\numberwithin{equation}{section}
\tikzset{every picture/.style={line width=0.75pt}}
\theoremstyle{definition}
\newtheorem{theo}{Theorem}[section]
\newtheorem{defi}[theo]{Definition}
\newtheorem{rema}[theo]{Remark}
\newtheorem{prop}[theo]{Proposition}
\newtheorem{coro}[theo]{Corollary}
\newtheorem{lemma}[theo]{Lemma}
\newtheorem{exam}[theo]{Example}
\newtheorem{conj}[theo]{Conjecture}
\newtheorem{conv}[theo]{Convention}
\newtheorem{cons}[theo]{Construction}
\newtheorem*{theorem**}{Theorem\theoremnum}
\newenvironment{theorem*}[1][]{%
  \edef\theoremnum{\if\relax\detokenize{#1}\relax\else~#1\fi}
  \begin{theorem**}
}{%
  \end{theorem**}
}  
\newcommand{\address}[1]{\gdef\@address{#1}}
\newcommand{\email}[1]{\gdef\@email{\url{#1}}}
\newcommand{\@endstuff}{\par\vspace{\baselineskip}\noindent\small
\begin{tabular}{@{}l}\scshape\@address\\\textit{E-mail address:} \@email\end{tabular}}
\title{\textbf{Coloring Trivalent Graphs: A Defect TFT Approach}}
\author{Amit Kumar}
\address{Department of Mathematics, \\ Louisiana State University, Baton Rouge LA, USA.}
\email{akuma25@lsu.edu}
\date{}
\begin{document}

\maketitle

\begin{abstract}

   We show that the combinatorial matter of graph coloring is, in fact, quantum in the sense of satisfying the sum over all the possible intermediate state properties of a path integral. In our case, the topological field theory (TFT) with defects gives meaning to it. This TFT has the property that when evaluated on a planar trivalent graph, it provides the number of Tait-Coloring of it. Defects can be considered as a generalization of groups. With the Klein-four group as a 1-defect condition, we reinterpret graph coloring as sections of a certain bundle, distinguishing a coloring (global-sections) from a coloring process (local-sections.) These constructions also lead to an interpretation of the word problem, for a finitely presented group, as a cobordism problem and a generalization of (trivial) bundles at the level of higher categories.
   
\end{abstract}

\tableofcontents

\section{Introduction}
\subsection{Motivation and Main Result} \label{sec:motivation}

Given an un-directed graph $\Gamma$, and a discrete set $X$, an \textit{admissible edge coloring} of $\Gamma$, with values in $X$, is an assignment such that:

\begin{enumerate}

    \item Every edge of $\Gamma$ gets assigned an elements of $X$,
    \item Each edge, meeting at a vertex, gets assigned different elements of $X$.

\end{enumerate}

Note, we do not require $\Gamma$ to be embedded in some surface $\Sigma$. Therefore, an admissible coloring can be thought of as an assignment to the edges of an abstract graph, which is given in terms of a set of vertices, $V(\Gamma)$, a set of edges, $E(\Gamma)$ and a set of incidence relations, i.e., a map $\mathcal{I} $ from the set of edges, $E(\Gamma)$ to the set of unordered pair of vertices of $\Gamma$, that maps every edge to its end-points. The function $\mathcal{I}$ gives rise to another function $\mathcal{V}:V(\Gamma) \to \mathbb{Z}_{+}$, whose value at a vertex $v \in V(\Gamma)$ is called the valency of $v$. A graph is called $n$-regular if all vertices have valency $n$.

A graph is called planar if it can be embedded in $\mathbb{S}^2$, and it is trivalent (or cubic) if the valency at every vertex is three. An admissible edge coloring of a planar graph by a set of three elements $X$ is called a \textit{Tait-coloring} or a $3$-\textit{edge-coloring}, and the number of such distinct admissible colorings is called the \textit{number of Tait coloring.}

The term \textit{admissible edge coloring} is inspired by an analogous concept in ~ \cite{khovanov2021foam}, and the definition of Tait coloring as well as the definition of number of Tait coloring for a planar trivalent graph is in the spirit of Penrose from the '70s in ~ \cite{penrose1971applications} and recent works like ~ \cite{baldridge2018cohomology} and ~ \cite{ baldridge2023topological}.

\begin{figure}[h]
    \centering
    \includegraphics[width=0.98\linewidth]{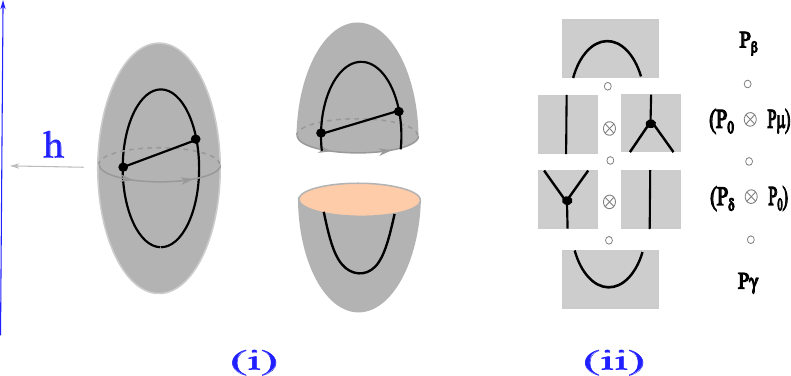}
    \caption{Diagram (i) shows that after choosing a height function, the coloring of a planar trivalent graph can be thought of as a sum over intermediate states. Diagram (ii) shows that it can be further decomposed, each piece can be admissibly colored separately and glued back with the hope of getting a coloring on the original graph.}
    \label{fig:quantum_decom}
\end{figure}

Next, we note that given a plane trivalent graph $\Gamma$, i.e. a graph $\Gamma$ embedded in $\mathbb{S}^2$, the number of Tait-coloring of $\Gamma$ can be obtained by decomposing a graph into two pieces as in \cref{fig:quantum_decom}, (I), coloring each piece admissibly, and gluing them back. If the color on the edges matches at the two boundary circles, keep it, discard it otherwise. This is one of the key properties of path integrals as discussed in [~\cite{hori2003mirror}, Section 8.3 ], [~\cite{Carqueville_2018}, Section 2.1(v)] or [~\cite{freed1993lectures}, Lecture 1], namely, \textit{summing over all possible field configurations on the glued boundary.}
In the context of \cref{fig:quantum_decom}, (i), after choosing a height function, the possible field configuration on the boundary circle is given by all possible assignments of colors to the endpoint of edges of the theta graph; the sum evaluates to zero if the color on the boundary does not match. Thus, a natural question arises: Does there exist a functorial quantum field theory whose partition function is the number of Tait-coloring of $\Gamma$? More precisely, does there exist a symmetric monoidal category $\mathcal{C}$, and a functor $ \mathcal{Z}: \mathcal{C} \to \text{Vect}_{\mathbb{K}} $ such that $\mathcal{Z}(\mathbb{S}^2, \Gamma)$ is a scalar, which is equal to the number of ways $\Gamma$ can be Tait-colored? Of course, this requires $(\mathbb{S}^2, \Gamma) \in \mathcal{C}$ in some sense.

It turns out that the category $\mathcal{C}$ is the category $\text{Bord}^{\textit{def, cw}}_2(\mathcal{D}_{+}^{\mathbf{3}})$ with objects as a disjoint union of circles with marked points, together with a germ of a collar, and morphisms between two objects as decorated stratified surfaces (see \cref{Bord_def} for a picture) bounding them. First, in \cref{sum_over_IS} we prove the above-mentioned gluing property of the Tait coloring of a planar trivalent graph. Next, after reinterpreting the concept of coloring as sections of a certain bundle, we proved the desired result:

\begin{theorem*}[\ref{main-2}]

Let $\Gamma$ be a trivalent graph embedded in $\mathbb{S}^2$. Consider the surface with defect $(\mathbb{S}^2, {\Gamma})$ in $\text{Mor}(\textit{Bord}_2^{\textit{def,cw}}(\mathcal{D}_{+}^{\mathbf{3}}))(\emptyset, \emptyset)$. The action of the functor $\chi^{cw}$ on $(\mathbb{S}^2, \Gamma)$ is the assignment
    \begin{equation*}
        \begin{split}
            \chi^{cw}(\mathbb{S}^2, \Gamma) &: \mathbb{C} \longrightarrow \mathbb{C}\\
            & \lambda \mapsto \#\text{Tait}(\Gamma) \lambda
        \end{split}
    \end{equation*}
    
  In other words, the number $\chi^{cw}(\mathbb{S}^2, \Gamma)(1)$ is the number of Tait-coloring of the planar trivalent graph $\Gamma$. 

\end{theorem*}

The following section outlines how this paper's main result(s) has been achieved.

\subsection{Tait's correspondence and Defects } \label{sec:intro_Tait}
The biggest inspiration comes from the work of Tait ~\cite{tait1880note}, who gave a way to vertex four-color a planar graph, given the admissible $3$-edge coloring of its dual trivalent graph. Given a planar trivalent graph $\Gamma$, Tait's correspondence provides a way to color the regions, i.e., the components of $\mathbb{S}^2 \setminus \Gamma$ by the four elements of $K_4 = \langle a, b, c \mid a^2, b^2, c^2, abc \rangle$ given an admissible $3$-edge coloring of $\Gamma$ by $X = \{a, b, c\}$, the non-trivial elements of $K_4$. \cref{fig:Tait_sheaf}, (i), is a demonstration of this procedure. We have chosen to color the unbounded region by $1$. However, one can arbitrarily select a region, and color, i.e., an element of $K_4$ and to color a neighboring region, multiply by the assignment on the edges of $\Gamma$ when crossing it (cf ~\cite{baldridge2023topological}).

\begin{figure}[h]
    \centering
    \includegraphics[width=0.98\linewidth]{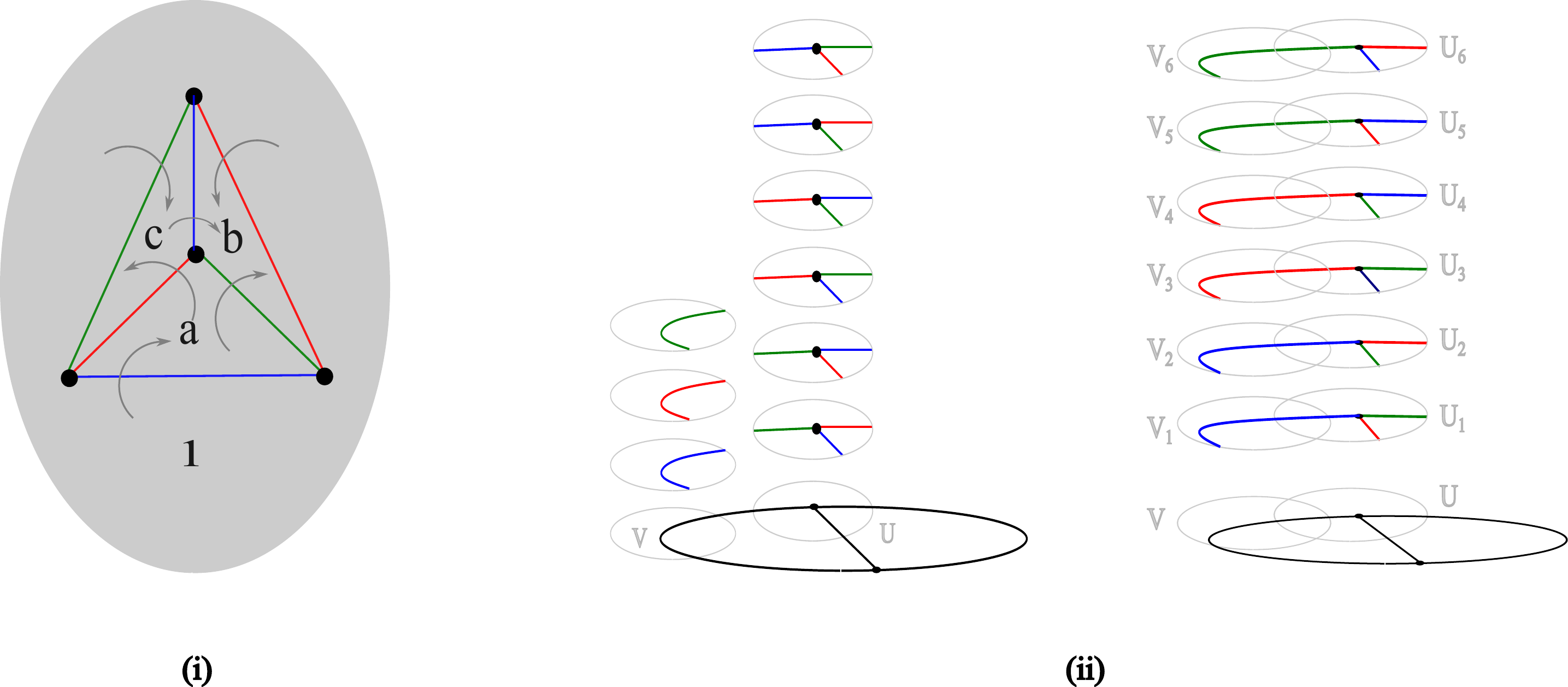}
    \caption{The blue color stands for $a$, red for $b$ and green for $c$ in Klein-four group. Diagram (i)  shows an example of Tait's correspondence. Diagram (ii) shows that the coloring of a graph embedded in a surface satisfies the sheaf property: only those assignments survive that agree on the overlap.}
    \label{fig:Tait_sheaf}
\end{figure}

It turns out that what Tait is describing is an example from the theory of defects [~\cite{carqueville2023topological}, Introduction, Paragraph 2] when the $1$-strata are labeled by non-identity elements of $K_4$ [cf ~\cite{carqueville20203}, Section 4.3]. This term, \textit{defects}, has its origin in physics and their presence reflects the phenomenon that the symmetry may be broken in different ways in different regions of the system [~\cite{lancaster2014quantum}, Chapter 26, Chapter 29]. A topological defect generalizes the idea of a symmetry group [~\cite{cordova2022snowmass}, ~\cite{schafer2024ictp}, ~\cite{freed2022topological}], and defects, topological or not, find their way into many areas of mathematics (and physics) [~\cite{bah2022panorama}, Section 2.9], [~\cite{freed2021quantum}]. The functorial field theory that we seek is an example of a \textit{topological field theory with defects}, which we discuss next.

\subsection{Topological Field Theory with Defects}

Now, we can state that the index, \textit{def}, in the notation $\text{Bord}^{\textit{def}}_2(\mathcal{D}_{+}^{\mathbf{3}})$ stands for defects, and the functor $\mathcal{Z}:\text{Bord}^{\textit{def, cw}}_2(\mathcal{D}_{+}^{\mathbf{3}}) \to \text{Vect}_{\mathbb{K}}$ is a \textit{field theory with defects}, which we need to be topological since the Tait-coloring is independent of planar isotopes. Referring to ~\cite{carqueville2019orbifolds}, a topological field theory with defects can be understood in terms of Extended TFTs, which is a higher functor ($n$-functor, $n>1$) into a higher symmetric monoidal target category. In this sense, a defect TFT can be thought of as enlarging the domain category instead, to accommodate all the enriched (higher categorical) structures in it while keeping the target category to be the symmetric monoidal $1$-category $\text{Vect}_{\mathbb{K}}$.

Therefore, in \cref{Bord}, we began by defining this domain category. We deviated from the mainstream literature to present the symmetric monoidal category $\text{Bord}^{\textit{def}}_n(\mathcal{D})$, for a set of defect conditions $\mathcal{D}$ (cf \cref{condns}), as the category of bordism between \textit{manifolds with defects}, i.e., the objects are a disjoint union of $(n-1)$-dimensional manifold with defects, each with a germ of a collar, and the set of morphism between two $(n-1)$-dimensional manifold with defects $\hat{N_1}$ and $\hat{N_2}$ is given by a $n$-dimensional manifolds with defects $\hat{M}$ with boundaries $(-\hat{N_1}) \sqcup \hat{N_2}$, where $(-\hat{N})$ stands for the opposite orientation on $N$. We drew inspiration from the observation that the edge coloring of a graph satisfies the sheaf property (\cref{fig:Tait_sheaf}, (ii)) and built the local models by drawing the string diagram for defect conditions. (Compare this approach with the definition of a pre-foam in [~\cite{khovanov2021foam}, Definition 2.1].) However, we have restricted ourselves to $n=2$ in this manuscript, and after developing their theory, we define a lattice TFT called, \textit{trivial surrounding theory} in \cref{sec:TFT} and construct a special trivial surrounding theory $\chi^{cw}$ in \cref{chi-cw} that provides the required partition function needed for \cref{main-2}. Since $\chi^{cw}$ is a lattice topological field theory, it uses a decomposition as in \cref{fig:quantum_decom}, (ii) to calculate the correlator, i.e., the value of $\chi^{cw}$ on a morphism. With the $\mathbb{C}$-vector space, spanned by $X = \{a, b, c\}$, which we also denote $X$ by the abuse of notation, the calculation in \cref{chi-cw} renders the following:

\[ 
\begin{matrix}
    
\chi^{cw}(P_{\beta}):
\begin{cases}
a \otimes a   \mapsto 1 \\
b \otimes b  \mapsto 1 \\
c \otimes c  \mapsto 1 \\
0 \hspace{2mm} \text{otherwise} & 
\end{cases}    

&

&

\chi^{cw}(P_{\mu}):
 \begin{cases}
 a \otimes b, b \otimes a  \mapsto c \\
 a \otimes c, c \otimes a  \mapsto b \\
 b \otimes c, c \otimes b  \mapsto a \\
 0 \hspace{5mm} \text{otherwise}
\end{cases}

\\
&

&

&
\\

\chi^{cw}(P_{\delta}):
\begin{cases}
 a & \mapsto  b \otimes c + c \otimes b \\
 b &  \mapsto  a \otimes c + c \otimes a \\
 c & \mapsto  a \otimes b + b \otimes a
\end{cases}

&

&

\chi^{cw}(P_{\gamma}): 1 \to a \otimes a + b \otimes b + c \otimes c

\end{matrix}
\]

Using these, together with $\chi^{cw}(P_0) = 1$, on \cref{fig:quantum_decom},(ii), we obtain,

\begin{multline} \label{partition_theta}
    1 \mapsto a \otimes a + b \otimes b + c \otimes c \mapsto a \otimes (b \otimes c + c \otimes b) + b \otimes (a \otimes c + c \otimes a) + c \otimes (a \otimes b + b \otimes a) \\
   a \otimes b \otimes c + a \otimes c \otimes b + b \otimes a \otimes c + b \otimes c \otimes a + c \otimes a \otimes b + c \otimes b \otimes a \mapsto 2 c \otimes c + 2 b \otimes b + 2 a \otimes a \mapsto 6 ,  
\end{multline}

which is equal to the number of Tait-coloring for the theta graph. However, we need to prove this in general, and this is where all the new mathematics lies. We discuss this next.

\subsection{Key ideas and tools} \label{sec:key_tools}

We begin by noticing that the correlators that the TFT $\chi^{cw}$ calculates should be invariant under planar isotopy if it is meant to calculate the number of Tait-coloring as mentioned in the previous section. Therefore, the first key step is to see a planar graph as a string diagram for a pivotal 2-category, which is a $2$-category with adjoints [cf ~\cite{carqueville2016lecture}, Section 2.2, ~\cite{barrett2024graycategoriesdualsdiagrams}, Section 2.1-2.3, Section 3.1-3.2]. In that case, \cref{prop-deco} proves that every planar trivalent graph can be viewed as assembled from simpler pieces as in \cref{fig:quantum_decom}, (ii). We emphasize that the collar plays a bigger role than just facilitating the gluing since there are cross-sections of a surface with defects that do not admit a collar. Such cross-sections are not objects in the category $\text{Bord}^{\textit{def}}_2(\mathcal{D})$ and this has to be taken care of when decomposing a graph.

Next, referring to \cref{partition_theta}, it is unclear how $a, b, c$ are connected with edge-coloring. Taking ideas from Tait's correspondence as discussed in \cref{sec:intro_Tait} we redefined the notion of coloring in \cref{sec:graph-coloring}. This is accomplished in two steps: First, building on the idea developed in ~\cite{casals2023legendrian}, given a finitely presented group $G$ with the presentation $P_G$, we define the category $\text{Bord}^{\textit{def}}_2(\mathcal{P_G})$ in \cref{sec:groups}. Next, use the presentation $ P_{K_4} \coloneqq \langle a, b, c \mid a^2 = b^2 = c^2 = abc = 1 \rangle$ for the Klein-four group to build the category $\text{Bord}^{\textit{def}}_2(\mathcal{P}_{K_4})$. The morphism set of this category can be thought of as accommodating all trivalent graph $\Gamma$, embedded in some surface $\Sigma$, which can be $3$-edge colored.

Once we have a category whose morphism set comprises all $3$-edge colored graphs, we can define a $3$-edge coloring of a trivalent graph in terms of lifts to this set. However, such a lift may not exist for the entire graph. \cref{big-1} gives an obstruction for planar graphs and also validates Tait's correspondence. In fact, \cref{big-1} generalizes the classical result that a planar trivalent graph with a bridge is not $3$-edge colorable. The generalized statement and the proof, both, require the group property of $K_4$.

Since, we can not start with a trivalent graph, embedded in some surface, and hope to find a lift in $\text{Bord}^{\textit{def, cw}}_2(\mathcal{P}_{K_4})$, we should look for a lift only locally and ask when such local lifts give rise to a global lift. In layman's language, this distinguishes a coloring from a coloring process. A coloring process is a local assignment of sections and a coloring is a global section. A coloring process leads to a coloring precisely when a global section extends all the local sections. This has been elaborated in \cref{sec:coloring_process} and the hidden abstract structure in the definition of a coloring process has been outlined in  \cref{sec:additional}.

\subsection{Word problem and other connections}

We see that the validity of \cref{big-1} rests on associating to a circle with defects in $\text{Bord}^{\textit{def, cw}}_2(\mathcal{P}_{K_4})$ an element of $K_4$. Moreover, it is an immediate corollary of \cref{big-1} that a single circle with defects labeled with a single point labeled $c$ can not be cobordant to a circle with two marked points both labeled $a$, as long as this cobordism has genus $0$ (see \cref{exam:planar_word-problem} and \cref{fig:planar_word-problem}). This leads to an interpretation of the word problem as a cobordism (with defects) problem via the category $\text{Bord}^{\textit{def}}_2(\mathcal{P_G})$, for a finitely presented group $G$ with the presentation $P_G$. This is discussed in \cref{sec:word-problem_theory}, and \cref{sec:word_problem_future} along with many other future directions that stem from this work in \cref{sec:future_dirn}.

\subsection{Acknowledgement}

First, the author would like to thank his advisor Scott Baldridge for patiently and passionately listening to the raw ideas and for his constant support. Next, the author is indebted to Ingo Runkel for his powerful suggestions, and Nils Carqueville for many fruitful discussions. Finally, the author wants to thank David Truemann, Eric Zaslow, Roger Casals, Mikhail Khovanov, and Louis-Hadrien Robert whose works have been an inspiration for this project. In particular, the author thanks Mikhail Khovanov and Anton Zeitlin for many discussions in the early stages of this project. The author also thanks Kevin Schreve, Jerome Hoffman, and Daniel Cohen for many informal but valuable discussions about this project.


\section{Category of 2-dimensional bordism with defects} \label{Bord}
The goal of this section is to review the category of smooth bordism with defects. We essentially follow ~\cite{davydov2011field}, but give new definitions and modify some old ones to set the ground for work in the subsequent sections. A general theory was developed in ~\cite{carqueville20203}, Section-2 and in ~\cite{carqueville2019orbifolds}. We only deviate from these in some terms, conventions, and notations. For instance, by $\textit{Bord}_2^{\textit{def}}$ we mean the category $\textit{Bord}_2^{\textit{def, top}}$, but omit the word 'top' as we are dealing with \textit{topological defects} throughout this manuscript. Similarly, we use symbols that keep track of various incidences among strata.

The category $\textit{Bord}_2^{\textit{def}}(\mathcal{D})$ constitutes objects and morphisms that are \textit{stratified spaces} with each stratum labeled with elements of sets called \textit{defect conditions}. We explore each of these concepts in the following subsections.

\subsection{Defect Conditions}


\begin{defi} \label{adm}
Given an $n$-dimensional oriented manifold $M$, and a finite collection $\mathfrak{S} = \{M_0, \dots , M_n \}$ of submanifolds of $M$, we say that $\mathfrak{S}$ is an \textit{admissible decomposition} of $M$ if the following conditions hold:
\begin{enumerate}
    \item (\textbf{covering}) $M = \bigcup_{i=0}^{n}M_i$,
    \item (\textbf{decomposition}) $\text{dim}(M_i) = i$, $M_i \cap M_j = \emptyset$ for $i \neq j$, and the orientation of $M_n$ is induced by the orientation of $M$, and
    \item (\textbf{admissibility}) each partial union $\bigcup_{i = 0}^{k}M_i$ is a closed subset of $M$ for every $k \leq n$.
\end{enumerate}
\end{defi}

\begin{exam} \label{exam-deco}
    \begin{enumerate}
         \item Let $M$ be $n$-dimensional, the collection $\mathfrak{S}_0 = \{M\}$, which means $M_k = M$ if $k = n$, and $M_k = \emptyset$ otherwise, is an admissible decomposition of $M$.
        \item Let $U = \mathbb{S}^1$ denote the unit circle in the complex plane. Form the collection $\mathfrak{U}$ with $U_0 = \{p,q, r, s\}$, where $p = 1, \hspace{1mm} q = i, \hspace{1mm} r = -1,\hspace{1mm} s = - i$, and $$U_1 = \mathbb{S}^1 \setminus M_0 = \{(p,q), (q,r), (r,s), (s,p)\}.$$ Then, $\mathfrak{U}$ is an admissible decomposition of $\mathbb{S}^1$. See \cref{0-1-decom}.

        \begin{figure}
            \centering
            \includegraphics[width=0.98 \linewidth]{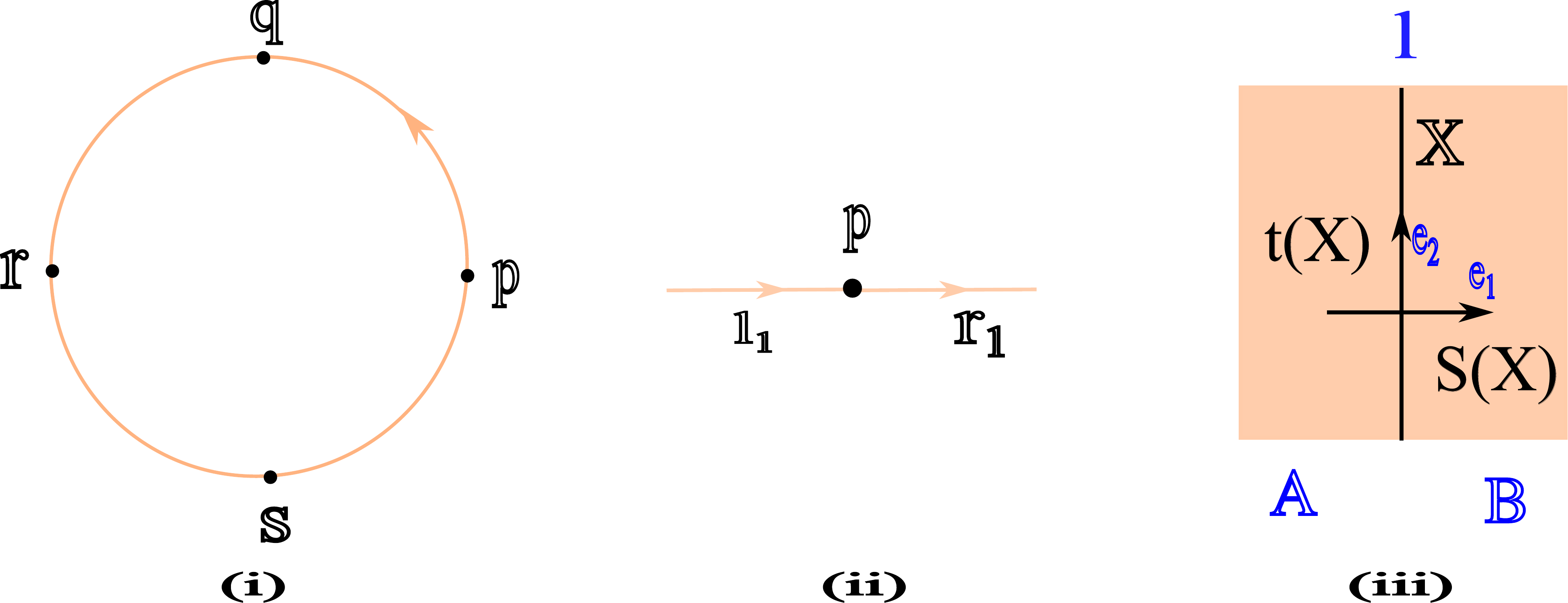}
            \caption{Diagram (i) shows an admissible decomposition $\mathfrak{U} = \{U_0, U_1\}$ of $\mathbb{S}^1$, where $U_0 = \{p,q, r, s\}$ and $U_1 = \mathbb{S}^1 \setminus U_0$. Diagram (ii) illustrates the condition \cref{defcirc} (4), and diagram (iii) illustrates the condition \cref{surfdef} (4.1). We have interpreted the sign 
            $\epsilon = +1$ to an upward pointed arrow.}
            \label{0-1-decom}
        \end{figure}

        \item (\textbf{Non-example.}) Let $\psi : \mathbb{S}^2 \to \mathbb{R}^2$ be the stereographic projection. The collection $\mathfrak{S}_1 = \{M_0, M_1, M_2\}$, where $$M_0 = \{\psi^{-1}(0,0)\}, M_1 = \{\psi^{-1}(\{(x, 0) \mid x \in \mathbb{R}, x \neq 0 \})\}, $$ and $M_2 = \mathbb{S}^2 \setminus (M_0 \cup M_1)$ is not an admissible decomposition. $M_0$ is the south pole, $M_1$ is the great circle containing the south pole but missing the north pole $\infty$. We see that $M_0 \cup M_1$ is not closed in $\mathbb{S}^2$ as the north-pole, which is in the closure of $M_0 \cup M_1$, is missing.
        \item (Turning (3) into an example.) However, the collection $\mathfrak{S}_2 = \{M'_0, M'_1, M'_2\}$, where $M'_0 = \{\psi^{-1}(0,0), \infty \}, M'_1 = M_1$ and $M'_2 = \mathbb{S}^2 \setminus (M'_0 \cup M'_1)$, is admissible.
        \item Given a link $L \subset \mathbb{S}^3$, form the collection $M_0 = M_2 = \emptyset$, $M_1 = L$, and  $M_3 = \mathbb{S}^3 \setminus (M_0 \cup M_1 \cup M_2)$. This is an admissible decomposition of $\mathbb{S}^3.$
        \item Let $\Sigma$ be a surface and $\Gamma$ an embedded graph in $\Sigma$ with vertex set $V(\Gamma)$ and edge set $E(\Gamma)$ such that $V(\Gamma) \cap \partial \Sigma = \emptyset$ and each edge of $\Gamma$ is transversal to $\partial \Sigma$, if they meet. The collection $\Sigma_0 = V(\Gamma), \Sigma_1 = E(\Gamma)$ and $\Sigma_2 = \Sigma \setminus (\Sigma_0 \cup \Sigma_1)$ is an admissible decomposition of $\Sigma$.
    \end{enumerate}
\end{exam}

\begin{rema}
  \begin{enumerate}
    \item A consequence of admissibility condition is that $\Bar{M}_k \setminus M_k$ is contained in the union $\bigcup_{i=0}^{k-1}M_i$ of lower dimensional piece. Where $\Bar{M}_k$ is the closure of $M_k$ in $M$.
    \item Let $\mathfrak{M}^k$ denotes the partial union $ \bigcup_{i = 0}^{k}M_i$, then there is a filtration by closed subspaces 
    \begin{equation} \label{eqn:filtration}
        M = \mathfrak{M}^n \supset \mathfrak{M}^{n-1} \supset \dots \supset \mathfrak{M}^0 \supset \mathfrak{M}^{-1}= \emptyset
    \end{equation}
    
    with $\mathfrak{M}^k \setminus \mathfrak{M}^{k-1} = M_k$. Therefore, an admissible decomposition canonically leads to \textit{stratification} of $M$. We will refer to the components of $M_k$ as the $k$-\textit{dimensional strata} of $M$. We refer to the reader to ~\cite{friedman2017singular}, (2.2) for the definition of a filtered space and stratification. 
    \item We stress that ~\cite{carqueville20203} does not require filtration to be given by closed subspaces, but it does not matter to us. Similarly, they also assume frontier and finiteness conditions, which in our case is the consequence of \cref{adm}. However, we do require filtration to be locally cone-like, which we define next.

  \end{enumerate}
\end{rema}

\begin{defi}[\cite{friedman2017singular}, \cite{maxim2019intersection}] \label{def:locally_CS}
With the notation as in \cref{eqn:filtration}, a filtered space $M$ is \textit{locally cone-like}, if for all $i, 0 \leq i \leq n$  and for each $x \in M_i$ there exists a neighborhood $U$ of $x$ in $M_i$, a neighborhood $U_x$ of $x$ in $M$, a compact $n-i-1$ stratified space $L$ with a filtration $$ L = \mathfrak{L}^{n-i-1} \supset \dots \mathfrak{L}^0 \supset \mathfrak{L}^{-1} = \emptyset$$ and a homeomorphism $ h: U \times \mathring{c}L \to U_x $ such that $h(U \times \mathring{c}\mathfrak{L}^k) = \mathfrak{M}^{i+k+1} \cap U_x$. Here, $\mathring{c}L = L \times [0,1)/L\times \{0\}$ is the open-cone on $L$.

\end{defi}

\begin{conv} \label{conv:locally_CS}
\begin{enumerate}
   \item Following \cite{friedman2017singular}, we will refer to $U_x$ in \cref{def:locally_CS} as a \textit{distinguished neighborhood} of $x$.
   \item For the rest of this manuscript we will work with $n=2$, i.e., the top dimensional strata in \cref{adm} are manifolds of dimension $2$.
   \item We assume that each $0$-dimensional strata is incident to at least two $1$-dimensional strata.
\end{enumerate}
\end{conv}


Given a set $D_i$, let $\Bar{D_i}$ be the set of formal inverses of elements in $D_i$, and \linebreak $X_i = D_i \cup \Bar{D_i}$. For example if $D_1 = \{x, y, z\}$ then $\Bar{D}_1 = \{x^{-1}, y^{-1}, z^{-1}\}$, and $X_1 = \{x, y, z, x^{-1}, y^{-1}, z^{-1}\}$. With this convention, define:


\begin{defi} \label{condns}

    A $2$-\textit{defect condition} or $2$-\textit{defect data} is a tuple  $$(D_2, D_1, D_0, \psi_{\{0,1\}}, \psi_{\{1,2\}})$$ consisting of sets $D_2, D_1, D_0$ and maps 
    $$ \psi_{\{1,2\}} : X_1 \to D_2 \times D_2 \hspace{5mm} \text{and} \hspace{5mm} \psi_{\{0,1\}}: X_0 \to \sqcup_{m=0}^{\infty}((X_1)^m /C_m) $$
    where, $(X_1)^m$ mean the $m$-fold Cartesian product of the set $X_1$ and $C_m$ is the group of cyclic permutations that acts on the $m$-tuples in $(X_1)^m$. These maps are subject to the following two \textit{orientation consistency} conditions:
    \begin{enumerate}
        \item If $\psi_{\{1,2\}}(x^{\epsilon}) = (\alpha, \beta)$ , then $\psi_{\{1,2\}}(x^{-\epsilon}) = (\beta, \alpha)$, and
        \item if $\psi_{\{0,1\}}(u^{\epsilon}) = [(x_1^{\epsilon_1}, \dots , x_m^{\epsilon_m})]$ , then $\psi_{0,1}(u^{-\epsilon}) = [(x_m^{-\epsilon_m}, \dots , x_1^{-\epsilon_1})]$
    \end{enumerate}
     Usually, in the literature (cf, ~\cite{davydov2011field}, ~\cite{carqueville20203}) the map $\psi_{1,2}$ is given in terms of two maps $s, t : X_1 \to D_2$ such that for $x^{\epsilon} \in X_1 , \psi_{\{1,2\}}(x^{\epsilon}) = (t(x^{\epsilon}), s(x^{\epsilon}))$. Using maps $s, t$ the tuples in $(X_1)^m$ are $m$ cyclically composable domain walls, i.e. if $(x_1^{\epsilon_1}, \dots , x_m^{\epsilon_m})$ is such a tuple, then $t(x_1) = s(x_n)$ and $t(x_{i+1}) = s(x_i)$.

\end{defi}
Here, we have digressed from ~\cite{davydov2011field} and ~\cite{carqueville20203} in terms of notations, the reason for indexing $\{0,1\}$ or $\{1,2\}$ will become apparent in \cref{surfdef} and the rest will become clear in \cref{sec:groups}.

\begin{rema}
Because of orientation consistency conditions, \cref{condns} (1) and (2), $\psi_{\{1,2\}}$ and $\psi_{\{0,1\}}$ can be uniquely determined by their values on $D_1$ and $D_0$ respectively.    
\end{rema}


\begin{cons} \label{local-junctions}
    We use the defect conditions to produce \textit{defect disks}, as shown in \cref{fig:defect_orient} (i) and (ii) by drawing a string diagram inside the oriented topological disk $\mathbb{D}^2$ (with the standard orientation) as follows:
    \begin{enumerate}
    
        \item If $\psi_{\{1,2\}}(x) = (\alpha, \beta)$, then we represent it as in \cref{fig:defect_orient} (i) with the orientation on the $1$-stratum chosen so that it gives the orientation on $\mathbb{D}^2$ as discussed in \cref{0-1-decom} (iii). The arrow is reversed if $x$ changes to $x^{-1}$ following the orientation consistency condition $\psi_{\{1,2\}}(x^{-1}) = (\beta, \alpha)$. So that $\alpha$ is always on the left of $x$ and $\beta$ is always on the right.
        \item If $\psi_{\{0,1\}}(u^{\epsilon}) = [(x_1^{\epsilon_1}, \dots, x_m^{\epsilon_m})]$, then place a single $0$-dimensional stratum at the center $O$ of $\mathbb{D}^2$ and following the positive induced orientation, which is anti-clockwise here, place the tuple $(x_1^{\epsilon_1}, \dots, x_m^{\epsilon_m})$ on the boundary circle and connect them to the center. For a point $P$ on the boundary circle that is marked $x^{\epsilon}$ the orientation on the $1$-stratum $OP$ is towards $O$ if $\epsilon = +1$ and away from $O$ if $\epsilon = -1$.
        \item There is an operation of involution $\ast$ that changes the orientation of $\mathbb{D}^2$. Note that this will also switch left and right, therefore, to be consistent with \cref{0-1-decom}, (iii), we need to reverse the direction of the arrows. Then, this facilitates gluing by inducing the same orientation at the boundary circle and its $0$-strata. The string diagrams for $\psi_{\{0,1\}}(u^{\epsilon})$ and $\psi_{\{0,1\}}(u^{- \epsilon})$ are connected via an involution to form \cref{fig:defect_orient} (iv) [cf, ~\cite{davydov2011field}, Section 2.3, 2.4.]. See also \cref{sec:word-problem_theory}.
        
    \end{enumerate}
       
\end{cons}

The proof of the following lemma is straightforward and is omitted.

\begin{lemma} \label{lemma:defect_disk-intersection}

With the definition of defect-disk as in \cref{local-junctions}, the intersection of two defect-disks is again a defect disk.
    
\end{lemma}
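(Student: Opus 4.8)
The plan is to unwind what a ``defect disk'' actually is as combinatorial data, observe that the intersection of two disks embedded in the plane inherits a stratification from either one, and check that this inherited stratification again matches the local model prescribed by Construction \ref{local-junctions}. Concretely, a defect disk is a copy of $\mathbb{D}^2$ carrying one of the two normal forms: either (type (i)) a single $1$-stratum running through the interior, labeled by some $x \in X_1$ with the two sides labeled by $\alpha, \beta \in D_2$ via $\psi_{\{1,2\}}$; or (type (ii)) a single $0$-stratum at the center with $m$ radial $1$-strata emanating to the boundary, labeled cyclically by $(x_1^{\epsilon_1}, \dots, x_m^{\epsilon_m})$ via $\psi_{\{0,1\}}$, together with the induced $D_2$-labels on the sectors between consecutive radial edges. (A ``trivial'' disk $\{\mathbb{D}^2\}$ with a single $2$-stratum labeled by some $\alpha \in D_2$ should also be allowed, per Example \ref{exam-deco}(1); I will treat it as the $m=0$ case.)

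First I would reduce to the case where the intersection $U = D \cap D'$ of the two defect disks $D, D'$ is nonempty, open, and connected — if it is empty the statement is vacuous, and a general intersection is a disjoint union of disks so it suffices to argue component-wise. Next, I would case-split on which stratum types meet the component $U$. If $U$ misses all $0$-strata of both disks, then within $U$ each disk looks like finitely many disjoint arcs of $1$-strata with labeled sectors between them; since $U$ is a disk (contractible, planar) these arcs can be isotoped to be parallel straight segments, and by passing to a smaller sub-disk one reduces to a single arc, which is exactly a type (i) defect disk — here the orientation-consistency condition \ref{condns}(1) is what guarantees the side-labels are globally consistent along the arc. If $U$ contains the central $0$-stratum of one of the disks, say $D$, then (since by Convention \ref{conv:locally_CS}(3) every $0$-stratum has valence $\geq 2$, and by the locally-cone-like condition of Definition \ref{def:locally_CS} a $0$-point has a distinguished neighborhood that is a cone) a small enough neighborhood of that point inside $U$ is precisely the cone model, i.e.\ a type (ii) disk with the same cyclic datum $\psi_{\{0,1\}}(u^\epsilon)$; and the same $U$ cannot simultaneously contain a $0$-stratum of $D'$ as well, because two distinct $0$-strata can always be separated by shrinking. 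So after shrinking $U$ we land in one of the two (or the trivial) normal forms, and the induced labelings are read off from the original $\psi_{\{1,2\}}, \psi_{\{0,1\}}$, which is what it means to be a defect disk.

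The main obstacle — and the only genuine point — is orientation bookkeeping under the possible involution $\ast$ of Construction \ref{local-junctions}(3): the two disks $D, D'$ might present $U$ with opposite orientations, in which case matching the string diagrams requires applying $\ast$, which reverses all arrows and swaps left/right. I would handle this by noting that the orientation-consistency conditions \ref{condns}(1)--(2) are designed exactly so that $\psi_{\{1,2\}}$ and $\psi_{\{0,1\}}$ are equivariant for the simultaneous $x \mapsto x^{-1}$ and orientation-reversal, so the involuted diagram is again the string diagram of a legitimate defect condition (with $x$ replaced by $x^{-1}$, resp.\ the cyclic tuple reversed), hence still a defect disk. The remaining verifications — that the filtration by closed subsets is preserved, that the result is still locally cone-like, and that the $2$-strata sector labels agree on the overlap — are immediate from the corresponding properties of $D$ and $D'$ and are exactly the ``straightforward'' checks the authors elide; I would record them in one line each rather than belabor them.
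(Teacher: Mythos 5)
The paper never actually proves this lemma—it is dismissed with ``the proof \dots is straightforward and is omitted''—so there is no argument of the authors to compare against line by line; what you have written is, in substance, the routine verification they had in mind: reduce to a connected component of the overlap, case-split on which strata that component meets, recognize the normal form of \cref{local-junctions}, and use the orientation-consistency conditions of \cref{condns} to absorb the involution. Two points in your write-up deserve tightening. First, the blanket claim that the intersection of two embedded disks in a surface is a disjoint union of disks is false in general (two large disks on $\mathbb{S}^2$ can meet in an annulus); what actually saves you is that the lemma is only invoked, in \cref{surfdef}(4), for overlapping distinguished neighborhoods of one and the same stratum point, which may be taken small enough (inside a cone chart as in \cref{def:locally_CS}) that the relevant component is a disk—say this rather than asserting the general claim. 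Second, ``pass to a smaller sub-disk'' proves that the overlap \emph{contains} a defect disk (equivalently, is one after shrinking), not that it literally \emph{is} one; since the statement read literally already fails for empty or disconnected intersections, the liberal reading is clearly intended, but you should make explicit that ``defect disk'' is taken up to isomorphism of labeled stratified disks and that shrinking is permitted, because that is precisely what the well-definedness argument in \cref{surfdef}(4) requires. Finally, your exclusion of two distinct $0$-strata in the overlap holds for a cleaner reason than ``separation by shrinking'': a disk of the second kind contains exactly one $0$-stratum, namely its center, so the overlap meets a $0$-stratum only when the two disks are centered at the same point. With these caveats made explicit, your argument is sound and supplies exactly the detail the paper elides.
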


\begin{figure}
    \centering
    \includegraphics[width=0.98 \linewidth]{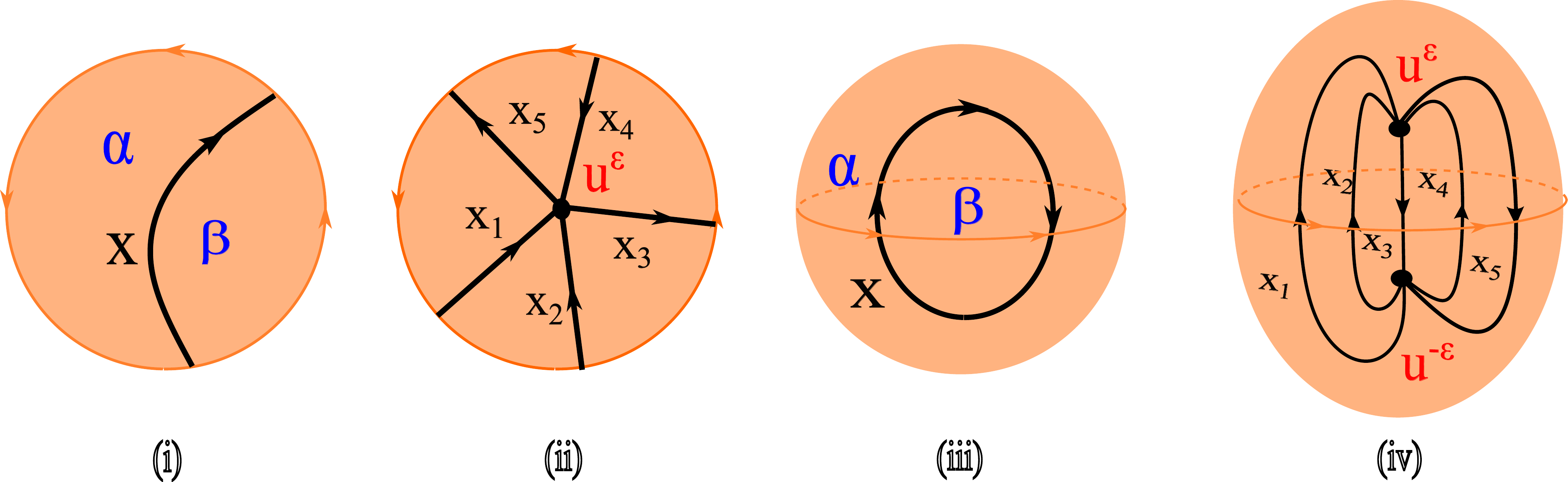}
    \caption{The map $\psi_{1,2}$ can be visualized as (i) where (in this case) it is: $x \mapsto (\alpha, \beta)$. (ii) represnts $\psi_{0,1} : u^{\epsilon} \mapsto [(x_1, x_2, x_3^{-1}, x_4, x_5^{-1})]$. (iii) (respectively (iv)) represents the orientation consistency condition for $\psi_{1,2}$ (respectively $\psi_{0,1}$).}
    \label{fig:defect_orient}
\end{figure}

\subsection{Objects}

Naively, an object in the category $\text{Bord}_2^{\text{def}}(\mathcal{D})$ is a circle with marked points and arcs. Points are marked with the elements of $D_1$, and arcs are marked with the elements of $D_2$. If we denote the circle by $\mathbb{S}^1$ then these points give rise to admissible decomposition of $\mathbb{S}^1$ as in ~\cref{exam-deco}(2). More generally, given sets of defect conditions $\mathcal{E} \coloneqq \{E_0 , E_1, \phi_{0,1}\}$, where $E_0 = D_1, E_1 = D_2$ and $\phi_{\{0,1\}} = \psi_{\{1,2\}}$ we define:

\begin{defi} \label{defcirc}
    A $1$-manifold with defects with defect conditions $\mathcal{E}$ is a tuple $(\mathbb{L}, \mathfrak{L}, d)$ where

\begin{enumerate}
  \item $ \mathbb{L}$ is a closed, connected, oriented $1$-manifold, possibly with boundary.
  \item $\mathfrak{L} $ is an admissible decomposition of $\mathbb{L}$. It consists of a set of points $L_0$ in the interior of  $ \mathbb{L}$ and its complement $L_1 = \mathbb{L} \setminus L_0$, forming a locally cone-like filtration \cref{def:locally_CS}.
  \item For $Y_i = E_i \sqcup \Bar{E}_i$ and $Y = Y_0 \sqcup Y_1$ , $d: \mathbb{L} \to Y $ such that
      \begin{itemize}
          \item $d(L_0) \subset Y_0$,  
          \item $d(L_1) \subset E_1$, and
          \item $d|_{\pi_{0}(L_i)}$ is constant              
    \end{itemize}
  \item The map $d: \mathbb{L} \to Y $ respects $\phi_{\{0,1\}}$. More precisely, if $p_0 \in L_0$ is such that  it is the common boundary of $l_1, r_1 \in L_1$ in a way so that $l_1$ ($r_1$) is oriented into (out of) $p_0$ , then $\phi_{\{0,1\}}(d(p)) = (d(l_1), d(r_1)) $ if the sign, $ \epsilon$, of $d(p)$ is +1. Refer to \cref{0-1-decom}, (ii) for illustration. 
  
\end{enumerate}
 
\end{defi}

\begin{exam}
    We upgrade \cref{exam-deco}, (2)] to a $1$-manifold with defects in ~\cref{1-man-def}, where $E_0 = \{x_1, x_2\}, E_1 = \{\alpha, \beta\}$. 

    \begin{figure}
        \centering
        \includegraphics[scale=0.3]{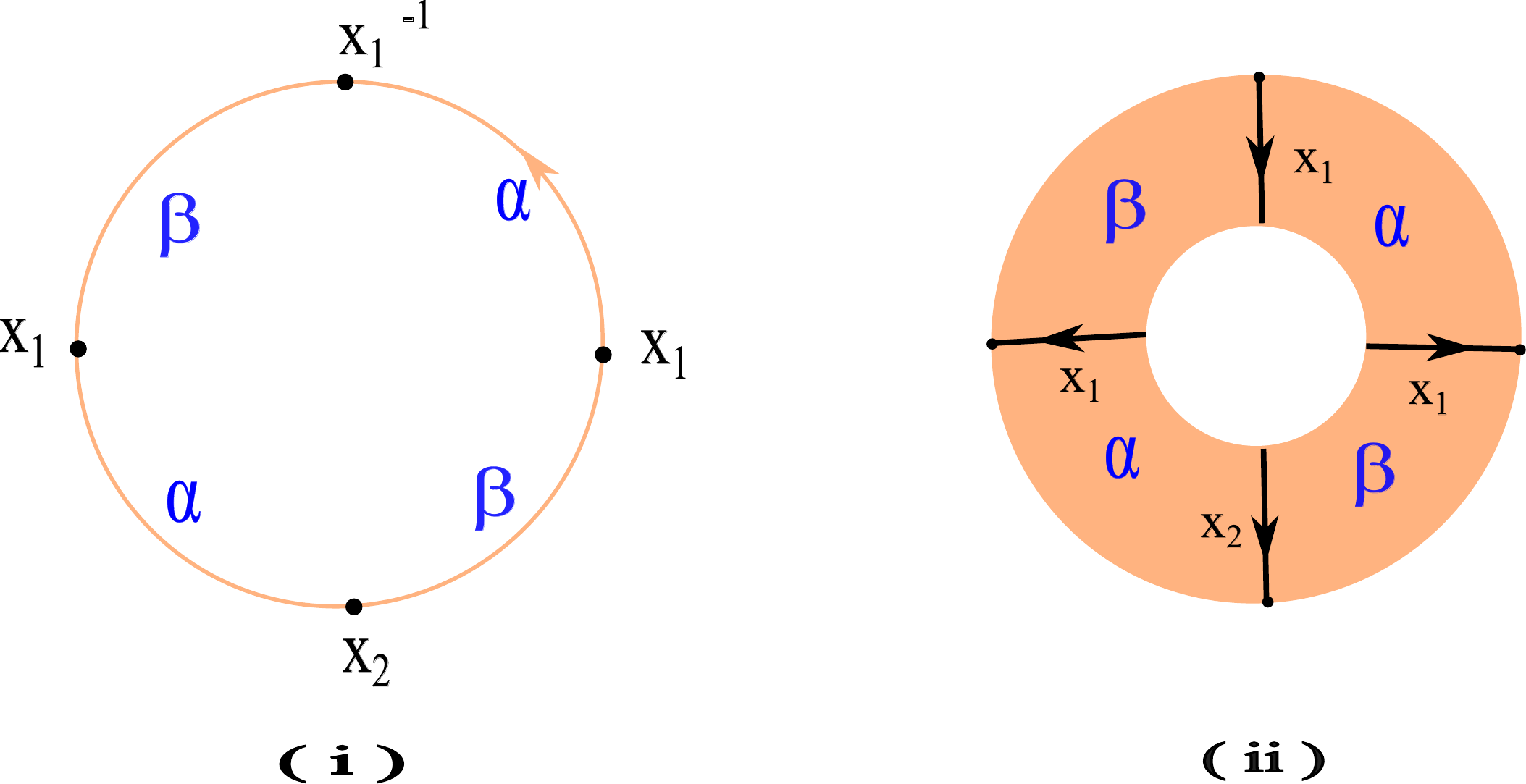}
        \caption{(i) shows a circle with defects. The map $d: \mathbb{S}^1 \to X$ takes $p \mapsto x_1 , q \mapsto x_1^{-1} , r \mapsto x_1 , s \mapsto x_2,(p,q) \mapsto \alpha , \hspace{2mm} (q,r) \mapsto \beta , \hspace{2mm} (r,s) \mapsto \alpha , \hspace{2mm} (s,p) \mapsto \beta$, and $\phi_{\{0,1\}} $ maps $ x_1$ to $ (\alpha, \beta)$ and $x_2$ to $(\beta, \alpha)$. (ii) shows a cylinder on this circle with defects. If we agree to call $p \times [0,1] (\hspace{1mm}\text{respectively} \hspace{1mm} q \times [0,1], r \times [0,1])$ as $p[1]$ (respectively $q[1], r[1]$), then there is no change in the map $d$ as $p$ (respectively $q, r$) lies in the unique component $p[1]$ (respectively $q[1], r[1] $). The same holds for the two-dimensional strata.}
        \label{1-man-def}
    \end{figure}
\end{exam}

\begin{rema}
    We call a closed $1$-manifold with defect, a \textit{circle with defects}. In this case, we further assume that the underlying circle comes with a \textit{distinguished point}, which we denote by $-1$. No $0$-dimensional stratum is allowed to pass through this point under isotopy preserving the defect structure. In other words, all the marked points lie in $ \mathbb{S}^1 \setminus \{-1\}$. See [~\cite{carqueville2016lecture}, Section 2.1] and ~\cite{davydov2011field}, Section 2.3] for details behind this convention.
\end{rema}


\subsection{Morphism} A morphism in the category $\text{Bord}_2^{\text{def}}(\mathcal{D})$ is given by an equivalence class of bordism between two circles with defects. We proceed to define this carefully. 

\begin{defi} \label{surfdef}
    Given a tuple of defect conditions $\mathcal{D} \coloneqq (D_2, D_1, D_0, \psi_{0,1}, \psi_{1,2})$ and associated sets $\{X_i\}$, a \textit{surface with defects} with defect conditions $\mathcal{D}$ consists of the following data.
    \begin{enumerate}
        \item An orientable surface $\Sigma$, possibly with boundary.
        \item An admissible decomposition $\mathfrak{S} \coloneqq \{\Sigma_2, \Sigma_1, \Sigma_0\}$ of $\Sigma$, with locally cone like filtration (cf \cref{def:locally_CS}), such that $\Sigma_0$ lies in the interior of $\Sigma$ and each member of $\Sigma_1$ is transversal to $\partial \Sigma$ if they meet.
        \item For $X = X_0 \sqcup X_1 \sqcup X_2$, a map $d : \Sigma \to X$ such that
          \begin{itemize}
             \item $d(\Sigma_i) \subset X_i$ for all $i \neq 2$,
             \item $d(\Sigma_2) \subset D_2$, and
             \item $d|_{\pi_{0}(\Sigma_i)}$ is constant.
          \end{itemize} 
        \item The map $d : \Sigma \to X$ respects the maps $\psi_{0,1}$ and $\psi_{0,2}$ in the following sense:
             \begin{enumerate}
             
                 \item Let $l \in \Sigma_1$ and $A,B \in \Sigma_2$ be such that $l$ is the common boundary of $A$ and $B$. If $U_x$ is a distinguished neighborhood (\cref{conv:locally_CS}) around a point $x$ in $l$ then the assignments $d(A), d(B)$ and $d(l)$ make $U_x$ into one of the defect disks of the kind \cref{fig:defect_orient} (i). The fact that $d$ is constant on components and \cref{lemma:defect_disk-intersection} makes it well-defined.        
                 \item Let $O \in \Sigma_0$ is such that it is incident by $l_1, \dots, l_m \in \Sigma_1$ and $V_O$ be a distinguished neighborhood (\cref{conv:locally_CS}) around $O$. The assignments $d(l_1), \dots, d(l_m)$ and $d(O)$ are required to make $V_O$ into one of the defect disks of the kind \cref{fig:defect_orient} (iii). Again, this is well-defined due to \cref{lemma:defect_disk-intersection}, and the fact that $d$ is constant on each component.
                 
            \end{enumerate}

    \end{enumerate}

    We will denote a surface with defect by a tuple $(\Sigma, \mathfrak{S}, \mathcal{D}, d) $ or just by $(\Sigma, \mathfrak{S}, d) $ when the defect condition $\mathcal{D}$ is clear from the context. We will refer to $\Sigma$ as the \textit{underlying surface}.
\end{defi}

\begin{exam}
 A defect data controls the map $d$. \cref{exam-surfdef} shows both an example and a non-example of a surface with defects. here, the underlying surface is a disk which is a surface with a boundary.

\end{exam}

\begin{figure}[h]
    \centering
    \includegraphics[scale=0.5]{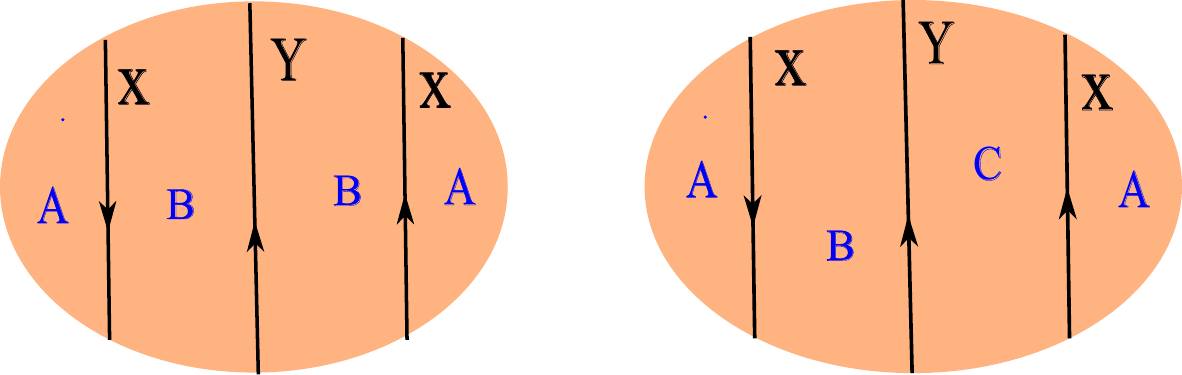}
    \caption{Shows an example of a surface (discs) with defect (left) and a non-example (right). One can check that the map $\psi_{\{1, 2\}}$ is not well defined for $X$ in the case of the diagram on the right. }
    \label{exam-surfdef}
\end{figure}

Next, we define the concept of isomorphism between two surfaces with defects with the same defect conditions $\mathcal{D}$.

\begin{defi} \label{iso}

    Given two surfaces with defects $ (U, \mathfrak{U}, d^U)  \hspace{1mm} \text{and} \hspace{1mm}  (V, \mathfrak{V}, d^V) $, an isomorphism between them is a map $f$ such that 
    \begin{enumerate}
        \item $f: U \to V$ is an orientation preserving diffeomorphism, together with the property that for all $i$, $U_i = f^{-1}(V_i)$. In other words, $f$ preserves orientation and admissible decomposition.
        \item $d^U = d^V \circ f$, or the following diagram commute: 
        $$\begin{tikzcd}
            U \arrow[rr, "f"] \arrow[dr, "d^U"] &  & V \arrow[dl, "d^V"]\\
             & X & 
        \end{tikzcd}$$
    \end{enumerate}
    
\end{defi}

\begin{rema}
    We do not aim to define a general morphism of surfaces with defects as it is not needed for this manuscript. However, it can be modeled on ~\cite{carqueville20203} Definition 2.4.
\end{rema}



Given a circle with defect $(\mathbb{S}^1, \mathfrak{C}, d)$ with defect conditions $\mathcal{E}$ as in \cref{defcirc}, there is a canonical way to put a surface with defect structure on the surface $\mathbb{S}^1 \times I$ for any interval $I$. We denote this surface with defects by $(\mathbb{S}^1 \times I, \mathfrak{C}[1], \mathcal{D}, d)$. The motivation behind such a notation comes from the observation that the admissible decomposition, defect conditions, and the incident map are given by just shifting the index by $+1$, while there is no change in the map $d$. Note that the resulting surface with defects does not have a $0$-dimensional stratum.

\begin{defi} \label{cylinder-collar}
Given a $1$-manifold with defect 
 $\hat{\mathbb{S}} \coloneqq (\mathbb{S}^1, \mathfrak{C}, d)$ with defect conditions $\mathcal{E}$ as in \cref{defcirc},
    \begin{enumerate}
        \item A \textit{open-cylinder} on $\hat{\mathbb{S}}$ is a surface with defect isomorphic to $$(\mathbb{S}^1 \times (0,1), \mathfrak{C}[1], \mathcal{D},  d).$$ 
        \item A \textit{collar} of $\hat{\mathbb{S}}$ is a surface with defects isomorphic to $$(\mathbb{S}^1 \times [0,1), \mathfrak{C}[1], \mathcal{D}, d).$$
    \end{enumerate}
\end{defi}

\cref{1-man-def} (ii) shows a cylinder on (i).

Finally, we define what it means for two circles with defects to be cobordant via a surface with defects. 

\begin{defi} \label{cobordism}
     Let $(U, \mathfrak{U}, d_{U}) $ and $ (V, \mathfrak{V}, d_{V})$ be two circles with defects with defect conditions $\mathcal{E}$ coming from the defect conditions $\mathcal{D}$ as in \cref{defcirc}. A \textit{bordism with defect} is a surface with defects $(\Sigma, \mathfrak{S}, d) $ with defect conditions $\mathcal{D}$  such that 
    \begin{enumerate}
    
        \item (Oriented bordism) $\partial \Sigma = (- U) \sqcup V$ where $-U$ denotes $U$ with the reverse orientation.
        \item (compatible with admissible decomposition) For each $j$, $ U_j \sqcup V_j \subset \partial \Sigma_{j+1}$.
        \item Let $\mathfrak{i}$ (respectively $\mathfrak{j}$) be the inclusion map for the inclusion of $U$ (respectively $V$) into $\partial \Sigma$. The maps $d_U$ and $d_V$ is related to $d$ via $d_U = \mathfrak{i}^{\ast}d$ and $d_V = \mathfrak{j}^{\ast}d$. These maps fit together in the following commutative diagram:
           $$\begin{tikzcd}
                & \Sigma \arrow[dd, "d"] & \\
               U \arrow[dr, "d_U"] \arrow[ur, "\mathfrak{i}"] &  & V \arrow[dl, "d_V"] \arrow[ul, "\mathfrak{j}"] \\
                & X & \\
           \end{tikzcd}$$

    \end{enumerate}

\end{defi}

\begin{conv}

We denote an oriented bordism $\Sigma$ with in-boundary $U$ and out-boundary $V$ by $\begin{tikzcd}[scale cd =0.5] & \Sigma \arrow[dr, "o"] & \\ U \arrow[ur, "i"] &  & V \end{tikzcd}$.

\end{conv}

\begin{exam}
    \cref{Bord_def} shows an example of a bordism from a disjoint union of two circles with defects to a single circle with defects. Here, we have chosen $D_2 = \{\alpha, \beta, \gamma, \delta\}, D_1 = \{x_1, x_2, x_3, x_4, x_5, x_6\}, D_0 = \{u\}; \psi_{1,2}: x_1 \mapsto (\alpha, \beta), x_2 \mapsto (\beta, \alpha), x_3 \mapsto (\alpha, \gamma), x_4 \mapsto (\alpha, \gamma), x_5 \mapsto (\gamma, \alpha), x_6 \mapsto (\delta, \alpha); \psi_{1,2}: u \mapsto [(x_2, x_5^{-1}, x_3^{-1}, x_1)] $.

    \begin{figure}
    \centering
    \includegraphics[scale=0.32]{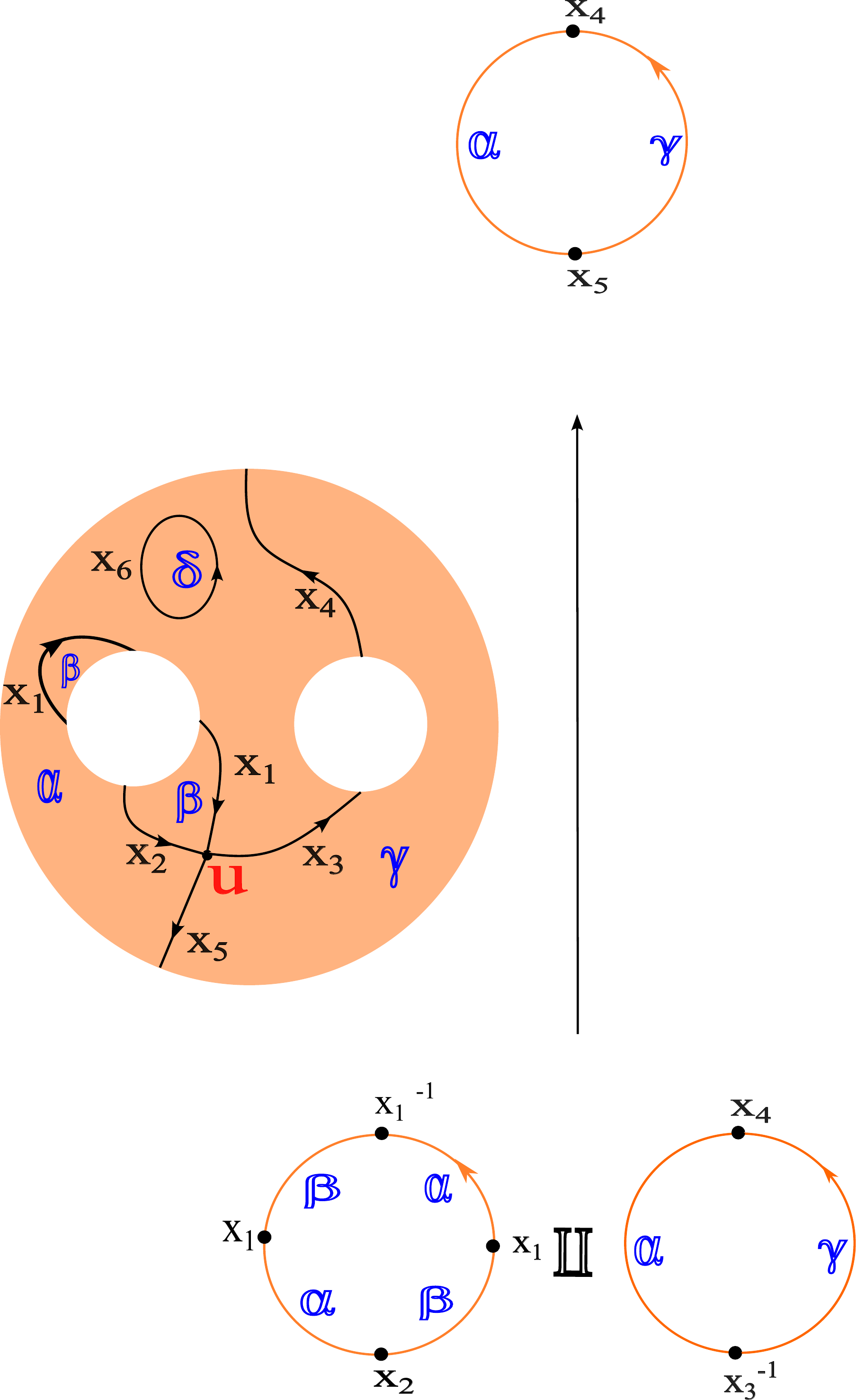}
    \caption{The pair of pants with defects is a bordism between two circles with defects on the left and one on the right. The sign convention on $1$-strata is taken positive if the arrow flows in the direction of time. Thus, an in-boundary gets a positive (respectively negative) sign if the arrow is out of (respectively into) it. The opposite is true for the out-boundary. }
    \label{Bord_def}
\end{figure}

\end{exam}

Finally, we collect all the constituent data of the category $\text{Bord}_2^{\text{def}}(\mathcal{D})$ at one place:

\begin{defi} \label{defn:def-cat}

The category $\textit{Bord}_2^{\textit{def}}(\mathcal{D})$ consists of the following data:

\begin{enumerate}

    \item The \textbf{defect conditions} $\mathcal{D}$, which constitutes sets $D_2, D_1, D_0$, and maps $\psi_{1,2}$ and $\psi_{0,1}$ defined in ~\cref{condns}.
    \item an \textbf{objects} is a disjoint union of circles with defects, defined in ~\ref{defcirc}, together with a germ of collars and distinguished points. As mentioned in [~\cite{freed2019lectures}, 1.2] think of these marked circles as coming with a germ of an embedding into a surface with defects. Further, the germ of collars makes sure that the gluing is well-behaved.
    \item Given two objects $U$ and $V$, a \textbf{morphism} between them is either a permutation of $\hat{\mathbb{S}}^1$ factors in the disjoint union or a surface with defect $\Sigma$ such that $\begin{tikzcd}[scale cd =0.5] & \Sigma \arrow[dr, "o"] & \\ U \arrow[ur, "i"] &  & V \end{tikzcd}$ that respects the condition on distinguished points. See \cref{generic_cross-section} and \cref{distinguished} below. We consider two such bordisms to be equivalent if there is a boundary-preserving isomorphism of the surface with defects. More precisely, a morphism between two objects is given by a defect bordism class between them.
    
\end{enumerate}
This gives $\textit{Bord}_2^{\textit{def}}(\mathcal{D})$ the structure of a symmetric monoidal category (see also \cite{davydov2011field}, Section 2.3.)
\end{defi}

\begin{exam}
    ~\cref{Bord_def} shows an example of a morphism from marked circles on the left to the marked circle on the right. Please note the convention about the sign of $1$-defect conditions.
\end{exam}

Given a surface with defects  $(\Sigma, \mathfrak{S}, \mathcal{D}, d)$, one would expect its cross-sections to be objects in the category $\textit{Bord}_2^{\textit{def}}(\mathcal{D})$, but this requires some care as an arbitrary cross-section may not have a germ of collar around it. The remedy is to consider only those cross sections that admit a collar around it. This is accomplished in two steps. First, we note the existence of a forgetful functor into the category $\textit{Bord}_2$, the symmetric monoidal category of ordinary $2$-dimensional oriented bordism. Next, we use Morse theory to define a \textit{generic cross-section}.

\begin{defi}
    For any set of defect conditions $\mathcal{D}$, there exists a forgetful functor $\mathfrak{D}: \textit{Bord}_2^{\textit{def}}(\mathcal{D}) \to \textit{Bord}_2$ defined by its actions on objects and morphisms as follows:
    \begin{enumerate}
        \item(On objects) $\mathfrak{D}((O, \mathfrak{O}, d_{O})) = O $, 
        \item(On morphism) $\mathfrak{D}( (\Sigma, \mathfrak{S}, \mathcal{D} , d)) = \Sigma$
    \end{enumerate}
That is, the functor $\mathfrak{D}$ maps a circle (surface) with defects to the underlying circle (surface) by forgetting all stratification and defects.
\end{defi}


Next, for a surface $\Sigma$ (or an underlying surface of a surface with defects) choose a height function $f: \Sigma \to \mathbb{R}$. Since $\Sigma$ is compact, we can assume that $f(\Sigma) \subset [0,1]$.

\begin{defi} \label{generic_cross-section}
    Let  $(\Sigma, \mathfrak{S}, \mathcal{D} , d)$ be a surface with defects, and $f: \Sigma \to [0,1]$ be a height function. For $t \in [0,1]$, we say that $f^{-1}(\{t\})$ is a \textit{generic cross-section} of the surface with defects $(\Sigma, \mathfrak{S}, \mathcal{D} , d)$ if there exists $\epsilon > 0$ such that $f^{-1}((t - \epsilon, t + \epsilon))$ is isomorphic to an open-cylinder or a collar of $f^{-1}(\{t\})$ (\cref{cylinder-collar}).
\end{defi}

We refer to [~\cite{audin2014morse}, Section 1.2] for the existence of the Morse function on a given surface $\Sigma$. The existence of a height function follows from the existence of Morse functions.

\begin{rema} \label{distinguished}
    To respect the condition of the distinguished point, we demand that a generic cross-section is a circle with defects with a distinguished point, and the totality of all such distinguished points is contained in a component of $\Sigma_2$. 
\end{rema}

We end this section by mentioning that we have kept ourselves limited to the two-dimensional defect case as it is best suited for our objective, but for $n = 3$, it is done in ~\cite{carqueville20203}, and in ~\cite{carqueville2019orbifolds} for general $n$. Another generalized picture is presented in `\cite{lurie2008classification}, Section-4.3. The other approach, as suggested by our definition of a surface with defects, comes from the introduction of \textit{constructible sheaf}. This latter approach has been developed in ~\cite{freed2022topological}, Section-2.4 and 2.5 in relation with topological symmetries of QFT.


\section{Topological field theories with defect} \label{sec:TFT}

Let $\mathbb{K}$ be a field and $\text{Vect}_{\mathbb{K}}$ denotes the symmetric monoidal category of $\mathbb{K}$ vector spaces. A \textit{topological field theory with defect} (or a TFT \textit{with defects}  is a symmetric monoidal functor $$ T: \text{Bord}_2^{\text{def}}(\mathcal{D}) \to \text{Vect}_{\mathbb{K}} . $$ We are interested in the category of  $\mathbb{K}$ vector spaces with a trace pairing, which restricts the target category of $T$ to $ \text{Vect}_F(\mathbb{K}) $, i.e., the category of finite dimensional  $\mathbb{K}$ vector spaces. One example of such a functor comes from lattice TFT constructions. We do not give this construction in detail but highlight only the essential ingredients and steps. A detail of this construction can be found in [~\cite{davydov2011field}, (3)], which is closest to the spirit of this manuscript. Other references are: [~\cite{frohlich2007duality} , ~\cite{fuchs2002tft}], together with an earlier work [~\cite{turaev1999homotopy}].

\subsection{Category of bordism with PLCW decomposition}

The most essential ingredient for lattice TFTs is the category $\text{Bord}_2^{\text{def, cw}}(\mathcal{D})$. It has the same objects and morphisms as the category $\text{Bord}_2^{\text{def}}(\mathcal{D})$ but they come equipped with an extra structure, namely a PLCW decomposition. We only collect the key feature of PLCW decomposition, and refer to ~\cite{kirillov2012piecewise} for details. The main feature of PLCW decomposition is that although it is more general than triangulation, it is less general than a CW decomposition. 

\begin{defi} [~\cite{kirillov2012piecewise}, Definition 3.1, 3.3]

Let $B^k$ be the $k$-dimensional ball, $\mathring{B}^k$ be its interior, and $S^{n-1} = \partial B^n$ be its boundary. A \textit{generalized k-cell} is a subset $C \subset \mathbb{R}^n$ together with a map $\phi: B^k \to C$, called a characteristic map, such that $\phi \mid_{\mathring{B}^k}$ is injective. 
    
\end{defi}

For a generalized cell $C$, denote $C = \mathring{C} \sqcup \dot{C}$ where $\mathring{C} = \phi(\mathring{B}^k)$ and $\dot{B}^k = \phi(\partial B^k)$.

\begin{defi} [~\cite{kirillov2012piecewise}, Definition 4.1] A generalized cell complex is a finite collection $K$ of generalized cells in $\mathbb{R}^n$ such that 

\begin{enumerate}
    \item for any distinct cells $A, B \in K, \mathring{A} \cap \mathring{B} = \emptyset$; and
    \item for any cell $C \in K, \dot{C}$ is a union of cells.
\end{enumerate}

The \textit{support} $|K|$ of such a complex $K$ is defined by $|K| = \bigcup_{C \in K}C$. It is a compact subspace of $\mathbb{R}^n$. Conversely, a compact subspace $X$ of $\mathbb{R}^n$ is said to admit a \textit{generalized cell decomposition} if $X = |K|$ for some generalized cell complex $K$.
    
\end{defi}

\begin{defi} [~\cite{kirillov2012piecewise}, Definition 4.3, 5.1]

A PLCW complex $K$ is a generalized cell complex, which is defined inductively in terms of skeletons
$$ K^0 \subset K^1 \subset K^2 \subset \dots , $$ where

\begin{itemize}
    \item \textbf{Base case:} $K^0$ is discrete.
    \item \textbf{Induction step:} Assuming $K^{n-1}$ is a PLCW complex, $K^n$ is built out of $K^{n-1}$ by attaching generalized $n$-cells. Explicitly, for any generalized $n$-cell $A \in K, A = \phi(B^n)$, there exists a generalized cell decomposition $L$ of $\partial B^n$ such that $\phi \mid_{\mathring{B}^n}$ is a cellular map of generalized complexes that is also injective when restricted to the interior of each cell in $L.$
\end{itemize}
    
\end{defi}

~\cref{useful} below, about the properties of PLCW complexes, is very useful for working purposes:

\begin{prop} [~\cite{kirillov2012piecewise}, Section 5] \label{useful}

 The following key properties of a PLCW complex $K$ is immediate:

\end{prop}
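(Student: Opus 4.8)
The plan is to extract from \cite{kirillov2012piecewise}, Section~5, the list of structural facts about a PLCW complex $K$ that will actually be used in the lattice TFT construction of $\chi^{cw}$, and to observe that each of them follows directly from the inductive skeleton definition given just above. Concretely, I would record the following as the content of the proposition: (i) every face of a cell of $K$ is again a cell of $K$, and the intersection of (the closures of) two cells is a union of cells; (ii) $|K|$ is a compact PL space, and if two PLCW complexes have a common subdivision then they present the same PL space --- in particular any PL manifold (with boundary, with corners along the defect strata) admits a PLCW decomposition; (iii) any two PLCW decompositions of the same underlying space are related by a finite sequence of elementary moves (the Pachner-type bistellar moves for PLCW complexes: subdividing a cell, merging two cells across a common codimension-one face, and the elementary "bubble" move), so that a combinatorially defined assignment which is invariant under these moves descends to an invariant of the underlying bordism; (iv) every PLCW decomposition can be refined to a triangulation, and conversely a triangulation is in particular a PLCW decomposition, so one may pass freely between the two; and (v) the boundary $\partial K$ inherits a PLCW structure, and a PLCW decomposition of the boundary of a bordism extends to one of the whole bordism (this is what makes the category $\text{Bord}_2^{\text{def, cw}}(\mathcal{D})$ well-defined and composable).

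For the proof itself I would proceed property by property. Items (i) and (v) are immediate from the definition: the characteristic map of an $n$-cell restricts, by construction, to a cellular map on a generalized cell decomposition $L$ of $\partial B^n$, so the faces are cells and the boundary complex is a PLCW complex of one lower dimension; compactness of $|K|$ is part of the definition of a generalized cell complex. Item (ii) is the statement, proved in \cite{kirillov2012piecewise}, that PLCW complexes and simplicial complexes present the same class of PL spaces --- here I would simply cite it, since reproving the PL Hauptvermutung-type comparison is outside the scope. Item (iv) is proved by inducting over skeleta: given a PLCW structure, triangulate $K^0$ trivially, and having triangulated $K^{n-1}$, cone each $n$-cell from an interior point over the induced triangulation of its boundary sphere; the injectivity-on-interiors condition guarantees this is a genuine triangulation. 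Item (iii), the move-connectedness statement, is the one I expect to be the main obstacle: the cleanest route is to use (iv) to reduce to the simplicial case (any two PLCW decompositions have triangulated refinements, which by Pachner's theorem are related by bistellar flips), and then to check that each PLCW refinement/coarsening step is itself a composite of the elementary PLCW moves --- but verifying that the elementary PLCW moves generate \emph{all} common-refinement relations, not just those coming through triangulations, requires a small argument that a shared PLCW refinement can be reached by cell subdivisions alone.

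Since the excerpt cuts off exactly at the word "immediate," I anticipate the author in fact states these as consequences one simply reads off, so I would keep the proof to a paragraph: cite \cite{kirillov2012piecewise} for (ii) and the move-generation part of (iii), give the one-line skeletal induction for (i), (iv), (v), and remark that (iii) reduces to Pachner's theorem via (iv). The place to be careful --- and the only place where genuine work hides --- is making sure these statements are phrased for PLCW complexes \emph{stratified} by defect data, i.e. that all the cells, refinements, and moves can be taken to respect the stratification $\{\Sigma_2,\Sigma_1,\Sigma_0\}$ of a surface with defects; this compatibility is what \cref{generic_cross-section} and the collar conditions were set up to guarantee, and it is the hypothesis under which the subsequent lattice TFT $\chi^{cw}$ will be shown to be well-defined.
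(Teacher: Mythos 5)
Your proposal reconstructs a different, and much heavier, statement than the one the paper actually makes. The list attached to \cref{useful} in the paper consists of four elementary facts only: (1) $|K|$ is the disjoint union of the open cells $\mathring{C}$; (2) the intersection of two cells of $K$ is a union of cells of $K$; (3) for any $k$-cell $C$, the boundary part $\dot{C}$ is a union of $(k-1)$-cells; and (4) every PLCW complex is a CW complex. All four are read off directly from the inductive definition of a PLCW complex, which is exactly why the paper gives no argument at all beyond the word ``immediate'' and the citation to \cite{kirillov2012piecewise}, Section~5. Your items (i) and (v), and the skeletal-induction remarks supporting them, do cover the paper's items (2) and (3) and are fine as far as they go; but you omit (1) and (4) entirely.

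The genuine problem is that you fold into the proposition the move-connectedness statement (your item (iii)) and the PL-comparison/triangulation statements (your items (ii) and (iv)). In the paper these are deliberately \emph{not} part of \cref{useful}: the Alexander-type theorem that any two PLCW decompositions are related by elementary moves is quoted separately afterwards, citing \cite{kirillov2012piecewise}, Theorem~8.1, precisely because it is a substantial theorem and not an immediate consequence of the definitions. You even flag item (iii) as ``the main obstacle'' and sketch a reduction through Pachner's theorem --- which is a reasonable observation about Kirillov's paper, but it signals that your reconstruction cannot be the content of a proposition whose proof is declared immediate. Likewise, the compatibility of cell decompositions with the defect stratification, which you place at the end, is handled in the paper by the definition of $\textit{Bord}_2^{\textit{def, cw}}(\mathcal{D})$ and the basic-gons, not by this proposition. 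So the fix is simply to trim the statement to the four facts above and observe that each follows at once from the definition of a generalized cell complex and the inductive skeleton construction (with (4) holding because the characteristic maps and the closure-finiteness built into admissibility give the CW axioms); everything else you discuss belongs to the later, separately cited results.
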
 

\begin{enumerate}
    \item $ |K| = \bigsqcup_{C \in K} \mathring{C} $.
    \item If $A, B \in K$ are two cells, then $A \cap B$ is a union of cells of $K$.
    \item For any $k$-cell $C \in K$, $\dot{C}$ is a union of $(k-1)$ cells of $K$.
    \item Every PLCW complex is a CW complex.
\end{enumerate}

Given a compact $n$-dimensional manifold $M$, possibly with a non-empty boundary, we say that $M$ admits a PLCW decomposition into $k$-cells for $k = 0, \dots, n$ if $M = |K|$ for some PLCW complex $K$.
 
\begin{exam}
    A cell-decomposition of $\mathbb{S}^2$ into a single $0$-cell and a $2$-cell is a CW-decomposition but not a PLCW decomposition since ~\cref{useful} condition $(3)$ fails.
\end{exam}

One important characteristic of PLCW decomposition is the analog of Alexander's theorem, which states that any two PLCW decompositions of a compact finite-dimensional manifold are related by a sequence of elementary moves [~\cite{kirillov2012piecewise} Theorem 8.1.] This property is very important for us as it allows one to prove that the definition of lattice TFT does not depend on the choice of a cell decomposition [~\cite{davydov2011field}, Section 3.6.]

Next, we discuss the category of bordism equipped with a PLCW decomposition:

\begin{conv}
    We follow ~\cite{davydov2011field} for notations and conventions. In particular, by a \textit{cell-decomposition} of a manifold $M$, we mean a PLCW decomposition of $M$, and by a cell, we mean a generalized cell.
\end{conv}

\begin{conv} \label{conv:cells}
    For a space $M$ with a PLCW decomposition, denote the collection of cells by $C(M)$, and by $C_k(M)$ the collection of $k$-cells.
\end{conv}

\begin{figure}
     \centering
     \includegraphics[width=0.98\linewidth]{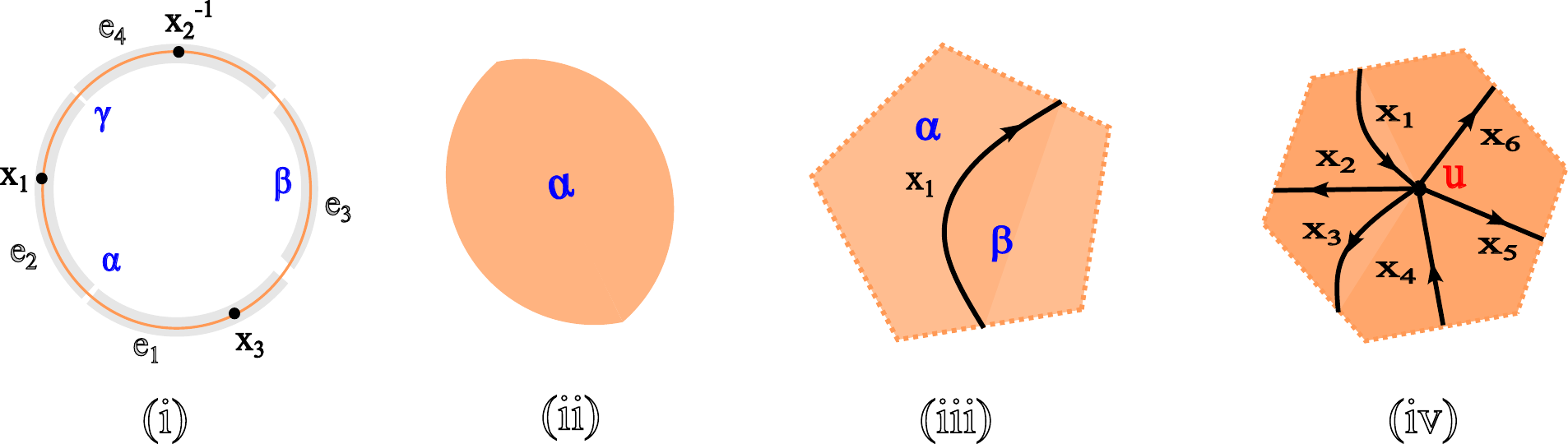}
     \caption{Diagram (i) shows an object in the category $\textit{Bord}_2^{\text{def, cw}}(\mathcal{D})$. Note that the $1$-cell $e_3$ does not contain any defect. Diagram (ii) is a generalized cell with no defect. We will refer to diagrams (ii), (iii), and (iv) as \textit{basic-gons} of type (i), (ii) and (iii) respectively. To read the defect conditions, one must interpret these basic-gons as defect disks and follow \cref{local-junctions}. For example, from diagram (iii) we read $\psi_{1,2}(x) = (\alpha, \beta)$ and from diagram (iv) we read $\psi_{0,1}(u) = [(x_1, x_2^{-1}, x_3^{-1}, x_4, x_5^{-1}, x_6^{-1})]$. Where we have oriented the surface in an anti-clockwise manner. One can again use \cref{local-junctions} (ii) to produce the basic-gon for $\psi_{\{0,1\}}(u^{-1})$ from the data of  $\psi_{\{0,1\}}(u)$. }
     \label{basic_gons}
 \end{figure}

\begin{defi} \label{PLCW}

The category $\textit{Bord}_2^{\text{def, cw}}(\mathcal{D})$ consists of the following data:

\begin{enumerate}

    \item The set of \textbf{defect conditions} $\mathcal{D}$ as defined in \cref{condns}.
    \item An \textbf{object} is given by a disjoint union of circles with defects $U$ together with a cell-decomposition $C(U)$, such that 
      \begin{itemize}
        \item each point of the set $U_0$ lies in a $1$-cell, and 
        \item each $1$-cell contains at most one such point (\cref{basic_gons}, (i)).
      \end{itemize}
    \item A \textbf{morphism} is a surface with defects equipped with a PLCW decomposition $C(\Sigma)$ that is homeomorphic to one of the configurations in \cref{basic_gons} (ii), (iii), or (iv)(after ignoring the labels.) More precisely,
        \begin{itemize}
            \item each $1$-dimensional stratum $\Sigma_1$ only intersects $1$-cells and $2$-cells, but not $0$-cells. Moreover, each $1$-cell intersects only one $1$-stratum of $\Sigma_1$ .
            \item Each $0$-dimensional stratum lies inside a $2$-cell, and each $2$-cell contains at most one such strata. It may only contain a star-shaped configuration of $1$-strata such that each edge of this cell is traversed by exactly one $1$-stratum.
            \item If a $2$-cell contains no $0$-stratum but only $1$-strata then it must be homeomorphic to the configuration shown in \cref{basic_gons}, (iii).
        \end{itemize}
    
\end{enumerate}

\end{defi}

\begin{conv}
    \cref{basic_gons} (ii), (iii), and (iv) are called \textit{basic-gons} (of Types I, II, and III respectively.) Surfaces with defects are assembled from them.
\end{conv}

\begin{rema}
    It is good to think of the basic-gons as \textit{cups} and \textit{caps}. One can check that, under orientation consistency conditions of \cref{condns}, caps transform to cups and vice-versa. See also \cref{sec:word-problem_theory}; in particular \cref{fig:cup-cap_detail}.
\end{rema}

There is a forgetful functor $F: \textit{Bord}_2^{\text{def, cw}}(\mathcal{D}) \to \textit{Bord}_2^{\text{def}}(\mathcal{D})$, which is full and surjective. The lattice TFT construction uses the PLCW decomposition to construct a symmetric monoidal functor $$T^{\text{CW}}:\textit{Bord}_2^{\text{def, cw}}(\mathcal{D}) \to \text{Vect}_F(\mathbb{K})$$ and then $F$ is used to show that $T^{\text{CW}}$ is independent of the cell-decomposition by showing the existence of a unique symmetric monoidal functor $T$ that makes the following diagram commute:

\begin{equation} \label{plcwcd}
 \begin{tikzcd}
    \textit{Bord}_2^{\text{def, cw}}(\mathcal{D}) \arrow[rr, "T^{\text{CW}}"] \arrow[dr, "F"] &  & \text{Vect}_F(\mathbb{K})\\
     & \textit{Bord}_2^{\text{def}}(\mathcal{D}) \arrow[ur, "\exists! \hspace{1mm} T"] & 
\end{tikzcd}
\end{equation}

We do not prove this here but refer to [~\cite{davydov2011field}], Section 3.6.

\subsection{Lattice TFT with defects}

In short, a lattice TFT assigns a Frobenius algebra $A_a$ to $2$-dimensional stratum labeled with defect $a$, a $(A_a - A_b)$-bimodule $X_x$ to the $1$-stratum labelled with $x$ such that $t(x) = a$ and $s(x) = b$, and a bimodule intertwiner to $0$-dimensioanl stratum. We refer to ~\cite{davydov2011field} Section 3.3 for an overall algebraic preliminaries, with supplements ~\cite{kockfrobenius} for Frobenius algebra, and ~\cite{mac2013categories} for bimodules. The following convention will be very handy for our purpose:

\begin{conv} \label{conv:tft}
    Let $A$ and $B$ be a unital, associative algebra over $\mathbb{K}$, and $X$ be a $\mathbb{K}$ vector space. We write $X$ for an $A-B$-bimodule $X$, and $X^{-1}$ for the $B-A$-bimodule $X^{\ast}$, where $X^{\ast}$ the dual of $X$. This way, we can denote a bimodule by $X^{\epsilon}$ where $\epsilon = \pm 1$.

    Later, we will choose $\mathbb{K} = \mathbb{C}$, but the theory works for any field $\mathbb{K}.$
\end{conv}

Recall that, if $A$ is an associative, unital algebra over $\mathbb{K}$, then for a right $A$-module $X$ and for a left $A$-module $Y$ , the tensor product $X \otimes_{A}Y$ is defined as the following cokernel:

\begin{equation} \label{eqn:bimod-1}
     X \otimes A \otimes Y \xrightarrow{l-r}  X \otimes Y \xrightarrow{p}  X \otimes_{A} Y ,
\end{equation}

where $l$ is the left-multiplication map given on pure tensors by $l(x \otimes a \otimes y) = x \otimes (ay)$ and extended by linearity. Similarly, $r$ is the right-multiplication map given by $r(x \otimes a \otimes y) = (xa) \otimes y$. Here, $x \in X, y \in Y, a \in A$ and $\otimes$ denote the tensor product of $\mathbb{K}$-vector space - $\otimes_{\mathbb{K}}$. When $X$ is an $A-A$-bimodule, the \textit{cyclic tensor product} is defined:

\begin{equation} \label{eqn:bimod-2}
     A \otimes X \xrightarrow{l-r}  X \xrightarrow{p} \circlearrowleft X ,
\end{equation}

where $l(a \otimes x) = ax$ and $r(a \otimes x = xa)$.

\begin{defi} \label{def:trivial}
A lattice TFT $T_0^{\textit{cw}}: \textit{Bord}_2^{\textit{def, cw}}(\mathcal{D}) \to \text{Vect}_F(\mathbb{K})$ is a \textit{trivial surrounding theory} if $D_2 $ is a singleton $ \{\ast\}$ and $T_0^{\textit{cw}}$ assigns $\mathbb{K}$ to $\ast$.
    
\end{defi}

Such a theory is characterized by the fact that the non-trivial part of the theory lies solely on the $1$-dimensional strata. It assigns a $\mathbb{K}$ vector space $X_x$ for each $x \in D_1$, which is naturally a $\mathbb{K}-\mathbb{K}$-bimodule. Note that it is enough to consider the assignment for $D_1$ as it can be extended on entire $X_1$ using the orientation consistency and \cref{conv:tft} via the rule: $T_0^{\textit{cw}}(x^{-1}) = T_0^{\textit{cw}}(x)^{-1}$. In other words if $T_0^{\textit{cw}}$ assigns $x$ a $\mathbb{K}$-vector space $X_x$ then it assigns $x^{-1}$ its dual - $X_x^{\ast}$.

The following lemma is very important as all the calculations we are going to do is based on it:

\begin{lemma} \label{prop:tensor}
    For a $\mathbb{K}$ vector space $X, Y$, the following identities hold:
        \begin{enumerate}
            \item $ X \otimes_{\mathbb{K}} Y \cong X \otimes Y$
            \item $\circlearrowleft_{\mathbb{K}} X \cong X$
        \end{enumerate}
\end{lemma}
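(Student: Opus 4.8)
The plan is to prove both identities directly from the universal-property definitions in \cref{eqn:bimod-1} and \cref{eqn:bimod-2}, specializing the base algebra $A$ to the field $\mathbb{K}$ itself (viewed as a unital associative $\mathbb{K}$-algebra, with every $\mathbb{K}$-vector space regarded as a $\mathbb{K}$-$\mathbb{K}$-bimodule in the obvious way). The key observation is that over a field the module-theoretic constructions collapse onto the ordinary vector-space tensor product, so the content is really just ``the cokernel defining $\otimes_{\mathbb{K}}$ is the map that was already an isomorphism.''

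For part (1), I would examine the defining cokernel $X \otimes \mathbb{K} \otimes Y \xrightarrow{l-r} X \otimes Y \xrightarrow{p} X \otimes_{\mathbb{K}} Y$. Here $\otimes = \otimes_{\mathbb{K}}$, and on pure tensors $l(x \otimes \lambda \otimes y) = x \otimes (\lambda y)$ while $r(x \otimes \lambda \otimes y) = (x\lambda) \otimes y = (\lambda x) \otimes y$; since scalars can be slid freely across $\otimes_{\mathbb{K}}$, both equal $\lambda (x \otimes y)$, so $l - r = 0$. The cokernel of the zero map out of $X \otimes \mathbb{K} \otimes Y$ into $X \otimes Y$ is just $X \otimes Y$ with $p$ the identity; hence $X \otimes_{\mathbb{K}} Y \cong X \otimes Y$. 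Equivalently, one can invoke the universal property: a $\mathbb{K}$-balanced bilinear map out of $X \times Y$ is the same thing as a $\mathbb{K}$-bilinear map, so $\otimes_{\mathbb{K}}$ and $\otimes$ represent the same functor and are canonically isomorphic.

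For part (2), the cyclic tensor product $\circlearrowleft_{\mathbb{K}} X$ is by \cref{eqn:bimod-2} the cokernel of $\mathbb{K} \otimes X \xrightarrow{l-r} X$, where $l(\lambda \otimes x) = \lambda x$ and $r(\lambda \otimes x) = x\lambda = \lambda x$. Again $l - r = 0$, so the cokernel is $X$ itself and $\circlearrowleft_{\mathbb{K}} X \cong X$. (This matches the geometric intuition: a trivial surrounding theory on a cylinder-like $2$-stratum labeled $\ast$ imposes no relation, since $\mathbb{K}$ is commutative and acts centrally.)

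I do not anticipate a genuine obstacle here: the only thing to be careful about is bookkeeping the left/right $\mathbb{K}$-actions so that one is entitled to say $x\lambda = \lambda x$ and $\lambda(x\otimes y) = (\lambda x)\otimes y = x \otimes (\lambda y)$, which is exactly the compatibility built into the notion of a $\mathbb{K}$-$\mathbb{K}$-bimodule over a field (left and right actions agree and are central). Once that is noted, both statements are immediate, which is presumably why the lemma is flagged as the computational backbone rather than a deep result. I would therefore keep the write-up to a few lines, perhaps remarking that the isomorphisms are natural in $X$ and $Y$ so they can be used silently in the subsequent correlator computations in \cref{chi-cw}.
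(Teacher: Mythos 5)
Your proof is correct, and since the paper explicitly omits the proof as "straightforward," your argument — observing that over the field $\mathbb{K}$ the maps $l$ and $r$ in \cref{eqn:bimod-1} and \cref{eqn:bimod-2} coincide, so the cokernels are the spaces themselves — is exactly the intended routine verification. Nothing to add beyond your own remark on bookkeeping the (coinciding) left and right $\mathbb{K}$-actions.
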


Where the tensor product $\otimes_{\mathbb{K}}$ on the left of (1) is the tensor product in the sense of \cref{eqn:bimod-1}, and the tensor product $\otimes$ on the right is the tensor product of $\mathbb{K}$-vector space.

The proof of \cref{prop:tensor} is straightforward, and is omitted.

Below, we summarise the input data for a trivial surrounding theory before giving its lattice TFT construction. The reader should consult [~\cite{davydov2011field}, Section 3.4, Section 3.5] for a more general theory.

\begin{defi} \label{trivial_data}
A trivial surrounding theory assigns:

\begin{enumerate}

    \item The field $\mathbb{K}$ for $\ast \in D_2$, 
    \item a $\mathbb{K}$-vector space $X_x$ for each $x \in D_1$, extended to $X_1$ by the rule $T_0^{\textit{cw}}(x^{-1}) = T_0^{\textit{cw}}(x)^{-1}$.
    \item for $u \in D_0$ such that $\psi_{0,1}(u) = [(x_1^{\epsilon_1}, \dots , x_n^{\epsilon_n})]$ a linear map $\mu_u \in \text{Hom}_{\mathbb{K}}(X_{x_1}^{\epsilon_1}\otimes \dots \otimes X_{x_n}^{\epsilon_n}, \mathbb{K})$ with the property that $\mu_u$ is invariant under the induced action on $X_{x_1}^{\epsilon_1}\otimes \dots \otimes X_{x_n}^{\epsilon_n}$ of the action of the cyclic group $C_n$ on the tuple $(x_1^{\epsilon_1}, \dots , x_n^{\epsilon_n})$. We will denote the set of such maps by $\circlearrowleft_{\text{Inv}}\text{Hom}_{\mathbb{K}}(X_{x_1}^{\epsilon_1}\otimes \dots \otimes X_{x_n}^{\epsilon_n}, \mathbb{K})$.

\end{enumerate}

\end{defi}

\begin{rema}
    \cref{trivial_data}, (3) is a consequence of the definition of general lattice TFT data and \cref{prop:tensor}.
\end{rema}

\begin{exam} \label{Tait_coloring}
    Let $X \coloneqq \mathbb{C} \langle a, b, c \rangle$. The map $\mu : X \otimes X \otimes X \to \mathbb{C}$ defined on the bases by the rule
\[ \mu(x \otimes y \otimes z) =
  \begin{cases}
    1       & \quad \text{if } x, y, z \text{ are all different}\\
    0  & \quad \text{otherwise }
  \end{cases}
, \]  

and extended by linearity satisfies the condition [\cref{trivial_data}, (3)]. In fact, it is invariant under the transposition of factors.

\end{exam}

\begin{rema} \label{Frobenius}
We set $\mathbb{K} = \mathbb{C}$ for the rest of this manuscript, and emphasize that the assignment $\mathbb{C}$ to two-dimensional starta by a trivial surrounding theory should be viewed as a Frobenius algebra. Indeed the non-degenerate pairing $\beta : \mathbb{C} \otimes \mathbb{C} \to \mathbb{C} $ given by $ \beta(a \otimes b) = ab $ makes $\mathbb{C}$ a Frobenius algebra with the \textit{counit} $\mathbf{\epsilon}_{\mathbb{C}}$ as the identity $\mathbf{1}_{\mathbb{C}} : \mathbb{C} \to \mathbb{C}$. The copairing $\gamma: \mathbb{C} \to \mathbb{C} \otimes \mathbb{C} \cong \mathbb{C}$ is also the identity map.    
\end{rema}

\begin{cons} \label{trivial_theory}

Fix the defect condition $\mathcal{D}$ with $D_2 = \{\ast\}$. We proceed to explain the trivial surrounding theory as a  symmetric monoidal functor $$ T_0^{\textit{cw}}: \textit{Bord}_2^{\textit{def, cw}}(\mathcal{D}) \to \text{Vect}_F(\mathbb{C}) $$ defined on objects and morphisms using the PLCW decomposition as follows:

\begin{itemize}
    \item \textbf{On objects.} Let $U$ be an object in $\textit{Bord}_2^{\textit{def,cw}}(\mathcal{D})$ which is a single circle. By the definition of the category $\textit{Bord}_2^{\textit{def,cw}}(\mathcal{D})$, it comes equipped with a cell-decomposition as in \cref{basic_gons}-(i). Let $e \in C(U)$ be such a $1$-cell. We assign to it the vector space:
\begin{equation} \label{on_objects}
  R_e =
  \begin{cases}
    \mathbb{C}       & \quad \text{if} \hspace{2mm} e \hspace{2mm} \text{contains no} \hspace{2mm} 0\text{ - defect}\\
    X_x^{\epsilon}  & \quad \text{if } \hspace{2mm} e \hspace{2mm} \text{contains a} \hspace{2mm} 0\text{ - defect with label} \hspace{2mm} x^{\epsilon}
  \end{cases}.
\end{equation}
Then the action of $T_0^{\textit{cw}}$ on this single object $U$ is given by

\begin{equation}
T_0^{\textit{cw}}(U) = \bigotimes_{e \in C_1(U)}R_e
\end{equation}

Here we are using \cref{conv:cells}. For a general object $O = U_1 \sqcup \dots \sqcup U_n$, we extend the definition of $T_0^{\textit{cw}}$ by monoidal property, namely: $ T_0^{\textit{cw}}(O) = T_0^{\textit{cw}}(U_1) \otimes \dots \otimes T_0^{\textit{cw}}(U_n)$

\begin{figure}[ht]
    \centering
    \includegraphics[width=0.9\linewidth]{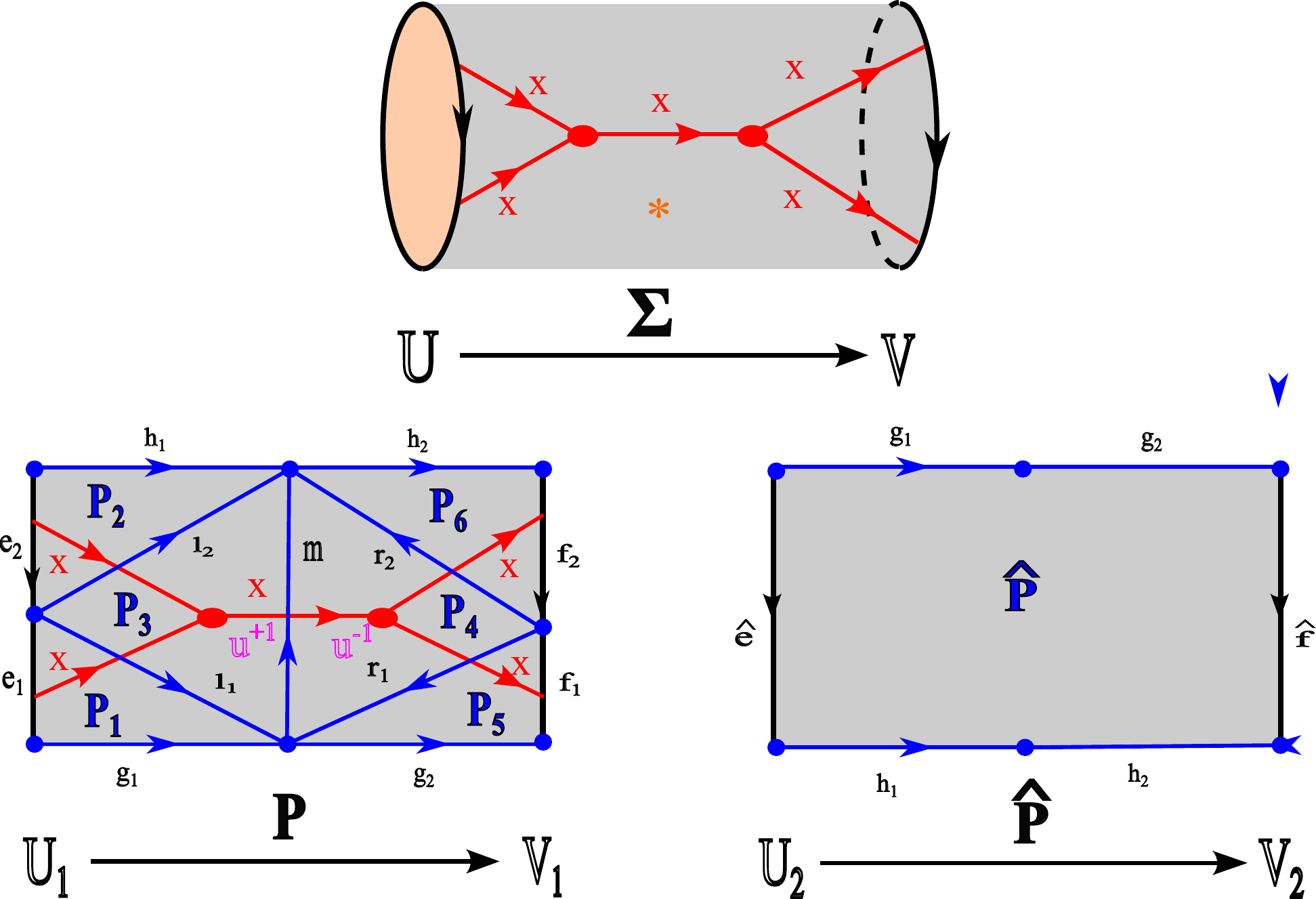}
    \caption{$U$ and $V$ both have two $0$-defects labelled $x$. The two bottom pictures show the PLCW decomposition using basic-gons. It has eight $0$-cells (marked as small blue-circles), $15$ $1$-cells, $ C_1(M) = \{e_1, e_2, \hat{e}, g_1, g_2, h_1, h_2, l_1, l_2, m, r_1, r_2, f_1, f_2, \hat{f}\}$, and seven $2$-cells, $C_2(M) = \{P_1, P_2, P_3, P_4, P_5, P_6, \hat{P}\}$.}
    \label{fig:pre-dumbbell}
\end{figure}

\end{itemize}

\begin{exam} \label{pre-dumbbell:obj}
We refer to \cref{fig:pre-dumbbell} and want to evaluate $T_0^{\textit{cw}}(U)$ and $T_0^{\textit{cw}}(V)$. We make a table below to achieve that:

\begin{center}
\begin{tabular}{ |c|c|c|c|c|c|c| } 
 \hline
 $\mathbf{e :}$ & $e_1$ & $e_2$ & $\hat{e}$ & $f_1$ & $f_2$ & $\hat{f}$ \\ 
 \hline
 $\mathbf{R_e :}$ & $X$ & $X$ & $\mathbb{C}$ & $X$ & $X$ & $\mathbb{C}$ \\ 
 \hline
\end{tabular}
\end{center}

From this, we get 
\begin{equation}
\begin{split}
T_0^{\textit{cw}}(U) & = R_{e_1}\otimes R_{e_2}\otimes R_{\hat{e}} \\
 & = X \otimes X \otimes \mathbb{C} \\
 & \cong X \otimes X
\end{split}
\end{equation}
A similar calculation shows that $T_0^{\textit{cw}}(V) \cong X \otimes X$.

\end{exam}

\begin{itemize}

\item \textbf{On morphism.} Let $\Sigma: U \to V$ be a bordism in $\textit{Bord}_2^{\textit{def,cw}}(\mathcal{D})$, the action of the functor $T_0^{\textit{cw}}$ on $(\Sigma: U \to V)$ is given by:    

 \begin{equation}
    \resizebox{0.9\hsize}{!}{%
    $T_0^{\textit{cw}}(\Sigma): T_0^{\textit{cw}}(U) \xrightarrow{\mathbf{1}_{ T_0^{\textit{cw}}(U)} \otimes \mathscr{P}(\Sigma)} T_0^{\textit{cw}}(U) \otimes Q(\Sigma) \otimes T_0^{\textit{cw}}(V) \xrightarrow{\mathscr{E}(\Sigma) \otimes \mathbf{1}_{ T_0^{\textit{cw}}(V)}} T_0^{\textit{cw}}(V)$%
        }.
   \end{equation}
    
\end{itemize}

We need to describe all these components: the vector space $Q(\Sigma)$, and the maps $\mathscr{P}(\Sigma)$, called the \textit{propagator}, and $\mathscr{E}(\Sigma)$, called the evaluation. By definition, $\Sigma$ is equipped with a PLCW decomposition. We use this fact to define the space $Q(\Sigma)$ and maps $\mathscr{P}(\Sigma)$, and finally use the basic-gons (see \cref{basic_gons}) to define the map $\mathscr{E}(\Sigma)$. 

We begin with vector space $Q(\Sigma)$. Let $\partial_{\text{in}}\Sigma$ be the part parameterized by $U$, the in-boundary of $\Sigma$,  and  $\partial_{\text{out}}\Sigma$ be the part parameterised by $V$, the out-boundary of $\Sigma$. For $P \in C_2(\Sigma)$, we consider triples of the form $(P, e, \mathfrak{O})$ where $e \in C_1(\Sigma)$ is a $1$-cell forming an edge of the polygon $P$, and $\mathfrak{O}$ is an orientation of $e$. We demand that the triple satisfy the condition that the orientation of $P$ comes from the orientation of $\Sigma$, which also orients $e$ as a part of $\partial P$. In other words, the pair $(e, \mathfrak{O})$ is part of $\partial P$ as an oriented edge. We will follow the outward normal first convention. Thus an edge $e$ gets $\mathfrak{O}$ as $+1$ if the orientation on $e$ with the outward normal first convention gives the orientation of $P$ and $-1$ otherwise. Next, to each such triple  $(P, e, \mathfrak{O})$ we assign a vector space: 

\begin{equation}
    Q_{(P, e, \mathfrak{O})} =
  \begin{cases}
    \mathbb{C}      & \quad \text{if } \hspace{2mm} (e, \mathfrak{O}) \hspace{2mm} \text{does not intersect}\hspace{2mm} \Sigma_1.\\
    X_x  & \quad \text{if } \hspace{2mm} (e, \mathfrak{O}) \hspace{2mm} \text{intersects} \hspace{2mm} \Sigma_1 \hspace{2mm} \text{at a defect with label}\hspace{2mm} x\\  & \quad \text{and is oriented into the polygon} \hspace{2mm} P, \hspace{2mm} \text{and}\\
    X_x^{\ast}   & \quad \text{if } \hspace{2mm} (e, \mathfrak{O}) \hspace{2mm} \text{intersects} \hspace{2mm} \Sigma_1 \hspace{2mm} \text{at a defect with label}\hspace{2mm} x \\
     & \quad \text{and is oriented out of the polygon}\hspace{2mm} P
  \end{cases}
\end{equation}

Finally we define the vector space $Q(\Sigma)$ by:
\begin{equation}
    Q(\Sigma) = \bigotimes_{(P, e, \mathfrak{O}), e \notin \partial_{\text{in}}\Sigma} Q_{(P, e, \mathfrak{O})}
\end{equation}

\begin{exam} \label{pre-dumbell-Q(M)}
We again refer to \cref{fig:pre-dumbbell} and want to calculate $Q(\Sigma)$ for the PLCW decomposition given there. We list the data in the following table.
\begin{center}
\begin{tabular}{ |c|c|c|c|c|c| } 
 \hline
 $\mathbf{(P, e, \mathfrak{O}):}$ & $(P_1, e_1, -)$ & $(P_1, g_1, +)$ & $(p_1, l_1, -)$ & $(P_2, e_2, -)$ & $(P_2, l_2, +)$   \\ 
 \hline
 $\mathbf{Q_{(P, e, \mathfrak{O})}:}$ & $X$ & $\mathbb{C}$ & $X^{\ast}$ & $X$ & $X^{\ast}$ \\ 
 \hline
\end{tabular}
\end{center}   
                  \vspace{2mm}
\begin{center}
\begin{tabular}{ |c|c|c|c|c|c| } 
 \hline
 $\mathbf{(P, e, \mathfrak{O}):}$ & $(P_2, h_1, -)$ & $(P_3, l_1, +)$ & $(P_3, l_2, -)$ & $(P_3, m, +)$ & $(P_4, m, -)$\\ 
 \hline
 $\mathbf{Q_{(P, e, \mathfrak{O})}:}$ & $\mathbb{C}$ & $X$ & $X$ & $X^{\ast}$ & $X$ \\ 
 \hline
\end{tabular}
\end{center}  
                   \vspace{2mm}
\begin{center}
\begin{tabular}{ |c|c|c|c|c|c| } 
 \hline
 $\mathbf{(P, e, \mathfrak{O}):}$ & $(P_4, r_1, -)$ & $(P_4, r_2, +)$ & $(p_5, g_2, +)$ & $(P_5, r_1, +)$ & $(P_5, f_1, +)$   \\ 
 \hline
 $\mathbf{Q_{(P, e, \mathfrak{O})}:}$ & $X^{\ast}$ & $X^{\ast}$ & $\mathbb{C}$ & $X$ & $X^{\ast}$  \\ 
 \hline
\end{tabular}
\end{center}  
                  \vspace{2mm}
\begin{center}
\begin{tabular}{ |c|c|c|c|c|c| } 
 \hline
 $\mathbf{(P, e, \mathfrak{O}):}$ & $(P_6, r_2, -)$ & $(P_6, h_2, -)$ & $(p_6, f_2, +)$ & $(\hat{P}, \hat{e}, -)$ & $(\hat{P}, \hat{f}, +)$   \\ 
 \hline
 $\mathbf{Q_{(P, e, \mathfrak{O})}:}$ & $X$ & $\mathbb{C}$ & $X^{\ast}$ & $\mathbb{C}$ & $\mathbb{C}$  \\ 
 \hline
\end{tabular}
\end{center} 
                   \vspace{2mm}
\begin{center}
\begin{tabular}{ |c|c|c|c|c| } 
 \hline
 $\mathbf{(P, e, \mathfrak{O}):}$ & $(\hat{P}, g_1, -)$ & $(\hat{P}, g_2, -)$ & $(\hat{P}, h_1, +)$ & $(\hat{P}, h_2, +)$  \\ 
 \hline
 $\mathbf{Q_{(P, e, \mathfrak{O})}:}$ & $\mathbb{C}$ & $\mathbb{C}$ & $\mathbb{C}$ & $\mathbb{C}$  \\ 
 \hline
\end{tabular}
\end{center} 
               \vspace{3mm}
Finally, we drop the contribution from edges $e_1, e_2$ and $\hat{e}$ (as they belong to $\partial_{\text{in}}\Sigma$) to write

\begin{align*}
    Q(\Sigma) = Q_{(P_1, g_1, +)} \otimes Q_{(P_1, l_1, -)} \otimes Q_{(P_2, l_2, +)} \otimes Q_{(P_2, h_1, -)} \otimes Q_{(P_3, l_1, +)} \\
    \otimes Q_{(P_3, l_2, -)} \otimes Q_{(P_3, m, +)} \otimes Q_{(P_4, m, -)} \otimes Q_{(P_4, r_1, -)} \otimes Q_{(P_4, r_2, +)} \\ \otimes Q_{(P_5, g_2, +)} \otimes Q_{(P_5, r_1, +)} \otimes Q_{(P_5,f_1, +)} \otimes Q_{(P_6, r_2, -)} \\ \otimes Q_{(P_6, h_2, -)} \otimes Q_{(P_6, f_2, +)} \otimes Q_{(\hat{P}, \hat{f}, +)} \otimes Q_{(\hat{P},g_1, -)} \\ \otimes Q_{(\hat{P}, g_2, -)} \otimes Q_{(\hat{P}, h_1, +)} \otimes Q_{(\hat{P}, h_2, +)}.
\end{align*}

This, in turn, gives

\begin{align*}
     Q(\Sigma) = \mathbb{C} \otimes X^{\ast} \otimes X^{\ast} \otimes \mathbb{C} \otimes X \otimes X
     \otimes X^{\ast} \otimes X \otimes X^{\ast} \otimes X^{\ast} \otimes \mathbb{C}
     \otimes X \\
     \otimes X^{\ast} \otimes X \otimes \mathbb{C} \otimes X^{\ast} \otimes \mathbb{C} \otimes \mathbb{C} \otimes \mathbb{C} \otimes \mathbb{C} \otimes \mathbb{C},
\end{align*}

where we have intentionally kept copies of $\mathbb{C}$ for now. We also write the edges for $\mathbb{C}$ and the vector space $X$. We will see in subsequent calculations (for evaluation) that it is important to keep track of the edges contributing to $Q(\Sigma)$. So, we conclude this example by just rewriting $Q(\Sigma)$ where the contributions of relevant edges have been marked correctly:

\begin{equation} \label{Q-labelled}
    \begin{aligned}
        Q(\Sigma) & = \mathbb{C}_{g_1} \otimes X^{\ast}_{l_1} \otimes X^{\ast}_{l_2} \otimes \mathbb{C}_{h_1} \otimes X_{l_1} \otimes X_{l_2}
     \otimes X^{\ast}_{m} \otimes X_{m} \\ 
     & \otimes X^{\ast}_{r_1} \otimes X^{\ast}_{r_2}  \otimes \mathbb{C}_{g_2} \otimes X_{r_1} \otimes X^{\ast}_{f_1} \otimes X_{r_2} \otimes \mathbb{C}_{h_2} \\ 
     & \otimes X^{\ast}_{f_2} \otimes \mathbb{C}_{\hat{f}} \otimes \mathbb{C}_{g_1} \otimes \mathbb{C}_{g_2} \otimes \mathbb{C}_{h_1} \otimes \mathbb{C}_{h_2}.
    \end{aligned}
\end{equation}
\end{exam}

Next, we turn to the propagator $\mathscr{P}(\Sigma) : \mathbb{C} \to Q(\Sigma) \otimes T_0^{cw}(V)$. We note that in \cref{Q-labelled}, each edge appears twice: one with $X$ and the other time with $X^{\ast}$. (Same holds for $\mathbb{C}$ but it has been identified with its dual.) This is not a coincidence as we are going to see below that the space $Q(\Sigma)$ has been assembled from the propagator $\mathscr{P}(\Sigma)$. Each edge $e \in C_1(\Sigma)$ appears twice since two polygons share it, and the induced orientation from one polygon is opposite to the induced orientation from the other as both polygons are given the same orientation as that of $\Sigma$. The construction of the map $\mathscr{P}(\Sigma)$ is done by defining on each edge $e$ and assembling it at the end. Let us denote the two triples involving the edge $e$ by $(P(e)_1, e, \mathfrak{O}_1)$ and $(P(e)_2, e, \mathfrak{O}_2)$. The notation means that $e$ appears as a common boundary of $P(e)_1, P(e)_2 \in C_2(\Sigma)$ with orientations $\mathfrak{O}_1$ and $\mathfrak{O}_2$ that are opposite to each other. We have two cases to consider.

\begin{itemize}
    \item The case where $e$ is an interior edge, that is, $e \notin  C_1(\Sigma) \cap \partial\Sigma$. In this case we define the linear the map $$ \mathscr{P}_e : \mathbb{C} \to Q_{(P(e)_1, e, \mathfrak{O}_1)} \otimes Q_{(P(e)_2, e, \mathfrak{O}_2)} $$ according to the following two sub-cases:
    \begin{enumerate}
        \item If $e$ does not intersect $\Sigma_1$ then both $ Q_{(P(e)_1, e, \mathfrak{O}_1)}$ and $Q_{(P(e)_2, e, \mathfrak{O}_2)}$ is $\mathbb{C}$. In this case, we take $\mathscr{P}_e = \gamma$ where $\gamma: \mathbb{C} \to \mathbb{C} \otimes \mathbb{C}$ is the copairing of the Frobenius algebra $\mathbb{C}$, which by \cref{Frobenius} is the identity map. Thus for $e \notin C_1(\Sigma) \cap \Sigma_1$, $\mathscr{P}_e : \mathbb{C} \to \mathbb{C}$ is the identity $\mathbf{1}_{\mathbb{C}}$.
        \item If $e$ does intersect $\Sigma_1$ then it does so in a defect labelled $x$. In this case one of the  $ Q_{(P(e)_1, e, \mathfrak{O}_1)}$ and $ Q_{(P(e)_2, e, \mathfrak{O}_2)}$ is $X_{x,e}$ and the other is $X_{x,e}^{\ast}$. If we choose to write $\mathscr{P}_e : \mathbb{C} \to X_{x,e} \otimes X_{x,e}^{\ast}$ then $\mathscr{P}_e$ is given by 
        \begin{equation}
            \mathscr{P}_e(\lambda) = \lambda \sum_{i}v_i \otimes v_i^{\ast}
        \end{equation}
        where $\{v_i\}$ is a basis of $X_{x,e}$ and $\{v_i^{\ast}\}$ be the corresponding dual basis of $X_{x,e}^{\ast}$, that is, $v_i^{\ast}(v_j) = \delta_{ij}$. Here we have denoted the vector space $X_x$ by $X_{x,e}$ to keep track of the edge.
        \end{enumerate}

    \item If $e$ is such that $e \in C_1(\Sigma) \cap \partial_{\text{out}}\Sigma$, then there is exactly one triple $(P, e, \mathfrak{O})$ that contains $e$. Let us call such a triple $(P(e), e, \mathfrak{o})$, and define
    \begin{equation}
        \mathscr{P}_e = Q_{P(e), e, \mathfrak{O}} \otimes R_{e}
    \end{equation}
    Note that if one of $Q_{P(e), e, \mathfrak{O}}$ and $R_e$ gets $X_{x,e}$, the other will get its dual $X_{x,e}$.
\end{itemize}

Altogether, the propagator $\mathscr{P}(\Sigma)$ is defined by
\begin{equation}
    \mathscr{P}(\Sigma) = \bigotimes_{e \in C_1(\Sigma), e \notin \partial_{\text{in}}\Sigma} \mathscr{P}_e .
\end{equation}

\begin{exam} \label{pre-dumbbell-gator}
    We again refer to \cref{fig:pre-dumbbell} and \cref{pre-dumbell-Q(M)} and compute the propagator $\mathscr{P}(\Sigma)$. To do that we list all the individual maps $\mathscr{P}_e$:

    \begin{equation}
        \begin{matrix}
            \mathscr{P}_{g_1}: \mathbb{C} \to \mathbb{C}_{g_1} \otimes \mathbb{C}_{g_1}& \mathscr{P}_{l_1}: \mathbb{C} \to X_{l_1} \otimes X_{l_1}^{\ast} & \mathscr{P}_{l_2}: \mathbb{C} \to X_{l_2}\otimes X_{l_2}^{\ast}\\
            \mathscr{P}_{h_1}: \mathbb{C} \to \mathbb{C}_{h_1} \otimes \mathbb{C}_{h_1} & \mathscr{P}_{m}: \mathbb{C} \to X_m \otimes X_m^{\ast} & \mathscr{P}_{g_2}: \mathbb{C} \to \mathbb{C}_{g_2} \otimes \mathbb{C}_{g_2}\\ 
            \mathscr{P}_{r_1}: \mathbb{C} \to X_{r_1}\otimes X_{r_1}^{\ast} & \mathscr{P}_{r_2}: \mathbb{C} \to X_{r_2}\otimes X_{r_2}^{\ast} & \mathscr{P}_{h_2}: \mathbb{C} \to \mathbb{C}_{h_2} \otimes \mathbb{C}_{h_2} \\
            \mathscr{P}_{f_1}: \mathbb{C} \to X_{f_1}\otimes X_{f_1}^{\ast} & \mathscr{P}_{f_2}: X_{f_2}\otimes X_{f_2}^{\ast} & \mathscr{P}_{\hat{f}}: \mathbb{C} \to \mathbb{C}_{\hat{f}} \otimes \mathbb{C}_{\hat{f}}
        \end{matrix}
    \end{equation}

We see, after arranging \cref{Q-labelled} that $\mathscr{P}(\Sigma): \mathbb{C} \to Q(\Sigma)\otimes T_0^{\textit{cw}}(V)$.

\end{exam}
Finally, we define the evaluation map $\mathscr{E}(\Sigma): T_0^{\textit{cw}}(U) \otimes Q(\Sigma) \to \mathbb{C}$. By adjoining $T_0^{\textit{cw}}(U)$ to $Q(\Sigma)$, we have gathered all the $Q_{(P, e, \mathfrak{o})}$ from the table in \cref{pre-dumbell-Q(M)} that we dropped when writing the expression of $Q(\Sigma)$, that is, $$ T_0^{\textit{cw}}(U) \otimes Q(\Sigma) = \bigotimes_{P \in C_2(M), (e, \mathfrak{O}) \in \partial P} Q_{(P, e, \mathfrak{O})}$$

For each polygon $P \in C_2(M)$, define a $\mathbb{C}$-linear map 
\begin{equation}
    \mathscr{E}_P : \bigotimes_{(e, \mathfrak{O})\in P}Q_{(P, e, \mathfrak{O})} \to \mathbb{C}
\end{equation}

as follows, depending on the kind of defects $P$ contains:
\begin{enumerate}
    \item $P$ does not intersect $\Sigma_0$ or $\Sigma_1$. In this case we get $\mathscr{E}: \otimes^{n}\mathbb{C} \to \mathbb{C}$ which is given by $\mathscr{E}_P(c_1 \otimes \dots \otimes c_n) = \mathbf{\epsilon}_{\mathbb{C}}(c_1 \dots c_m)$ where $\mathbf{\epsilon}_{\mathbb{C}}$ is the counit from \cref{Frobenius}, which is the identity. So, under the identification of the space $\otimes^n\mathbb{C}$ with $\mathbb{C}$, $\mathscr{E}_P: \mathbb{C} \to \mathbb{C}$ is simply the identity map $\mathbf{1}_{\mathbb{C}}$.
    \item $P$ intersects $\Sigma_1$ but not $\Sigma_0$. By the definition of $\textit{Bord}_2^{\textit{def, cw}}(\mathcal{D})$, it must resemble \cref{basic_gons}(iii), that is, there is only one such component of $\Sigma_1$. Let it be $x$. There is one oriented edge where $x$ leaves $P$. If this (oriented) edge is denoted by $(e_1, \mathfrak{O}_1)$, then $Q_{(P, e_1, \mathfrak{O}_1)}$ equals $X_x^{\ast}$. Starting from this edge we traverse the edges of $\partial P$ in an anti-clockwise manner. The linear map $\mathscr{E}_P$ then takes the form $$ \mathscr{E}_P: X_x^{\ast} \otimes (\otimes^{n_1}\mathbb{C}) \otimes X_x \otimes (\otimes^{n_2}\mathbb{C}) \to \mathbb{C}. $$
    Set 
  \begin{equation} \label{E-type-2}
  \begin{aligned}
      \mathscr{E}_P(v_x^{\ast} \otimes c_1 \otimes \dots \otimes c_{n_1}\otimes w_x \otimes c_{n_1+1} \otimes \dots \otimes c_{n_1+n_2} )\\
        = v_x^{\ast}((c_1 \dots c_{n_1})w_x(c_{n_1+1}\dots c_{n_1 + n_2})) .   
    \end{aligned}
    \end{equation}
    By linearity, it becomes $(c_1 \dots c_{n_1})v_x^{\ast}(w_x)(c_{n_1+1}\dots c_{n_1 + n_2})$, which also shows that in case of a trivial surrounding theory, we could have picked up any edge of $P$.

    \item Finally, suppose that $P$ does contains a component of $\Sigma_0$. Then by the definition of $\textit{Bord}_2^{\textit{def, cw}}(\mathcal{D})$, it must look like \cref{basic_gons} (iv). Explicitly, each oriented edge $(e_i, \mathfrak{O}_i) \in \partial P$ intersects $\Sigma_1$. Choose an arbitrary edge $(e_1, \mathfrak{O}_1)$ and order the remaining edges in an anti-clockwise manner. Let $u^{\epsilon} \in X_0$ be the label at the only element of $\Sigma_0 \cap P$, and $\psi_{0,1}(u^{\epsilon}) = [(x_1^{\epsilon_1}, \dots, x_n^{\epsilon_n})]$. If $\epsilon = +1$ then the TFT $T_0^{\textit{cw}}$ assigns $u^{+1}$ an element $\mu_u \in \circlearrowleft_{\text{Inv}}\text{Hom}_{\mathbb{K}}(X_{x_1}^{\epsilon_1}\otimes \dots \otimes X_{x_n}^{\epsilon_n}, \mathbb{K})$. Set $\mathscr{E}_P(v_1 \otimes \dots \otimes v_n) = \mu_u(v_1 \otimes \dots \otimes v_n)$. Note that this is independent of the choice of $(e_1, \mathfrak{O}_1)$ since $\circlearrowleft_{\text{Inv}}\text{Hom}_{\mathbb{K}}(X_{x_1}^{\epsilon_1}\otimes \dots \otimes X_{x_n}^{\epsilon_n}, \mathbb{K})$ is defined that way. See \cref{trivial_data} for detail.

    If $\epsilon = -1$, then repeat the same argument with the class $[(x_n^{-\epsilon_n}, \dots, x_1^{-\epsilon_1})] $.   
        
\end{enumerate}

\begin{exam} \label{exam:pre-dumbbell-eval}
    We continue to evaluate the TFT assigned to \cref{fig:pre-dumbbell}. Moving ahead of \cref{pre-dumbbell-gator}, write $\mathscr{E}_P$ for polygons in $C_2(\Sigma)$. The only polygon of type-(1) is $\hat{P}$, for which

    \begin{equation}
        \begin{matrix}
         \mathscr{E}_{\hat{P}}: & Q_{(\hat{P}, \hat{e}, - )}\otimes  Q_{(\hat{P}, h_1, +)}\otimes  Q_{(\hat{P},h_2, +)}\otimes  Q_{(\hat{P}, g_1,-)}\otimes  Q_{(\hat{P}, g_2, -)}\otimes  Q_{(\hat{P}, \hat{f}, +)} \to \mathbb{C} \\ 
         & \mathbb{C}_{\hat{e}}\otimes  \mathbb{C}_{h_1}\otimes  \mathbb{C}_{h_2}\otimes  \mathbb{C}_{g_1}\otimes  \mathbb{C}_{g_2}\otimes  \mathbb{C}_{\hat{f}} \longrightarrow \mathbb{C} \\
         & \lambda_{\hat{e}}\otimes \lambda_{h_1}\otimes \lambda_{h_2}\otimes \lambda_{g_1}\otimes \lambda_{g_2}\otimes \lambda_{\hat{f}}  \longmapsto \lambda_{\hat{e}}\lambda_{h_1}\lambda_{h_2}\lambda_{g_1} \lambda_{g_2}\lambda_{\hat{f}}
         \end{matrix}.
    \end{equation}
        
     $P_1, P_2, P_5$ and $P_6$ is of type-(2):

    \begin{equation}
        \begin{matrix}
         \mathscr{E}_{P_1}: & Q_{(P_1, l_1, - )}\otimes  Q_{(P_1, e_1, -)}\otimes  Q_{(P_1,g_1, +)} \to \mathbb{C} \\ 
         & X_{l_1}^{\ast}\otimes  X_{e_1}\otimes \mathbb{C}_{g_1}  \longrightarrow \mathbb{C} \\
         & v^{\ast} \otimes w \otimes \lambda_{g_1}  \longmapsto \lambda_{g_1} v^{\ast}(w)
       \end{matrix}.
    \end{equation}            
        
Similarly for $P_2, P_5$ and $P_6$. In this case, all of these maps are the same except for the indexing. Finally, $P_3$ and $P_4$ is of type-(3). Since $\psi_{0,1}(u) = [(x, x, x^{-1})]$, $T_0^{\textit{cw}}$ assigns $u$ some $\mu_1 \in \circlearrowleft_{\text{Inv}}\text{Hom}_{\mathbb{C}}(X_{l_1}\otimes X_{l_2}\otimes X_{m}^{\ast}, \mathbb{C})$, where all of $X_{m}, X_{l_1}$ and $X_{l_2}$ are $X_x$.

    \begin{equation}
        \begin{matrix}
         \mathscr{E}_{P_3}: & Q_{(P_3, l_1, +)}\otimes  Q_{(P_3, l_2, -)}\otimes  Q_{(P_3,m , +)} \to \mathbb{C} \\ 
         & X_{l_1}\otimes  X_{l_2}\otimes X_{m}^{ast}  \longrightarrow \mathbb{C} \\
         & v_1 \otimes v_2 \otimes v_3  \longmapsto \mu_1 (v_1 \otimes v_2 \otimes v_3)
         \end{matrix}.
    \end{equation}  

Next we turn to $P_4$, the TFT $T_0^{\textit{cw}}$ assigns $u^{-1}$ an element $\mu_2 \in \circlearrowleft_{\text{Inv}}\text{Hom}_{\mathbb{C}}(X_{r_1}^{\ast}\otimes X_{m}\otimes X_{r_2}^{\ast}, \mathbb{C})$. Again, all of $X_{m}, X_{r_1}$ and $X_{r_2}$ are $X_x$.
    \begin{equation}
        \begin{matrix}
         \mathscr{E}_{P_4}: & Q_{(P_4, r_1, -)}\otimes  Q_{(P_4, m, -)}\otimes  Q_{(P_4,r_2 ,+)} \to \mathbb{C} \\ 
         & X_{r_1}^{\ast}\otimes  X_{m}\otimes X_{r_2}^{ast}  \longrightarrow \mathbb{C} \\
         & v_1 \otimes v_2 \otimes v_3  \longmapsto \mu_2 (v_1 \otimes v_2 \otimes v_3)
         \end{matrix}
    \end{equation}  

We conclude this example by pointing out that the map $\mu$ from \cref{Tait_coloring} can take the place of both $\mu_1$ and $\mu_2$. This map will be useful in the next sections.

\end{exam}

This finalizes the construction of a lattice TFT with a trivial surrounding theory. All these conditions can be deduced from the lattice TFT construction in ~\cite{davydov2011field} with the special case when the trivial Frobenius algebra, namely $\mathbb{K}$ is assigned to all the two-dimensional strata. Therefore the trivial surrounding theory enjoys all the facilities of its more general counterpart, i.e., lattice TFT. In particular, the trivial surrounding theory is independent of the choice of a PLCW decomposition. Two such cell decompositions are forced to look similar in the vicinity of a defect by \cref{basic_gons}, but they can always be refined, and altered in many ways. We refer to ~\cite{davydov2011field} and ~\cite{kirillov2012piecewise} for details. Alternatively, one could define the map $\mathscr{P}$ and $\mathscr{E}$ by declaring identity on regions with no lower-dimensional defects (that is without resorting to the Frobenius algebra property of $\mathbb{C}$) and using just the property of vector space and then proving the independence on cell-decomposition from scratch, following Section 7 and 8 of ~\cite{kirillov2012piecewise}. However, we will not take this approach here.

\end{cons}

\subsection{Some useful results}

Now, we state and prove results that will simplify the calculation in the case of a trivial surrounding theory. All the proofs in this section rely on two facts. First, that the TFT is independent of a PLCW decomposition. Second, a trivial surrounding theory assigns $\mathbb{C}$ to two-dimensional strata.

\begin{figure}[ht]
    \centering
    \includegraphics[width=0.9\linewidth]{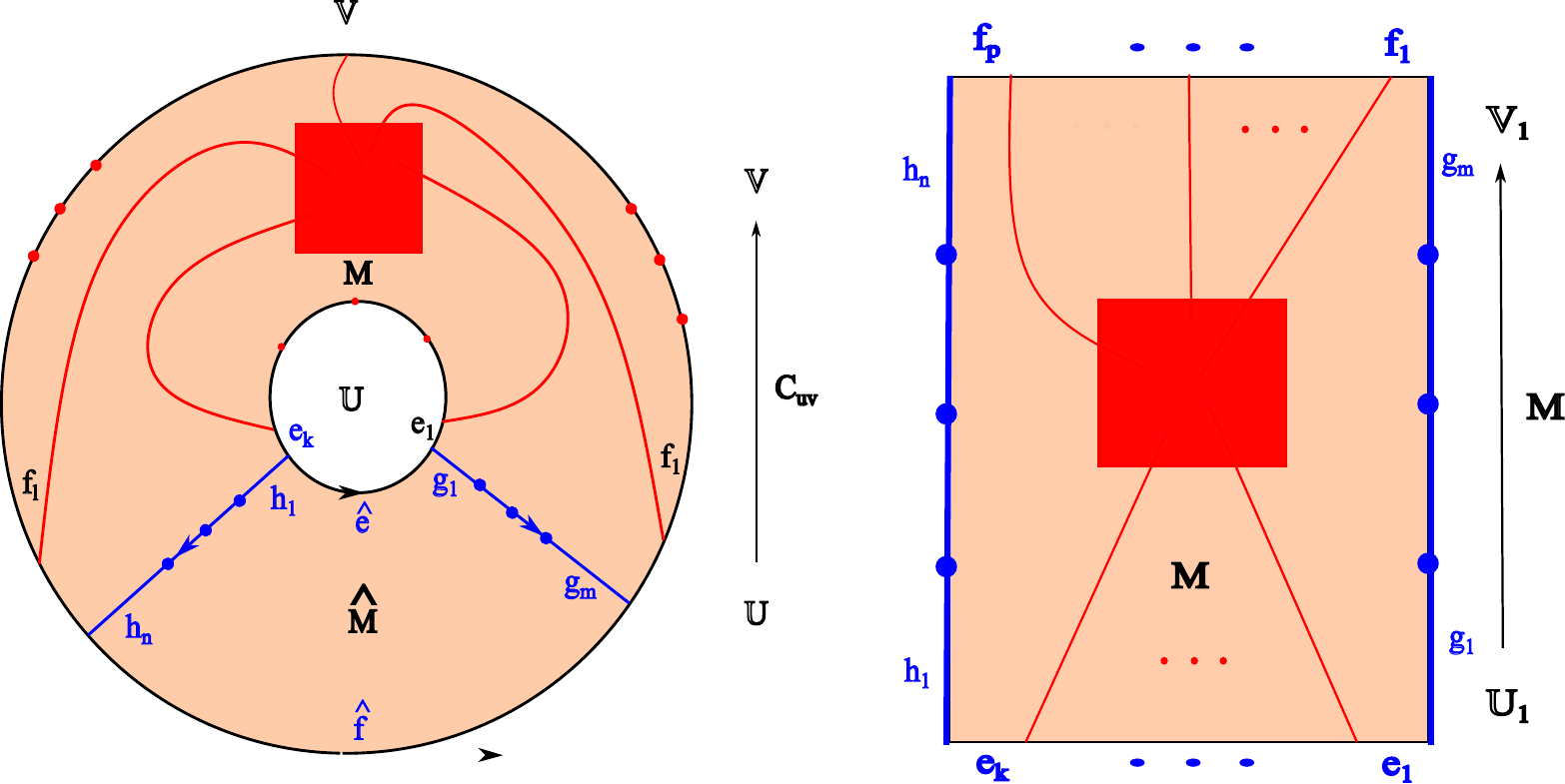}
    \caption{We have the cylinder as a morphism on the left, and on the right is the polygon containing all the defects of $C_{uv}$ $C_1(U_1) = \{e_1, \dots, e_k\}$ which forms $C_1(U)$ together with $\hat{e}$. Similarly, $C_1(V_1) = \{f_1, \dots, f_p\}$ which with $\hat{f}$ forms $C_1(V)$. The red dots \textcolor{red}{$(\dots)$} depict the presence of $1$-dimensional defects in that region, while the blue dots \textcolor{blue}{$(\dots)$} depict the presence of more cells. The cells $\{h_1, \dots, h_n\}$ and $\{g_1, \dots, g_m\}$ are the $1$-cells of $C_1(C_{uv})$ that forms the two common boundaries of $M$ and $\hat{M}$}
    \label{fig:TFT_Prop-1}
\end{figure}

Let $C_{uv}: U \to V$ be the morphism from $U$ to $V$ such that the underlying surface with boundary is a cylinder. Since it has to respect the distinguished point, there exists a PLCW decomposition decomposing $C_{uv}$ into two polygons $M$ and $\hat{M}$, where $\hat{M}$ is such that it contains no $0$ or $1$ dimensional strata. (See \cref{fig:TFT_Prop-1} for a visual demonstration.) Under this decomposition, $U$ (respectively $V$) decomposes as $U_1 \cup U_2$ (respectively $V_1 \cup V_2$) such that $U_1$ (respectively $V_1$) is the restriction of $U$ (respectively $V$) to $M$, and  $U_2$ (respectively $V_2$) is the restriction of $U$ (respectively $V$) to $\hat{M}$. Let $T_0^{\textit{cw}}(U_1) = \otimes_{e \in C_1(U_1)}R_e$ and $T_0^{\textit{cw}}(V_1) = \otimes_{f \in C_1(V_1)}R_f$ and we define $\mathscr{P}(M)$ by keeping contributions from $M$ only, that is, if $e \in C_1(\partial M) \cap C_1(\partial \hat{M})$, we define a truncated propagator $\mathscr{P}'_e: \mathbb{C} \to Q_{(P(e), e, \mathfrak{O})}$ where $P(e) \in C_2(M)$ and define
\begin{equation} \label{P-E-polygons}
    \begin{aligned}
        \mathscr{P}(M) \coloneqq (\bigotimes_{e \notin \hat{M}, e \notin \partial_{\text{in}}C_{uv}, e \in C_1(C_{uv})}\mathscr{P}_e) \otimes_{e \in C_1(\partial \hat{M})} \mathscr{P}'_e & &,& & \mathscr{E}(M) \coloneqq \bigotimes_{P \in C_2(C_{uv}), P \neq \hat{M} } \mathscr{E}_P 
    \end{aligned}
\end{equation}

 and $Q(M)$ be the restriction of $Q(C_{uv})$ to the codomain of $\mathscr{P}(M)$.

The following theorem says that under such conditions, the calculation can be done on the planar polygon $M$.

\begin{prop} \label{Prop-main}
    
    If $T_0^{\textit{cw}}$ is a trivial surrounding theory, then $T_0^{\textit{cw}}(C_{uv}): T_0^{\textit{cw}}(U) \to T_0^{\textit{cw}}(V)$ equals $T_0^{\textit{cw}}(M): T_0^{\textit{cw}}(U_1) \to T_0^{\textit{cw}}(V_1)$, where $T_0^{\textit{cw}}(M)$ is defined by the composite:
    \begin{equation}
        T_0^{\textit{cw}}(U_1) \xrightarrow{\mathbf{1}_{ T_0^{\textit{cw}}(U_1)} \otimes \mathscr{P}(M)} T_0^{\textit{cw}}(U_1) \otimes Q(M) \otimes T_0^{\textit{cw}}(V_1) \xrightarrow{\mathscr{E}(M) \otimes \mathbf{1}_{ T_0^{\textit{cw}}(V_1)}} T_0^{\textit{cw}}(V_1)  
    \end{equation}
    
\end{prop}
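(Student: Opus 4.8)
The plan is to show that the extra cell $\hat M$ and the extra boundary components $U_2, V_2$ contribute only identity maps, so that the full correlator $T_0^{\textit{cw}}(C_{uv})$ factors as $T_0^{\textit{cw}}(M) \otimes \mathbf{1}$ under the tensor decompositions $T_0^{\textit{cw}}(U) = T_0^{\textit{cw}}(U_1)\otimes T_0^{\textit{cw}}(U_2)$ and $T_0^{\textit{cw}}(V) = T_0^{\textit{cw}}(V_1)\otimes T_0^{\textit{cw}}(V_2)$. First I would record these decompositions explicitly: since $U = U_1 \cup U_2$ meet only in the finitely many $0$-cells of $C(U)$ and neither $U_2$ nor its $1$-cells carry any $0$-defect, $R_e = \mathbb{C}$ for every $e \in C_1(U_2)$, hence $T_0^{\textit{cw}}(U_2) = \bigotimes_{e\in C_1(U_2)} R_e \cong \mathbb{C}$ by \cref{prop:tensor}; likewise $T_0^{\textit{cw}}(V_2)\cong\mathbb{C}$. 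So the claimed equality is really the statement that, after these identifications, $T_0^{\textit{cw}}(C_{uv})$ ignores the $\mathbb{C}$-factors, which is exactly what one wants in order to do the computation on the planar polygon $M$.

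Next I would unwind the definition of $T_0^{\textit{cw}}(C_{uv})$ for the two-polygon PLCW decomposition $\{M, \hat M\}$ and sort the data according to which polygon it belongs to. The propagator $\mathscr{P}(C_{uv}) = \bigotimes_{e} \mathscr{P}_e$ ranges over $e \in C_1(C_{uv})\setminus\partial_{\text{in}}C_{uv}$; I would split this tensor product into (a) edges interior to $M$ or on $\partial_{\text{out}}C_{uv}\cap \overline M$, (b) the shared edges $\{h_1,\dots,h_n\}\cup\{g_1,\dots,g_m\}$, and (c) edges interior to $\hat M$ or on $\partial_{\text{out}}C_{uv}\cap\overline{\hat M}$. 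Similarly $\mathscr{E}(C_{uv}) = \bigotimes_{P\in C_2(C_{uv})}\mathscr{E}_P$ splits as $\mathscr{E}_M' := \bigotimes_{P\in C_2(M)}\mathscr{E}_P$ tensored with $\mathscr{E}_{\hat M}$. Because $\hat M$ contains no $\Sigma_0$- or $\Sigma_1$-strata, it is a polygon of type (1) in the construction, so $\mathscr{E}_{\hat M}$ is the identity on $\bigotimes^{k}\mathbb{C}\cong\mathbb{C}$ (using \cref{Frobenius} that $\boldsymbol{\epsilon}_{\mathbb{C}} = \mathbf{1}_{\mathbb{C}}$); and for each shared edge $e\in\{h_i\}\cup\{g_j\}$, since $e$ does not meet $\Sigma_1$, $\mathscr{P}_e = \gamma = \mathbf{1}_{\mathbb{C}}: \mathbb{C}\to\mathbb{C}\otimes\mathbb{C}$ is again the identity copairing of the Frobenius algebra $\mathbb{C}$. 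The crucial bookkeeping point — and the step I expect to be the main obstacle — is to verify that when one contracts these trivial copairings $\mathscr{P}_{h_i}, \mathscr{P}_{g_j}$ against the trivial evaluation $\mathscr{E}_{\hat M}$ and the outboundary identifications $\mathscr{P}_e = Q_{(P(e),e,\mathfrak O)}\otimes R_e$ along $V_2$, everything that lives over $\hat M$ collapses to the identity map $\mathbf{1}_{\mathbb{C}}$; one must track the pairing pattern (each edge appears once with $X$ and once with $X^\ast$, or twice with $\mathbb{C}$) carefully enough to see that no $X$-factor from $M$ ever gets paired across into $\hat M$, which holds precisely because every shared edge and every $\hat M$-edge carries only $\mathbb{C}$.

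Once that collapse is established, I would assemble the pieces: the remaining nontrivial data are exactly the propagators $\mathscr{P}_e$ over edges not in $\hat M$ (together with the truncated propagators $\mathscr{P}'_e$ on the shared edges, which are the "half" of $\mathscr{P}_e$ landing in $C_2(M)$) and the evaluations $\mathscr{E}_P$ over $P\in C_2(C_{uv})\setminus\{\hat M\}$ — i.e. precisely $\mathscr{P}(M)$, $Q(M)$, and $\mathscr{E}(M)$ as defined in \cref{P-E-polygons}. Re-associating the big tensor product $\mathbf{1}_{T_0^{\textit{cw}}(U)}\otimes\mathscr{P}(C_{uv})$ followed by $\mathscr{E}(C_{uv})\otimes\mathbf{1}_{T_0^{\textit{cw}}(V)}$ along the decomposition $T_0^{\textit{cw}}(U)\otimes Q(C_{uv})\otimes T_0^{\textit{cw}}(V) = \bigotimes_{P\in C_2(C_{uv})}\bigotimes_{(e,\mathfrak O)\in\partial P} Q_{(P,e,\mathfrak O)}$ and using the symmetric monoidal structure of $\mathrm{Vect}_F(\mathbb{C})$ to regroup the $\hat M$-block, the composite becomes $\big(T_0^{\textit{cw}}(M)\colon T_0^{\textit{cw}}(U_1)\to T_0^{\textit{cw}}(V_1)\big)\otimes \mathbf{1}_{\mathbb{C}}$. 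Finally, identifying $T_0^{\textit{cw}}(U)\cong T_0^{\textit{cw}}(U_1)$ and $T_0^{\textit{cw}}(V)\cong T_0^{\textit{cw}}(V_1)$ via the isomorphisms from the first paragraph, and invoking the independence of $T_0^{\textit{cw}}$ on the PLCW decomposition (so that this particular two-polygon decomposition computes the correct morphism), gives $T_0^{\textit{cw}}(C_{uv}) = T_0^{\textit{cw}}(M)$, as claimed.
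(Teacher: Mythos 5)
Your proposal is correct and takes essentially the same route as the paper's proof: both split the propagator and evaluation according to the two-polygon decomposition $\{M,\hat M\}$, observe that $\mathscr{E}_{\hat M}$ and the copairings on the shared edges $g_i,h_i$ (and the defect-free boundary pieces $U_2,V_2$) contribute only identities on $\mathbb{C}$ because the theory is trivial surrounding, and then regroup the tensor factors so that what remains is exactly $\mathscr{P}(M)$, $Q(M)$, $\mathscr{E}(M)$, giving $T_0^{\textit{cw}}(C_{uv}) = T_0^{\textit{cw}}(M)$ after the identification $X\otimes\mathbb{C}\cong X$. The bookkeeping point you flag as the main obstacle is handled in the paper by tracking the two factors of each $\mathscr{P}_{g_i}(1)=1\otimes 1$, one absorbed by the adjacent $M$-polygon's evaluation and the other by $\mathscr{E}_{\hat M}$, which is precisely your "collapse" argument.
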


\begin{proof}
    Let's denote the single edge covering $U_2$ (respectively $V_2$) by $\hat{e}$ (respectively $\hat{f}$). The map $T^{\textit{cw}}(C_{uv}): T^{\textit{cw}}(U) \to T^{\textit{cw}}(V)$ is given by 

   \begin{equation} \label{proof_TFT_Prop-1}
    \resizebox{0.9\hsize}{!}{%
    $T^{\textit{cw}}(C_{uv}): T^{\textit{cw}}(U) \xrightarrow{\mathbf{1}_{ T^{\textit{cw}}(U)} \otimes \mathscr{P}(C_{uv})} T^{\textit{cw}}(U) \otimes Q(C_{uv}) \otimes T^{\textit{cw}}(V) \xrightarrow{\mathscr{E}(C_{uv}) \otimes \mathbf{1}_{ T^{\textit{cw}}(V)}} T^{\textit{cw}}(V)$%
        }
   \end{equation}
   
The key step is to write $$\mathscr{E}(C_{uv}) = (\bigotimes_{P \in C_2(C_{uv}), P \neq \hat{M}}\mathscr{E}_p) \otimes \mathscr{E}_{\hat{M}}$$ and arrange the middle term in the \cref{proof_TFT_Prop-1} according to it. We get $T_0^{\textit{cw}}(U) = T_0^{\textit{cw}}(U_1) \otimes R_{\hat{e}}$ and $T_0^{\textit{cw}}(V) = T_0^{\textit{cw}}(V_1) \otimes R_{\hat{f}}$. The propagators $\mathscr{P}_{g_i}: \mathbb{C} \to \mathbb{C}_{g_i} \otimes \mathbb{C}_{g_i}$ has the form $1 \mapsto 1_{g_i} \otimes 1_{g_i}$ and similarly for $h_i$ for every $i$. Let $P_{g_i}$ (respectively $P_{h_i}$) be the polygons containing $g_i$ (respectively $h_i$). One of the two factors from $\mathscr{P}_{g_i}(1)$ (respectively $\mathscr{P}_{h_i}(1)$) goes with $\mathscr{E}_{P_{g_i}}$ (respectively $\mathscr{E}_{P_{h_i}}$) and the other with $\mathscr{E}_{\hat{P}}$ reducing \cref{proof_TFT_Prop-1} to:

\begin{equation}
    \begin{aligned}
        T_0^{\textit{cw}}(U_1)\otimes \mathbb{C}_{\hat{e}} \xrightarrow{\mathbf{1}_{ T_0^{\textit{cw}}(U_1)} \otimes\mathscr{P}(M)} T_0^{\textit{cw}}(U_1) \otimes Q(M) \otimes \mathbb{C}_{\hat{e}} \otimes (\otimes_{g_i}\mathbb{C}_{g_i})\otimes (\otimes_{h_i} \mathbb{C}_{h_i}) \\
 \otimes T_0^{\textit{cw}}(V_1) \otimes \mathbb{C}_{\hat{f}} \xrightarrow{\mathscr{E}(M) \otimes \mathbf{1}_{ T_0^{\textit{cw}}(V_1)}} T_0^{\textit{cw}}(V_1) \otimes \mathbb{C}_{\hat{f}}        \end{aligned} .
\end{equation}

This gives the desired result since for a $\mathbb{C}$-vector space $X$, $X \otimes \mathbb{C} \cong \mathbb{C}$.

\end{proof}

\begin{theo} \label{Thm:simplest}
The calculation of a trivial surrounding theory $T_0^{cw}(C_{uv})$ as in \cref{Prop-main} can be done on the polygon of the kind $M$ in \cref{fig:TFT_Prop-1} as shown below:
    \begin{figure}[ht]
        \centering
        \includegraphics[width=0.5\linewidth]{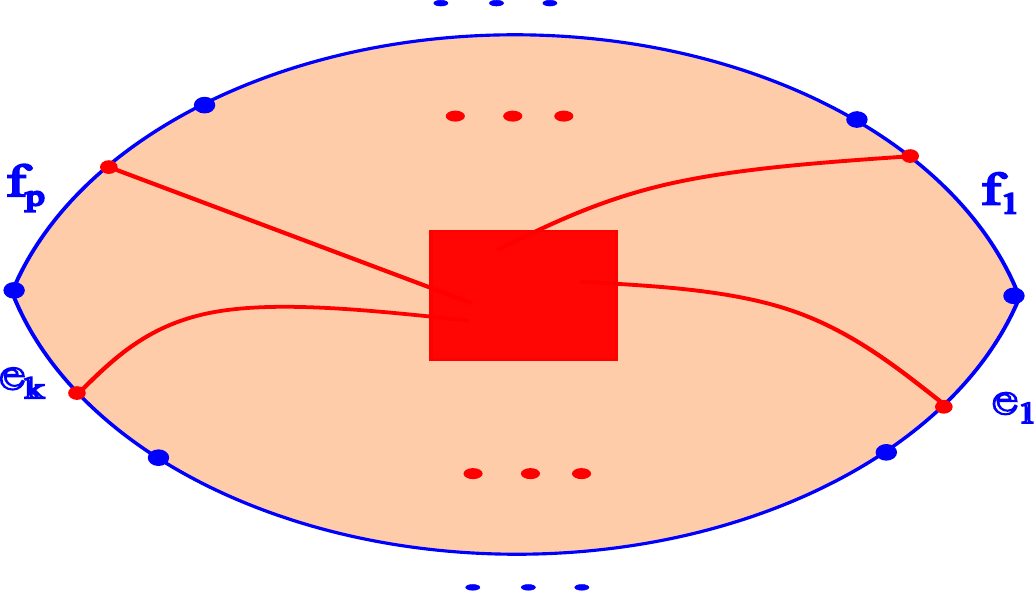}
        \caption{  The red dots \textcolor{red}{$(\dots)$} depict the presence of $1$-dimensional defects in that region, while the blue dots \textcolor{blue}{$(\dots)$} depict the presence of more cells.  }
        \label{fig:simplest}
    \end{figure}
     
\end{theo}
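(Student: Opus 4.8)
The plan is to deduce \cref{Thm:simplest} from \cref{Prop-main} together with the cell-decomposition independence of a lattice TFT. By \cref{Prop-main}, for the cylinder $C_{uv}\colon U\to V$ the morphism $T_0^{\textit{cw}}(C_{uv})$ already coincides with the correlator $T_0^{\textit{cw}}(M)$ computed on the single polygon $M$ of \cref{fig:TFT_Prop-1} that carries all of the $0$- and $1$-dimensional strata, together with its truncated propagator $\mathscr{P}(M)$ and evaluation $\mathscr{E}(M)$. So it remains to check that the PLCW decomposition of $M$ may be replaced by the minimal one drawn in \cref{fig:simplest} without changing $T_0^{\textit{cw}}(M)$.

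First I would invoke that $T_0^{\textit{cw}}$, being a lattice TFT, does not depend on the chosen PLCW decomposition: any two PLCW decompositions of a compact surface, with a fixed decomposition of the boundary, are connected by a finite sequence of the elementary moves of \cite{kirillov2012piecewise} (the Alexander-type theorem), and $T^{\textit{cw}}$ is invariant under each such move — this is exactly the content of the commuting triangle \cref{plcwcd} and of \cite{davydov2011field}. Since the in-boundary $U_1=\{e_1,\dots,e_k\}$ and the out-boundary $V_1=\{f_1,\dots,f_p\}$ of $M$ carry the cell structures inherited from the objects $U$ and $V$, the moves are applied only in the interior of $M$ and along its two free (non-boundary) sides, so they leave $T_0^{\textit{cw}}(U_1)$ and $T_0^{\textit{cw}}(V_1)$ untouched.

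Next I would exhibit the minimal decomposition: using the elementary moves in the interior of $M$, collapse the side $1$-cells $g_1,\dots,g_m$ to one side edge and $h_1,\dots,h_n$ to one side edge, and merge all $2$-cells that meet neither $\Sigma_0$ nor $\Sigma_1$; what is left is a single strip of basic-gons of types (ii)--(iv) in \cref{basic_gons} — the star-shaped neighbourhoods around the $0$-strata and the one-in/one-out bands around the $1$-strata — glued along the bottom to $e_1,\dots,e_k$ and along the top to $f_1,\dots,f_p$. This is precisely the configuration of \cref{fig:simplest}, and running the recipe of \cref{trivial_theory} (propagator on the interior edges, evaluation on the remaining basic-gons, as truncated in \cref{Prop-main}) on it computes $T_0^{\textit{cw}}(M)$, hence $T_0^{\textit{cw}}(C_{uv})$.

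The main obstacle is purely bookkeeping: one must verify that the chosen sequence of elementary moves collapsing the side edges and coalescing the defect-free $2$-cells stays within the class of PLCW decompositions admissible for $\text{Bord}_2^{\textit{def, cw}}(\mathcal{D})$ in the sense of \cref{PLCW} — each $1$-cell meeting at most one $1$-stratum, each $2$-cell containing at most one $0$-stratum, and every defect-free $2$-cell being of type (iii) — and that no move ever modifies the boundary cells coming from $U$ and $V$. Once that is in place, invariance of $T_0^{\textit{cw}}$ under the moves yields the stated reduction to the polygon of \cref{fig:simplest}.
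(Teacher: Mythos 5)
Your route is genuinely different from the paper's. You reduce everything to the general cell-decomposition independence of the lattice construction (the Alexander-type theorem of Kirillov plus the invariance proved in \cite{davydov2011field}, i.e.\ the triangle \cref{plcwcd}), and then exhibit the configuration of \cref{fig:simplest} as the end of a chain of elementary moves. The paper instead argues directly, using only the trivial-surrounding property: by \cref{Frobenius} the copairing and counit of $\mathbb{C}$ are identities, so every defect-free side edge $g_i, h_i$ contributes a scalar, and since the evaluations (e.g.\ \cref{E-type-2}) and the maps $\mu_u$ are $\mathbb{C}$-linear these scalars pull out; hence identifying the $0$-cells on the two non-object sides, so that each side becomes a single $1$-cell, does not change the composite, and one lands on \cref{fig:simplest} without ever invoking the move theorem. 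Your approach buys generality (it would survive a non-trivial surrounding algebra, where the side contributions are no longer scalars), while the paper's is shorter and self-contained for the trivial case.

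There is, however, one point you should repair before the argument is complete. The independence-under-moves statement you cite is a statement about the value $T_0^{\textit{cw}}$ assigned to a \emph{morphism} of $\textit{Bord}_2^{\textit{def,cw}}(\mathcal{D})$, i.e.\ to the whole cylinder $C_{uv}$ with its in- and out-boundary cell structures fixed. The quantity $T_0^{\textit{cw}}(M)$ of \cref{Prop-main} is not such a value: $M$ has two free sides $g_1,\dots,g_m$ and $h_1,\dots,h_n$, and along them the construction uses the ad hoc truncated propagators $\mathscr{P}'_e$ of \cref{P-E-polygons}. Performing elementary moves ``along the two free sides of $M$'' is therefore not covered by the cited invariance; those edges are boundary edges of the region on which the truncated computation is performed. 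The fix is easy but should be said: carry out the collapse of $g_1,\dots,g_m$ and $h_1,\dots,h_n$ as a change of PLCW decomposition of $C_{uv}$ itself, where these are interior $1$-cells shared with $\hat{M}$ (checking, as you note, that the resulting decomposition still satisfies \cref{PLCW} and still splits as a defect-free polygon $\hat{M}'$ plus a region $M'$ containing all strata), and then re-apply \cref{Prop-main} to the new decomposition; alternatively, verify invariance of the truncated computation by hand, which is exactly the scalar-pulling argument of the paper. With that adjustment your proof goes through.
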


\begin{proof}
    This is a simple application of the fact that the TFT is trivial surrounding, i.e. it assigns $\mathbb{C}$ to the $2$-dimensional strata. First, note that \cref{fig:simplest} is obtained from $M$ in \cref{fig:TFT_Prop-1} by identifying all the zero cells on the non-object sides $(g_1, \dots, g_m) $ and $( h_1, \dots, h_n)$ to (two distinct) single $0$-cells. Then, since $\mathscr{P}$ assigns $\mathbb{C}$ to these edges and both $v^{\ast}$ and $\mu_v$ are $\mathbb{C}$-linear, one can take the contribution from these edges out, for instance, \cref{E-type-2} can be written as $(c_1 \dots c_{n_1}) V_x^{\ast}(w_x)(c_{n_1+1}\dots c_{n_1 + n_2})$, which is same as if the polygon had only two sides, one where the defect enters and the other where the defect leaves, since $c_i$ are scalars. More precisely, if $g_1, \dots, g_m$ are edges ($1$-cells) of one of the basic-gons with a $0$-defect as in \cref{fig:pre-dumbbell}, then each cell containing $g_i$ and $g_{i+1}$ is of the kind discussed in \cref{E-type-2} and thus can be identified, as their contributions are scalars that can be pulled out when writing the expression for $\mathscr{E}$. Repeating this process identifies each $g_i$ and the same holds for $h_i$.
\end{proof}


To state the next proposition we need to use the fact that the category of $2$-defect TFT is equivalent to a Pivotal 2-category. We do not make this construction explicit here and refer to ~\cite{carqueville2016lecture} (2.2) and (2.3). In what follows, only the key features of this construction is highlighted. 

For a defect TFT $T: \textit{Bord}_2^{\textit{def, cw}}(\mathcal{D}) \to \text{Vect}_{F}(\mathbb{K})$, the data of the $2$-category $\mathcal{B}_T$ consists of:
\begin{itemize}
    \item (Level 0) the class $\text{Obj}(\mathcal{B}_T) = D_2$. The string diagram is shown below:
    \begin{equation} \label{fig:level-0}
        \centering
        \includegraphics[scale=0.5]{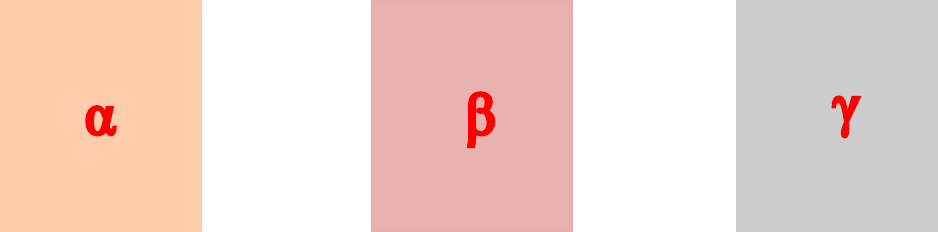}
    \end{equation}
    \item (Level 1) Given two objects $\alpha, \beta \in D_2$, a $\mathbb{K}$-linear category $\mathcal{B}_T(\alpha, \beta)$ whose objects are $1$-morphisms $X: \alpha \to \beta$ and as a category it is a free category generated by the pre-category given by maps $s, t : D_1 \to D_2$. (See \cref{condns}). As a string diagram:

    \begin{equation} \label{fig:level-1}
        \centering
        \includegraphics[scale=0.5]{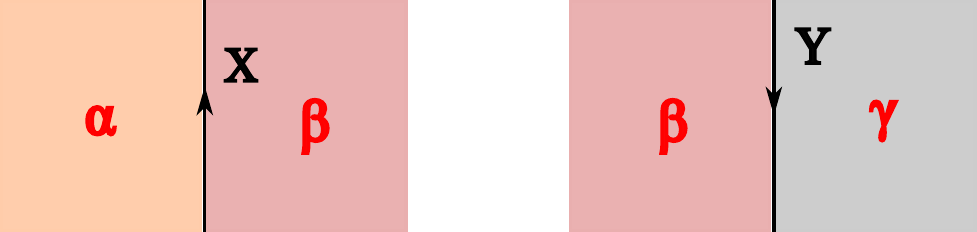}
    \end{equation}
    
    The horizontal composition of two $1$-morphisms is given by a $\mathbb{K}$-linear map $$ \mathcal{B}_T(\beta, \gamma) \otimes \mathcal{B}_T(\alpha, \beta) \to \mathcal{B}_T(\alpha, \gamma)$$ and is also called \textit{fusion}. It can be represented by the string diagram:
    \begin{equation} \label{fusion}
    \includegraphics[scale=0.4,valign=c]{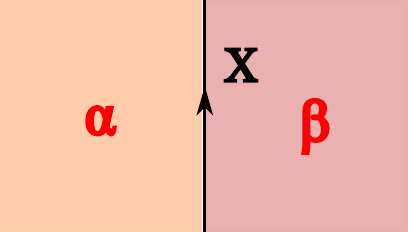}
  \otimes \vcenter{\hbox{\includegraphics[scale=0.4]{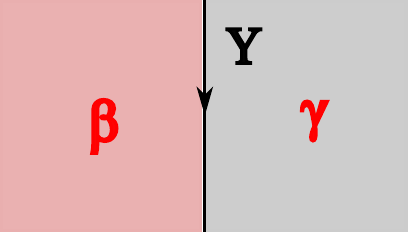}}} = \includegraphics[scale=0.4,valign=c]{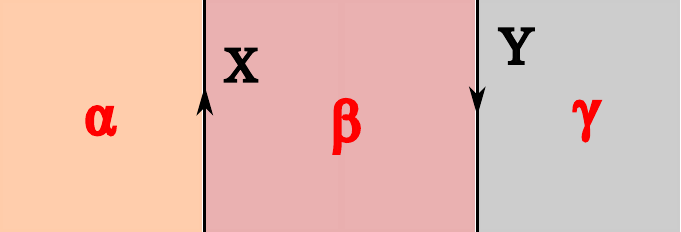}     
    \end{equation}

    The output in \cref{fusion} is usually written as $X \otimes Y^{-1} : \alpha \to \gamma$.

    \item (Level 2) The $\mathbb{K}$-linear space of $2$-morphism between $X \coloneqq (x_1^{\epsilon_1}, \dots x_n^{\epsilon_n}): \alpha \to \beta$ and $Y \coloneqq (y_1^{\nu_1}, \dots, y_m^{\nu_m}): \alpha \to \beta$, $\text{Hom}(X, Y)$ is given by $T(Y \otimes X^{-1})$:
    \begin{equation} \label{vertex-hole}
    Hom(X, Y) = T \Bigg(\includegraphics[scale=0.4,valign=c]{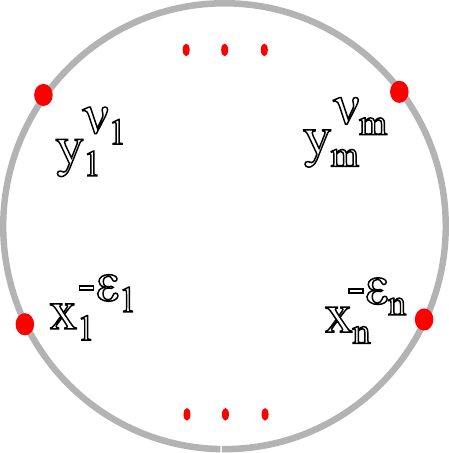} \Bigg)    
    \end{equation}  

    This corresponds to the local operators inserted at defect junctions. We refer to [~\cite{carqueville2016lecture} (2.17)] and [~\cite{davydov2011field}, (3.65)] for details on how to build the set $D_0$ from this data. For now, we only mention that one should think of this space as consisting of discs, with at most one labeled $0$-stratum, available to fill in. To see this, first note that Diagram (ii) in \cref{fig:defect_orient} can be thought as a cup $\hat{\mathbb{D}}$ and the TFT $T$ evaluates on it to $\mathbb{C} \to T(\partial \hat{\mathbb{D}})$, i.e. it assigns $\hat{\mathbb{D}}$ to a vector in the space $  T(\partial \hat{\mathbb{D}}) $. The case with \ref{vertex-hole} is similar: a disk with the boundary as this circle, when thought as a cup will assign a vector in the space $ \text{Hom}(X, Y) $. As an example, when $Y = X$ the identity $2$-morphism $\mathbf{1}_{X}$, which is an element of $\text{Hom}(X, X)$ is given by (i) below. On the other hand  (ii) shows the \textit{vertical composition} $v \circ u$ of two $2$-morphisms  $u \in \text{Hom}(X, Y)$ and $v \in \text{Hom}(Y, Z)$ . This is why we limited ourselves to disks with at most one vertex. Although, any disk with the boundary $Y \otimes X^{-1}$, when thought of as a cup, gets assigned a vector in $\text{Hom}(X, X)$ by $T$, it can always be written as vertical compositions of disks with single labeled $0$-stratum.
    \begin{equation*}
        \centering
        \includegraphics[scale=0.3]{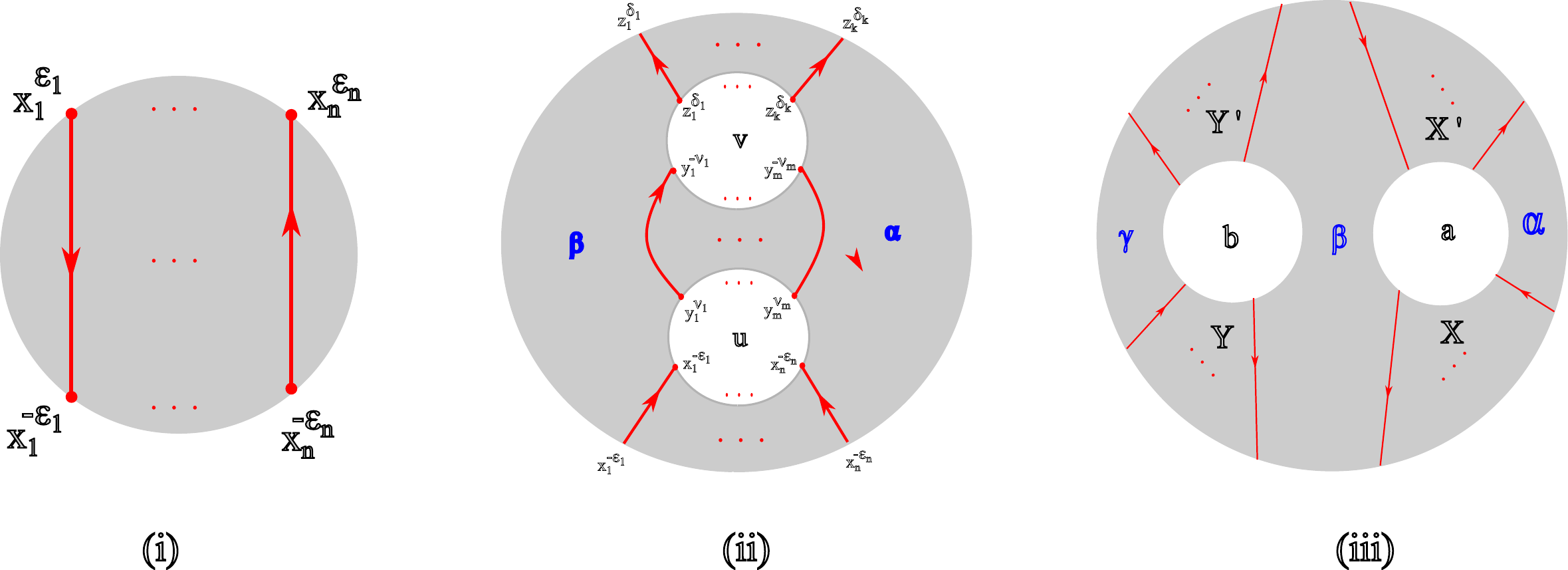}
    \end{equation*}
    While diagram (iii) shows the \textit{horizontal composition} $b \otimes a$ of $b \in \text{Hom}(Y, Y')$ and $a \in \text{Hom}(X, X')$.
\end{itemize}

\begin{rema}
    A consequence of the functoriality of fusion $\otimes$ is that the horizontal and vertical composition satisfies the \textit{interchange law}: For $\psi \in \text{Hom}(Y, Y'), \phi \in \text{Hom}(X, X')$
    
    \begin{equation} \label{interchange-law}
        \psi \otimes \phi = (\psi \otimes \mathbf{1}_{X}) \circ (\mathbf{1}_{Y'} \otimes \phi) = (\mathbf{1}_{Y}\otimes \phi) \circ (\psi \otimes \mathbf{1}_{X'})
    \end{equation}

    \begin{equation*}
        \centering
        \includegraphics[scale=0.3]{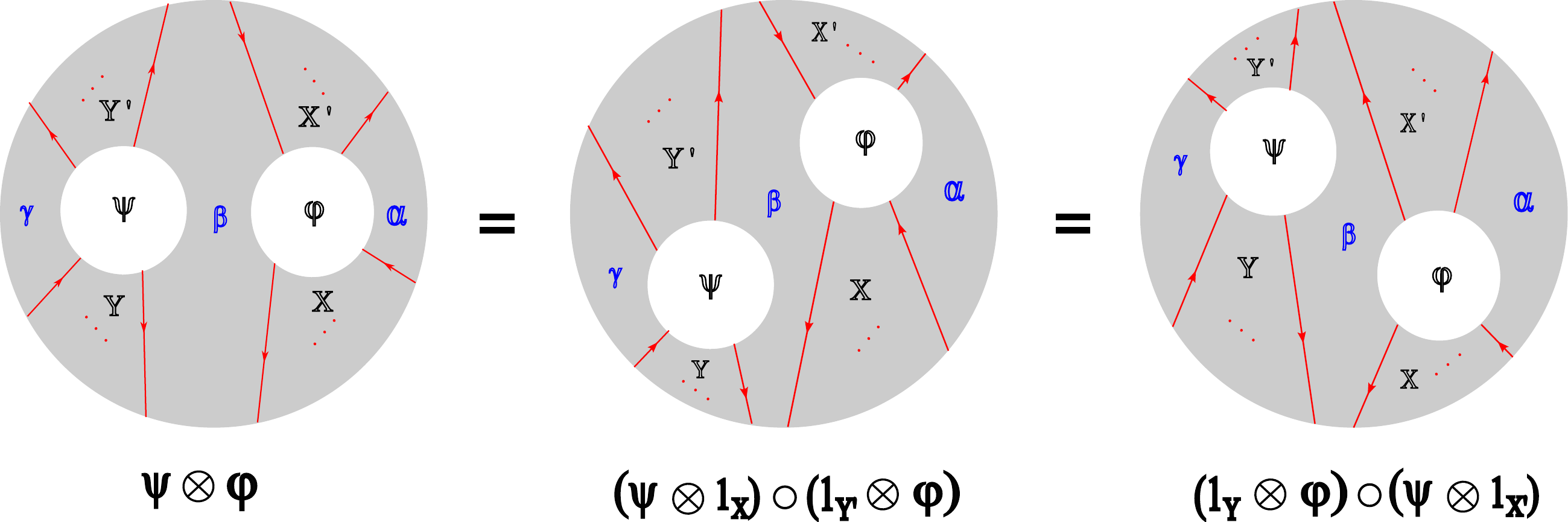}
    \end{equation*}
\end{rema}

Returning to the case of lattice TFT and trivial surrounding theory, the following proposition can now be stated:

\begin{figure}
    \centering
    \includegraphics[scale=0.35]{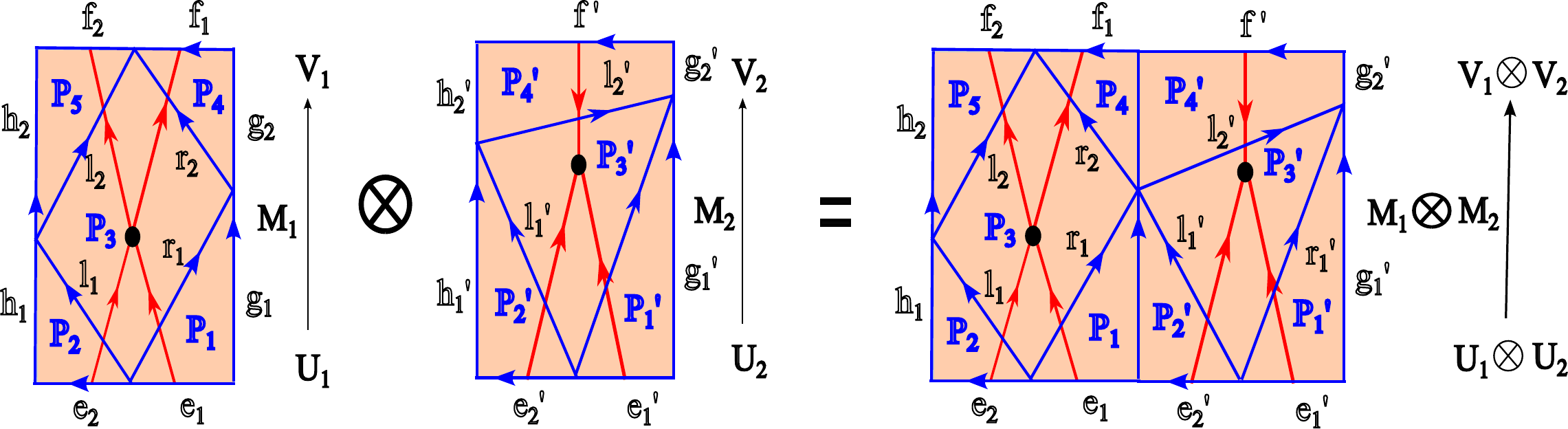}
    \caption{One can subdivide [~\cite{kirillov2012piecewise}, Section 6, Section 7] to get a cell decomposition on $M_1 \otimes M_2$ that restricts to the given cell-decomposition on both $M_1$ and $M_2$}
    \label{fig:TFT_Prop-2}
\end{figure}

\begin{prop} \label{Prop:fusion}
    Let $M_1: U_1 \to V_1$ and $M_2: U_2 \to V_2$ be two bordism restricted to respective polygons. With the definition of $T_o^{\textit{cw}}(M)$ as in \cref{Prop-main}, for the fusion, $$M_1 \otimes M_2 : U_1 \otimes U_2 \to V_1 \otimes V_2 , $$ we get $$T_0^{\textit{cw}}(M_1 \otimes M_2) = T_o^{\textit{cw}}(M_1) \otimes T_o^{\textit{cw}}(M_2).$$
\end{prop}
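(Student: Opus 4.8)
The plan is to reduce the statement to two facts already in hand: the symmetric monoidality of the functor $T_0^{\textit{cw}}$ on $\textit{Bord}_2^{\textit{def, cw}}(\mathcal{D})$ set up in \cref{trivial_theory}, and the polygon‑reduction of \cref{Prop-main}. First I would fix a PLCW decomposition of the fused surface $M_1 \otimes M_2$ that restricts to the prescribed decompositions of $M_1$ and $M_2$; such a common choice is available by the subdivision results of [\cite{kirillov2012piecewise}, Section~6, Section~7] (cf.\ \cref{fig:TFT_Prop-2}), and since a trivial surrounding theory is independent of the choice of PLCW decomposition, working with this particular one loses no generality. The key point is that $M_1$ and $M_2$ occur inside $M_1 \otimes M_2$ as \emph{disjoint} subcomplexes, so that $C_k(M_1 \otimes M_2) = C_k(M_1) \sqcup C_k(M_2)$ for every $k$, and, likewise, the set of triples $(P, e, \mathfrak{O})$ entering the definitions of $Q(-)$ and $\mathscr{P}(-)$ partitions into those coming from $M_1$ and those coming from $M_2$ — no cell, edge, or triple straddles the two pieces.

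Granting this, every ingredient of the composite in \cref{Prop-main} splits as a tensor product. On objects, the formula $T_0^{\textit{cw}}(O) = \bigotimes_{e \in C_1(O)} R_e$ is already a tensor over $1$‑cells, so the partitions $C_1(U_1 \otimes U_2) = C_1(U_1) \sqcup C_1(U_2)$ and $C_1(V_1 \otimes V_2) = C_1(V_1) \sqcup C_1(V_2)$ give $T_0^{\textit{cw}}(U_1 \otimes U_2) \cong T_0^{\textit{cw}}(U_1) \otimes T_0^{\textit{cw}}(U_2)$ and similarly for $V$. The space $Q(M_1 \otimes M_2)$ is a tensor over triples, the propagator $\mathscr{P}(M_1 \otimes M_2)$ is assembled edge by edge, and the evaluation $\mathscr{E}(M_1 \otimes M_2)$ is assembled polygon by polygon; since edges, polygons, and triples each break cleanly into an $M_1$‑part and an $M_2$‑part, one gets $Q(M_1 \otimes M_2) \cong Q(M_1) \otimes Q(M_2)$, $\mathscr{P}(M_1 \otimes M_2) = \mathscr{P}(M_1) \otimes \mathscr{P}(M_2)$, and $\mathscr{E}(M_1 \otimes M_2) = \mathscr{E}(M_1) \otimes \mathscr{E}(M_2)$, all up to the evident reordering of tensor factors.

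Substituting these into the composite defining $T_0^{\textit{cw}}(M_1 \otimes M_2)$ and using that $(\text{Vect}_F(\mathbb{C}), \otimes)$ is symmetric monoidal — so that the permutation isomorphisms needed to sort the factors into their $M_1$‑ and $M_2$‑blocks are coherent and introduce no signs — the composite factors as $\bigl[(\mathscr{E}(M_1) \otimes \mathbf{1}) \circ (\mathbf{1} \otimes \mathscr{P}(M_1))\bigr] \otimes \bigl[(\mathscr{E}(M_2) \otimes \mathbf{1}) \circ (\mathbf{1} \otimes \mathscr{P}(M_2))\bigr]$, which is exactly $T_0^{\textit{cw}}(M_1) \otimes T_0^{\textit{cw}}(M_2)$. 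An alternative, slightly cleaner route is to note that $M_1 \otimes M_2$ is the nontrivial part of the cylinder over $U_1 \otimes U_2$, i.e.\ of $C_{u_1 v_1} \sqcup C_{u_2 v_2}$, apply the ``peel off the trivial polygons'' argument of \cref{Prop-main} to this disjoint union of cylinders, and then invoke the symmetric monoidality of $T_0^{\textit{cw}}$ directly. I expect the only genuine work to be the bookkeeping in the first route — verifying that no cell or triple of the common refinement mixes the two pieces and keeping the reordering isomorphisms straight — whereas the second route offloads that bookkeeping onto the already‑established \cref{Prop-main} and \cref{trivial_theory}.
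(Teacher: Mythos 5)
Your overall strategy---pick a common PLCW refinement, split $Q(-)$, $\mathscr{P}(-)$, $\mathscr{E}(-)$ into contributions from $M_1$ and $M_2$, then sort the tensor factors using the symmetry/interchange law of $\text{Vect}_F(\mathbb{C})$---is the same as the paper's. But the claim your first route rests on is false for the operation the proposition is actually about: fusion is not disjoint union. By \cref{fusion_basic-gons} (and the string diagram \ref{fusion}), $M_1 \otimes M_2$ is obtained by \emph{gluing} the two polygons along a shared $1$-cell lying in the defect-free $2$-stratum (horizontal composition), so $M_1$ and $M_2$ do not sit inside $M_1 \otimes M_2$ as disjoint subcomplexes: the glued lateral edge (and its endpoints) belongs to both, and $C_k(M_1\otimes M_2)\neq C_k(M_1)\sqcup C_k(M_2)$ for $k=0,1$. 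That edge is precisely where the factorization is not purely formal. Inside $M_i$ alone it is an \emph{external} edge, so by \cref{P-E-polygons} only the truncated propagator $\mathscr{P}'_e\colon \mathbb{C}\to Q_{(P(e),e,\mathfrak{O})}$ contributes; inside $M_1\otimes M_2$ it is \emph{internal}, so the full copairing $\mathscr{P}_e\colon \mathbb{C}\to Q_{(P_1(e),e,\mathfrak{O}_1)}\otimes Q_{(P_2(e),e,\mathfrak{O}_2)}$ contributes. One must check that the two truncated contributions recombine into the full one; this holds because the glued edge carries no $1$-defect, so all the spaces involved are $\mathbb{C}$ and the copairing is the identity (\cref{Frobenius})---i.e.\ it is the trivial surrounding hypothesis, not generalities about tensor products, that makes $\mathscr{P}(M_1\otimes M_2)=\mathscr{P}(M_1)\otimes\mathscr{P}(M_2)$ true. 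The paper's proof flags exactly this point; your write-up skips it by asserting disjointness.

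Your ``cleaner'' alternative has the same issue in a different guise: it identifies the fused polygon $M_1\otimes M_2$ with (the nontrivial part of) the disjoint union of cylinders $C_{u_1v_1}\sqcup C_{u_2v_2}$ and then invokes symmetric monoidality of $T_0^{\textit{cw}}$. But the monoidal structure of the bordism category is disjoint union of circles, whereas fusion glues polygons along an edge inside a single connected surface; the content of \cref{Prop:fusion} is precisely that, for a trivial surrounding theory, these two operations have the same effect on correlators. Invoking monoidality there is therefore circular, or at best proves a statement about $C_{u_1v_1}\sqcup C_{u_2v_2}$ while leaving the comparison with the single glued polygon---the shared-edge bookkeeping above---still to be done. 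The fix is short: make the glued lateral edge explicit and verify, as in the paper, that its two truncated propagators supply exactly the two factors expected of an internal edge; after that your rearrangement argument goes through as written.
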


\begin{proof}

Choose a cell decomposition of $M_1 \otimes M_2$ such that it gives a cell decomposition of both $M_1$ and $M_2$ and such that the same holds for $U$ with respect to $U_1$ and $U_2$, and for $V$ with respect to $V_1$ and $V_2$. This is always possible after suitable refinements of cell decompositions of $M_1$ and $M_2$. See \cref{fig:TFT_Prop-2} for an example. With this, we get the data as follows:

\begin{equation} \label{proof-eqn-1}
    \begin{split}
        T_0^{\textit{cw}}(U) & = \bigotimes_{e \in C_1(U)}R_e \\
        & = \bigotimes_{e \in C_1(U_1)}R_e \bigotimes_{e' \in C_1(U_2)}(R_{e'}) \\
        & = T_0^{\textit{cw}}(U_1) \otimes T_0^{\textit{cw}}(U_2)
    \end{split}
\end{equation}

Similarly, we get $T_0^{\textit{cw}}(V) = T_0^{\textit{cw}}(V_1) \otimes T_0^{\textit{cw}}(V_2)$. Furthermore,

\begin{equation} \label{proof-eqn-2}
  \begin{split}
     \mathscr{P}(M_1 \otimes M_2) & = \bigotimes_{\substack{e \in C_1(M_1 \otimes M_2)\\ e \notin \partial_{in}(M_1 \otimes M_2)}} \mathscr{P}_e \\
     & = \bigotimes_{\substack{e \notin \partial_{in} M_1 \\ e \in C_1(M_1)}} \mathscr{P}_e \bigotimes_{\substack{e' \notin \partial_{in}M_2\\ e' \in C_1(M_2) }} \mathscr{P}_{e'} \\
     & = \mathscr{P}_{M_1} \otimes \mathscr{P}_{M_2}
  \end{split}
\end{equation}

and

\begin{equation} \label{proof-eqn-3}
   \begin{split}
   \mathscr{E}(M_1 \otimes M_2) & = \bigotimes_{P \in C_2(M_1 \otimes M_2)}\mathscr{E}_P\\
   & = \bigotimes_{P \in C_2(M_1)} \mathscr{E}_p \bigotimes_{P' \in C_2(M_2)} \mathscr{E}_{P'}\\
   & = \mathscr{E}(M_1) \otimes \mathscr{E}(M_2)
  \end{split}
\end{equation}

In defining the propagator $\mathscr{P}(M)$ in \cref{P-E-polygons} we kept only one copy of $Q_{(P(e'), e', \mathfrak{O})}$ for an externel edge $e'$ of the polygon $M$. We get two copies of $Q_{(P(e'), e', \mathfrak{O})}$ with opposite signs $\mathfrak{O}$ from this edge in $M_1 \otimes M_2$ - one from each of $M_1$ and $M_2$; as one would expect from an internal edge.

Next, $T_0^{\textit{cw}}(M_1 \otimes M_2)$ is given by the composition

\begin{equation*}
    \begin{aligned}
        T_0^{\textit{cw}}(U_1 \otimes U_2) \xrightarrow{\mathbf{1} \otimes \mathscr{P}(M_1 \otimes M_2)} T_0^{\textit{cw}}(U_1 \otimes U_2) \otimes Q(M_1 \otimes M_2) \otimes T_0^{cw}(V_1 \otimes V_2) \\
        \xrightarrow{E(M_1 \otimes M_2) \otimes \mathbf{1}} T_0^{\textit{cw}}(V_1 \otimes V_2)
    \end{aligned}
\end{equation*}
    
Using \cref{proof-eqn-1}, \cref{proof-eqn-2}, \cref{proof-eqn-3} and $Q(M_1 \otimes M_2) = Q(M_1)\otimes Q(M_2)$ we get
    
\begin{equation} \label{proof-eqn-4}
    \begin{aligned}
        T_0^{\textit{cw}}(U_1) \otimes T_0^{\textit{cw}}(U_2) \xrightarrow{\mathbf{1} \otimes \mathscr{P}(M_1) \otimes \mathscr{P}(M_2)} T_0^{\textit{cw}}(U_1) \otimes T_0^{\textit{cw}}(U_2) \otimes Q(M_1) \otimes Q(M_2) \\ \otimes T_0^{\textit{cw}}(V_1) \otimes T_0^{\textit{cw}}(V_2)
        \xrightarrow{\mathscr{E}(M_1) \otimes \mathscr{E}(M_2) \otimes \mathbf{1}} T_0^{\textit{cw}}(V_1) \otimes T_0^{\textit{cw}}(V_2).
    \end{aligned}
\end{equation}

After arranging, \cref{proof-eqn-4} gives

\begin{equation}
    \begin{aligned}
        T_0^{\textit{cw}}(U_1) \otimes T_0^{\textit{cw}}(U_2) \xrightarrow{\mathbf{1} \otimes \mathscr{P}(M_1) \otimes \mathscr{P}(M_2)} T_0^{\textit{cw}}(U_1) \otimes Q(M_1) \otimes T_0^{\textit{cw}}(V_1) \otimes T_0^{\textit{cw}}(U_2) \\ \otimes  Q(M_2) \otimes  \otimes T_0^{\textit{cw}}(V_2)
        \xrightarrow{\mathscr{E}(M_1) \otimes \mathscr{E}(M_2) \otimes \mathbf{1}} T_0^{\textit{cw}}(V_1) \otimes T_0^{\textit{cw}}(V_2).
    \end{aligned}
\end{equation}    

Now, look at the following composition of maps:

\begin{equation}
    \begin{aligned}
        T_0^{\textit{cw}}(U_1) \otimes T_0^{\textit{cw}}(U_2) \xrightarrow{\mathbf{1} \otimes \mathscr{P}(M_1) \otimes \mathbf{1}} T_0^{\textit{cw}}(U_1) \otimes Q(M_1) \otimes T_0^{\textit{cw}}(V_1) \\ \otimes T_0^{\textit{cw}}(U_2) \xrightarrow{\mathscr{E}(M_1)\otimes \mathbf{1} \otimes \mathbf{1}} T_0^{\textit{cw}}(V_1) \otimes T_0^{\textit{cw}}(U_2) \xrightarrow{\mathbf{1} \otimes \mathbf{1} \otimes \mathscr{P}(M_2)} T_0^{\textit{cw}}(V_1)\\ \otimes T_0^{\textit{cw}}(U_2) \otimes Q(M_2) \otimes T_0^{\textit{cw}}(V_2)
        \xrightarrow{\mathbf{1} \otimes \mathscr{E}(M_2) \otimes \mathbf{1}} T_0^{\textit{cw}}(V_1) \otimes T_0^{\textit{cw}}(V_2).
    \end{aligned}
\end{equation}    
       
This is the map $(\mathbf{1} \otimes T_0^{\textit{cw}}) \circ (T_0^{\textit{cw}} \otimes \mathbf{1})$ on $U_1 \otimes U_2$, but it also equals:

      $$(\mathbf{1} \otimes \mathscr{E}(M_2) \otimes \mathbf{1}) \circ (\mathbf{1} \otimes \mathbf{1} \otimes \mathscr{P}(M_2) ) \circ (\mathscr{E}(M_1) \otimes \mathbf{1} \otimes \mathbf{1}) \circ (\mathbf{1} \otimes \mathscr{P}(M_1) \otimes \mathbf{1}).$$

The two terms in the middle can be interchanged as a consequence of the interchange law in the monoidal category $\text{Vect}_F(\mathbb{C})$. This gives,
  
  $$(\mathbf{1} \otimes \mathscr{E}(M_2) \otimes \mathbf{1}) \circ  (\mathscr{E}(M_1) \otimes \mathbf{1} \otimes \mathbf{1}) \circ (\mathbf{1} \otimes \mathbf{1} \otimes \mathscr{P}(M_2) ) \circ  (\mathbf{1} \otimes \mathscr{P}(M_1) \otimes \mathbf{1}),$$ but that equals to:

  $$(\mathscr{E}(M_1) \otimes \mathscr{E}(M_2) \otimes \mathbf{1}) \circ (\mathbf{1} \otimes \mathscr{P}(M_1) \otimes \mathscr{P}(M_2)).$$

Comparing this with \cref{proof-eqn-3} we get the desired result.
  
\end{proof}

Alternatively, one could first prove, using a suitable PLCW decomposition $T_0^{\textit{cw}}(1 \otimes M) = \mathbf{1} \otimes T_0^{\textit{cw}}(M)$. Then functoriality and interchange law to prove \cref{Prop:fusion}:

\begin{equation*}
    \begin{split}
        T_0^{\textit{cw}}(M_1 \otimes M_2) & = T_0^{\textit{cw}}((M_1 \otimes 1) \circ (1 \otimes M_2)) \\
        & = T_0^{\textit{cw}}((M_1 \otimes 1)) \circ T_0^{\textit{cw}}((1 \otimes M_2)) \hspace{2mm} \\
        & = (T_0^{\textit{cw}}(M_1)\otimes \mathbf{1}) \circ (\mathbf{1} \otimes T_0^{\textit{cw}}(M_2)) \\
        & = T_0^{\textit{cw}}(M_1) \otimes T_0^{\textit{cw}}(M_2) \hspace{2mm}
    \end{split}
\end{equation*}


\section{Finitely presented groups and defects}

As mentioned in the caption for \cref{basic_gons} one can read off all the ingredients of the defect conditions $\mathcal{D}$ of the category $\textit{Bord}_2^{\text{def}}(\mathcal{D})$ by treating all the basic-gons in the category $\textit{Bord}_2^{\text{def, cw}}(\mathcal{D})$ as defect-disks from \cref{local-junctions}. The basic-gons of type (iii) gives the map $\psi_{1,2}$, while the map $\psi_{0,1}$ is obtained from the basic gons of type (iv) as explained in the caption of \cref{basic_gons}. Moreover, if $t(x) = s(x)$ for some $x \in D_1$, then $x$ can not be distinguished from $x^{-1}$ and we can get rid of direction (orientation; note that this condition is trivially satisfied if the set $D_2$ is a singleton.) In other words, the category $\textit{Bord}_2^{\text{def, cw}}(\mathcal{D})$ can be fully specified by specifying all the basic-gons that can appear in this category. Hence, using the forgetful functor in \cref{plcwcd} the category $\textit{Bord}_2^{\textit{def}}(\mathcal{D})$ can also be fully specified in this manner.

\subsection{Surface of defects from a Group presentation} \label{sec:groups}

With that in mind, given a finitely presented group $G$, with the presentation $P_G \coloneqq \langle B_G \mid R_G \rangle$ we define a collection of basic-gons for each (trivial or non-trivial) relations in $R_G$ as shown in \cref{fig:plcwgrp}, and use this to define a set of defect conditions by treating basic-gons as defect disks.

\begin{figure}[ht]
        \centering
        \includegraphics[scale=0.5]{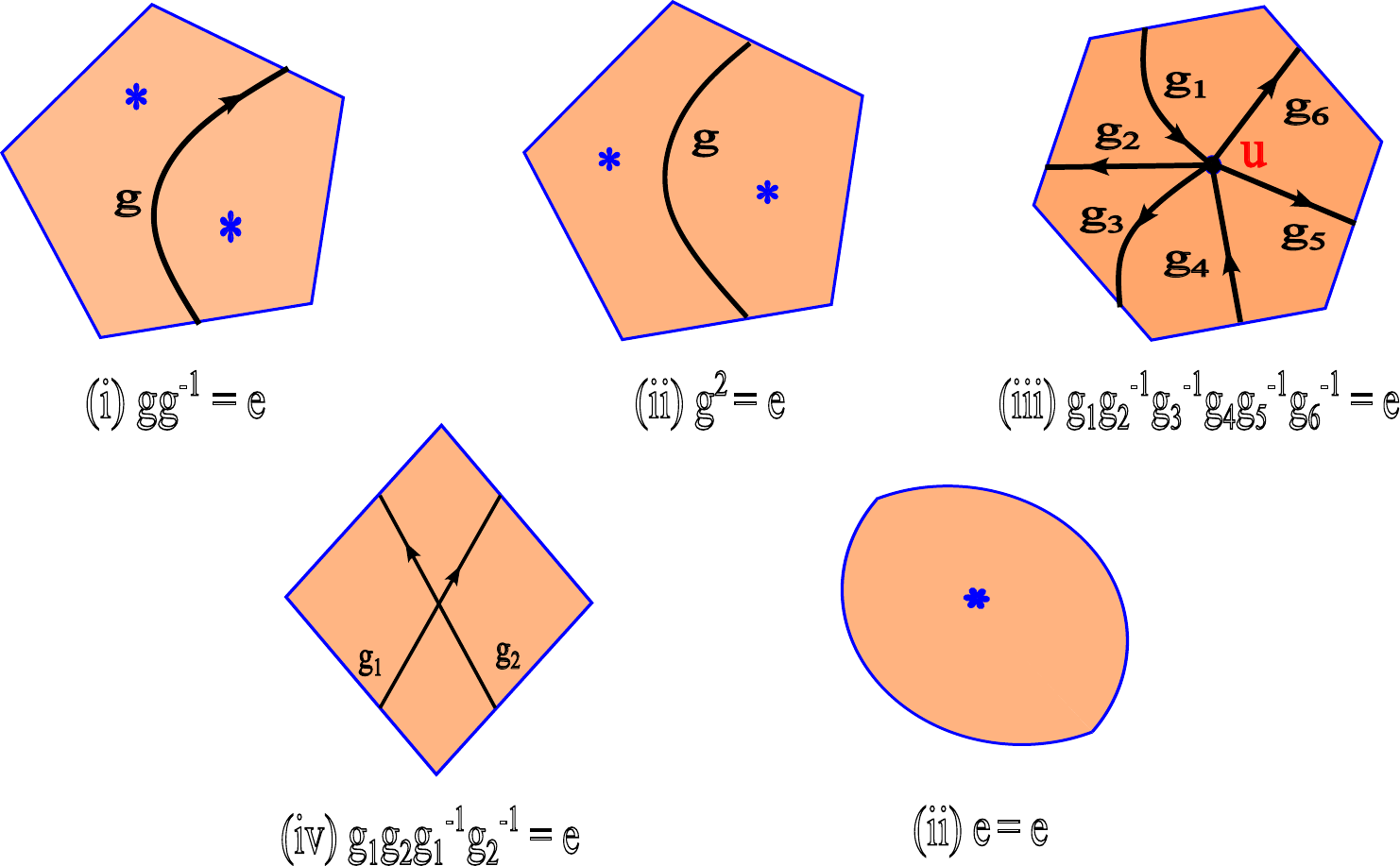}
        \caption{The condition for diagram (ii) is trivially satisfied when $D_2$ is a singleton. Diagram (iv) depicts the commutator relation. There is a special vertex there which is characterised by the property that it is idempotent under the operation of vertical-composition. See the caption below \cref{fig:twist}.}
        \label{fig:plcwgrp}
\end{figure}

\begin{defi} \label{def:group_condns}
    Given a finitely presented group $G$ with the presentation $P_G \coloneqq \langle B_G \mid R_G \rangle$, we define a set of defect conditions $\mathcal{P_G} \coloneqq (D_2, D_1, D_0, \psi_{\{1,2\}}, \psi_{\{0,1\}})$ as follows:
    \begin{enumerate}
    
        \item The set $D_2$ is a singleton, $\{\ast\}$.
        \item The set $D_1$ is the set of generators $B_G$, and
        \item The set $D_0$ is in one-to-one correspondence with the words in the relation $R_G$, i.e., if $\mathit{r} \in R_G$, then $D_0 \coloneqq \{u_{\mathit{r}} \mid \mathit{r} \in R_G\}$.
        \item The map $\psi_{\{1,2\}}: X_1 \to D_2 \times D_2$ is trivial, i.e., for all $g \in B_G, \psi_{\{1,2\}}: g^{\epsilon} \mapsto (\ast, \ast)$.
        \item For each relation $\mathit{r} \in R_G$ such that $\mathit{r} = g_{i_1}^{\epsilon_1} \dots g_{i_m}^{\epsilon_m}$, form the map $$\psi_{\{0,1\}}: u_{\mathit{r}} \mapsto [(g_{i_1}^{\epsilon_1}, \dots , g_{i_m}^{\epsilon_m})], $$ and define $\psi_{\{0,1\}}(u_r^{-1}) \coloneqq \psi_{\{0,1\}}(u_{r^{-1}})$.
        
    \end{enumerate}
\end{defi}

We need the following: 

\begin{prop} \label{prop:plcwgrp}
    \cref{def:group_condns} is well defined.
\end{prop}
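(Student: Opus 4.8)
The plan is to verify that the tuple $\mathcal{P_G}=(D_2,D_1,D_0,\psi_{\{1,2\}},\psi_{\{0,1\}})$ assembled in \cref{def:group_condns} satisfies every requirement of \cref{condns}. Concretely there are three things to check: (a) $\psi_{\{1,2\}}$ is a well-defined map $X_1\to D_2\times D_2$ obeying the first orientation-consistency condition; (b) $\psi_{\{0,1\}}$ is a well-defined map $X_0\to\bigsqcup_{m\ge 0}(X_1)^m/C_m$, which in particular requires that the extension $\psi_{\{0,1\}}(u_r^{-1}):=\psi_{\{0,1\}}(u_{r^{-1}})$ to $\bar D_0$ be unambiguous; and (c) $\psi_{\{0,1\}}$ obeys the second orientation-consistency condition. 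Items (a) and most of (b) are immediate, and the only real content lies in the interaction between the rule defining $\psi_{\{0,1\}}$ on $\bar D_0$ and condition (2).

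First I would dispose of (a). Since $D_2=\{\ast\}$ is a singleton, the assignment $g^{\epsilon}\mapsto(\ast,\ast)$ is the only option and is trivially a function $X_1\to D_2\times D_2$; the first orientation-consistency condition, which here reads $\psi_{\{1,2\}}(g^{-\epsilon})=(\ast,\ast)$, then holds automatically. (This is the content of diagram (ii) of \cref{fig:plcwgrp}.) In particular $s=t$ on all of $X_1$, so orientations of $1$-strata carry no information, consistently with the remarks preceding \cref{def:group_condns}. For $\psi_{\{0,1\}}$ on $D_0$: a relator $r\in R_G$ is a fixed word $g_{i_1}^{\epsilon_1}\cdots g_{i_m}^{\epsilon_m}$ in the alphabet $X_1=B_G\sqcup\bar B_G$, and the fifth clause of \cref{def:group_condns} returns its class $[(g_{i_1}^{\epsilon_1},\dots,g_{i_m}^{\epsilon_m})]\in(X_1)^m/C_m$. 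This is well defined precisely because the codomain is built out of $C_m$-orbits, so the one genuine choice --- which cyclic rotation of $r$ one writes down --- is already quotiented away; note that $\psi_{\{0,1\}}$ need not be injective, since two relators that are cyclic rotations of one another receive the same value, and that causes no trouble. If, in addition, one wants every $0$-stratum incident to at least two $1$-strata as in \cref{conv:locally_CS}, a Tietze reduction to relators of length $m\ge 2$ suffices; this is a convenience, not a requirement for well-definedness of $\mathcal{P_G}$.

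The step that needs care is the extension of $\psi_{\{0,1\}}$ to $\bar D_0$ together with the second orientation-consistency condition. I would first make precise that the symbol $\psi_{\{0,1\}}(u_{r^{-1}})$ denotes the value the fifth clause of \cref{def:group_condns} assigns to the \emph{formal inverse word} $r^{-1}=g_{i_m}^{-\epsilon_m}\cdots g_{i_1}^{-\epsilon_1}$, namely $[(g_{i_m}^{-\epsilon_m},\dots,g_{i_1}^{-\epsilon_1})]$; this makes sense whether or not $r^{-1}\in R_G$, and when $r^{-1}\in R_G$ the independently defined element $u_{r^{-1}}\in D_0$ receives exactly this same class, so there is no clash. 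Writing $x_k^{\epsilon_k}:=g_{i_k}^{\epsilon_k}$, one then has $\psi_{\{0,1\}}(u_r)=[(x_1^{\epsilon_1},\dots,x_m^{\epsilon_m})]$ and $\psi_{\{0,1\}}(u_r^{-1})=[(x_m^{-\epsilon_m},\dots,x_1^{-\epsilon_1})]$, which is exactly the relation demanded by condition (2) of \cref{condns}; applying that condition a second time returns $[(x_1^{\epsilon_1},\dots,x_m^{\epsilon_m})]$, so the rule is involutive and self-consistent even in degenerate cases such as $r^{-1}=r$ (for instance $r=gg^{-1}$). Finally, condition (2) is itself compatible with the $C_m$-quotient --- cyclically rotating the source tuple cyclically rotates the reversed-and-inverted tuple --- so all of these equalities hold at the level of classes.

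The main obstacle, then, is not a deep argument but the bookkeeping just described: one must fix the reading of $\psi_{\{0,1\}}(u_{r^{-1}})$ so that it agrees simultaneously with the fifth clause of \cref{def:group_condns} applied to $r^{-1}$ and with the second orientation-consistency condition, and then confirm that this reading survives the degenerate configurations ($r^{-1}\in R_G$, $r^{-1}=r$ as a word, short relators). Once this is settled, I would close by observing that feeding $\mathcal{P_G}$ into \cref{local-junctions} reproduces exactly the basic-gons drawn in \cref{fig:plcwgrp}, so $\mathcal{P_G}$ is a genuine $2$-defect condition and \cref{def:group_condns} is well posed.
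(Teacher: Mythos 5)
Your proposal is correct and takes essentially the same route as the paper: $\psi_{\{1,2\}}$ is trivially well defined because $D_2=\{\ast\}$ is a singleton, and the conditions on $\psi_{\{0,1\}}$ are verified from the identity $r^{-1}=g_{i_m}^{-\epsilon_m}\cdots g_{i_1}^{-\epsilon_1}$ together with the $C_m$-quotient in the codomain. The only divergence is bookkeeping: the paper disposes of the inverse labels by invoking closure of $R_G$ under cyclic permutation and inversion, whereas you spell out how to read $\psi_{\{0,1\}}(u_{r^{-1}})$ as the value on the formal inverse word even when $r^{-1}$ is not a listed relator and check the degenerate cases explicitly, which is a slightly more careful rendering of the same argument.
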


\begin{proof}
    We only need to check that the maps $\psi_{1,2}$ and $\psi_{0,1}$ are well-defined and satisfy the orientation consistency conditions of \cref{condns}. Since the set $D_2$ is singleton, the map $\psi_{1,2}$ is well-defined and trivially satisfies the orientation consistency condition. The fact that $\psi_{0,1}$ is well-defined and satisfies the orientation-consistency condition follows from the following easy facts about groups:
    \begin{itemize}
        \item If a word $g_{i_1}^{\epsilon_1} \dots g_{i_n}^{\epsilon_1}$ is in $R_G$ then so is any cyclic permutation and inverse of it.
        \item If $\mathit{r} \in R_G$ is such that $\mathit{r} = g_{i_1}^{\epsilon_1} \dots g_{i_m}^{\epsilon_m}$, then  $\mathit{r}^{-1} = g_{i_m}^{-\epsilon_m} \dots g_{i_1}^{-\epsilon_1}$.
    \end{itemize}

\end{proof}

The basic-gon (iv) in \cref{fig:plcwgrp} can be better understood in terms of the interpretation of the data of $D_0$ given in ~\ref{vertex-hole}. The basic-gon (iv) should be considered a $2$-morphism $\tau: X \otimes Y \to Y \otimes X$ with the property: $\tau \circ \tau = I$. Where '$\circ$' is the vertical composition of $2$-morphisms. Diagrams in \cref{fig:twist} explain it better by drawing an analogy with the identity map. See also \cref{sec:word-problem_theory}, \cref{sec:word_problem_future}, and \cref{sec:ribbon_graph}.

\begin{figure}
    \centering
    \includegraphics[scale=0.7]{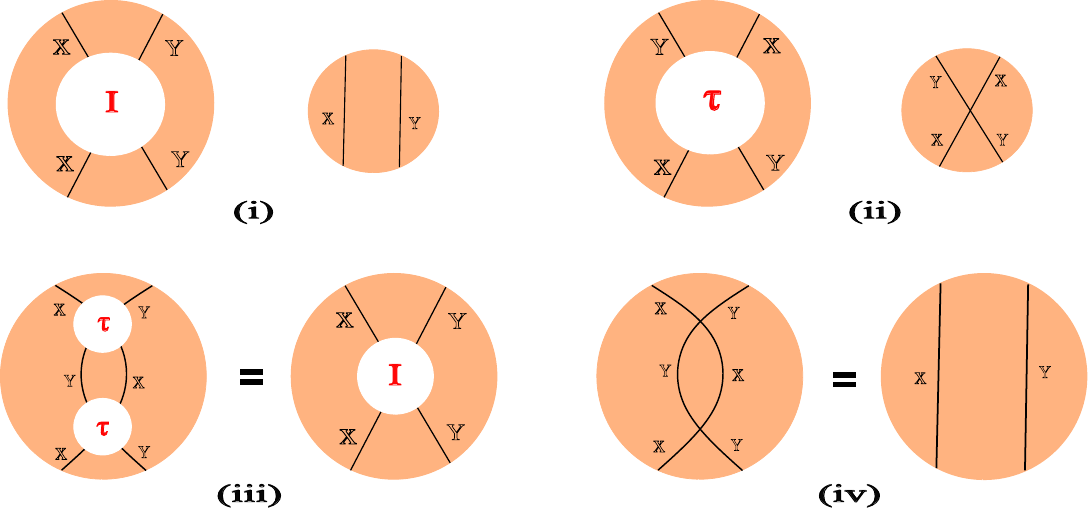}
    \caption{Diagram (ii) represent a $2$-morphism $\tau: X \otimes Y \to Y \otimes X$ in the same manner as Diagram (i) represent the identity $2$-morphism $I: X \otimes Y \to X \otimes Y$. Diagram (iv) shows the interpretation of Diagram (iii) in terms of the filled-in discs analogy of $2$-morphism mentioned in the paragraph below ~\ref{vertex-hole}.}
    \label{fig:twist}
\end{figure}

\begin{rema}
    It is not difficult to notice that \cref{def:group_condns}, (5) can also be flipped, and the tuple on the right-hand side of $\psi_{\{0,1\}}(u^{\epsilon})$ in \cref{condns} (2) can be written as a word.
\end{rema}

\begin{rema}
    $D_2$ does not have to be a singleton. Any set $D_2$ for which the map $\psi_{1,2}: X_1 \to D_2 \times D_2$ is well-defined and satisfies orientation consistency condition can be taken as $D_2$. However, we do not pursue it in this manuscript.
\end{rema}

\begin{defi} \label{associated-def}

    Given a group $G$ and a presentation $P_G \coloneqq \langle B_G \mid R_G \rangle$ we define the category $\textit{Bord}_2^{\text{def, cw}}(\mathcal{P_G})$ (and $\textit{Bord}_2^{\text{def}}(\mathcal{P_G})$) as follows:
    \begin{enumerate}
        \item as a category it is $\textit{Bord}_2^{\text{def}}(\mathcal{D})$ where the defect condition$\mathcal{D}$ is $$\mathcal{P_G} \coloneqq (D_2, D_1, D_0, \psi_{\{1,2\}}, \psi_{\{0,1\}})$$ from \cref{def:group_condns}.
        \item The basic-gons corresponds to words in $R_G$ as in \cref{fig:plcwgrp}.
    \end{enumerate}
    
\end{defi}

In simple language, the category $\textit{Bord}_2^{\text{def, cw}}(\mathcal{P_G})$ has morphisms as surfaces with defects with a PLCW decomposition such that each generalized cell looks like one of the basic-gons in \cref{fig:plcwgrp}, and the category   $\textit{Bord}_2^{\text{def}}(\mathcal{P_G})$ is obtained from the category $\textit{Bord}_2^{\text{def, cw}}(\mathcal{P_G})$ by using the forgetful functor $F$ from the \cref{plcwcd}.

\begin{rema}
    The mathematical object we have discussed in \cref{def:group_condns} and \cref{associated-def} is not as new as it may seem. To see this, let us recall that the string diagram is dual (in the sense of a graph) to the usual pasting diagram of a category [cf ~\cite{barrett2024graycategoriesdualsdiagrams}, Section 2]. Then we see that the pasting diagram for the $2$-category of defect conditions $\mathcal{P_G}$ consists of a single vertex (since $D_2$ is a singleton), loops labeled by the generators of $B_G$, and, finally, the dual of a diagram of type (ii) \cref{fig:defect_orient} is polygon with edges labeled by relations in a cyclic order, which is just a topological disk to be attached to the loops labeled by generators according to relations in $R_G$. This is [~\cite{hatcher2002algebraic}, Corollary 1.28] and is related to the concept of Eilenberg-MacLane space [~\cite{hatcher2002algebraic}, Section 1.B, Section 4.2.]
\end{rema}

.

\begin{exam} \label{exam:coloring}
    Let $K_4 $ be the \textit{Klein four-group} with the presentation $ P_{K_4} \coloneqq \langle a, b, c \mid a^2 = b^2 = c^2 = abc = 1 \rangle$. The basic-gons for the category $\textit{Bord}_2^{\text{def, cw}}(\mathcal{P}_{K_4})$ are shown in \cref{fig:colored}.
  
     \begin{figure}
        
        \centering
        \includegraphics[scale=0.5]{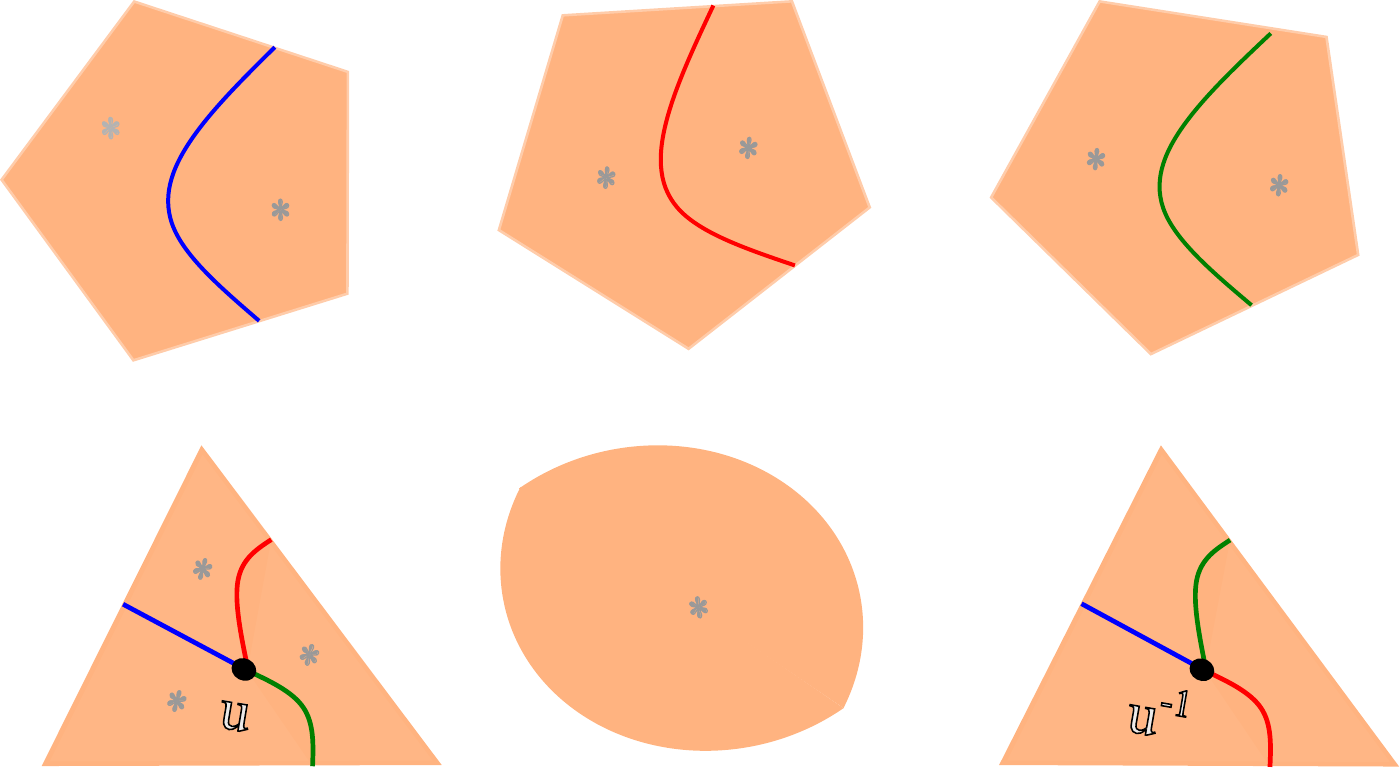}
        \caption{We have denoted $a, b, c$ by colors blue, red, green respectively.}
        \label{fig:colored}

     \end{figure}

Although, we have not defined \textit{coloring} yet (we will do it in the next section) relying on pictures \cref{fig:colored} for now, we note that a surface with defect $\hat{\Sigma}$ in $\textbf{Mor}(\textit{Bord}_2^{\text{def, cw}}(\mathcal{P}_{K_4}))$ is precisely a pair $(\Sigma, \Gamma)$ where $\Gamma$ is a trivalent $3$-edge colorable graph embedded in $\Sigma$. Here by a coloring of an edge $\hat{e}$ of $\Gamma$ we mean the image of $\hat{e}$ under $d$ in $B_{K_4}$, which is the set $\{a, b, c\}$.

\end{exam}    

\begin{exam}
    Consider the symmetric group $S_n$ with the presentation 

   \[
        S_N = \left\langle \tau_1, \dots , \tau_{n-1} \Bigg|
        \begin{array}{lr}
        \tau_i\tau_j = \tau_j\tau_i \hspace{3mm} \mid i - j \mid > 1\\
        \tau_i\tau_{i+1}\tau_i \hspace{2.5mm} = \hspace{2.5mm} \tau_{i+1}\tau_{i}\tau_{i+1} \\
          \hspace{10mm}\tau_i^2 \hspace{1mm} = \hspace{1mm} 1
        \end{array}\right\rangle
  \]    
    
    The basic-gons for the category $\textit{Bord}_2^{\text{def, cw}}(\mathcal{S}_4)$ are given in \cref{fig:hexagonal}.

    \begin{figure}
        \centering
        \includegraphics[scale=0.5]{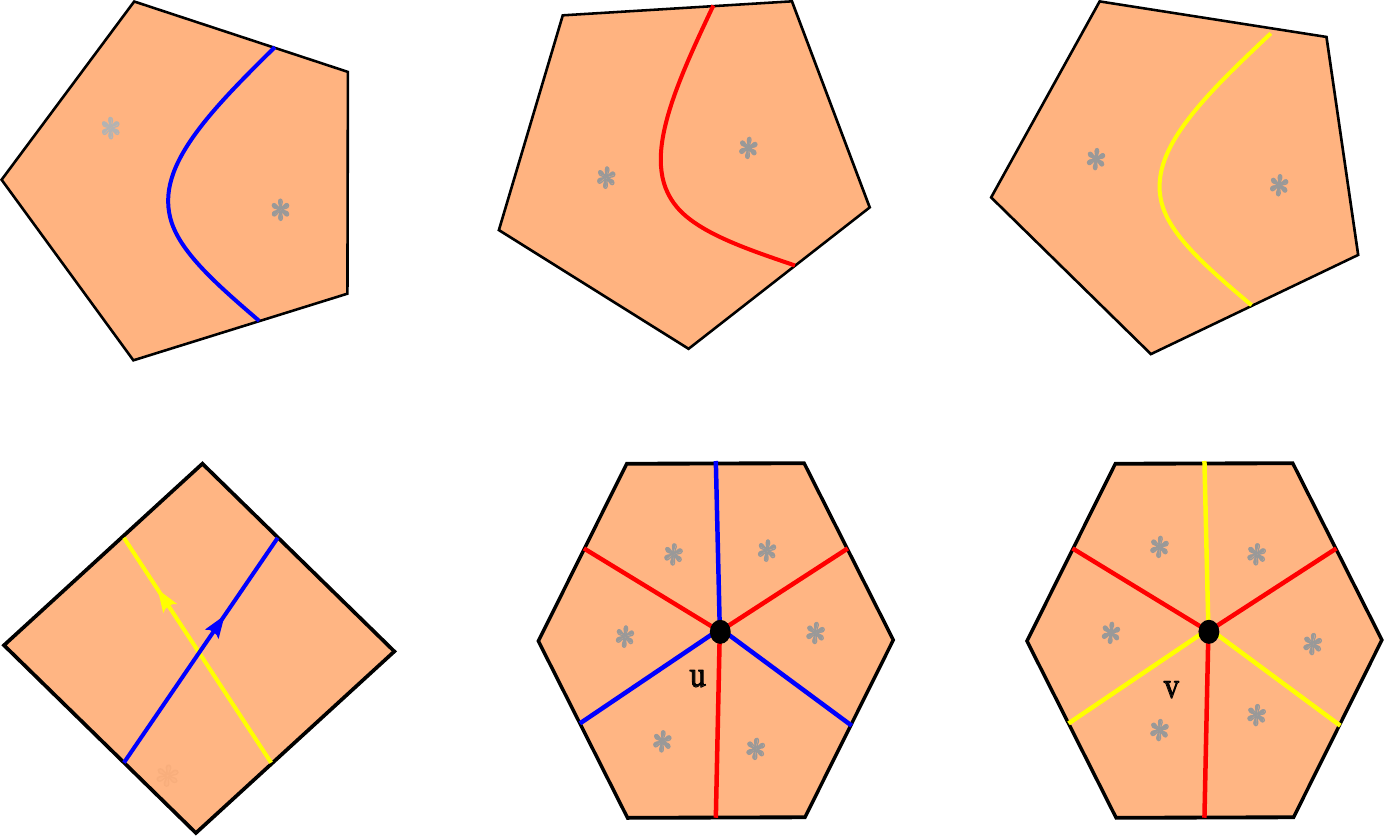}
        \caption{Baisc-gons in  $\textit{Bord}_2^{\text{def, cw}}(\mathcal{S}_4)$. Each generator has been represented by a different color: $\tau_1$ by blue, $\tau_2$ by red, and $\tau_3$ by yellow.}
        \label{fig:hexagonal}
    \end{figure}

The class $\textbf{Mor}(\textit{Bord}_2^{\text{def, cw}}(\mathcal{S}_N))$ consists of $N$-graphs with only hexagonal vertices. It is worth mentioning that a general $N$-graph also has trivalent vertices. Note that $N$-graphs were introduced in ~\cite{casals2023legendrian} where they were used to construct Legendrian surfaces in the first jet space of the underlying surface of the $N$-graph. We do not make this construction explicit here.

\end{exam}

\begin{defi} \label{def-aleph}

    Given $D_2 = \{\ast\}$ and $D_1 = \{e\}$, both singletons, we define a category $\textit{Bord}_2^{\textit{def, cw}}(\mathcal{D}^{\aleph})$ with the property that it has a basic-gon of type-(iii) with $n$ lines for every $n \geq 2$. For a fixed $n \in \mathbb{Z}_{+}$, define the subcategory $\textit{Bord}_2^{\textit{def, cw}}(\mathcal{D}^{\mathbf{n}})$ with the property that the only basic-gon of type-(iii) it has are those with $n $ $1$-strata. Similarly, the category $\textit{Bord}_2^{\textit{def,cw}}(\mathcal{D}_{+}^{\mathbf{n}})$ is the category with $n$-regular undirected graphs.
\end{defi}

\begin{defi} \label{defn:bleach-cat}

    The \textit{forgetful functor} $$\Pi^{cw} : \textit{Bord}_2^{\textit{def, cw}}(\mathcal{P}_G) \to \textit{Bord}_2^{\textit{def, cw}}(\mathcal{D}^{\aleph})$$ is defined as follows:
    \begin{itemize}
        \item on \textbf{objects} it changes the labels on a (disjoint union of) circles from $g^{\epsilon} \in B_{G}$ to $e^{\epsilon}$, and
        \item on \textbf{morphism} it is defined via its action on basic-gons, where it changes a label of $1$-strata from $g^{\epsilon} \in B_{G}$ to $e^{\epsilon}$.
    \end{itemize}
    
\end{defi}

Again, referring to pictures, the map $\Pi^{cw}$ can be thought of as \textit{bleaching}: all the colors on $1$-dimensional strata are forgotten, i.e., get replaced by $e$ without forgetting the signs.

Given two objects $O_1$ and $O_2$ in $\textit{Bord}_2^{\textit{def,cw}}(\mathcal{P}_G)$, the assignment $\Sigma \mapsto \Pi^{cw}(\Sigma)$ induces a function  

 \begin{equation} \label{bleach}
     \pi^{cw} : \text{Mor}(\textit{Bord}_2^{\textit{def,cw}}(\mathcal{P}_G))(O_1, O_2) \to  \text{Mor}(\textit{Bord}_2^{\textit{def,cw}}(\mathcal{D}^{\aleph})(\Pi^{cw}(O_1),  \Pi^{cw}(O_2).
\end{equation}

Since a surface $\Sigma$ in the set $\text{Mor}(\textit{Bord}_2^{\textit{def,cw}}(\mathcal{P}_G))$ includes the information about the source and target objects in its boundaries via the cobordism $\begin{tikzcd}[scale cd =0.5] & \hat{\Sigma} \arrow[dr] & \\ O_1 \arrow[ur] &  & O_2 \end{tikzcd}$, we simply write \cref{bleach} as:
\begin{equation} \label{bleach-reduced}
     \pi^{cw} : \text{Mor}(\textit{Bord}_2^{\textit{def,cw}}(\mathcal{P}_G)) \to  \text{Mor}(\textit{Bord}_2^{\textit{def,cw}}(\mathcal{D}^{\aleph}).
\end{equation}

We conclude this subsection with the following remarks:

\begin{rema}

    The category $\textit{Bord}_2^{\textit{def}}(\mathcal{D}^{\aleph})$, which is obtained from $\textit{Bord}_2^{\textit{def, cw}}(\mathcal{D}^{\aleph})$ using the forgetful functor $F$ in \cref{plcwcd}, is the category that uses single defects for both $D_2$, $D_1$ while adjusting $D_0$ accordingly.
    
\end{rema}

\begin{rema} \label{rema:pre-bleach}

    The functor $\Pi^{cw}$ from \cref{defn:bleach-cat} induces a forgetful functor, $$ \Pi: \textit{Bord}_2^{\textit{def}}(\mathcal{P_G}))\to \textit{Bord}_2^{\textit{def}}(\mathcal{D}^{\aleph}))$$ in a canonical way, namely the following diagram commutes on the level of functors.

    \begin{equation} \label{eqn:plcwcd-bleach}
    \begin{tikzcd}
    \textit{Bord}_2^{\text{def, cw}}(\mathcal{P_G}) \arrow[rr, "\hat{\Pi^{cw}}"] \arrow[d, "F"] &  & \textit{Bord}_2^{\text{def,cw}}(\mathcal{D}^{\aleph}) \arrow[d, "F"]\\
     \textit{Bord}_2^{\text{def}}(\mathcal{P_G}) \arrow[rr, "\Pi"] &  & \textit{Bord}_2^{\text{def}}(\mathcal{D}^{\aleph})    
     \end{tikzcd}
    \end{equation}
There is also a function

\begin{equation} 
     \pi : \text{Mor}(\textit{Bord}_2^{\textit{def}}(\mathcal{P}_G)) \to  \text{Mor}(\textit{Bord}_2^{\textit{def}}(\mathcal{D}^{\aleph})
\end{equation}

analogous to the functor $\pi^{cw}$ in \cref{bleach-reduced}

\end{rema}

By abuse of notation, we will refer to both $\pi^{cw}$ and $\pi$ as \textit{bleach}.

\subsection{A special trivial surrounding theory} \label{chi-cw}

This section aims to give an example of a lattice TFT that is a trivial surrounding theory, introduced in \cref{trivial_theory}. In what follows, let $X$ be a $\mathbb{C}$-vector space generated by $a, b$ and $c$. Further let $X^{\ast}$ be the dual vector space with corresponding dual basis $a^{\ast}, b^{\ast}$ and $c^{\ast}$. We assume $X \cong X^{\ast}$ via the induced inner product. A choice of such an $X$ gives meaning to the fact that on an undirected graph, one can choose any direction when calculating a TFT.

\begin{defi} \label{color_direct_TFT}
Let $X$ be the vector space $\mathbb{C}\langle a, b, c \rangle$ as in \cref{Tait_coloring}. We define the trivial surrounding theory
$\chi^{cw}: \textit{Bord}_2^{\textit{def,cw}}(\mathcal{D}^{\mathbf{3}}) \to \text{Vect}_F(\mathbb{C})$, with the properties that
\begin{itemize}
    \item it assigns to a $1$-cell $e$ containing the single defect, the vector space $R_e = X$, and
    \item to a trivalent vertex $u$, the map $\mu: X \otimes X \otimes X \to \mathbb{C}$ as defined in \cref{Tait_coloring}.
\end{itemize}

\begin{rema}
    A consequence of \cref{color_direct_TFT} is that $\chi^{cw}$ assigns to a circle with $n$-defects, the vector space $X^{\otimes n}$.
\end{rema}

\begin{rema} \label{color_TFT}
    Since $X \cong X^{\ast}$, $\chi^{cw}$ passes to a functor,
    \begin{equation}
        \chi^{cw}: \textit{Bord}_2^{\textit{def,cw}}(\mathcal{D}_{+}^{\mathbf{3}}) \to \text{Vect}_F(\mathbb{C}),
    \end{equation}
    which we also write as $\chi^{cw}$ by the abuse of notation. The category $\textit{Bord}_2^{\textit{def,cw}}(\mathcal{D}_{+}^{\mathbf{3}})$ was defined in \cref{def-aleph}. (To compute with it, we choose any orientation for the $1$-strata.)
\end{rema}

\end{defi}

Now, we compute the value of $\chi^{cw}$ on some simple patterns and basic-gons. We begin with $\chi^{cw}(P_0)$ for the polygon $P_0 : U \to V$ as shown below:

\begin{equation} \label{Pattern-1}
    \centering
    \includegraphics[scale=0.5]{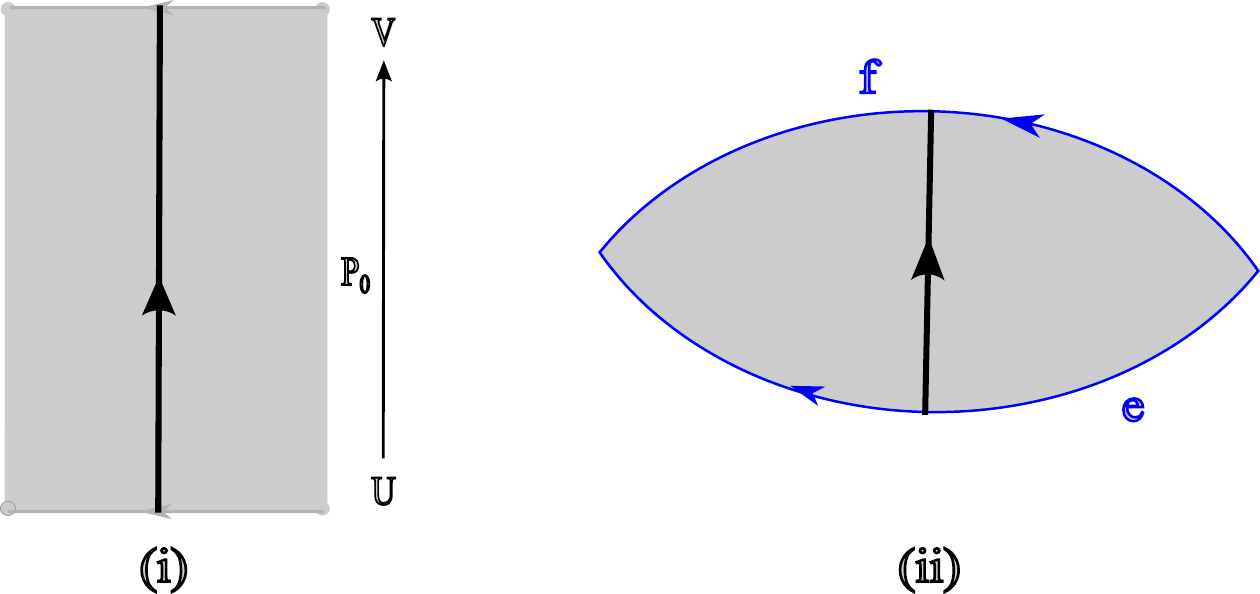}
\end{equation}

In the light of \cref{Thm:simplest}, the cell-decomposition in Diagram (ii) can be used to calculate $\chi^{\textit{cw}}(P_0)$. First, we see that both of $\chi^{\textit{cw}}(U)$ and $ \chi^{\textit{cw}}(V)$ is $X$, which gives $\chi^{\textit{cw}}(P_0): X \to X$. What is this map? This can be calculated using the following composition:
\begin{equation*}
    \chi^{\textit{cw}}(P_0): \chi^{\textit{cw}}(U) \xrightarrow{\mathbf{1} \otimes \mathscr{P}(P_0)} \chi^{\textit{cw}}(U) \otimes Q(P_0) \otimes \chi^{\textit{cw}}(V) \xrightarrow{\mathscr{E}(P_0) \otimes \mathbf{1}} \chi^{\textit{cw}}(V) 
\end{equation*}

We need the data:

\begin{center}
\begin{tabular}{ |c |c| c |}
\hline
 $(P_0, e, \mathfrak{O})$ & $(P_0, e, -)$ & $(P_0, f, +)$ \\ 
\hline
 $Q_{(P_0, e, \mathfrak{O})}$ & $X_e$ & $X_f^{\ast}$ \\ 
\hline 
\end{tabular}
\end{center}

From this we get $Q(P_0) = Q_{(P_0, f, +)}$ and $\mathscr{P(P_0)} = \mathscr{P}_f$ which is the co-pairing map $\mathbb{C} \to X_f^{\ast} \otimes X_f$. This leads to

\begin{equation*}
    \chi^{\textit{cw}}(P_0): X_e \xrightarrow{1 \otimes \mathscr{P}(P_0)} X_e \otimes X_f^{\ast} \otimes X_f \xrightarrow{\mathscr{E}(P_0) \otimes \mathbf{1}} X_f 
\end{equation*}

which is explicitly given by:

\begin{equation*}
    v \xrightarrow{1 \otimes \mathscr{P}(P_0)} v \otimes (a_f^{\ast} \otimes a_f + b_f^{\ast} \otimes b_f + c_f^{\ast} \otimes c_f) \xrightarrow{\mathscr{E}(P_0) \otimes \mathbf{1}} a_f^{\ast}(v)a_f + b_f^{\ast}(v)b_f + c_f^{\ast}(v)c_f 
\end{equation*}

Therefore the map

\[ \chi^{\textit{cw}}(P_0) :
  \begin{cases}
    a \mapsto a\\
    b \mapsto b\\
    c \mapsto c
  \end{cases}
\]

Hence $\chi^{\textit{cw}}(P_0) = \mathbf{1}$.

Next, we consider $P_{\gamma}: U \to V$ shown below:

\begin{equation} \label{Pattern-2}
    \centering
    \includegraphics[scale=0.5]{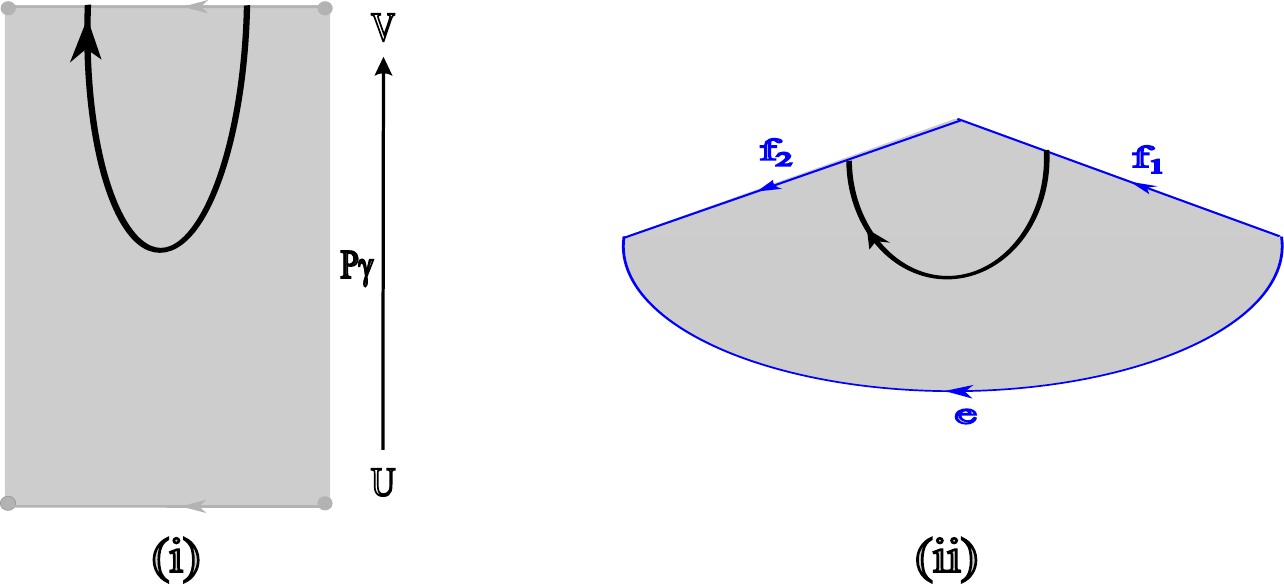}
\end{equation}

Again, we use the cell-decomposition (ii) to calculate $\chi^{\textit{cw}}(P_{\gamma})$. In this case we have $\chi^{\textit{cw}}(U) = \mathbb{C}_e$ and $ \chi^{\textit{cw}}(V) = X_{f_2} \otimes X_{f_1}^{\ast}$, which gives $\chi^{\textit{cw}}(P_{\gamma}): \mathbb{C} \to X \otimes X^{\ast}$. To determine this map, the following data is needed:

\begin{center}
\begin{tabular}{ |c|c|c|c|}
\hline
 $(P_{\gamma}, e, \mathfrak{O})$ & $(P_{\gamma}, e, -)$ & $(P_{\gamma}, f_1, +)$ & $(P_{\gamma}, f_2, +)$ \\ 
\hline
 $Q_{(P_{\gamma}, e, \mathfrak{O})}$ & $\mathbb{C}_e$ & $X_{f_1}$ & $X_{f_2}^{\ast}$ \\ 
\hline 
\end{tabular}
\end{center}

From this we get $Q(P_{\gamma}) = Q_{(P_{\gamma}, f_1, +)} \otimes Q_{(P_{\gamma}, f_2, +)}$ and $\mathscr{P(P_{\gamma})} = \mathscr{P}_{f_1} \otimes \mathscr{P}_{f_2}$ with

$$\begin{matrix}
    \mathscr{P}_{f_1}: & \mathbb{C} \to X_{f_1} \otimes X_{f_1}^{\ast} & \mathscr{P}_{f_2} & : \mathbb{C} \to X_{f_2}^{\ast} \otimes X_{f_2}
\end{matrix}$$

Therefore,

\begin{equation*}
    \chi^{\textit{cw}}(P_{\gamma}): \mathbb{C}_e \xrightarrow{1 \otimes \mathscr{P}(P_{\gamma})} \mathbb{C}_e \otimes X_{f_1} \otimes X_{f_2}^{\ast} \otimes X_{f_2} \otimes X_{f_1}^{\ast} \xrightarrow{\mathscr{E}(P_0) \otimes \mathbf{1}} X_{f_2} \otimes X_{f_1}^{\ast} 
\end{equation*}

among which $\mathbf{1}\otimes \mathscr{P}(P_{\gamma}):$

\begin{equation*}
    1_e \mapsto 1_e \otimes (a_{f_2}^{\ast} \otimes a_{f_2} + b_{f_2}^{\ast} \otimes b_{f_2} + c_{f_2}^{\ast} \otimes c_{f_2}) \otimes (a_{f_1}^{\ast} \otimes a_{f_1} + b_{f_1}^{\ast} \otimes b_{f_1} + c_{f_1}^{\ast} \otimes c_{f_1}) 
\end{equation*}

Thus, the image of $1 \in \mathbb{C}_e$ under $\mathbf{1}\otimes \mathscr{P}(P_{\gamma})$ equals
$$\begin{matrix}

    1_{e} \otimes a_{f_2}^{\ast} \otimes a_{f_2} \otimes a_{f_1} \otimes a_{f_1}^{\ast} + 1_{e} \otimes a_{f_2}^{\ast} \otimes a_{f_2} \otimes b_{f_1} \otimes b_{f_1}^{\ast} + 1_{e} \otimes a_{f_2}^{\ast} \otimes a_{f_2} \otimes c_{f_1} \otimes c_{f_1}^{\ast}\\
    +1_{e} \otimes b_{f_2}^{\ast} \otimes b_{f_2} \otimes a_{f_1} \otimes a_{f_1}^{\ast} + 1_{e} \otimes b_{f_2}^{\ast} \otimes b_{f_2} \otimes b_{f_1} \otimes b_{f_1}^{\ast} + 1_{e} \otimes b_{f_2}^{\ast} \otimes b_{f_2} \otimes c_{f_1} \otimes c_{f_1}^{\ast}\\
    +1_{e} \otimes c_{f_2}^{\ast} \otimes c_{f_2} \otimes a_{f_1} \otimes a_{f_1}^{\ast} + 1_{e} \otimes c_{f_2}^{\ast} \otimes c_{f_2} \otimes b_{f_1} \otimes b_{f_1}^{\ast} + 1_{e} \otimes c_{f_2}^{\ast} \otimes c_{f_2} \otimes c_{f_1} \otimes c_{f_1}^{\ast}    
    
\end{matrix}$$

The action of $\mathscr{E}(P_{\gamma}) \otimes \mathbf{1}$ on these terms is given by

$$\begin{matrix}

    a_{f_2}^{\ast}(1_e a_{f_1}) a_{f_2} \otimes a_{f_1}^{\ast} & + & a_{f_2}^{\ast}(1_e b_{f_1}) a_{f_2} \otimes b_{f_1}^{\ast} & + & a_{f_2}^{\ast}(1_e c_{f_1}) a_{f_2} \otimes c_{f_1}^{\ast}\\ + b_{f_2}^{\ast}(1_e a_{f_1}) a_{f_2} \otimes a_{f_1}^{\ast} & + & b_{f_2}^{\ast}(1_e b_{f_1}) b_{f_2} \otimes b_{f_1}^{\ast} & + & b_{f_2}^{\ast}(1_e c_{f_1}) b_{f_2} \otimes c_{f_1}^{\ast} \\ +c_{f_2}^{\ast}(1_e a_{f_1}) c_{f_2} \otimes a_{f_1}^{\ast} & + & c_{f_2}^{\ast}(1_e b_{f_1}) c_{f_2} \otimes b_{f_1}^{\ast} & + & c_{f_2}^{\ast}(1_e c_{f_1}) a_{f_2} \otimes c_{f_1}^{\ast}
    
\end{matrix}$$

Therefore, the map is

\begin{equation} \label{basic-gon-color-1}
    \chi^{\textit{cw}}(P_{\gamma}) : 1 \mapsto a \otimes a^{\ast} + b \otimes b^{\ast} + c \otimes c^{\ast} ,
\end{equation}

which is the co-evaluation map. 

Now, we turn to the map $P_{\mu}: U \to V$ shown below:

\begin{equation} \label{Pattern-3}
    \centering
    \includegraphics[scale=0.5]{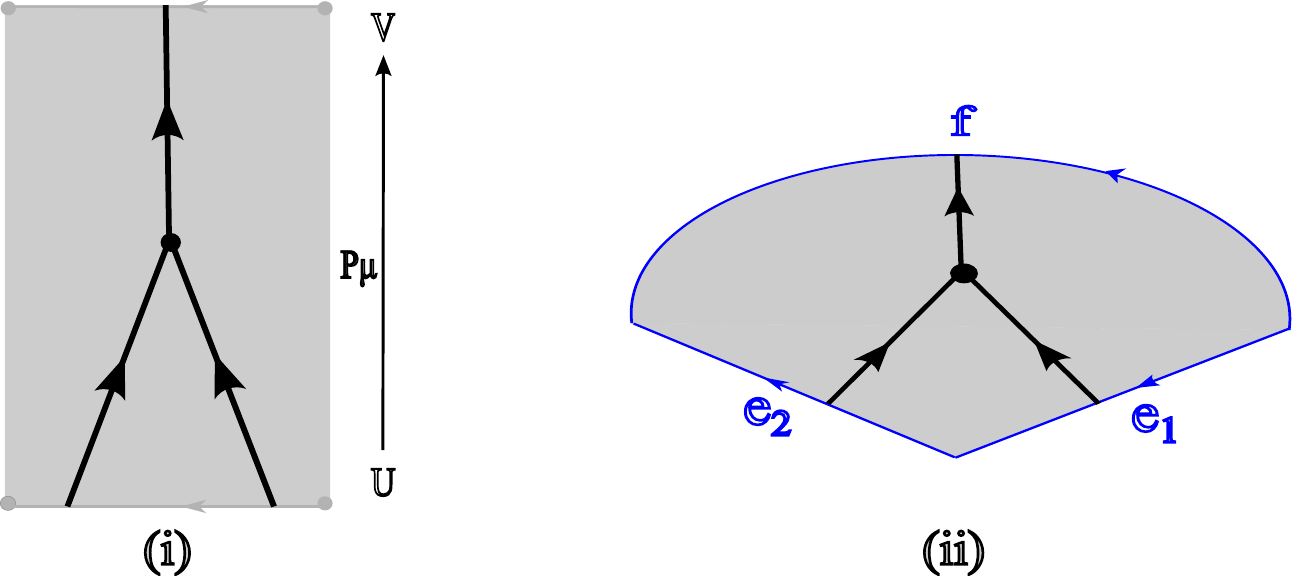}
\end{equation}

Like earlier, we use the cell decomposition as in \ref{Pattern-3}, Diagram (ii), to calculate $\chi^{\textit{cw}}(P_{\mu})$. In this case,

$$\begin{matrix}

    \chi^{\textit{cw}}(U) &= R_{e_1} \otimes R_{e_2} & & , & & \chi^{\textit{cw}}(V) &= R_{f}\\
          & = X_{e_1} \otimes X_{e_2} & &   & & &= X_f
          
\end{matrix}$$
    
Thus, we get the map $$ \chi^{\textit{cw}}(P_{\mu}): X_{e_1} \otimes X_{e_2} \to X_f .$$ The following data is needed to compute this map explicitly:

\begin{center}
\begin{tabular}{ |c|c|c|c|}
\hline
 $(P_{\mu}, e, \mathfrak{O})$ & $(P_{\mu}, e_1, -)$ & $(P_{\mu}, e_2, -)$ & $(P_{\mu}, f, +)$ \\ 
\hline
 $Q_{(P_{\mu}, e, \mathfrak{O})}$ & $X_{e_1}$ & $X_{e_2}$ & $X_{f}^{\ast}$ \\ 
\hline 
\end{tabular}
\end{center}

From this we get $Q(P_{\mu}) = Q_{(P_{\mu}, f, +)}$ and $\mathscr{P(P_{\mu})} = \mathscr{P}_{f}$ defined as
$\mathscr{P}_{f} :  \mathbb{C} \to X_f^{\ast} \otimes X_f$

we get

\begin{equation*}
\chi^{\textit{cw}}(P_{\mu}) : X_{e_1} \otimes X_{e_2} \xrightarrow{1 \otimes \mathscr{P}(P_{\mu})} X_{e_1} \otimes X_{e_2} \otimes X_{f}^{\ast} \otimes X_{f} \xrightarrow{\mathscr{E}(P_{\mu}) \otimes \mathbf{1}} X_{f} , 
\end{equation*}

which, for $x \in X_{e_1}$ and $y \in X_{e_2}$, is given by
\begin{equation} \label{eq-mu}
    \begin{matrix}

    x \otimes y & \mapsto & x \otimes y \otimes (a_{f}^{\ast} \otimes a_{f} + b_{f}^{\ast} \otimes b_{f} + c_{f}^{\ast} \otimes c_{f})\\
    & = & x \otimes y \otimes a_{f}^{\ast} \otimes a_{f} + x\otimes y \otimes b_f^{\ast} \otimes b_{f} + x \otimes y \otimes c_{f}^{\ast} \otimes c_{f} .

   \end{matrix}
\end{equation}

Recall the map $\mu: X \otimes X \otimes X \to \mathbb{C}$ from \cref{Tait_coloring}. The action of ($\mathscr{E}(P_{\mu}) \otimes \mathbf{1} )$ on ~ \ref{eq-mu} is given by

\begin{multline}
    x \otimes y \otimes a_{f}^{\ast} \otimes a_{f} + x\otimes y \otimes b_f^{\ast} \otimes b_{f} + x \otimes y \otimes c_{f}^{\ast} \otimes c_{f} \\
    \mapsto \mu(a_f^{\ast} \otimes y \otimes x)a_f + \mu(b_f^{\ast} \otimes y \otimes x)b_f + \mu(c_f^{\ast} \otimes y \otimes x)c_f
\end{multline}
    
Therefore the action of the map $\chi^{cw}(P_{\mu})$ on the basis elements are given by:

\begin{equation}
\chi^{cw}(P_{\mu}) :
  \begin{cases}
     a \otimes a, b\otimes b, c\otimes c  & \mapsto 0 \\
     a \otimes b, b \otimes a  & \mapsto  c \\
     c \otimes a, a \otimes c  & \mapsto  b \\
     b \otimes c, c \otimes b  & \mapsto  a
  \end{cases}    
\end{equation}

Finally, we want to know the action of the TFT $\chi^{cw}$ on $(P_{\beta}): U \to V$ shown below:

\begin{equation} \label{Pattern-4}
    \centering
    \includegraphics[scale=0.5]{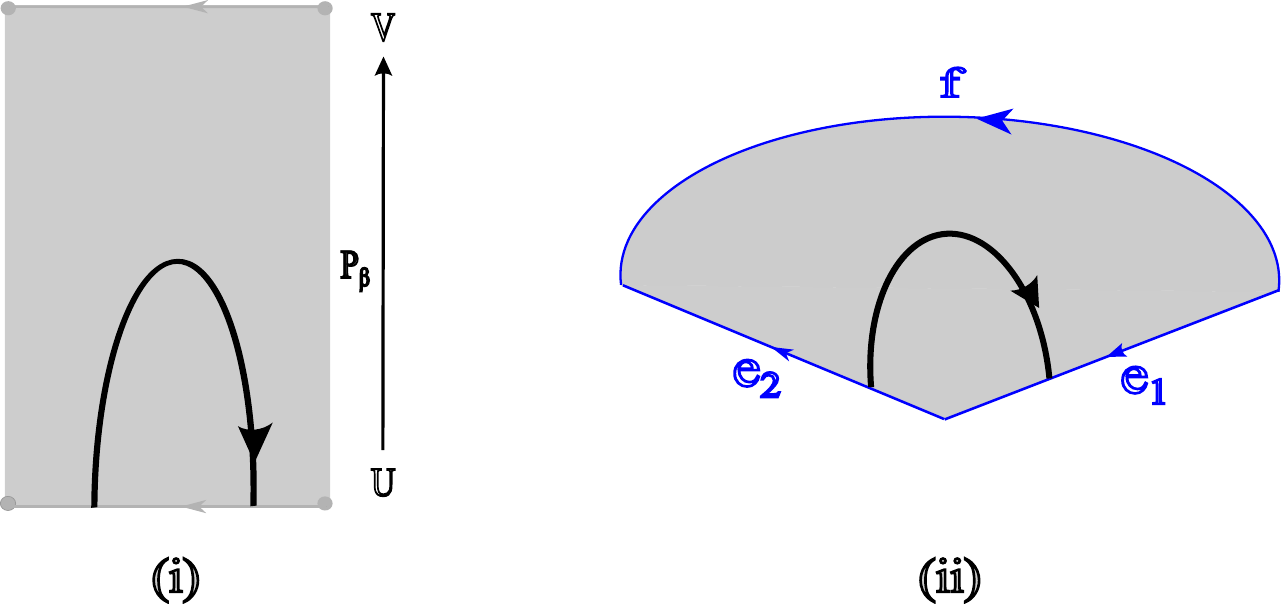}
\end{equation}

We see from ~\ref{Pattern-4}, Diagram (ii), that $C_1(U) = \{e_1, e_2\}$ and $C_1(V) = \{f\}$, which gives $\chi^{cw}(U) \coloneqq R_{e_1} \otimes R_{e_2} = X_{e_1}^{\ast} \otimes X_{e_2}$ and similarly $\chi^{cw}(V) \coloneqq R_f = \mathbb{C}_{f}$. The data for $Q_{(P_{\beta})}$ is given by the table:

\begin{center}
\begin{tabular}{ |c|c|c|c|}
\hline
 $(P_{\beta}, e, \mathfrak{O})$ & $(P_{\beta}, e_1, -)$ & $(P_{\beta}, e_2, -)$ & $(P_{\beta}, f, +)$ \\ 
\hline
 $Q_{(P_{\beta}, e, \mathfrak{O})}$ & $X_{e_1}^{\ast}$ & $X_{e_2}$ & $\mathbb{C}_f$ \\ 
\hline 
\end{tabular}
\end{center}

and $\mathscr{P}(P_{\beta}) = \mathscr{P}_f$, where $\mathscr{P}_f : \mathbb{C} \to \mathbb{C}_f \otimes \mathbb{C}_f$. Therefore, we get the composition:

\begin{equation*}
\chi^{\textit{cw}}(P_{\beta}) : X_{e_1} \otimes X_{e_2}^{\ast} \xrightarrow{1 \otimes \mathscr{P}(P_{\beta})} X_{e_1} \otimes X_{e_2}^{\ast} \otimes \mathbb{C}_{f} \otimes \mathbb{C}_{f} \xrightarrow{\mathscr{E}(P_{\mu}) \otimes \mathbf{1}} \mathbb{C}_{f} 
\end{equation*}

given explicitly by:

\begin{equation}
    x \otimes y^{\ast} \xrightarrow{\mathbf{1} \otimes \mathscr{P}(P_{\beta})} x\otimes y^{\ast} \otimes 1 \otimes 1 \xrightarrow{\mathscr{E}(P_{\beta}) \otimes \mathbf{1}} y^{\ast}(x)
\end{equation}

Thus $\chi^{cw}(P_{\beta})$ is given on the basis elements by

\begin{equation}
\chi^{cw}(P_{\mu}) :
  \begin{cases}
     a \otimes a^{\ast}, b\otimes b^{\ast}, c\otimes c^{\ast}  & \mapsto 1 \\
     
     0  & \text{otherwise} ,
  \end{cases}    
\end{equation}

which is nothing but the evaluation map.

We will return to $\chi^{cw}$ in the next section and interpret the computations of this section in terms of graph coloring.

\section{Applications to graph coloring} \label{sec:graph-coloring}

We met the category $\textit{Bord}_2^{\text{def, cw}}(\mathcal{P}_{K_4})$ in \cref{exam:coloring}, and mentioned that an element of the set $\textbf{Mor}(\textit{Bord}_2^{\text{def, cw}}(\mathcal{P}_{K_4})$ is precisely a pair $(\Sigma, \Gamma)$, where $\Gamma$ is a trivalent, $3$-edge colorable, graph embedded in $\Sigma$. In this section, we give the proper definition of $3$-edge coloring, a $3$-edge colorable graph, and construct a trivial surrounding theory, $$\chi^{cw}:\textit{Bord}_2^{\text{def, cw}}(\mathcal{D}_{+}^{\mathbf{3}})) \to \text{Vect}_F(\mathbb{C}), $$ which counts the number of Tait-coloring of a trivalent planar graph.

\subsection{Admissible edge coloring: a new interpretation.} \label{sec:coloring_process}

Next, we use the tools we have developed so far to redefine the concept of edge coloring of a graph; not necessarily admissible. However, the coloring that we encounter in the subsequent sections is admissible.

Recall the forgetful functor $$ \Pi^{cw}: \textit{Bord}_2^{\textit{def,cw}}(\mathcal{P_G}) \to \textit{Bord}_2^{\textit{def,cw}}(\mathcal{D}^{\aleph})$$ from \cref{defn:bleach-cat}. In the case, when $G = \mathbb{K}_4$, and $\mathcal{P_G}$ is as in \cref{exam:coloring}, the target category is much smaller, and we have, by an abuse of notation, $$\Pi^{cw}: \textit{Bord}_2^{\textit{def,cw}}(\mathcal{P}_{K_4})) \to \textit{Bord}_2^{\textit{def,cw}}(\mathcal{D}_{+}^{\mathbf{3}}))$$ mapping into the subcategory $\textit{Bord}_2^{\textit{def,cw}}(\mathcal{D}_{+}^{\mathbf{3}})$. First, to set up the main results of this manuscript, we give a new definition of coloring by $\mathbb{K}_4$ in terms of the category $\textit{Bord}_2^{\textit{def,cw}}(\mathbb{K}_4)$ and the map $\pi^{cw}$. 

\begin{defi} \label{obj:coloring}

For an object $O$ in $\textit{Bord}_2^{\textit{def,cw}}(\mathcal{D}_{+}^{\mathbf{3}})$, a coloring of $O$ is an object $\hat{O}$ in $\textit{Bord}_2^{\textit{def,cw}}(\mathcal{P}_{K_4})$ such that $\Pi^{cw}(\hat{O}) = O$. The collection of such $\hat{O}$ will be denoted $\Pi^{-1}(O).$
\end{defi}

\begin{defi} \label{defn:coloring}

    Given $(\Sigma, \Gamma)$ in $\text{Mor}(\textit{Bord}_2^{\textit{def,cw}}(\mathcal{D}_{+}^{\mathbf{3}}))$, let $$\pi^{-1}(\Sigma, \Gamma) \coloneqq \{ (\Sigma_i, \Gamma_i, \mathbb{K}_4) \in \text{Mor}(\textit{Bord}_2^{\textit{def,cw}}(\mathcal{P}_{K_4})) \mid \pi^{cw}(\Sigma_i, \Gamma_i, \mathbb{K}_4) = (\Sigma, \Gamma) \hspace{1mm} \forall \hspace{1mm} i \}.$$ A coloring is a map $$ s:  \text{Mor}(\textit{Bord}_2^{\textit{def,cw}}(\mathcal{D}_{+}^{\mathbf{3}})  \to\text{Mor}(\textit{Bord}_2^{\textit{def,cw}}(\mathcal{P}_{K_4}))$$ in the opposite direction of map $\pi^{cw}$ in ~\ref{bleach-reduced} such that $(\pi^{cw} \diamond s) (\Sigma, \Gamma) = (\Sigma, \Gamma)$ if $\pi^{-1}(\Sigma, \Gamma)$ is non-empty, and $s(\Sigma, \Gamma) = \emptyset_2$ (the empty morphism) if $\pi^{-1}(\Sigma, \Gamma)$ is empty. Here, $\diamond$ stands for the composition in the category of sets (also see \cref{sec:additional}.)

    The value of $s$ at a surface with defects $(\Sigma, \Gamma)$ is called a coloring of $(\Sigma, \Gamma)$. A trivalent graph $\Gamma$ embedded in a surface $\Sigma$ is said to be $3$-edge colorable if $s(\Sigma, \Gamma) \neq \emptyset_2$, or equivalently $\pi^{-1}(\Sigma, \Gamma)$ is non-empty.
\end{defi}

Where we have intentionally dropped the notation \textit{cw} when writing $\Pi^{-1}$ or $\pi^{-1}$ to avoid the cumbersomeness of notations.

\begin{rema} \label{rema:section}
    
It follows from the definition of $\Pi^{cw}$ that if $\hat{\Sigma} \in  \text{Mor}(\textit{Bord}_2^{\textit{def,cw}}(\mathcal{P}_{K_4}))$ is such that $\pi^{cw}(\hat{\Sigma}) = (\Sigma, \Gamma)$, then they have identical (isotopic) underlying stratified space, namely given as in \cref{exam-deco} (6). Thus each individual element in $\pi^{-1}(\Sigma, \Gamma)$ is a copy of $(\Sigma, \Gamma)$ as a stratified space. Writing $\pi^{-1}(\Sigma, \Gamma) =\sqcup_{i}(\Sigma_i, \Gamma_i, \mathbb{K}_4)$, we can define the map $\mathfrak{p}: \pi^{-1}((\Sigma, \Gamma)) \to (\Sigma, \Gamma)$ by $\mathfrak{p}(\hat{\Sigma}) = \pi^{cw}(\hat{\Sigma})$, which satisfies $\mathfrak{p} \diamond s = \mathbf{1}$, thus $s$ can be viewed as a section of $\mathfrak{p}: \pi^{-1}((\Sigma, \Gamma)) \to (\Sigma, \Gamma)$.

\end{rema}

\begin{exam} \label{exam:coloring_exam-1}
    We take $O_1 = \emptyset, O_2$ as a single circle $O$ with two $0$-defects, and $(\Sigma, \Gamma) : \phi \to O$ as shown below in \cref{fig:coloring_exam-1} (middle). Note that orientation on the $1$-strata does not matter, consequently, both $0$-defects in the boundary circle with defects, $O_2$, have gotten the same label $e$. The following figure shows some maps $s: (\Sigma, \Gamma) \to \pi^{-1}((\Sigma, \Gamma))$ such that $\mathfrak{p} \diamond s = \mathbf{1}$ 
    \begin{figure}[ht]
        \centering
        \includegraphics[width=0.9\linewidth]{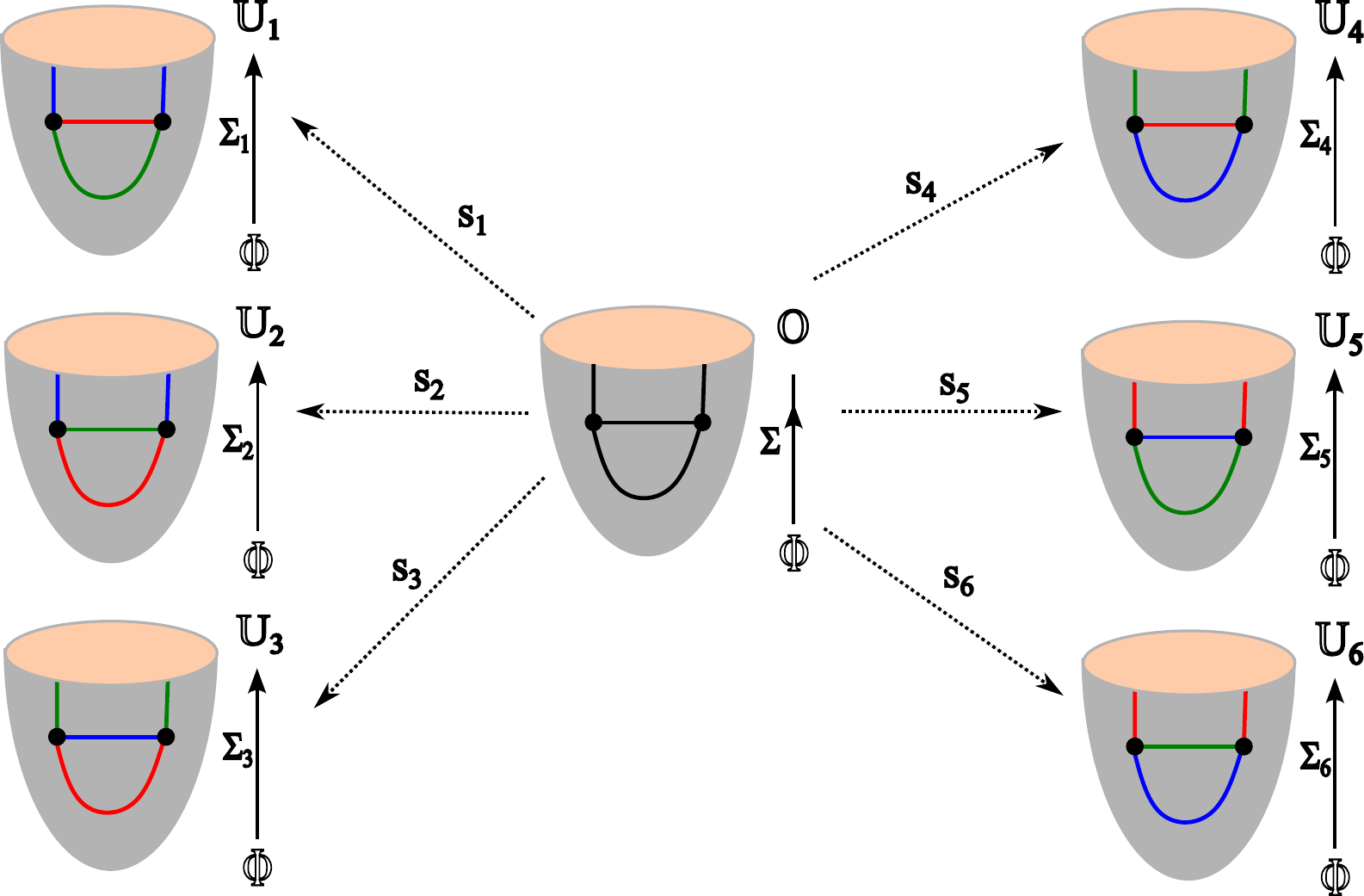}
        \caption{The set $\pi^{-1}((\Sigma, \Gamma))$ from \cref{exam:coloring_exam-1}}
        \label{fig:coloring_exam-1}
    \end{figure}

\end{exam}

Note, that an object with two $0$-defects labeled by two of $a, b$ or $c$, both different, also lies in the collection $\Pi^{-1}(O)$, but they do not appear as an out-boundary of a morphism in the Figure ~\ref{fig:coloring_exam-1}. This is not a coincidence but a consequence of \cref{big-1} below.

Using \cref{defn:coloring}, the question of whether the functor $\Pi^{cw}$ is full, can be rephrased as whether every trivalent graph, embedded in some surface, is $3$-edge colorable. \cref{big-1} gives a necessary condition on a planar trivalent graph to be $3$-edge colorable using the group structure of $\mathbb{K}_4$, and \cref{cor:big-1} answers the question about the fullness of $\Pi^{cw}$ negatively.

Next, we formulate the idea of the \textit{coloring process} for a given trivalent graph $\Gamma$, embedded in a surface $\Sigma$ by considering the pair $(\Sigma, \Gamma)$ in the set $\text{Mor}(\textit{Bord}_2^{\textit{def,cw}}(\mathcal{D}_{+}^{\mathbf{3}}))$. The first definition needed is: 

\begin{defi} \label{def:coloring}

    For a surface with defect  $(\Sigma, \Gamma)$ in $\text{Mor}(\textit{Bord}_2^{\textit{def,cw}}(\mathcal{D}_{+}^{\mathbf{3}}))$, let $P \in C_2(\Sigma)$ be a basic-gon considered as a morphism $(P, \Gamma_P): \emptyset \to \partial P$ , a choice of a $3$-\textit{edge-coloring} localised at $P$ is the value $s_P \coloneqq s((P, \Gamma_P))$ under the function $s$ of \cref{defn:coloring}. Alternatively, it is a choice of a section $s_{P}$ of $ \mathfrak{p}: \pi^{-1}(P) \to P$ in the sense of \cref{rema:section}.
    
\end{defi}

\begin{conv}

    Now onwards, we will suppress $\Gamma_P$ from the notation $(P, \Gamma_P)$ for convenience, and use just $P$ for $(P, \Gamma_P) \in \text{Mor}(\textit{Bord}_2^{\textit{def,cw}}(\mathcal{D}_{+}^{\mathbf{3}}))$.  
    
\end{conv}

\begin{rema}

    It follows from the definition of $\Pi^{cw}$ and the identity $\pi^{cw} \diamond s_{P} = P$ that the underlying stratified spaces of $s(P)$ and $P$ are isotopic. Thus $s(P)$ is isomorphic to one of the defect disks in $\textit{Bord}_2^{\textit{def,cw}}(\mathcal{P}_{K_4})$ 
    as surfaces with defects. The color assigned to the graph $\Gamma_P$ in $P$ is the label, in the defect condition $\mathcal{P}_{K_4}$ (cf \cref{exam:coloring}), that edges of $\Gamma$ get assigned under this $s$.
    
\end{rema}

The following result is a straightforward application of \cref{defn:coloring} and \cref{defn:bleach-cat}:

\begin{prop}

    For basic-gons $Q_1, Q_2$ in $(\Sigma, \Gamma)$ in $\text{Mor}(\textit{Bord}_2^{\textit{def,cw}}(\mathcal{P}_{K_4}))$, the following holds:
    \begin{enumerate}
        \item $\mathfrak{p}(Q_1 \otimes Q_2) = \mathfrak{p}(Q_1) \otimes \mathfrak{p}(Q_2)$, and
        \item $\mathfrak{p}(Q_1 \circ Q_2) = \mathfrak{p}(Q_1) \circ \mathfrak{p}(Q_2)$.
    \end{enumerate}

\end{prop}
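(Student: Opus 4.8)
The plan is to unwind all the definitions and reduce both claims to the corresponding statements about the functor $\Pi^{cw}$ on basic-gons, which are essentially tautological. Recall from \cref{rema:section} that $\mathfrak{p}$ is defined by $\mathfrak{p}(\hat{\Sigma}) = \pi^{cw}(\hat{\Sigma})$, so it suffices to establish the two identities for $\pi^{cw}$, and these in turn follow from the fact that $\Pi^{cw}$ is a symmetric monoidal functor (hence respects horizontal composition $\otimes$, i.e.\ disjoint union, and vertical composition $\circ$, i.e.\ gluing of bordisms) together with the observation from \cref{defn:bleach-cat} that $\Pi^{cw}$ is defined purely via its action on basic-gons by relabelling $1$-strata from $B_{K_4}$ to $e$ without touching the underlying stratified space, cells, orientations, or signs.

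First I would fix $Q_1, Q_2$ as basic-gons appearing in $(\Sigma,\Gamma)$, viewed as morphisms $\emptyset \to \partial Q_i$ in $\text{Mor}(\textit{Bord}_2^{\textit{def,cw}}(\mathcal{P}_{K_4}))$ (suppressing $\Gamma_{Q_i}$ per the convention). For part (1), the fusion $Q_1 \otimes Q_2$ is the disjoint union as an object of the morphism set, and since $\Pi^{cw}$ acts separately on each connected component by relabelling, $\Pi^{cw}(Q_1 \otimes Q_2) = \Pi^{cw}(Q_1) \otimes \Pi^{cw}(Q_2)$; passing to $\pi^{cw}$ and hence to $\mathfrak{p}$ gives $\mathfrak{p}(Q_1\otimes Q_2) = \mathfrak{p}(Q_1)\otimes\mathfrak{p}(Q_2)$. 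For part (2), when $Q_1 \circ Q_2$ is defined (the out-boundary of $Q_2$ matching the in-boundary of $Q_1$ as circles with defects), the vertical composition is the glued bordism; again $\Pi^{cw}$ is defined cell-by-cell and commutes with gluing of PLCW decompositions along a common boundary, because the relabelling on the two pieces agrees on the shared boundary circles (both sides send $g^{\epsilon}\in B_{K_4}$ to $e^{\epsilon}$). Hence $\Pi^{cw}(Q_1\circ Q_2) = \Pi^{cw}(Q_1)\circ\Pi^{cw}(Q_2)$, and the same for $\pi^{cw}$ and $\mathfrak{p}$.

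The only mild subtlety — and thus the main (though still minor) obstacle — is bookkeeping around the distinguished points and germs of collars that are part of the data of objects in these bordism categories, and the fact that $\mathfrak{p}$ is really a map out of the \emph{disjoint union} $\pi^{-1}(P) = \bigsqcup_i(\Sigma_i,\Gamma_i,\mathbb{K}_4)$ of all colorings; one must check that the monoidal and compositional structure is compatible with taking this fibre over each polygon separately, i.e.\ that a coloring of $Q_1\circ Q_2$ restricts to compatible colorings of $Q_1$ and $Q_2$ and conversely. But this is immediate from \cref{rema:section}: elements of $\pi^{-1}(P)$ are just copies of $P$ with $K_4$-labels on the $1$-strata, and gluing/disjoint union of such labelled copies is exactly gluing/disjoint union of $K_4$-labelled pieces, which matches on boundaries precisely when the underlying uncolored gluing matches. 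I would therefore present the proof as: (i) recall $\mathfrak{p} = \pi^{cw}$ on $\pi^{-1}(\,\cdot\,)$; (ii) invoke symmetric monoidality of $\Pi^{cw}$ for $\otimes$; (iii) invoke functoriality of $\Pi^{cw}$ (compatibility with gluing of bordisms-with-PLCW-structure) for $\circ$; (iv) note the boundary-matching condition is unaffected by the bleaching relabelling, so composability is preserved. This is short enough that no displayed computation is needed.
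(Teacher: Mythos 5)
Your proposal is correct and follows essentially the same route as the paper, which offers no displayed proof at all: it declares the proposition a straightforward consequence of \cref{defn:coloring} and \cref{defn:bleach-cat}, i.e.\ exactly your observation that $\mathfrak{p}$ is just $\pi^{cw}$ (bleaching), which relabels $1$-strata from $B_{K_4}$ to $e$ cell-by-cell without altering the underlying stratified space, and therefore commutes with any gluing of basic-gons.

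One point to adjust: in the paper's usage (see \cref{fusion_basic-gons}), $Q_1 \otimes Q_2$ for basic-gons sitting inside $(\Sigma,\Gamma)$ is \emph{not} disjoint union of two separate morphisms but the gluing of $Q_1$ and $Q_2$ along their common $1$-cell $Q_1 \cap Q_2 \in C_1(\Sigma)$, i.e.\ horizontal composition (fusion), while $\circ$ is the vertical composition along the shared $1$-cells in the boundary circles. So your appeal to symmetric monoidality of $\Pi^{cw}$ (disjoint union) in part (1) is aimed at the wrong operation; however, the argument you give for part (2) — that bleaching is defined cell-by-cell and the relabelling on the two pieces agrees on the shared cells, so composability and the glued labelling are preserved — applies verbatim to gluing along an internal $1$-cell, and this repairs part (1) without any new idea. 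With that substitution your write-up matches the intended (and omitted) justification in the paper.
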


By definition, a surface $(\Sigma, \Gamma) \in \text{Mor}(\textit{Bord}_2^{\textit{def,cw}}(\mathcal{D}_{+}^{\mathbf{3}}))$ comes equipped with a PLCW decomposition into cells $C_0(\Sigma), C_1(\Sigma) $ and $ C_2(\Sigma)$ such that each $2$-cell is isomorphic (as a surface with defects) to one of basic-gons as in \cref{basic_gons} (ii), (iii) and (iv).

\begin{conv} \label{fusion_basic-gons}
Referring to \cref{basic_gons} and the caption there, make the following conventions:
    \begin{enumerate}
        \item For two basic-gons $P_i, P_j$ of type (ii) or (iii), we denote by $P_i \otimes P_j$ the gluing of $P_i$ and $P_j$ along $P_{ij} \coloneqq P_i \cap P_j \in C_1(\Sigma)$. Although this seems like an abuse of notation, it corresponds to the fusion or horizontal composition, see ~\ref{fusion}, when considering the defect data of $P_i$ and $P_j$.
        \item For a basic-gon $P_{\mu}$ of type (iii) we use the vertical composition $P_{\mu} \circ (P_{i_1} \otimes \dots \otimes P_{i_k}) $ to denote gluing along the $1$-cell formed by the intersection of $P_{\mu}, P_{i_1}, \dots, P_{i_k}$. Again, this corresponds to the vertical composition of the underlying defect data.
    \end{enumerate}
\end{conv}

We are ready to define a coloring process:

\begin{defi} \label{coloring_process}
Given an un-directed trivalent graph $\Gamma$, embedded in a surface $\Sigma$, a \textit{coloring process} is the following data assigned to the surface with defects \linebreak $(\Sigma, \Gamma) \in \text{Mor}(\textit{Bord}_2^{\textit{def,cw}}(\mathcal{D}_{+}^{\mathbf{3}}))$ :
\begin{enumerate}
    
    \item A coloring $s_P$ for every $P \in C_2(\Sigma)$, as defined in \cref{def:coloring}, 
    \item A coloring $s_{(P_i \otimes P_j)}$ for every fused cells $P_i \otimes \dots \otimes P_j$ given by $s_{(P_i \otimes P_j)} \coloneqq s_{P_i} \otimes s_{P_j}$, and
    \item A coloring $s_{(P_{\mu} \circ P_{\nu})} $ for each vertical composition $P_{\mu} \circ P_{\nu} $ given by $s_{(P_{\mu} \circ P_{\nu})} \coloneqq s_{P_{\mu}} \circ s_{P_{\nu}}$.
    
\end{enumerate}
In short, a coloring process is an assignment of coloring to each $2$-cells and a schema to glue them together to produce a coloring of the entire surface $(\Sigma, \Gamma)$ as \cref{coloring_process}, (2) and (3), facilitate gluing of the coloring of an arbitrary (finite) number of cells by repeated application.
\end{defi}

\begin{rema}
    Compare \cref{coloring_process} with the concept of bundles at the level of $2$-category outlined in \cref{sec:additional}. In particular, to the diagrams in \cref{fig:section_horiz-composition} and \cref{fig:section_vert-composition}.
\end{rema}

\begin{defi} \label{def:coloring-global}
    A surface with defects $(\Sigma, \Gamma) \in \text{Mor}(\textit{Bord}_2^{\textit{def,cw}}(\mathcal{D}_{+}^{\mathbf{3}}))$ admits a $3$-edge coloring or, is $3$-edge colorable if these exist an $s: (\Sigma, \Gamma) \to \pi^{-1}((\Sigma, \Gamma))$ extending all $s_P$ for $P \in C_2(\Sigma)$, i.e. $s\mid_{p} = s_P$, that satisfies the conditions listed in \cref{coloring_process} when restricted to any sub-complex formed by fusing and composing several $2$-cells. 
\end{defi}

 Note that the map $\mathfrak{p}$ coincides with the one defined in \cref{def:coloring} on $2$-cells. Therefore, one can say that the map $s$ in \cref{def:coloring-global} is a global section.  Given a surface $(\Sigma, \Gamma) \in \text{Mor}(\textit{Bord}_2^{\textit{def,cw}}(\mathcal{D}^{\mathbf{3}}))(O_1, O_2)$, local sections, as given by \cref{coloring_process} always exists at every basic-gons. \cref{def:coloring-global} says that the graph $\Gamma$ is $3$-edge colorable if all these local-sections can be patched together to produce a global-section. In that case, such a global section gives a coloring of $\Gamma$.

\begin{theo} \label{big-1}
    Consider the surface with defect $(\mathbb{S}^2, \Gamma, K_4 )$ as an element in the set $\text{Mor}(\textit{Bord}_2^{\textit{def, cw}}(\mathcal{P}_{K_4}))$ Let $\Bar{\mathbb{S}^1}$ be a generic cross-section of $(\mathbb{S}^2, \Gamma, K_4 )$ then the product of defects on $\Bar{\mathbb{S}^1}$ is 1.
\end{theo}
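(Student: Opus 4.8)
The plan is to prove the statement by a cobordism/Morse sweep, attaching to every generic cross-section a $K_4$-valued quantity and showing it never changes. Given a circle with defects $C$ occurring as a cross-section of $(\mathbb{S}^2,\Gamma,K_4)$, each of its marked points carries a label in $B_{K_4}\sqcup\overline{B}_{K_4}$, hence — since $x^{-1}=x$ in $K_4$ — an element of $K_4$; define $w(C)\in K_4$ to be the product of these labels in cyclic order around $C$. Because $K_4$ is abelian, $w(C)$ is independent of the distinguished point and of the orientation of $C$, and for a disjoint union we set $w(\coprod_i C_i)=\prod_i w(C_i)$. To prove $w(\overline{\mathbb{S}^1})=1$, choose a height function $f\colon\mathbb{S}^2\to[0,1]$ realising $\overline{\mathbb{S}^1}$ as a regular level set; perturbing $f$ away from that level (using the existence of Morse functions, \cite{audin2014morse}) we may assume $f$ is Morse on $\mathbb{S}^2$ with critical points off $\Gamma$, that $f$ restricted to each edge of $\Gamma$ is Morse, and that the vertices of $\Gamma$ all lie at regular values. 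This produces a finite set $S\subset(0,1)$ of special values, and for $t\notin S$ the fibre $f^{-1}(t)$ is a generic cross-section in the sense of \cref{generic_cross-section}.

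First I would note that $t\mapsto w(f^{-1}(t))$ is constant on each component of $(0,1)\setminus S$, since there $f$ restricts to a product bordism with defects and the marked-point data is carried along isotopically. The core of the argument is then a short local case analysis showing $w$ does not jump across a value of $S$. (1) At a critical point of $f$ on the underlying surface (all of which lie off $\Gamma$): a birth or death inserts or removes an unmarked circle, and a saddle merges or splits circles; in every case only the grouping of factors changes, and since $K_4$ is abelian $w$ is unchanged. (2) At a critical point of $f|_e$ for an edge $e$ labelled $x$: a pair of marked points labelled $x$ and $x^{-1}$ is created or destroyed, and $xx^{-1}=1$, so $w$ is unchanged. (3) At a vertex $u$ of $\Gamma$: the cross-sections just below and just above $u$ differ by replacing the ordered word of labels of the edges of $u$ that meet the lower region by the ordered word of labels of the edges that meet the upper region; by \cref{def:group_condns} together with the orientation-consistency conditions of \cref{condns} (and the sign conventions of \cref{fig:defect_orient}), this replacement multiplies $w$ by a cyclic rotation and/or inverse of the relator of $u$, which is one of $a^2,b^2,c^2,abc$ and hence trivial in $K_4$. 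Therefore $w(f^{-1}(t))$ is constant on all of $(0,1)\setminus S$; letting $t\to 0^+$, where $f^{-1}(t)$ is a union of small unmarked circles near the minima of $f$, gives $w\equiv 1$, so $w(\overline{\mathbb{S}^1})=1$. For the sharper statement that every individual component of a generic cross-section has trivial defect-product, one runs the same argument with a height function adapted to one of the two disks the component bounds in $\mathbb{S}^2$.

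I expect case (3) to be the only step that is not essentially formal: one must draw the local picture at a trivalent vertex with enough care about orientations and the signs on $1$-strata to be sure the multiplier is exactly the relator (up to cyclic rotation and inversion) and not the relator times a stray generator. This is precisely where the group structure — rather than merely the set-theoretic structure — of the defect condition $\mathcal{P}_{K_4}$ is used, as advertised in the introduction. There is an equivalent argument that sidesteps Morse theory: $\overline{\mathbb{S}^1}$ bounds a disk $D\subset\mathbb{S}^2$, the $K_4$-labelling of $\Gamma$ restricts to $\Gamma\cap D$, and Tait's region-colouring recipe (cf.\ \cref{sec:intro_Tait}, \cref{fig:Tait_sheaf}) assigns a consistent $K_4$-value to each component of $D\setminus\Gamma$ — consistency around each vertex being exactly the relation $abc=1$, and $D$ being simply connected — so that traversing $\partial D$ once returns to the starting region and forces the product of the labels crossed, namely $w(\overline{\mathbb{S}^1})$, to equal $1$. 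Since the paper uses this theorem to \emph{establish} Tait's correspondence, I would present the Morse-theoretic version as the proof and mention the region-colouring version only as a remark.
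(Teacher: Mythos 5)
Your proposal is correct, but it is a genuinely different argument from the one in the paper. The paper proves \cref{big-1} with a Kempe-chain/parity argument: the bicolored subgraphs $\gamma_{ab},\gamma_{bc},\gamma_{ac}$ are $2$-regular, hence disjoint unions of closed curves in $\mathbb{S}^2$, so by the Jordan curve theorem each meets a generic cross-section an even number of times; writing the product of defects as $a^{\#a}b^{\#b}c^{\#c}$ and using these parities together with $a^2=b^2=c^2=abc=1$ forces the product to be $1$. Your proof instead establishes a cobordism-invariance statement: the $K_4$-valued word $w(f^{-1}(t))$ is unchanged across surface critical points, edge critical points, and vertices (the last because the cross-sectional word changes exactly by a relator of $\mathcal{P}_{K_4}$), and then evaluates $w$ near the bottom of the sweep. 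Both are sound; note that your cases (1)--(3) quietly use that $K_4$ is abelian with every element an involution (basepoint independence, saddle merges, and sign-insensitivity at vertices), which is exactly why the delicate orientation bookkeeping you flag in case (3) evaporates here. What your route buys is generality: it is really a statement about $\textit{Bord}_2^{\textit{def}}(\mathcal{P}_G)$ for any finite presentation (up to conjugacy issues in the non-abelian case), and it is the natural ancestor of the word-problem-as-cobordism discussion in \cref{sec:word-problem_theory} and of the conjectured converse of \cref{thm:word_cobordism}; the paper's argument is shorter but specific to $3$-edge-colorings. Your component-wise refinement via a disk-adapted height function, and your decision to relegate the region-coloring argument to a remark to avoid circularity with \cref{coro:Tait}, are both appropriate.
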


For the rest of this section, we only consider graphs with a single component. Given a graph $\Gamma$, a \textit{bridge} is an edge of $\Gamma$ whose deletion disconnects the graph into two components. (See ~ \cite{bollobas1998modern} for more detail and general, as well as, alternative definitions.) 

Using \cref{def:coloring-global}, we deduce the following famous result from \cref{big-1}, which has been known to people since Tait:

\begin{coro} \label{cor:big-1}
    A planar trivalent graph $\Gamma$ with bridge is not $3$-edge colorable.
\end{coro}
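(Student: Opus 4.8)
The plan is to argue by contradiction using \cref{big-1}. Suppose $\Gamma$ is a connected planar trivalent graph containing a bridge $e$, and suppose $\Gamma$ were $3$-edge colorable. By \cref{defn:coloring} this means $\pi^{-1}(\mathbb{S}^2, \Gamma)$ is non-empty, so we may fix a lift $(\mathbb{S}^2, \Gamma, K_4) \in \text{Mor}(\textit{Bord}_2^{\textit{def, cw}}(\mathcal{P}_{K_4}))$ of $(\mathbb{S}^2, \Gamma)$; in particular every edge of $\Gamma$ carries a label $d(\cdot) \in B_{K_4} = \{a,b,c\}$, the non-identity elements of $K_4$ (cf. \cref{exam:coloring}).

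The one substantive step is a purely topological observation: there is a simple closed curve $C \subset \mathbb{S}^2$ meeting $\Gamma$ transversally in exactly one point, which lies in the interior of the bridge $e$ and is not a vertex. Indeed, since $e$ is a bridge, $\Gamma \setminus e$ has two components $\Gamma_1, \Gamma_2$; choose an embedded closed disk $D_1 \subset \mathbb{S}^2$ containing $\Gamma_1$ together with the half of $e$ incident to $\Gamma_1$ in its interior, disjoint from $\Gamma_2$, and with $\partial D_1$ crossing $e$ transversally exactly once, and set $C = \partial D_1$. (Equivalently, a bridge of a plane graph has the same face on both sides, and $C$ is obtained by joining the two sides of $e$ through that face and closing up across $e$.) Now choose a height function $f \colon \mathbb{S}^2 \to [0,1]$ with $C = f^{-1}(\{t\})$ for a regular value $t$, with $C$ avoiding $\Sigma_0$ and transverse to $\Sigma_1$; then $C$ is a generic cross-section of $(\mathbb{S}^2, \Gamma, K_4)$ in the sense of \cref{generic_cross-section}, and we place its distinguished point (cf. \cref{distinguished}) on $C$ away from $e$, so that $C$ is a bona fide circle with defects carrying a single marked point, labelled $d(e)^{\epsilon}$ for some sign $\epsilon$.

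Now apply \cref{big-1} to $C$: the product of the defects appearing on $C$ must equal $1 \in K_4$. But $C$ carries exactly one defect, with label $d(e)^{\epsilon}$, so that product is $d(e)^{\epsilon} = d(e)$ in $K_4$, since every element of $K_4$ is its own inverse. Hence $d(e) = 1$, contradicting $d(e) \in \{a,b,c\}$. Therefore no lift exists, i.e. $\pi^{-1}(\mathbb{S}^2,\Gamma)$ is empty and $\Gamma$ is not $3$-edge colorable.

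I expect the only place requiring care is the topological step — producing the separating circle $C$ that crosses $\Gamma$ exactly once on the bridge and realizing it as a generic cross-section (checking it avoids vertices, is transverse to the edge stratum, and can be given a distinguished point off $e$). Everything after that is immediate from \cref{big-1} together with the fact that $K_4$ has exponent $2$, which is precisely the "group property of $K_4$" alluded to in the introduction.
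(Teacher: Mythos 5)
Your proposal is correct and follows essentially the same route as the paper: assume a $3$-edge coloring exists, use the bridge to produce a generic cross-section meeting $\Gamma$ in exactly that one edge, and contradict \cref{big-1} since a single defect labeled $a$, $b$, or $c$ cannot have product $1$ in $K_4$. The paper simply asserts the existence of that cross-section, whereas you spell out the separating-curve construction; the substance is identical.
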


In the language we have developed so far, it means that a pair $(\mathbb{S}^2, \Gamma)$ with $\Gamma$ having a bridge never lies in the image of $\pi^{cw}$. Setting $O_1 = O_2 = \emptyset$ we see that $\pi^{cw}$ is not surjective. Hence $\Pi^{cw}$ is not full.

We prove the corollary first using \cref{big-1}.

\begin{proof}
    If the trivalent graph $\Gamma$ with bridge $e$ is $3$-edge colorable then the edge $e$ gets $a, b$ or $c$ as the color. Since $e$ is a bridge, there exists a generic cross-section of $(\mathbb{S}^2, \Gamma)$ that intersects only $e$. If $\Bar{\mathbb{S}_e^1}$ is this generic cross-section, then it contains a single defect labeled by $a, b$ or $c$, which it inherits from the coloring of $e$. This is a contradiction to ~\ref{big-1}.
\end{proof}

\begin{proof}[Proof of \cref{big-1}]
    Let $\gamma_{xy}$ denote the union of all the edges of the graph $\Gamma$ with color $x$ and $y$. Then all of $\gamma_{ab}, \gamma_{bc}$ and $\gamma_{ac}$ are piecewise linear simple (Jordan) curves embedded in $\mathbb{S}^2$ and thus intersect any generic cross-section even number of times. Let $S_t$ be a generic cross-section and $2n_1, 2n_2$ and $2n_3$ be the number of intersection points of it with $\gamma_{ab}, \gamma_{bc}$ and $\gamma_{ac}$ respectively. Note that it is enough to consider only two of them, say $\gamma_{ab}$ and $\gamma_{bc}$. The contribution from $\gamma_{ab}$ will be of the form $a^kb^{2n_1 - k}$ for some positive integer $k$. The share of $c$ comes from the curve $\gamma_{bc}$ and is equal to $c^{2n_2 - (2n_1 - k)}$. Therefore the product of defects of $S_t$ equals $a^kb^{2n_1 - k}c^{2n_2 - 2n_1 +k}$. This product simplifies to $(ab^{-1})^kc^k$ or $c^{2k}$, which equals $1$.
\end{proof}

\begin{rema} \label{rema:distinct_colors}
    
\cref{big-1} is more general than the classical statement of \cref{cor:big-1}. We return to the comment made below \cref{exam:coloring_exam-1} in connection with it. Now, we see that there can not be a morphism between $\emptyset$ and a single circle labeled with two distinct defects that project to $\hat{\Sigma}$. For, if there were such a morphism, then we could take its involution and glue along the common boundary circle with defects to produce a pair $(\mathbb{S}^2, \Gamma, K_4)$. We see that it contradicts \cref{big-1}.
    
\end{rema}

We conclude this section by deducing the validity of Tait's correspondence.

\begin{coro} \label{coro:Tait}
    Tait's correspondence, as outlined in \cref{sec:intro_Tait}, is a valid procedure.
\end{coro}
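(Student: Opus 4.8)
The plan is to unwind the procedure described in \cref{sec:intro_Tait} into two separate assertions and to reduce the nontrivial one to \cref{big-1}. Tait's rule fixes a region of $\mathbb{S}^2\setminus\Gamma$ (say the unbounded one), assigns it $1\in K_4$, and assigns to an arbitrary region $R$ the product in $K_4$ of the edge-colours crossed along any path in $\mathbb{S}^2$ from the base region to $R$ that meets $\Gamma$ transversally away from $V(\Gamma)$. ``Validity'' means: (i) this assignment is \emph{well defined}, i.e.\ independent of the chosen path; and (ii) the resulting colouring of the regions is \emph{proper}, i.e.\ two regions sharing an edge of $\Gamma$ receive distinct elements of $K_4$ (equivalently, it is a proper $4$-colouring of the planar dual $\Gamma^{\ast}$). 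Assertion (ii) is immediate: crossing an edge coloured $x$ multiplies the current value by $x\in\{a,b,c\}$, and $a,b,c$ are by construction the \emph{non-identity} elements of $K_4$. So the content is (i).

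For (i) I would pass to the planar dual $\Gamma^{\ast}\hookrightarrow\mathbb{S}^2$: vertices are the regions of $\mathbb{S}^2\setminus\Gamma$, one edge $e^{\ast}$ per edge $e$ of $\Gamma$ joining the two incident regions, with $e^{\ast}$ labelled by the $K_4$-colour of $e$. Tait's rule is exactly a request for $\varphi\colon V(\Gamma^{\ast})\to K_4$ with $\varphi(F_2)=\varphi(F_1)\,\mathrm{col}(e)$ whenever $e^{\ast}$ joins $F_1$ to $F_2$; since $\Gamma^{\ast}$ is connected such $\varphi$ exists, uniquely once fixed on one region, if and only if the product of labels around every cycle of $\Gamma^{\ast}$ is $1$ (abelianness of $K_4$ makes ``product around a cycle'' unambiguous). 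For a graph embedded in $\mathbb{S}^2$ the cycle space is spanned by the boundary cycles of its faces, and by planar duality the faces of $\Gamma^{\ast}$ are in bijection with the vertices of $\Gamma$; the boundary cycle of the face dual to $v\in V(\Gamma)$ visits precisely the dual edges of the three edges $e_1,e_2,e_3$ incident to $v$, so its label-product is $\mathrm{col}(e_1)\,\mathrm{col}(e_2)\,\mathrm{col}(e_3)$. Thus (i) reduces to showing this triple product is $1$ for every $v$.

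This is where \cref{big-1} enters. The boundary cycle of the $\Gamma^{\ast}$-face dual to $v$ is (isotopic to) a small circle $\Bar{\mathbb{S}_v^1}$ encircling $v$ and meeting $\Gamma$ transversally in the three half-edges at $v$. I claim $\Bar{\mathbb{S}_v^1}$ is a generic cross-section of $(\mathbb{S}^2,\Gamma,K_4)$ in the sense of \cref{generic_cross-section}: pick a height function $f\colon\mathbb{S}^2\to[0,1]$ that near $v$ is a rescaling of the radial coordinate of a chart centred at $v$, so that $\Bar{\mathbb{S}_v^1}$ is a regular level set whose neighbourhood $f^{-1}((t-\epsilon,t+\epsilon))$ is an annulus around $v$ not containing $v$, i.e.\ an open cylinder on the circle with three marked points. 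Then \cref{big-1} gives that the product of defects along $\Bar{\mathbb{S}_v^1}$ is $1$, i.e.\ $\mathrm{col}(e_1)\,\mathrm{col}(e_2)\,\mathrm{col}(e_3)=1$ — which is just the relation $abc=1$ of the presentation $P_{K_4}$ read off the defect datum of \cref{exam:coloring}. Hence $\varphi$ exists, it \emph{is} the Tait colouring, it is well defined, and by (ii) it is a proper $4$-colouring of the regions, proving the corollary.

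There is no deep step here; all the weight sits in \cref{big-1}. The only things needing care are two ``standard but fussy'' reductions: that the small encircling loop $\Bar{\mathbb{S}_v^1}$ genuinely qualifies as a generic cross-section, handled by the local Morse-function construction above exactly as in the proof of \cref{cor:big-1} where a cross-section meeting a single bridge was used; and the plane-graph facts that the cycle space is spanned by face boundaries and that $\Gamma^{\ast\ast}\cong\Gamma$ for a connected plane graph, together with a general-position argument so that the comparison loop $p_1\cdot\Bar{p_2}$ of two colouring routes may be taken transversal to $\Gamma$ and disjoint from $V(\Gamma)$. If one prefers to bypass the dual graph, the same argument runs through $H_1(\mathbb{S}^2\setminus V(\Gamma);\mathbb{Z}/2)$: the crossed-colour map is a homomorphism out of this group (two transversal loops agreeing off a neighbourhood of $\Gamma$ differ by crossing an edge twice, contributing $x^2=1$, or by sliding across a vertex, contributing $abc=1$), the classes $[\Bar{\mathbb{S}_v^1}]$ generate it, and \cref{big-1} kills each generator.
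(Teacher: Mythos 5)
Your proof is correct, but it takes a genuinely different route from the paper's. The paper argues path-independence directly: given two paths from the base region to the same region yielding values $g_1y$ and $g_2y$, it concatenates them into a loop, regards that loop as a generic cross-section of $(\mathbb{S}^2,\Gamma,K_4)$, and invokes \cref{big-1} to conclude $g_1g_2=1$, hence $g_1=g_2$ (using that $K_4$ is abelian with every element an involution); the remaining point that every region is reached follows from path-connectedness of $\mathbb{S}^2$, and the properness of the face colouring is deferred to a remark about the choice of $K_4$-set. You instead localize the problem: you pass to the plane dual, note that well-definedness is equivalent to the label-product around every cycle being trivial, and use the standard fact that the cycle space of a connected plane graph is spanned by face boundaries, so everything reduces to the product of the three edge-colours at each vertex of $\Gamma$ being $1$ --- which is just the relation $abc=1$ (your appeal to \cref{big-1} for the small circle $\Bar{\mathbb{S}_v^1}$ is correct but overkill, as you note). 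The trade-off: the paper's argument leans on the full global strength of \cref{big-1} (whose proof is the Jordan-curve parity argument) and quietly assumes the comparison loop can be realized as a generic cross-section, whereas yours needs only the local vertex relation plus elementary plane-graph facts (connected dual, face boundaries spanning the cycle space), and it handles properness explicitly --- adjacent regions differ because crossing an edge multiplies by a non-identity element, with the bridge case excluded by $3$-edge-colourability. Your version is more self-contained and arguably sharper on the admissibility point; the paper's is shorter given \cref{big-1} is already in hand and fits its theme of deducing everything from that theorem.
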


\begin{proof}
    After fixing an arbitrary region to begin face coloring, we need to show two things:
    \begin{enumerate}
        \item The face coloring of a region, as described by Tait's correspondence does not depend on the path taken to reach this region, and
        \item each region gets a color.
    \end{enumerate}
    The concept of face-coloring is not defined using \cref{def:coloring} or \cref{coloring_process} (although it can be), therefore, we rely on the classical definition. We use a $G$-set $Y$, with $G = K_4$ as a set of colors available for the face-coloring. To prove (1), assume that the starting region has a color $y \in Y$ and let $\gamma_1$ and $\gamma_2$ be two paths leading to different colors $g_1y$ and $g_2y$. Now, the concatenation of $\gamma_1$ and $\gamma_2$ gives rise to a loop starting and ending at the region colored $y$. This loop is a generic cross-section of $\mathbb{S}^2$ with the product of defects $g_1g_2$, which has to be $1$ by \cref{big-1}. This proves $g_1 = g_2$ (every element is its own inverse in $K_4$, and it is abelian.)
    
    Proof of (2) is a consequence of the fact that $\mathbb{S}^2$ is path-connected.
    
Note that the admissibility of face-coloring, i.e., each face sharing an edge getting a different color, depends on the size of the $K_4$-set $Y$ and the definition of the action. See also the discussion below \cref{4-color}.

\end{proof}

\subsection{Planar trivalent graphs} \label{sec:planar_trivalent_graphs}

Finally, we restrict our attention to un-directed, trivalent, planar graphs. In the language of surface with defects, it is a pair $(\mathbb{S}^2, \Gamma) \in \text{Mor}(\textit{Bord}_2^{\textit{def,cw}}(\mathcal{D}_{+}^{\mathbf{3}}))(\emptyset, \emptyset)$, with admissible decomposition as discussed in \cref{exam-deco} (6). The goal of this section is to address the question of the coloring of such a graph. In other words, whether a given surface with defects $(\mathbb{S}^2, \Gamma)$ lies in the image of $\pi^{cw}$ in \cref{bleach}. We saw in \cref{cor:big-1} that it is not always possible to find a global section $s: (\Sigma, \Gamma) \to \pi^{-1}((\Sigma, \Gamma))$ such that $\mathfrak{p} \circ s = \mathbf{1}$. Note that the cardinality of $\pi^{-1}((\mathbb{S}^2, \Gamma))$ is precisely the number of Tait-coloring of the planar graph $\Gamma$. By definition of $s$, it is also the total number of such global sections $s$.

We begin with an example demonstrating the coloring process for planar trivalent graphs:

\begin{exam} \label{exam:dumbbell}
    We see that for the dumbbell graph below, there are three choices of sections for each of $P_1, P_2$ and $P_4$ and six choices for $P_3$ but no such choice of $s_{P_1}, s_{P_2}, s_{P_3}$ and $s_{P_4}$ extends to a global-section $s$ as this will contradict \cref{big-1}.
    \begin{equation} \label{dumbbell}
        \centering
        \includegraphics[scale=1]{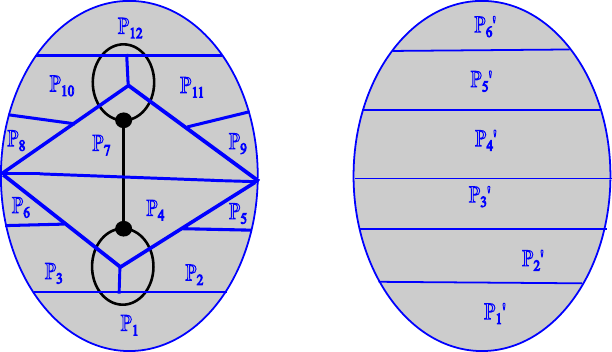}
    \end{equation}
    
\end{exam}

\begin{figure}[ht]
    \centering
    \includegraphics[scale=0.5]{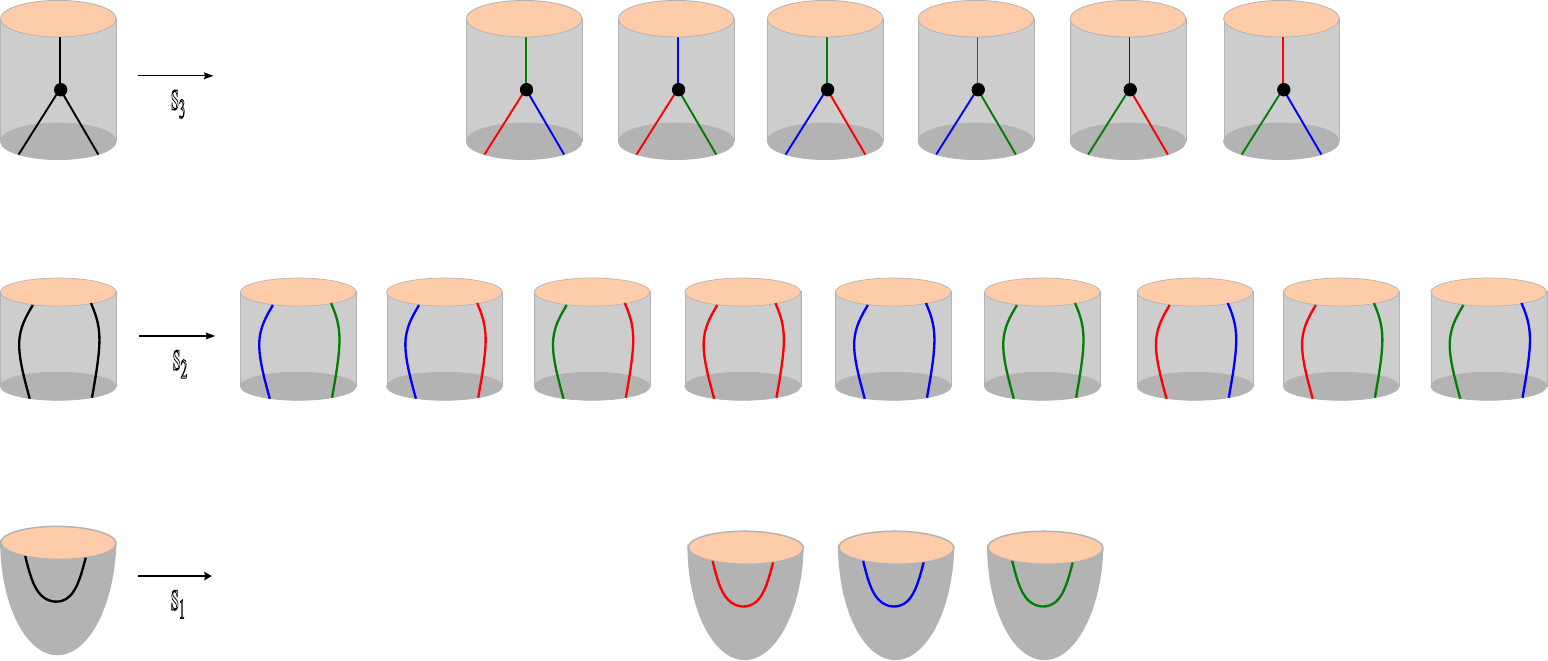}
    \caption{shows the existence of local sections on the southern hemisphere made by fusing $P_1$ and $P'_1$ (bottom), the cylinder made by fusing $P_2, P_3$ and $P'_2$(middle), and finally cylinder made by fusing $P_4, P_5, P_6$ and $P'_3$(top). They do not glue in any manner to produce a section on the southern hemisphere of ~ \ref{dumbbell} that restricts to individual sections. }
    \label{fig:coloring-dumbbell}
\end{figure}
\cref{fig:coloring-dumbbell} demonstrate the coloring process for the dumbbell graph in ~\ref{dumbbell}: extending the sections to $P_3 \otimes P_2 \otimes P'_2$ under horizontal composition by the rule $s_{P_3 \otimes P_2 \otimes P'_2} = s_{P_3} \otimes s_{P_2} \otimes s_{P'_2}$, and to the vertical composition $(P_3 \otimes P_2 \otimes P'_2) \circ (P_1 \otimes P'_1)$ by $(s_{P_3} \otimes s_{P_2} \otimes s_{P'_2}) \circ (s_{P_1} \otimes s_{P'_1})$. Note that for no choice of $s_{P_1}, \dots, s_{P_6}$, these individual sections can be extended to $(P_6 \otimes P_4 \otimes P_5 \otimes P'_3) \circ (P_3 \otimes P_2 \otimes P'_2) \circ (P_1 \otimes P'_1)$.

Note, for a cylinder $C$ in $\text{Mor}(\textit{Bord}_2^{\textit{def,cw}}(\mathcal{D}_{+}^{\mathbf{3}}))(O_1, O_2)$, if $s(C)$ exists then it is a cylinder $\hat{C}$ in the category $\text{Mor}(\textit{Bord}_2^{\textit{def,cw}}(\mathcal{P}_{K_4}))(U_1, U_2)$ for some circle with defects $U_1$ and $U_2$ with the property that $\Pi^{cw}(U_1) = O_1$ and $\Pi^{cw}(U_2) = O_2$. Therefore, if $C_1$ and $C_2$ are two such cylinders in $\text{Mor}(\textit{Bord}_2^{\textit{def,cw}}(\mathcal{D}_{+}^{\mathbf{3}}))$ such that $C_2 \circ C_1$ is defined then $s_{C_1}$ and $s_{C_2}$ extends to a section $s_{C_2 \circ C_1}$ if and only if the composition $s_{C_2} \circ s_{C_1}$ exists in $\textit{Bord}_2^{\textit{def,cw}}(\mathcal{P}_{K_4})$, in which case a section $s_{C_2 \circ C_1}$ is given by the composition $s_{C_2} \circ s_{C_1}$, as suggested by the coloring process. So, we see that it is the vertical composition that dictates the gluing of local sections.

Next, recall the trivial surrounding theory $\chi^{cw}: \textit{Bord}_2^{\textit{def,cw}}(\mathcal{D}^{\mathbf{3}}) \to \text{Vect}_F(\mathbb{C})$ from \cref{chi-cw}. Under the isomorphism $X \cong X^{\ast}$, it is independent of the orientation on the edges of the graph $\Gamma$ and thus we can talk about the correlator of a surface with defects in $\textit{Bord}_2^{\textit{def,cw}}(\mathcal{D}_{+}^{\mathbf{3}})$ by choosing an arbitrary orientation of $1$-strata. Thus we formulate the main result:

\begin{theo} \label{main-2}
    Let $\Gamma$ be a trivalent graph embedded in $\mathbb{S}^2$. Consider the surface with defect $(\mathbb{S}^2, {\Gamma})$ in $\text{Mor}(\textit{Bord}_2^{\textit{def,cw}}(\mathcal{D}_{+}^{\mathbf{3}}))(\emptyset, \emptyset)$. The action of the functor $\chi^{cw}$ on $(\mathbb{S}^2, \Gamma)$ is the assignment
    \begin{equation}
        \begin{split}
            \chi^{cw}(\mathbb{S}^2, \Gamma) &: \mathbb{C} \longrightarrow \mathbb{C}\\
            & \lambda \mapsto \#\text{Tait}(\Gamma) \lambda
        \end{split}
    \end{equation}
    
  In other words the number $\chi^{cw}(\mathbb{S}^2, \Gamma)(1)$ is the number of Tait-coloring of the planar trivalent graph $\Gamma$. 
    
\end{theo}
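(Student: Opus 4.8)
The plan is to compute $\chi^{cw}(\mathbb{S}^2,\Gamma)$ by slicing $(\mathbb{S}^2,\Gamma)$ with a generic height function and then invoking functoriality. First I would choose a Morse function $f\colon\mathbb{S}^2\to[0,1]$ adapted to $\Gamma$: one whose restriction to the $1$-skeleton $\Gamma$ is Morse with distinct critical values, none of which coincides with a critical value of $f$ itself, and arranged so that the distinguished points of all generic cross-sections lie in one common face of $\Gamma$ (so that \cref{distinguished} holds); existence follows from standard Morse theory together with the decomposition statement \cref{prop-deco}. Cutting at the regular levels exhibits $(\mathbb{S}^2,\Gamma)$, as a morphism $\emptyset\to\emptyset$ in $\textit{Bord}_2^{\textit{def,cw}}(\mathcal{D}_{+}^{\mathbf 3})$, as a vertical composite of elementary bordisms, each of which is the fusion of identity cylinders with exactly one of the basic pieces $P_0,P_\gamma,P_\beta,P_\mu,P_\delta$ of \cref{chi-cw} (the formula for $P_\delta$ coming out of the same propagator--evaluation calculation, or from $P_\mu$ by bending a leg): births, deaths and surface saddles occurring away from $\Gamma$ contribute only copies of $\mathbb{C}$ and act as identities --- here one uses that $\chi^{cw}$ is a trivial surrounding theory, via \cref{Prop-main} and \cref{Thm:simplest} --- a local minimum (resp.\ maximum) of an edge contributes $P_\gamma$ (resp.\ $P_\beta$), and a trivalent vertex contributes $P_\delta$ or $P_\mu$ according to whether one or two of its edges approach from below. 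By functoriality of $\chi^{cw}$ and \cref{Prop:fusion}, $\chi^{cw}(\mathbb{S}^2,\Gamma)$ is the corresponding composite of linear maps between tensor powers of $X=\mathbb{C}\langle a,b,c\rangle$, starting and ending at $X^{\otimes 0}=\mathbb{C}$.

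Next I would prove, by induction on the regular levels $t$ from bottom to top, the following bookkeeping identity: if $\Gamma_{<t}$ is the part of $\Gamma$ below level $t$ and $f^{-1}(t)$ meets $\Gamma$ in the ordered list of edges $e_1,\dots,e_{n_t}$, then $\chi^{cw}$ of the bordism below level $t$ sends $1$ to $\sum_{\vec c\in\{a,b,c\}^{n_t}} N(\vec c)\,(c_1\otimes\cdots\otimes c_{n_t})$, where $N(\vec c)$ is the number of edge-colorings of $\Gamma_{<t}$ that are admissible at every vertex below level $t$ and assign $c_j$ to $e_j$. The base case is the empty graph, with the single term $1$ of coefficient $N(\varnothing)=1$. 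Each inductive step is a direct check against the explicit formulas of \cref{chi-cw} and the evident extension count: $P_\gamma$ inserts a new edge carrying an arbitrary color on both strands; $P_\beta$ collapses the two strands of one edge and kills every term on which they disagree; the splitting vertex $P_\delta$ replaces a bottom color $x$ by the two terms $y\otimes z$ and $z\otimes y$ with $\{x,y,z\}=\{a,b,c\}$, which is exactly the list of admissible extensions; and the merging vertex $P_\mu$ sends $y\otimes z$ to the unique third color when $y\ne z$ and to $0$ when $y=z$, which is precisely the discarding of the partial colorings violating admissibility at that vertex. At the top level the cross-section is empty, so $X^{\otimes 0}=\mathbb{C}$ and the sole coefficient is the number of admissible colorings of $\Gamma_{<1}=\Gamma$, namely $\#\text{Tait}(\Gamma)$; thus $\chi^{cw}(\mathbb{S}^2,\Gamma)(1)=\#\text{Tait}(\Gamma)$, and $\mathbb{C}$-linearity gives $\lambda\mapsto\#\text{Tait}(\Gamma)\,\lambda$.

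Conceptually this is the "sum over intermediate states" principle of \cref{sum_over_IS}: at each regular level $\chi^{cw}$ records the formal sum of the partial colorings compatible with the slice, the propagators force adjacent slices to agree along shared edges, and the evaluations at the basic-gons impose admissibility at the vertices, so that the scalar finally produced counts the globally consistent colorings --- equivalently, the global sections $s$ of \cref{def:coloring-global}, hence $\#\text{Tait}(\Gamma)$ by \cref{defn:coloring}. I expect the main obstacle to be the first step rather than the combinatorics: one must verify that a single height function can be chosen so that \emph{every} generic cross-section is a bona fide object of $\textit{Bord}_2^{\textit{def,cw}}(\mathcal{D}_{+}^{\mathbf 3})$ (admitting a collar and a distinguished point in a common face), and that between consecutive critical levels the bordism genuinely is one basic-gon fused with identity cylinders. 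Once this decomposition is secured, each elementary move matches one line of the extension count verbatim, and independence of the output on the chosen decomposition is already guaranteed by the PLCW-invariance of the lattice TFT construction.
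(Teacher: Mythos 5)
Your proposal is correct and follows essentially the same route as the paper: slice $(\mathbb{S}^2,\Gamma)$ by a height function into the basic patterns of \cref{prop-deco} (secured at the PLCW level by \cref{prop-deco-plcw}), reduce to the explicit formulas of \cref{chi-cw} via the trivial-surrounding results \cref{Prop-main}, \cref{Thm:simplest} and \cref{Prop:fusion}, and interpret each elementary map as counting admissible extensions, concluding by the sum-over-intermediate-states property \cref{sum_over_IS}. The only difference is organizational: you carry a running state vector by induction over regular levels (treating the splitting vertex $P_\delta$ as its own elementary piece), whereas the paper packages the same count into \cref{cylinder} and \cref{cup-cap} by induction on the number of basic cylinders and then composes one cup with one cap at a single generic cross-section.
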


Proof of \cref{main-2} is split up over the next few pages. The first thing on this line is the \textit{planar trivalent decomposition theorem} stated and proved below:

\begin{theo} \label{prop-deco}
    Every planar trivalent graph, when seen as a surface with defects $(\mathbb{S}^2, \Gamma)\in \text{Mor}(\text{Bord}_2^{\text{def}}(\mathcal{D}_{+}^{\mathbf{3}}))$ can be written as the composite $ \rho_{i_1}\circ \dots \circ \rho_{i_m}$ where each $\rho_{i_j}$ is one of the four patterns shown in the ~\cref{fig:bases}.
\end{theo}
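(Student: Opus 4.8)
The plan is to prove \cref{prop-deco} by a Morse‑theoretic slicing of the sphere that is adapted to the embedded graph, and then to recognise each elementary slab as one of the four patterns of \cref{fig:bases} fused with identity cylinders. First I would choose a height function $f\colon\mathbb{S}^2\to[0,1]$ (in the sense used for \cref{generic_cross-section}; existence follows, as in \cite{audin2014morse}, from the existence of a Morse function) which may be taken with a single minimum and a single maximum, and which is moreover generic with respect to $\Gamma$: the restriction of $f$ to each open edge of $\Gamma$ is Morse, every vertex of $\Gamma$ is a regular point of $f$, and the finitely many \emph{events}—the two poles, the local extrema of $f$ along edges, and the vertices of $\Gamma$—occur at pairwise distinct heights $0<t_1<\dots<t_N<1$. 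Such an $f$ exists after an arbitrarily small perturbation of a given height function, since the functions with one minimum, one maximum, and in general position with respect to $\Gamma$ are dense among height functions on $\mathbb{S}^2$.

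Next I would slice. Pick regular values $0=s_0<s_1<\dots<s_N=1$ with $s_{j-1}<t_j<s_j$, so that the closed slab $\Sigma_j:=f^{-1}([s_{j-1},s_j])$ contains exactly the single event at height $t_j$ (the two caps $\Sigma_1,\Sigma_N$ absorb the poles). At a regular value $s$, Morse theory gives a product neighbourhood of $f^{-1}(s)$, so by \cref{cylinder-collar} and \cref{generic_cross-section} the level set $f^{-1}(s)$ with its induced stratification and labels is a genuine circle with defects together with a collar; since $f$ has exactly one minimum and one maximum, $f^{-1}(s)$ is a single circle for $0<s<1$. To make $f^{-1}(s)$ an actual \emph{object} of $\textit{Bord}_2^{\textit{def}}(\mathcal{D}_{+}^{\mathbf{3}})$ I would also record a distinguished point on it, chosen in $f^{-1}(s)\setminus\Gamma$—this set is nonempty because $\Gamma$ meets the cross-section in only finitely many points and $\mathbb{S}^2\setminus\Gamma\neq\emptyset$—and chosen so that the distinguished points of consecutive slices sweep out an arc lying in a single component of $\Sigma_2$ inside each slab, as demanded by \cref{distinguished}.

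With the slicing in hand, each slab $\Sigma_j$ is, up to boundary‑preserving isomorphism, determined by its single event: away from a small disk around the event it is a disjoint union of cylinders (which are identity morphisms, built from the pattern $P_0$ of \cref{Pattern-1} and its fusions), and inside that disk it is precisely the local model of the event—a pole cap, an edge‑minimum, an edge‑maximum, or a trivalent vertex—that is, one of the patterns of \cref{fig:bases}. Hence $\Sigma_j$ is the horizontal fusion of one such pattern with copies of $P_0$, and by the functoriality of $\otimes$ and the interchange law \cref{interchange-law} in the monoidal category $\textit{Bord}_2^{\textit{def}}(\mathcal{D}_{+}^{\mathbf{3}})$ (using $f\otimes g=(\mathbf{1}\otimes g)\circ(f\otimes\mathbf{1})$) this fusion is itself a vertical composite of patterns drawn from \cref{fig:bases}. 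By construction the boundaries match, so $(\mathbb{S}^2,\Gamma)=\Sigma_N\circ\dots\circ\Sigma_1$ in $\text{Mor}(\textit{Bord}_2^{\textit{def}}(\mathcal{D}_{+}^{\mathbf{3}}))(\emptyset,\emptyset)$; substituting the decomposition of each $\Sigma_j$ gives the asserted presentation $\rho_{i_1}\circ\dots\circ\rho_{i_m}$. The orientations on the $1$‑strata needed to match the local models are pinned down by the orientation‑consistency conventions of \cref{condns} and the involution $\ast$ of \cref{local-junctions}, and the theta‑graph computation of \cref{fig:quantum_decom}(ii) and \cref{partition_theta} is the prototype of this procedure.

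I expect the main obstacle to be not the Morse theory but the bookkeeping forced by the definition of the bordism category: one must guarantee that \emph{every} interface $f^{-1}(s_j)$ is a legitimate object, i.e. that it admits a collar (automatic at a regular value) \emph{and} carries a distinguished point whose trace across each slab stays inside a single connected component of the $2$‑stratum (\cref{distinguished}). Across an edge‑extremum or a trivalent vertex the local picture of $\Sigma_2$ changes, and one must verify that the distinguished‑point strip can be homotoped into the component of $\Sigma_2$ that survives the event; this is exactly the subtlety flagged in \cref{sec:key_tools}, and it is here that planarity of $\Gamma$ (so that the relevant region of $\mathbb{S}^2\setminus\Gamma$ is a disk and the strip is unobstructed) and the standing assumption \cref{conv:locally_CS}(3) (each $0$‑stratum is incident to at least two $1$‑strata) are used. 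A secondary, purely combinatorial point is that refining the slicing or perturbing $f$ must not change the resulting composite in $\textit{Bord}_2^{\textit{def}}$; this follows because two such choices differ by isotopies and elementary cell moves, together with the fact that a cylinder with defects represents an identity morphism (\cref{cylinder-collar}).
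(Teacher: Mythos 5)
Your slicing strategy is essentially the paper's own route (the paper isotopes $\Gamma$ so that each cylinder of a handle decomposition of $\mathbb{S}^2$ carries at most one vertex, then reads off the local models), but there is a genuine gap at the vertex step. You assert that the local model of each event---``a pole cap, an edge-minimum, an edge-maximum, or a trivalent vertex''---is one of the four patterns of \cref{fig:bases}. That is false for trivalent vertices in \emph{split} position: with a generic height function a vertex has either two edge-germs below and one above (the merge $\mu_i$, which is in \cref{fig:bases}) or one below and two above (the pattern $P_{\delta}\colon X\to X\otimes X$ that the paper computes separately in \cref{chi-cw} and uses in \cref{partition_theta}; see the lower vertex of the theta graph in \cref{fig:quantum_decom}). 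A split vertex is a bordism from a circle with $n$ marked points to one with $n+1$ marked points and therefore cannot literally be any $\rho_{i_j}$, and your argument provides no way to express the slab containing it through $I,\mu_i,\beta_i,\gamma_i$. Genericity does not rescue this: it only separates events in height, it does not place every vertex in merge position. This is precisely the step the paper supplies in the final sentence of its proof, where \cref{proof-2} rewrites such vertices as vertical composites of $I,\mu_i,\beta_i,\gamma_i$ (using a cup or cap to ``turn the vertex around,'' e.g. $(\mu_i\otimes I)\circ(I\otimes\gamma_i)$). Without that rewriting your decomposition already fails for the theta graph drawn as in \cref{fig:quantum_decom}.

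Two smaller remarks. First, the detour through the interchange law is unnecessary: each pattern in \cref{fig:bases} is drawn as a full cylinder with the identity strands at the other positions already included, so a slab with a single cup, cap, or merge event \emph{is} one of the $\rho$'s on the nose. Second, your treatment of edge extrema by Morse genericity replaces the paper's appeal to the cup--cap generation of planar diagrams (D\v{o}\v{s}en, Kauffman), which is a fine and arguably more self-contained alternative; and your handling of the polar caps and of the distinguished point mirrors the paper's (equally brief) treatment, so the only substantive defect is the missing split-vertex case described above.
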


\begin{figure}
    \centering
    \includegraphics[scale=0.6]{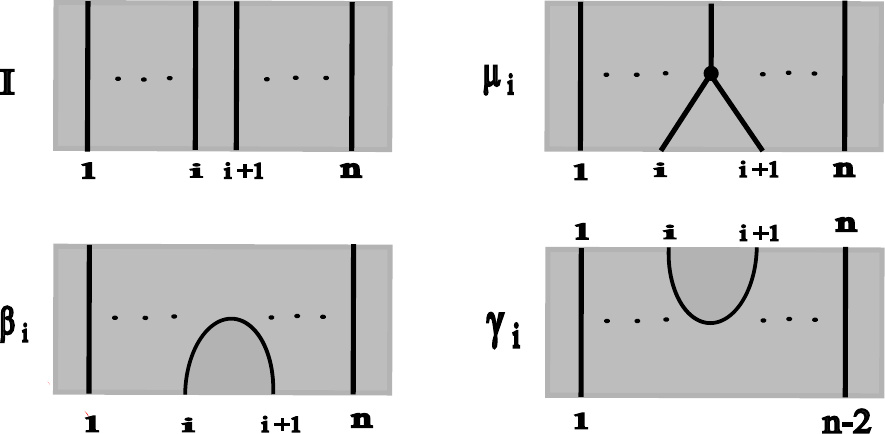}
    \caption{The four patterns $I, \mu_i , \beta_i$, and $\gamma_i$. The value of $n$ can be $2$, in which case $i$ equals $1$ and $i+1$ equals $2$. We have presented only the rectangle part of the cylinder. The part of the cylinder not shown is the region on the sphere without defect.}
    \label{fig:bases}
\end{figure}

\begin{proof}

First, because of a distinguished point, $\{-1\}$, on each generic cross-section, an edge can not wrap around $\mathbb{S}^2$. Next, since $\Gamma$ has only a finite number of vertices, it can be isotoped so that the handle decomposition of $\mathbb{S}^2$ in terms of the cylinder contains at most one vertex. Now, the portion of the cylinder far from this unique vertex is planar and thus generated by $U_i \coloneqq \gamma_i \circ \beta_i$. (See ~\cite{dovsen2003self} or ~\cite{kauffman1990invariant} for a proof of this fact.) On the other hand, the trivalent vertices that do not resemble $\mu_i$ can be obtained from a combination (vertical composition) of $I, \mu_i, \beta_i$, and $\gamma_i$ as shown in ~\cref{proof-2}.

\begin{figure}
    \centering
    \includegraphics[scale=0.6]{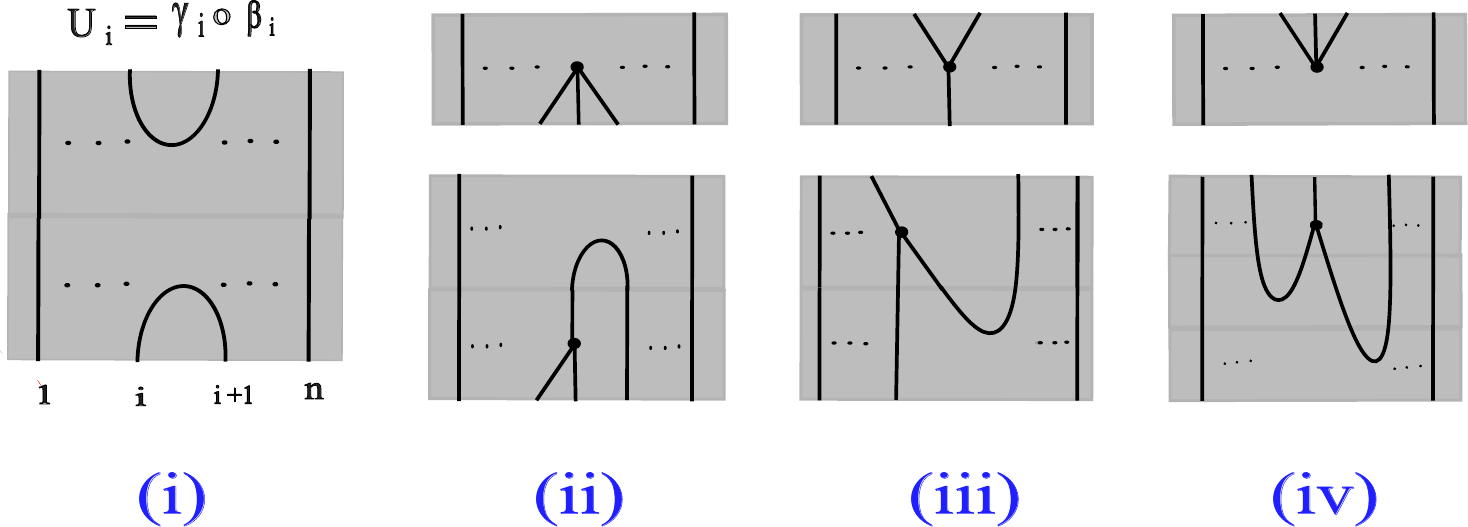}
    \caption{Diagram (i) is a picture of $U_i \coloneqq \gamma_i \circ \beta_i$. Diagrams (ii), (iii), and (iv) show how to write the trivalent vertex above these as a word involving $I, \mu, \gamma$, and $\beta$ from Figure-~\ref{fig:bases}. Where we have dropped the subscript $i$ for notational convenience. }
    \label{proof-2}
\end{figure}

\end{proof}

We also prove the following analogue for the category $\text{Bord}_2^{\textit{def,cw}}(\mathcal{D}_{+}^{\mathbf{3}})$

\begin{prop} \label{prop-deco-plcw}
    Given a planar graph $(\mathbb{S}^2, \Gamma) \in \text{Mor}(\text{Bord}_2^{\textit{def,cw}}(\mathcal{D}_{+}^{\mathbf{3}}))$, there is a PLCW decomposition of it making $\rho_{i_j}$ of \cref{prop-deco}. More precisely, each $\rho_{i_j}$ can be written as $\rho_{i_j} = P^{i_j}_1 \otimes \dots \otimes P^{i_j}_k$ for some $P^{i_j}_1, \dots P^{i_j}_k \in C_2(\mathbb{S}^2)$.
\end{prop}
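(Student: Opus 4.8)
The plan is to refine the decomposition $(\mathbb{S}^2,\Gamma)=\rho_{i_1}\circ\cdots\circ\rho_{i_m}$ produced by \cref{prop-deco} by imposing a PLCW structure on each cylinder $\rho_{i_j}$ compatible with the defect structure, and then to check that the gluing along the cross-sectional circles is compatible with \cref{PLCW}. The key observation is that each pattern $\rho_{i_j}$ in \cref{fig:bases} is, topologically, a cylinder on a circle with marked points containing at most one trivalent vertex; this is exactly the local shape allowed in $\textit{Bord}_2^{\textit{def,cw}}$, so one only needs to \emph{produce} a concrete PLCW decomposition rather than to overcome any genuine obstruction.

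First I would treat the pattern $I$ (the identity cylinder with one through-strand): decompose it as a single basic-gon of type (iii) from \cref{basic_gons}, as was already done for $P_0$ in \cref{chi-cw}; for a pattern $I$ carrying several parallel through-strands, take a fused product $P_1\otimes\cdots\otimes P_k$ of such basic-gons, one per strand, plus basic-gons of type (ii) for the parts of the circle with no defect. Next, for $\beta_i$ (a cap, where two strands meet an out-boundary circle with fewer defects) and $\gamma_i$ (a cup) I would use the decompositions already exhibited in \cref{chi-cw} as $P_{\beta}$ and $P_{\gamma}$: each is a single basic-gon of type (iii), again fused with type-(ii) basic-gons to account for the undecorated arcs of the boundary circles, and with further type-(ii) basic-gons inserted so that the cell structures on the two boundary circles match the required form in \cref{PLCW} (each $0$-stratum in a $1$-cell, at most one per $1$-cell). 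For the pattern $\mu_i$, which contains the unique trivalent vertex, use the basic-gon $P_{\mu}$ of type (iv) as in \cref{chi-cw}, once more fused with type-(ii) basic-gons for the remaining arcs. In every case $\rho_{i_j}$ is written as a horizontal composite $\rho_{i_j}=P^{i_j}_1\otimes\cdots\otimes P^{i_j}_k$ of basic-gons, which is the assertion to be proved.

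The one point requiring care is \emph{compatibility of the boundary cell-structures}: when forming $\rho_{i_{j}}\circ\rho_{i_{j+1}}$ the PLCW decomposition on the shared cross-sectional circle induced from $\rho_{i_j}$ must agree with the one induced from $\rho_{i_{j+1}}$. I would handle this exactly as in \cref{Prop:fusion} and its proof: one may always subdivide (cf.~\cite{kirillov2012piecewise}, Sections~6--7) so that both sides restrict to the same cell-decomposition of the common circle, inserting extra $0$-cells and $1$-cells (i.e.\ extra type-(ii) basic-gons) as needed and invoking Alexander's theorem for PLCW decompositions (\cite{kirillov2012piecewise}, Theorem~8.1) so that the resulting TFT value is unchanged. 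This refinement step, together with the fact that the distinguished point $\{-1\}$ prevents any strand from wrapping the circle (as used in \cref{prop-deco}), guarantees that the global cell structure on $(\mathbb{S}^2,\Gamma)$ is a legitimate object of $\textit{Bord}_2^{\textit{def,cw}}(\mathcal{D}_{+}^{\mathbf{3}})$ whose $2$-cells are exactly the basic-gons listed above; I expect this bookkeeping of matching boundary subdivisions to be the main, though routine, obstacle.
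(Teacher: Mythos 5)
Your construction is correct and is essentially the paper's own argument: the paper likewise cuts each basic cylinder $\rho_{i_j}$ into basic-gons by inserting a $0$-cell between consecutive defects on the bounding cross-sections and joining the matching $0$-cells by $1$-cells, so that every through-strand sits in a $P_0$-type gon, the undecorated arcs give defect-free gons, and the active position between $i$ and $i+1$ receives the $P_\beta$, $P_\gamma$ or $P_\mu$ gon, exactly as in your fusion of the patterns from \cref{chi-cw}. Your additional appeal to Kirillov's moves and Alexander's theorem to keep the TFT value unchanged is harmless but not needed, since the proposition only asserts the existence of such a decomposition (and choosing the $0$-cells once per shared cross-section already makes the boundary cell structures of consecutive cylinders agree).
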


\begin{proof}
    ~\cref{dumbbell} gives an idea about how to do it. First, choose a height function on $\mathbb{S}^2$ and obtain generic cross-sections containing the cylinders $\rho_{i_j}$. Since each $\rho_{i_j}$ has finitely many $1$-defects, insert a $0$-cell between any two consecutive defects. Referring to \cref{fig:bases}, we see that there is a bijection between all such $0$-cells inserted on either side of rectangles except for those between $i$ and $i+1$. Join these two to form $1$-cells. For $I$, this will immediately give a decomposition into basic-gons. For $\beta_i,\mu_i$, we join the two neighboring $0$-cells of the $0$-cell between $i$ and $i+1$ as shown in the picture below. The pattern $\gamma_i$ is done similarly to $\beta_i$.
    \begin{equation*}
        \centering
        \includegraphics[scale=0.8]{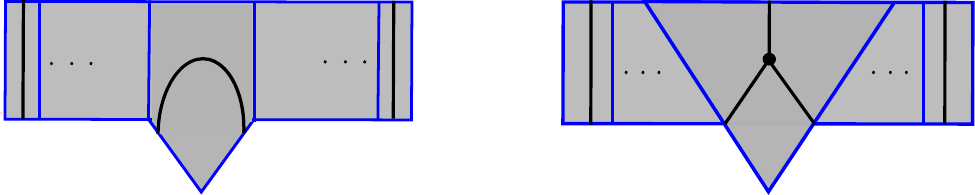}
    \end{equation*}
\end{proof}

Next, we prove the following important property of $3$-edge coloring which is analogous to the \textit{sum over all possible intermediate states} property in QFT. (See \cref{sec:motivation} for various references, and \cref{fig:quantum_decom}, Diagram (i) for a picture.)

\begin{lemma} \label{sum_over_IS}
    Let $(C_{12}, \Gamma_{12}) \in \text{Mor}(\textit{Bord}_2^{\textit{def,cw}}(\mathcal{D}_{+}^{\mathbf{3}}))(O_1, \partial O_2)$ and $O_t$ be a generic cross-section of $(C_{12}, \Gamma_{12})$ that fits into the composite bordism
    $$\begin{tikzcd}
      & (C_{1t}, \Gamma_{1t}) \arrow[dr, "\mathfrak{o}_1"] &  & (C_{t2}, \Gamma_{t2}) \arrow[dr, "\mathfrak{o}_2"] &  \\
     O_1 \arrow[ur, "\iota_1"] &  & O_t \arrow[ur, "\iota_2"] &  & O_2
    \end{tikzcd}$$
    If $\text{Tait}_{\hat{x}, \hat{y}}\Gamma_{xy}$ stands for coloring of the cylinder $(C_{xy}, \Gamma_{xy})$, with given (fixed) colors $\hat{x}$ on the in-boundary $x$ and $\hat{y}$ on the out-boundary $y$, then
    \begin{equation} \label{eqn:IS}
        \#\text{Tait}_{\hat{O}_1, \hat{O}_2}\Gamma_{12} = \sum_{\hat{O}_t}(\#\text{Tait}_{\hat{O}_1, \hat{O}_t}\Gamma_{1t})(\#\text{Tait}_{\hat{O}_t, \hat{O}_2}\Gamma_{12}).
    \end{equation}

\end{lemma}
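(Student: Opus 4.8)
The plan is to prove this by setting up an explicit bijection between colorings of $(C_{12},\Gamma_{12})$ restricting to the fixed boundary data $\hat{O}_1,\hat{O}_2$ and pairs of compatible colorings of the two halves. First I would recall from \cref{defn:coloring} and \cref{rema:section} that a coloring of a surface with defects $(\Sigma,\Gamma)$ — with prescribed colorings on its boundary circles with defects — is precisely an element of $\pi^{-1}(\Sigma,\Gamma)$ with that boundary behavior, i.e.\ a section $s$ of $\mathfrak{p}$, equivalently a lift of $(\Sigma,\Gamma)$ along $\pi^{cw}$ to $\textit{Bord}_2^{\textit{def,cw}}(\mathcal{P}_{K_4})$. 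Since $C_{12}=C_{t2}\circ C_{1t}$ as morphisms in $\textit{Bord}_2^{\textit{def,cw}}(\mathcal{D}_{+}^{\mathbf 3})$, a lift of $C_{12}$ is the same as a lift of this composite.

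The key step is a gluing/restriction correspondence: given a lift $\hat{C}_{12}\in\pi^{-1}(C_{12},\Gamma_{12})$ compatible with $\hat{O}_1,\hat{O}_2$, restricting to the two sides of the generic cross-section $O_t$ produces lifts $\hat{C}_{1t}$ and $\hat{C}_{t2}$; the crucial point is that $O_t$ being a \emph{generic} cross-section (\cref{generic_cross-section}) means it is a genuine circle with defects with a collar on both sides, hence an object of $\textit{Bord}_2^{\textit{def,cw}}(\mathcal{P}_{K_4})$ once colored, and the two restrictions induce the \emph{same} coloring $\hat{O}_t$ on it (the decorations agree because $d$ is constant on components, cf.\ \cref{def:coloring} and the remark following it). Conversely, given $\hat{O}_t$ and lifts $\hat{C}_{1t}\colon\hat{O}_1\to\hat{O}_t$ and $\hat{C}_{t2}\colon\hat{O}_t\to\hat{O}_2$ in $\textit{Bord}_2^{\textit{def,cw}}(\mathcal{P}_{K_4})$, their composition $\hat{C}_{t2}\circ\hat{C}_{1t}$ is a lift of $C_{12}$; this uses that $\pi^{cw}$ is compatible with vertical composition (the defect labels on matching boundary circles agree by construction, so the glued decoration map is well defined). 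These two operations are mutually inverse: restricting a glued lift returns the original pair, and gluing a restricted pair returns the original lift, because a coloring is determined by its values on each stratum and gluing along $O_t$ recovers every stratum of $C_{12}$. Summing over the (finitely many) possible intermediate colorings $\hat{O}_t$ and counting gives
\[
\#\mathrm{Tait}_{\hat{O}_1,\hat{O}_2}\Gamma_{12}=\sum_{\hat{O}_t}\bigl(\#\mathrm{Tait}_{\hat{O}_1,\hat{O}_t}\Gamma_{1t}\bigr)\bigl(\#\mathrm{Tait}_{\hat{O}_t,\hat{O}_2}\Gamma_{t2}\bigr),
\]
which is \cref{eqn:IS}.

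The main obstacle I anticipate is the careful bookkeeping at the cross-section: one must verify that a coloring of $C_{12}$ really does restrict to an honest \emph{object} of the colored bordism category on $O_t$ (this is exactly why genericity, and the collar condition of \cref{distinguished}, are invoked — an arbitrary cross-section need not admit a collar, so the restriction would not be a legitimate circle with defects), and that the distinguished-point convention is respected so the restricted pieces are composable morphisms. A secondary point to nail down is finiteness of the sum: the set of colorings $\hat{O}_t$ of a fixed circle with finitely many defect points by the finite set $\{a,b,c\}$ is finite, so the sum is well defined; and one should note the colorings on either side that disagree on $O_t$ contribute nothing, which is what makes the restriction map land in the claimed fiber product. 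Everything else is a routine consequence of the functoriality of $\pi^{cw}$ with respect to composition and of the fact, already recorded after \cref{rema:section}, that lifts along $\pi^{cw}$ preserve the underlying stratified space.
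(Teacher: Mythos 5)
Your proposal is correct and follows essentially the same route as the paper: cut along the generic cross-section $O_t$, show that restriction of a coloring of $(C_{12},\Gamma_{12})$ and gluing of compatible colorings of the two halves are mutually inverse (the paper phrases this as equality of the two sets $A$ and $B$), and then count by summing over the intermediate colorings $\hat{O}_t$, with non-extendable intermediate colorings contributing zero. Your additional remarks on genericity, collars, and the distinguished point are consistent elaborations of the same argument rather than a different method.
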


\begin{proof}
    The lemma says that if colorings $\hat{O}_1$ of $O_1$ and $\hat{O}_2$ of $O_2$ are chosen, then the number of $3$-edge coloring of $\Gamma_{12}$, such that the in-boundary $O_1$ receives the color $\hat{O}_1$ and the out-boundary receives the color $\hat{O}_2$, is the sum of the product of number of $3$-edge coloring of the graph $\Gamma_{1t}$ with in-boundary $\hat{O}_1$ and out-boundary $\hat{O}_t$, and $\Gamma_{t2}$ with in-boundary $\hat{O}_t$ and out-boundary $\hat{O}_2$, counted over all the coloring $\hat{O}_t$ of a given intermediate cross-section $O_t$. We will prove this by establishing equality between two sets:

    \[A = \{ s \mid s \hspace{1mm} \text{is a Tait-coloring of} 
     (C_{12}, \Gamma_{12}) \hspace{1mm} \text{with in-boundary} \hspace{1mm} \hat{O}_1 \text{and out-boundary} \hspace{1mm}\hat{O}_2 \} \]
     
     and

    \[
    \begin{split}
    B = \{s  \mid s \hspace{1mm} \text{is obtained by gluing $s_1$ and $s_2$ along the common boundary} \hspace{1mm}
     \hat{O}_t \text{where} s_1 \\ \text{is a Tait-coloring of} \hspace{1mm} \Gamma_{1t} \text{with the in-boundary} \hspace{1mm} \hat{O}_1
     \text{and} \hspace{1mm} s_2 \text{is a Tait-coloring of}\\  \Gamma_{t2} \hspace{1mm} \text{with out-boundary} \hspace{1mm}  \hat{O}_2\}.    
    \end{split}
    \]

First, $B \subset A$ is obvious. Conversely, if $s \in A$, then $s$ restricts to two sections $s_1$ and $s_2$ that can be glued (composed) along the common boundary, namely the color that $O_t$ receives to give a Tait-coloring of $\Gamma_{12}$, which proves $A \subset B$. \cref{eqn:IS} is then a statement about the cardinality of $A$ (left) and $B$ (right). To find the cardinality of $B$, notice that for a coloring $\hat{O}_t$ of $O_t$, if there are $m$ distinct coloring of $\Gamma_{1t}$ with out-boundary $\hat{O}_t$, and $n$ distinct coloring of $\Gamma_{t2}$ with in-boundary $\hat{O}_t$, then they can be combined in $mn$ ways to give a Tait-coloring of $\Gamma_{12}$. The cardinality of $B$ is obtained by summing over all such coloring $\hat{O}_t$ of $O_t$.

\end{proof}

\cref{sum_over_IS} says that the sum can be taken over arbitrary coloring of $O_t$, that is, it may or may not lead to a Tait-coloring on any of $\Gamma_{1t}$ or $\Gamma_{t2}$. The contribution from a color $\hat{O}_t$, which can not be extended to Tait-coloring of $\Gamma_{12}$, is zero because of the relation $A \subset B$. It also means that for such a coloring either $\#\text{Tait}_{\hat{O}_1, \hat{O}_t}\Gamma_{1t} $ is zero or $\#\text{Tait}_{\hat{O}_t, \hat{O}_2}\Gamma_{12}$ is zero.

Let $\mathcal{B}(V)$ denote the set of bases of the $\mathbb{C}$-vector space $V$. For $V = X^{\otimes n}$, this is the set of colors or states that $\chi^{cw}$ assigns to a circle with $n$-defects. We state the following interpretation of the calculations of $\chi^{cw}(P)$ where $P$ is a polygon as in ~\ref{Pattern-1}, ~\ref{Pattern-2},~\ref{Pattern-3}, and ~\ref{Pattern-4}.  

\begin{prop} \label{basic-gon-chi}

    To the basic-gons of the category $\textit{Bord}_2^{\textit{def,cw}}(\mathcal{D}_{+}^{\mathbf{3}})$, when viewed as a cup $\mathbb{D}_P \in \text{Mor}(\textit{Bord}_2^{\textit{def,cw}}(\mathcal{D}_{+}^{\mathbf{3}}))(\emptyset, \partial \mathbb{D}_P)$, $\chi^{cw}$ assigns a vector $v \in \chi^{cw}(\partial \mathbb{D}_P)$ whose component in the direction of a basis vector $w_i \in \mathcal{D}(\chi^{cw}(\partial \mathbb{D}_P))$ is the number of ways the embedded graph $\Gamma_P$ can be $3$-edge colored so that the out-boundary $\partial \mathbb{D}_P$ receives a color $w_i$.
    
\end{prop}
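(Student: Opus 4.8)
The plan is to match each of the four basic-gons $P_0, P_\gamma, P_\mu, P_\beta$ against the combinatorial count of admissible $3$-edge colorings of the embedded graph $\Gamma_P$ inside the disk, keeping the out-boundary color fixed, and to observe that the explicit formulas for $\chi^{cw}$ computed in \cref{chi-cw} reproduce exactly these counts. First I would set up notation: for a basic-gon $\mathbb{D}_P$ viewed as a cup $\emptyset \to \partial\mathbb{D}_P$, write $\chi^{cw}(\mathbb{D}_P)(1) = \sum_i \lambda_i\, w_i$ where $\{w_i\}$ runs over the basis $\mathcal{B}(\chi^{cw}(\partial\mathbb{D}_P))$ furnished by tensor products of $a,b,c$ (one tensor factor per $1$-cell of $\partial\mathbb{D}_P$ carrying a defect). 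By \cref{defn:coloring} and \cref{def:coloring}, a Tait-coloring of $(\mathbb{D}_P,\Gamma_P)$ with prescribed out-boundary color $w_i$ is exactly a lift $s_P$ in $\textit{Bord}_2^{\textit{def,cw}}(\mathcal{P}_{K_4})$ restricting to $w_i$ on $\partial\mathbb{D}_P$, i.e.\ a labeling of the edges of $\Gamma_P$ by $\{a,b,c\}$ which is admissible at each vertex of $\Gamma_P$ and which agrees with $w_i$ where the edges of $\Gamma_P$ meet $\partial\mathbb{D}_P$; so the claim is the equality $\lambda_i = \#\{\text{such labelings}\}$ for every $i$.

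The argument then proceeds case by case over the list in \cref{basic_gons}. For $P_0$ (Type I, a single $1$-stratum crossing, no vertex): $\Gamma_{P_0}$ is one edge, so fixing the out-boundary color $w\in\{a,b,c\}$ forces the in-boundary color to be $w$ and gives exactly one coloring; this matches $\chi^{cw}(P_0)=\mathbf{1}$, i.e.\ $a\mapsto a$, $b\mapsto b$, $c\mapsto c$. For $P_\gamma$ (a cap creating two parallel strands out of the empty object): $\Gamma_{P_\gamma}$ is a single edge with both endpoints on $\partial\mathbb{D}_P$, so an admissible coloring assigns one color to that edge and the out-boundary must read $w\otimes w^{\ast}$ (a color and its dual under $X\cong X^{\ast}$); there is exactly one coloring for each of the three diagonal out-boundary states and none otherwise, matching $\chi^{cw}(P_\gamma):1\mapsto a\otimes a^{\ast}+b\otimes b^{\ast}+c\otimes c^{\ast}$. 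For $P_\mu$ (Type III, the trivalent vertex with two inputs fused into one output): an admissible coloring of the single trivalent vertex requires the three incident edges to carry three distinct colors, so for a fixed output color $z$ the two input edges must carry the two colors in $\{a,b,c\}\setminus\{z\}$, in either order when we read them as an ordered tensor — but here the output is fixed and the inputs vary, so reading $\chi^{cw}$ as a map $X_{e_1}\otimes X_{e_2}\to X_f$ we get $a\otimes b, b\otimes a\mapsto c$ etc., exactly the count "one coloring of $\Gamma_P$ for each admissible input pair, zero otherwise." For $P_\beta$ (the cup annihilating two strands): $\Gamma_{P_\beta}$ is again a single edge with both endpoints on $\partial\mathbb{D}_P$, and the out-boundary (now the empty object) carries no data while the two collapsed input strands must read $w\otimes w^{\ast}$; this gives one coloring per matched pair, matching $\chi^{cw}(P_\beta)$, the evaluation map.

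The routine part is simply transcribing the four computations of \cref{chi-cw} and checking the bookkeeping of which tensor factor corresponds to which $1$-cell of $\partial\mathbb{D}_P$ and with which variance (dictated by the orientation conventions of \cref{local-junctions} and \cref{conv:tft}, and harmless here since $X\cong X^{\ast}$). The one genuine subtlety — and the step I expect to need the most care — is verifying that the basis of $\chi^{cw}(\partial\mathbb{D}_P)$ really is in bijection with the admissible boundary-colorings, i.e.\ that fixing a basis vector $w_i$ of the tensor product is the same as fixing the value of the lift $s_P$ on the boundary circle with defects; this is where one invokes \cref{rema:section} (the underlying stratified spaces of $s(P)$ and $P$ are isotopic) and the definition of $\Pi^{cw}$ as "bleaching", so that a basis vector of $X^{\otimes n}$ is literally a choice of element of $\{a,b,c\}$ on each of the $n$ defect-carrying $1$-cells of $\partial\mathbb{D}_P$, which is precisely a boundary color in the sense of \cref{obj:coloring}. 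Once this identification is pinned down, the four cases close the proof, and this proposition is then the local input that, combined with \cref{prop-deco-plcw}, \cref{sum_over_IS}, and \cref{Prop:fusion}, will drive the proof of \cref{main-2}.
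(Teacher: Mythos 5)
Your setup is the right one (writing $\chi^{cw}(\mathbb{D}_P)(1)=\sum_i\lambda_i w_i$ and identifying basis vectors of $X^{\otimes n}$ with boundary colorings via the bleaching/section picture), and your $P_\gamma$ case matches the paper, which in fact disposes of \emph{all} vertex-free basic-gons at once by observing that, viewed as cups, the patterns underlying $P_0$ and $P_\beta$ are the same basic-gon as $P_\gamma$, so that case is exactly the coevaluation computation \cref{basic-gon-color-1}. The gap is in the one substantive case, the type-III gon $\mathbb{D}_\mu$ with a trivalent vertex and three \emph{outgoing} defects. The proposition asserts something about the vector $\chi^{cw}(\mathbb{D}_\mu)(1)\in X^{\otimes 3}$, whereas what you invoke is the cylinder map $\chi^{cw}(P_\mu)\colon X_{e_1}\otimes X_{e_2}\to X_f$ computed in \cref{chi-cw}. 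These are different morphisms of $\textit{Bord}_2^{\textit{def,cw}}(\mathcal{D}_{+}^{\mathbf{3}})$ (the cup has empty in-boundary), and the value of $\chi^{cw}$ on that cup is never computed in \cref{chi-cw}; passing from the matrix entries of $P_\mu$ to the components of the cup is not harmless bookkeeping justified by $X\cong X^{\ast}$, it is precisely the bending identity that has to be proved. Your $P_0$ and $P_\beta$ paragraphs have the same slippage: they verify the coloring interpretation of cylinders and caps with nonempty in-boundary, not of the cups the proposition is about (innocuous there only because those cups coincide with $P_\gamma$).

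The paper closes exactly this gap for $\mathbb{D}_\mu$ by choosing a PLCW decomposition and writing the cup as a composite of pieces whose values are already known, $\chi^{cw}(\mathbb{D}_\mu)=\chi^{cw}(P_0\otimes P_\mu\otimes P_0)\circ\chi^{cw}(P_0\otimes P_\gamma)\circ\chi^{cw}(P_\gamma)$, then applying \cref{Prop-main}, \cref{Prop:fusion} and functoriality to compute $1\mapsto a\otimes b\otimes c+a\otimes c\otimes b+b\otimes a\otimes c+b\otimes c\otimes a+c\otimes a\otimes b+c\otimes b\otimes a$, and finally observing that these are the boundary words of all sections $s\colon\mathbb{D}_\mu\to\pi^{-1}(\mathbb{D}_\mu)$. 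To repair your argument, either reproduce this decomposition-and-composition step (or an equivalent direct lattice-TFT evaluation of the type-III basic-gon as a cup, with all three edges contributing propagators and $\mathscr{E}=\mu$); without it, the claimed equality of the $w_i$-components with the coloring counts is asserted for the wrong morphism and the key computation is missing.
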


\begin{proof}
    First, note that patterns ~\ref{Pattern-1} and ~\ref{Pattern-4} are both Pattern $P_{\gamma}$ from ~\ref{Pattern-4} as a basic-gon. So, in this case, the statement of the proposition is verified by \cref{basic-gon-color-1}. (See the map $S_1$ in \cref{fig:coloring-dumbbell}.) For $\mathbb{D}_{\mu}$ it follows from \cref{Prop:fusion} and the decomposition (iii) in the picture below.
    \begin{equation}
        \centering
        \includegraphics[scale=0.3]{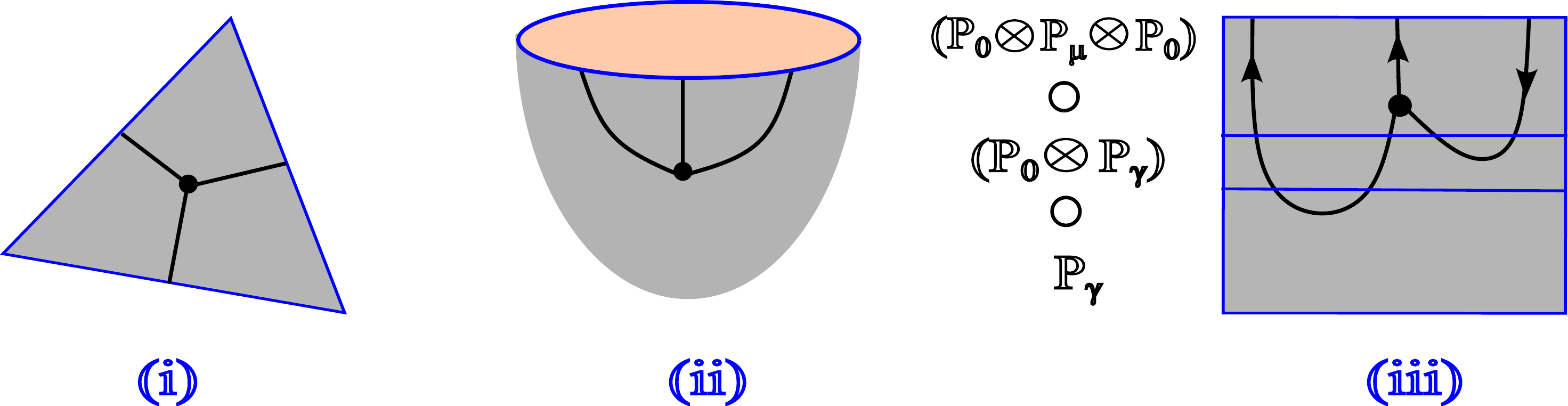}
    \end{equation}
It follows from the vertical composition shown in (iii), \cref{Prop-main}, and functoriality of $\chi^{cw}$ that
$$ \chi^{cw}(\mathbb{D}_{\mu}) = \chi^{cw}(P_0 \otimes P_{\mu} \otimes P_{0}) \circ \chi^{cw}(P_o \otimes P_{\gamma}) \circ x^{cw}(P_{\gamma})
$$ which gives
\begin{equation*}
    \begin{aligned}
        1_{\mathbb{C}} \xrightarrow{\chi^{cw}(P_{\gamma})} a \otimes a + b \otimes b + c \otimes C \xrightarrow{\chi^{cw}(P_0) \otimes \chi^{cw}(P_{\gamma})} a\otimes a \otimes a \otimes a + a \otimes a \otimes b \otimes b\\ 
        + a \otimes a \otimes c \otimes c
         + b \otimes b \otimes a\otimes a + b\otimes b \otimes b \otimes b + b\otimes b \otimes c \otimes c + c \otimes c \otimes a \otimes a
         + \\ c \otimes c \otimes b \otimes b
         + c \otimes c \otimes c \otimes c \xrightarrow{\chi^{cw}(P_0)\otimes \chi^{cw}(P_{\mu}) \otimes \chi^{cw}(P_0)} a \otimes c \otimes b
         + a \otimes b \otimes c + \\
         b \otimes c \otimes a + b \otimes a \otimes c + c \otimes b \otimes a + c \otimes a \otimes b   
    \end{aligned}
\end{equation*}
   but these are the words from the boundary of all the sections $s: \mathbb{D}_{\mu} \to \pi^{-1}(\mathbb{D}_{\mu})$ 
\end{proof}

The following lemma generalises \cref{basic-gon-chi}:

\begin{lemma} \label{cylinder}
    Let $\mathbb{S}_s$ and $\mathbb{S}_t$ be two objects in the category $\textit{Bord}_2^{\textit{def,cw}}(\mathcal{D}_{+}^{\mathbf{3}})$ comprising of single marked circles with $n_s$ and $n_t$ number of markings ($0$-defects) respectively. The TFT $\chi^{cw}: \textit{Bord}_2^{\textit{def,cw}}(\mathcal{D}_{+}^{\mathbf{3}}) \to \text{Vect}_F(\mathbb{C})$ assigns to a cylinder $(C_{st}, \Gamma_{st}): \mathbb{S}_s \to \mathbb{S}_t$ a linear map $\chi^{cw}(C_{st}, \Gamma_{st}): X^{\otimes n_s} \to X^{\otimes n_t}$ that sends a basis vector $v_j \in \mathcal{B}(X^{\otimes n_s})$ to a vector $B \in X^{\otimes n_t}$ such that the component of $B$ in the direction of a vector $w_i \in \mathcal{B}(X^{\otimes n_t})$ is the number of ways $\Gamma_{st}$ can be $3$-edge colored so that the in-boundary $\mathbb{S}_s$ receives the color $v_j$ and the out-boundary $S_t$ receives the color $w_i$.

\end{lemma}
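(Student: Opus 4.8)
The plan is to prove \cref{cylinder} by reducing the cylinder to a composition of the elementary patterns from \cref{prop-deco} and then tracking, coordinate by coordinate, how $\chi^{cw}$ acts relative to the combinatorial count of colorings. First I would apply \cref{prop-deco-plcw} to decompose the cylinder $(C_{st},\Gamma_{st})$ as a vertical composite $\rho_{i_1}\circ\cdots\circ\rho_{i_m}$ where each $\rho_{i_j}$ is one of the four patterns $I$, $\mu_i$, $\beta_i$, $\gamma_i$ of \cref{fig:bases}, with each pattern further written via \cref{prop-deco-plcw} as a horizontal fusion of basic-gons $P^{i_j}_1\otimes\cdots\otimes P^{i_j}_k$. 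By functoriality of $\chi^{cw}$ and \cref{Prop:fusion} (fusion goes to tensor product) plus \cref{Prop-main} (the calculation can be done on the planar polygon), we get
\begin{equation*}
\chi^{cw}(C_{st},\Gamma_{st}) = \chi^{cw}(\rho_{i_1})\circ\cdots\circ\chi^{cw}(\rho_{i_m}),
\qquad
\chi^{cw}(\rho_{i_j}) = \bigotimes_{l} \chi^{cw}(P^{i_j}_l),
\end{equation*}
and each $\chi^{cw}(P^{i_j}_l)$ is one of the four explicit maps $\chi^{cw}(P_0)=\mathbf 1$, $\chi^{cw}(P_\mu)$, $\chi^{cw}(P_\beta)$, $\chi^{cw}(P_\gamma)$ computed in \cref{chi-cw}.

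Next I would set up the counting side. For a generic cross-section $O_\tau$ sitting between $\rho_{i_j}$ and $\rho_{i_{j+1}}$ and a chosen color (basis vector) $\hat O_\tau$, \cref{sum_over_IS} gives the gluing identity
\begin{equation*}
\#\mathrm{Tait}_{v_j,w_i}\Gamma_{st} = \sum_{\hat O_\tau}(\#\mathrm{Tait}_{v_j,\hat O_\tau}\Gamma_{s\tau})(\#\mathrm{Tait}_{\hat O_\tau, w_i}\Gamma_{\tau t}),
\end{equation*}
which is precisely the matrix-multiplication rule. So the strategy is: define, for each pattern $\rho$, the matrix $N(\rho)$ whose $(w,v)$ entry is $\#\mathrm{Tait}_{v,w}(\Gamma_\rho)$ in the chosen bases $\mathcal B(X^{\otimes n})$; then \cref{sum_over_IS} says $N(C_{st},\Gamma_{st}) = N(\rho_{i_1})\cdots N(\rho_{i_m})$, while functoriality says the same for $\chi^{cw}$. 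Hence it suffices to prove the claim for a \emph{single} pattern $\rho\in\{I,\mu_i,\beta_i,\gamma_i\}$ — i.e. that the matrix of $\chi^{cw}(\rho)$ in the basis $\mathcal B(X^{\otimes n})$ equals $N(\rho)$ — and then the general case follows by induction on $m$, using that tensor product of matrices corresponds to tensor product of colorings ($\#\mathrm{Tait}$ of a disjoint/fused configuration factors as a product).

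The base case is exactly \cref{basic-gon-chi}, which already identifies $\chi^{cw}$ of each basic-gon cup with the coloring count of its out-boundary; the only additional work is the version with a nontrivial in-boundary, where instead of a cup $\emptyset\to\partial\mathbb D_P$ one has $P\colon U\to V$. For $P_0$ (pattern $I$) the map is the identity and the count is $1$ when $v=w$ and $0$ otherwise — matching. For $P_\mu$, the computed rule $a\otimes b\mapsto c$, etc., with everything else $\mapsto 0$, is exactly the statement that $\Gamma_{P_\mu}$ (a single trivalent vertex connecting two in-edges to one out-edge) has exactly one admissible coloring when the two incoming colors are distinct and it forces the third color out, and none otherwise. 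For $P_\beta$ and $P_\gamma$ one reads off the evaluation/coevaluation maps, which count the one coloring of a "turn-back" strand (in-color must equal out-color). Assembling these via the fusion rule \cref{Prop:fusion} and the vertical-composition rule \cref{Prop-main} gives $N(\rho)$ for each of the four patterns.

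The main obstacle I anticipate is bookkeeping rather than conceptual: one must be careful that the ordering of tensor factors in $\chi^{cw}(\rho_{i_j}) = \bigotimes_l \chi^{cw}(P^{i_j}_l)$ matches the ordering of the $0$-defects around the cross-section circle (respecting the distinguished point $-1$), and that the "planar polygon" reduction of \cref{Prop-main,Thm:simplest} is legitimately applied so that the $\mathbb C$-labeled regions (the part of the cylinder away from the defects) contribute only scalars and do not interfere. In particular, for patterns like $\beta_i$ and $\gamma_i$ where two adjacent strands are joined, one needs the identification of the extra $0$-cells from \cref{prop-deco-plcw} to be compatible with \cref{Thm:simplest}; this is where a careless choice of PLCW decomposition could produce a spurious mismatch, so I would pin down that decomposition explicitly (as in the last figure of the proof of \cref{prop-deco-plcw}) before running the computation. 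Once the single-pattern case is nailed down, the inductive step is a formal consequence of functoriality of $\chi^{cw}$ together with \cref{sum_over_IS}, and \cref{main-2} then follows by taking $\mathbb S_s = \mathbb S_t = \emptyset$ and composing the cylinders produced by \cref{prop-deco}.
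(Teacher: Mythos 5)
Your proposal is correct and follows essentially the same route as the paper: decompose the cylinder into the four basic patterns via \cref{prop-deco} and \cref{prop-deco-plcw}, verify the single-pattern case using \cref{basic-gon-chi}, \cref{Prop:fusion} and the explicit maps $\chi^{cw}(P_0),\chi^{cw}(P_\mu),\chi^{cw}(P_\beta),\chi^{cw}(P_\gamma)$, and then induct on the number of patterns, matching functoriality of $\chi^{cw}$ (matrix multiplication) with the gluing identity of \cref{sum_over_IS}. The paper's proof is exactly this induction (cutting along a generic cross-section in the inductive step), so no substantive difference; your extra attention to tensor-factor ordering and the PLCW choice is a reasonable refinement of bookkeeping the paper leaves implicit.
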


If we denote the linear map $\chi^{cw}(C_{st}, \Gamma_{st}): X^{\otimes n_s} \to X^{\otimes n_t}$ by a $3^{n_t} \times 3^{n_s}$ matrix $A = (a_{ij})$, then with the notation of \cref{sum_over_IS},
\begin{equation} \label{matrix-Tait}
    a_{ij} = \#\text{Tait}_{v_j, w_i}\Gamma_{st}
\end{equation}
for $v_j \in \mathcal{B}(X^{\otimes n_s})$ and $w_i \in \mathcal{B}(X^{\otimes n_t})$. In other words, $a_{ij}$ is the number of ways $\Gamma_{st}$ can be $3$-edge colored so that the in-boundary $\mathbb{S}_s$ receives the color $v_j$ and the out-boundary $\mathbb{S}_t$ receives the color $w_i$. In this language, \cref{eqn:IS} is the familiar matrix product $a_{ij} = \sum_{k}b_{ik}c_{kj}$. Of course, this is expected from the composition of linear maps in \cref{cylinder}.

The following corollary to \cref{cylinder} is immediate:
\begin{coro} \label{cup-cap}
The TFT $\chi^{cw}$ makes the following assignments:
   \begin{enumerate}
       \item For a cup $(D_t, \Gamma_t) : \emptyset \to \mathbb{S}_t$, a vector $w \in X^{\otimes n_t}$ whose component in the direction of $w_i \in \mathcal{B}(X^{\otimes n_t})$ is the number of ways one can $3$-edge color the graph $\Gamma_t$ so that $w_i$ is the color received by the boundary circle $\mathbb{S}_t$.
       \item For a cap $(U_s, \Gamma_s) : \mathbb{S}_s \to \mathbb{C}$, a covector $v$ which evaluates to $\kappa_i$ on $v_i \in \mathcal{B}(X^{\otimes n_s})$ with the property that there are $\kappa_i$ ways to $3$-edge color $\Gamma_s$ so that the in-boundary $\mathbb{S}_s$ receives the color $v_i$.
    \end{enumerate}
    
\end{coro}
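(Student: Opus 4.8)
The plan is to deduce both statements as the two degenerate instances of \cref{cylinder}, namely the cases $n_s = 0$ and $n_t = 0$. First I would record the bookkeeping facts: since $\chi^{cw}$ is symmetric monoidal, $\chi^{cw}(\emptyset)$ is the monoidal unit $X^{\otimes 0} = \mathbb{C}$ (equivalently, the on-objects formula of \cref{color_direct_TFT} gives an empty tensor product), so $\mathcal{B}(X^{\otimes 0})$ is the singleton $\{1\}$; and a $3$-edge coloring of the empty circle with defects exists and is unique. Consequently, any boundary constraint imposed on an empty in- or out-boundary is vacuous.

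Next I would treat the cup. A cup $(D_t, \Gamma_t): \emptyset \to \mathbb{S}_t$ is the special case of the cylinder $(C_{st}, \Gamma_{st}): \mathbb{S}_s \to \mathbb{S}_t$ of \cref{cylinder} with $\mathbb{S}_s$ the empty circle, so that the ``cylinder'' degenerates to a disc and $n_s = 0$. \cref{cylinder} then gives a linear map $\chi^{cw}(D_t, \Gamma_t): \mathbb{C} \to X^{\otimes n_t}$, which we identify with the vector $w \coloneqq \chi^{cw}(D_t, \Gamma_t)(1)$; by \cref{matrix-Tait} the component of $w$ along $w_i \in \mathcal{B}(X^{\otimes n_t})$ is $\#\text{Tait}_{1, w_i}\Gamma_t$, the number of $3$-edge colorings of $\Gamma_t$ whose in-boundary receives the color $1$ and whose out-boundary $\mathbb{S}_t$ receives $w_i$. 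Since the in-boundary is empty the first constraint is vacuous, so this count equals the number of $3$-edge colorings of $\Gamma_t$ with $\mathbb{S}_t$ colored $w_i$. This is assertion (1). Dually, a cap $(U_s, \Gamma_s): \mathbb{S}_s \to \mathbb{C}$ is the case $\mathbb{S}_t = \emptyset$, $n_t = 0$ of \cref{cylinder}, which yields a covector $v = \chi^{cw}(U_s, \Gamma_s): X^{\otimes n_s} \to \mathbb{C}$ whose value on $v_i \in \mathcal{B}(X^{\otimes n_s})$ is the single matrix entry $\#\text{Tait}_{v_i, 1}\Gamma_s$; setting $\kappa_i \coloneqq \#\text{Tait}_{v_i, 1}\Gamma_s$ and using that the out-boundary constraint is vacuous gives assertion (2).

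I do not expect a genuine obstacle here: the only points requiring care are the identifications $\chi^{cw}(\emptyset) = \mathbb{C}$ and $\mathcal{B}(\mathbb{C}) = \{1\}$, and the observation that a coloring constraint on the empty circle is automatically met — all of which are immediate from the monoidal structure of $\chi^{cw}$ and from the statement of \cref{cylinder} itself. (If one wishes to avoid reading \cref{cylinder} in its degenerate form, one can instead argue that a general cup is obtained from the basic-gon cups of \cref{basic-gon-chi} by the decomposition of \cref{prop-deco} together with \cref{Prop:fusion}, \cref{Prop-main}, and functoriality of $\chi^{cw}$, exactly as in the proof of \cref{cylinder}.) Hence the corollary follows at once, which is why it is labelled immediate.
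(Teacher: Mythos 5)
Your proposal is correct and matches the paper's approach: the paper states this corollary without proof as ``immediate'' from \cref{cylinder}, and your reading of the degenerate cases $n_s=0$ and $n_t=0$ (with the coloring constraint on an empty boundary being vacuous and $\chi^{cw}(\emptyset)=\mathbb{C}$) is exactly that deduction. Your parenthetical fallback --- building the cup from the basic-gon cups via \cref{prop-deco}, \cref{Prop:fusion}, \cref{Prop-main} and functoriality --- also disposes of the only pedantic worry, namely that a disc is not literally a cylinder between marked circles.
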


Let us understand \cref{cup-cap} through an example.

\begin{exam} \label{exam:cup-cap_decom}
\cref{fig:cup-cap_decom} shows a cup in Diagram (i), which is the same surface with defects $\hat{\Sigma}$ from \cref{exam:coloring_exam-1}. Using a decomposition as in the right of Diagram (i) in \cref{fig:cup-cap_decom}, we see that $\chi^{cw}$ assigns the vector $2a \otimes a + 2b \otimes b + 2 c \otimes c.$ \cref{cup-cap}, (1), then says that there are precisely two $3$-edge coloring of the graph embedded in $\hat{\Sigma}$ such that boundary circle receives the color $a \otimes a$, and the same is true for $b \otimes b$ and $c \otimes c$. This is exactly what we see in \cref{fig:coloring_exam-1}. \cref{cup-cap}, (1), also says that there are zero $3$-edge coloring of the graph in $\hat{\Sigma}$ for each of the boundary colors $a \otimes b, b \otimes a, a \otimes c, c \otimes a, b \otimes c $ and $ c \otimes b$. Compare this with the similar statement in \cref{rema:distinct_colors}. 

In \cref{fig:cup-cap_decom}, Diagram (ii), denotes the surface with defects in the left by $\hat{\Sigma}^{\ast}$. Next, using the decomposition on the right in the same diagram, we see that $\chi^{cw}(\hat{\Sigma}^{\ast})$ maps 

\begin{equation} \label{eqn:cap-1}
 a \otimes a \to a \otimes (b \otimes c + c \otimes b) = a \otimes b \otimes c + a \otimes c \otimes b \to c \times c + b \otimes b \to 2.
\end{equation}

and the same is true for $b \otimes b$ and $c \otimes c$. On the other hand, $\chi^{cw}(\hat{\Sigma}^{\ast})$ maps

\begin{equation} \label{eqn:cap-2}
    a \otimes b \to a \otimes (a \otimes c + c \otimes a) = a \otimes a \otimes c + a \otimes c \otimes a \to 0 + b \otimes a \to 0 + 0 = 0.
\end{equation}
and, the same is true for $b \otimes a, a \otimes c, c \otimes a, b \otimes c$ and $c \otimes b$. This is expected since one can see that flipping a surface does not change the $3$-edge coloring of the graph embedded in it. Noticing this, we see that the value of $\chi^{cw}(\hat{\Sigma}^{\ast})$ that we calculated in \cref{eqn:cap-1} and \cref{eqn:cap-2} is in accordance with \cref{cup-cap}, (2).

\end{exam}

    \begin{figure}
        \centering
        \includegraphics[width=0.9\linewidth]{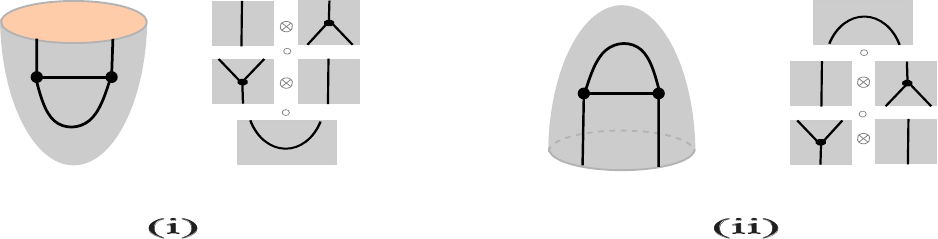}
        \caption{\cref{exam:cup-cap_decom} for details.}
        \label{fig:cup-cap_decom}
    \end{figure}

Finally, we are ready to prove the main result of this paper. First, we give proof of \cref{main-2} from \cref{cylinder}, \cref{sum_over_IS}, and \cref{cup-cap}.

\begin{proof}[Proof of \cref{main-2}]
    For a given surface with defects $(\mathbb{S}^2, \Gamma)$, choose a generic cross-section $\mathbb{S}_t$. By \cref{cup-cap} (1), $\chi^{cw}(D_t, \Gamma_t)$ is a vector of the form $\sum_{i}\lambda_i w_i$ where $\lambda_i$ is the number of ways one can color $\Gamma_t$ so that the cross-section  $\mathbb{S}_t$ gets the color $w_i$. Now, consider the cap $(U_t, \Gamma'_t)$, where $\Gamma'_t$ is the portion of $\Gamma$ embedded in the cap $U_t$. By \cref{cup-cap} (2),  $\chi^{cw}(U_t, \Gamma'_t)$ is a covector that maps $w_i \in \mathcal{B}(X^{\otimes n_t})$ to $\kappa_i \in \mathbb{C}$ with the property that the graph $\Gamma'_t$ can be colored in $\kappa_i$ ways so that $\mathbb{S}_t$ receives a color $w_i$. Composing the two we get:
    \begin{equation}
        \begin{split}
        \chi^{cw}(\mathbb{S}^2, \Gamma)(1) & = \chi^{cw}((U_t, \Gamma'_t) \circ (D_t, \Gamma_t))(1)\\
         & = \chi^{cw}((U_t, \Gamma'_t)) \circ \chi^{cw}((D_t, \Gamma_t))(1)\\
         & = \sum_i \kappa_i \lambda_i.
        \end{split}
    \end{equation}

Which is the number of Tait-coloring of $(\mathbb{S}^2, \Gamma)$ by \cref{sum_over_IS}. So, once we have shown that this number is independent of the choice of the generic cross-section, we are done. For that, let $\mathbb{S}_s$ be another generic cross-section. Without loss of generality, we can assume that it fits into the following composition 

\begin{equation}
   \emptyset \xrightarrow{(D_s, \Gamma_s)} \mathbb{S}_s \xrightarrow{(C_{st}, \Gamma_{st})} \mathbb{S}_t \xrightarrow{(U_t, \Gamma'_t)} \emptyset
\end{equation}

The action of $\chi^{cw}$ on it gives:

\begin{equation}
   \mathbb{C} \xrightarrow{\chi^{cw}((D_s, \Gamma_s))} X^{\otimes n_s} \xrightarrow{\chi^{cw}((C_{st}, \Gamma_{st}))} X^{\otimes n_t} \xrightarrow{\chi^{cw}((U_t, \Gamma'_t))} \mathbb{C}
\end{equation}

Let $\chi^{cw}(\mathbb{S}^2, \Gamma)(1) = \sum_{j}\kappa'_{j} \lambda'_{j}$ along $\mathbb{S}_s$, which means $\chi^{cw}(D_s, \Gamma_s) = \sum_{j}\lambda'_j v_j$ for $v_j \in \mathcal{B}(X^{\otimes n_s})$, and $\chi^{cw}(U_s, \Gamma'_s)$ maps $v_j \in \mathcal{B}(X^{\otimes n_s})$ to $\kappa'_j$. Now, suppose $\chi^{cw}(C_{st}, \Gamma_{st}) = (a_{ij})$ in the same bases $\{v_j\}$ of $X^{\otimes n_s}$ and $\{w_i\}$ of $X^{\otimes n_t}$. The functoriality of $\chi^{cw}$ applied on the identities $(C_{st}, \Gamma_{st}) \circ (D_s, \Gamma_s) = (D_t, \Gamma_t)$ and $(U_t, \Gamma'_t) \circ (C_{st}, \Gamma_{st}) = (U_s, \Gamma'_s)$ gives
$$ \lambda_i = \sum_{j}a_{ij} \lambda'_j \quad , \quad \kappa'_j = \sum_{i} \kappa_i a_{ij}$$ respectively. This leads to
$$
        \sum_{j}\kappa'_j \lambda'_j = \sum_{j}\sum_{i}\kappa_i a_{ij} \lambda'_j
        = \sum_{i}\sum_{j}\kappa_i a_{ij} \lambda'_j
         = \sum_{i}\kappa_{i} \sum_{j}a_{ij} \lambda'_j
         = \sum_{i}\kappa_{i}\lambda_{i}.
$$  

\end{proof}

\begin{proof}[Proof of \cref{cylinder}]
    By \cref{prop-deco} every cylinder $(C_{st}, \Gamma_{st})$ can be written as the composition of basic cylinders as in \cref{fig:bases}. Let $n$ be the number of such compositions, that is, the minimum number of basic cylinders $\rho_{i_j}$ required to make a given cylinder $(C_{st}, \Gamma_{st})$. We prove \cref{cylinder} by induction on $n$. The base case is $n=1$. In this case, $(C_{st}, \Gamma_{st})$ is one of the four basic cylinders in \cref{fig:bases}. Use \cref{prop-deco-plcw} to obtain a cell-decomposition and write each basic cylinder as a horizontal composition of basic-gons. Then the statement of the \cref{cylinder} follows from \cref{Prop:fusion} and \cref{basic-gon-chi}. (Since, other than $(i, i+1)$, everything else is the identity, $(i, i+1)$ is one of the four patterns appearing in \cref{basic-gon-chi}.) Now, for the induction step, assume the statement of \cref{cylinder} is true for all $k < n$. Choose a generic cross-section $\mathbb{S}_o$ of the cylinder $(C_{st}, \Gamma_{st})$ and obtain the composite 
    $$\begin{tikzcd}
      & (C_{so}, \Gamma_{so}) \arrow[dr, "\mathfrak{o}_1"] &  & (C_{ot}, \Gamma{ot}) \arrow[dr, "\mathfrak{o}_2"] &  \\
     \mathbb{S}_s \arrow[ur, "\iota_1"] &  & \mathbb{S}_o \arrow[ur, "\iota_2"] &  & \mathbb{S}_t
    \end{tikzcd}$$   
  Each of the cylinders $(C_{so}, \Gamma_{so})$ and $(C_{ot}, \Gamma_{ot})$ has lengths less than $n$, so the statement of \cref{cylinder} is true for them by the induction hypothesis. By the functoriality of $\chi^{cw}$, we get the composite
\begin{equation}
    X^{\otimes n_s} \xrightarrow{\chi^{cw}(C_{so}, \Gamma_{so})}  X^{\otimes n_o} \xrightarrow{\chi^{cw}(C_{ot}, \Gamma_{ot})} X^{\otimes n_t}
\end{equation}
which equals $\chi^{cw}(C_{st}, \Gamma_{st})$. Let $\mathcal{B}(X^{\otimes n_s}) = \{v_j\}$, $\mathcal{B}(X^{\otimes n_o}) = \{z_k\}$ and  $\mathcal{B}(X^{\otimes n_t}) = \{w_i\}$, and in these bases, the matrices of $\chi^{cw}(C_{st}, \Gamma_{st})$, $\chi^{cw}(C_{so}, \Gamma_{so})$, and $\chi^{cw}(C_{ot}, \Gamma_{ot})$ are given by $A \coloneqq (a_{ij})$, $B \coloneqq (b_{kj})$, and the matrix of $C \coloneqq (c_{ik})$ respectively. By linearity we get

\begin{equation} \label{final_color}
    a_{ij} = \sum_{k}c_{ik}b_{kj}
\end{equation}

 By \cref{cylinder} $c_{ik}$ is the number of coloring of $(C_{ot}, \Gamma_{ot})$ with in-boundary color $z_k$ and out-boundary color $w_i$. Similarly, $b_{kj}$ is the number of coloring of $(C_{so}, \Gamma_{so})$ with in-boundary color $v_j$ and out-boundary color $z_k$. Now, \cref{sum_over_IS} implies that \cref{final_color} is the number of $3$-edge coloring of $(C_{st}, \Gamma_{st})$ with an in-boundary color $v_j$ and out-boundary color $w_i$, but by the definition of the matrix $a_{ij}$, this is the component of $\chi^{cw}(C_{st},\Gamma_{st})(v_j)$ in the direction of $w_i$. 

\end{proof}

\begin{rema} \label{Tait-no-ind}
    We chose a PLCW decomposition to define the Tait-coloring, but \cref{main-2} also shows that the number of Tait-coloring is independent of this choice as the functor $\chi^{cw}$ is. See \cref{plcwcd} and~\cite{davydov2011field}, Section-3.6. 
\end{rema}

One can also show using \textit{Kirillov-moves} (~\cite{kirillov2012piecewise}, Section-6,7) that the definition of a $3$-edge coloring is independent of a choice of a PLCW decomposition, but we only need the number of such coloring, so we are going to be content with \cref{Tait-no-ind}.

We conclude this section with a conjecture, which is a reformulation of the $4$-color theorem in the language developed in this paper:

\begin{conj} \label{4-color}
    If $\Gamma$ is a planar trivalent graph with no bridge, then $$\chi^{cw}(\mathbb{S}^2, \Gamma)(1) \neq 0 .$$
\end{conj}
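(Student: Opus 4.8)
The plan is to reduce \cref{4-color} to the Four Color Theorem through Tait's correspondence, which in the present language is already half-established by \cref{coro:Tait} (edge-colorings produce face-colorings) and \cref{cor:big-1} (bridgelessness is necessary). Indeed, by \cref{main-2} the assertion $\chi^{cw}(\mathbb{S}^2,\Gamma)(1)\neq 0$ is literally the statement $\#\mathrm{Tait}(\Gamma)\geq 1$, i.e.\ the existence of a proper $3$-edge-coloring of a bridgeless planar cubic graph, which is precisely Tait's $1880$ reformulation of the Four Color Theorem. So the target statement is true, but only by appeal to the Appel--Haken theorem (or the later proof of Robertson--Sanders--Seymour--Thomas); what follows is the bookkeeping that connects the two.

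First I would reduce to the connected case: if $\Gamma=\Gamma_1\sqcup\cdots\sqcup\Gamma_k$ then $\chi^{cw}(\mathbb{S}^2,\Gamma)(1)=\prod_j\chi^{cw}(\mathbb{S}^2,\Gamma_j)(1)$ by monoidality, and each factor is a non-negative integer (it counts colorings), so it suffices to treat one component. For $\Gamma$ connected, cubic and bridgeless with at least two vertices, a loop at a vertex $v$ would force the unique remaining edge at $v$ to be a bridge; hence $\Gamma$ is loopless. Passing to the planar dual $\Gamma^{\vee}\subset\mathbb{S}^2$ (vertices $=$ regions of $\mathbb{S}^2\setminus\Gamma$, one dual edge transverse to each edge of $\Gamma$), a bridge of $\Gamma$ corresponds exactly to a loop of $\Gamma^{\vee}$, so $\Gamma^{\vee}$ is a loopless planar graph. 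The Four Color Theorem then supplies a proper $4$-coloring of the vertices of $\Gamma^{\vee}$, i.e.\ an assignment $f$ of the four elements of $K_4=\{1,a,b,c\}$ to the regions of $\mathbb{S}^2\setminus\Gamma$ so that regions sharing an edge receive distinct elements.

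From such an $f$ I would build the edge-coloring exactly as in the reverse of \cref{coro:Tait}: for an edge $\hat e$ of $\Gamma$ with incident regions $R_1,R_2$, set $d(\hat e)\coloneqq f(R_1)\,f(R_2)\in K_4$. Since $f(R_1)\neq f(R_2)$ and in $K_4$ one has $gh=1\iff g=h$, we get $d(\hat e)\neq 1$, i.e.\ $d(\hat e)\in\{a,b,c\}=X$. At a trivalent vertex three regions $R_1,R_2,R_3$ meet with $f(R_i)$ pairwise distinct, and the three incident edges receive $f(R_1)f(R_2),\,f(R_2)f(R_3),\,f(R_3)f(R_1)$; because $K_4$ is abelian with every element an involution, $f(R_i)f(R_j)=f(R_j)f(R_k)$ would force $f(R_i)=f(R_k)$, so these three labels are distinct. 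Thus $d$ is an admissible $3$-edge-coloring, $\#\mathrm{Tait}(\Gamma)\geq 1$, and $\chi^{cw}(\mathbb{S}^2,\Gamma)(1)\geq 1>0$. Combined with \cref{cor:big-1}, this shows that ``$\Gamma$ bridgeless'' is exactly the condition characterizing $\chi^{cw}(\mathbb{S}^2,\Gamma)(1)\neq 0$.

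The genuine obstacle is that this argument is not self-contained: it imports the Four Color Theorem wholesale, and no proof of the latter avoiding large computer-assisted case analysis is known. A proof intrinsic to the present framework would instead try to bound $\chi^{cw}(\mathbb{S}^2,\Gamma)(1)$ from below directly --- for instance, by choosing a favourable height function and showing that the composite of the cylinder maps $\chi^{cw}(C_{st},\Gamma_{st})$ of \cref{cylinder} never annihilates the cup vector of \cref{cup-cap}. Every contribution in the intermediate-state sum of \cref{sum_over_IS} is non-negative, so there is no cancellation to control; the difficulty is purely one of \emph{non-vanishing}, i.e.\ of exhibiting at least one compatible family of local sections $\{s_P\}$ in the sense of \cref{coloring_process} that glues to a global section. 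Producing such a family for an arbitrary bridgeless planar cubic graph is, by the equivalence above, exactly as hard as the Four Color Theorem itself, and I expect any new input to come from structural reducibility/discharging-type arguments rather than from the TFT formalism alone; accordingly, \cref{4-color} is stated as a conjecture (equivalently: a theorem conditional on the Four Color Theorem).
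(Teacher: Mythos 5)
You have not really proved anything the paper does not already concede: the statement you address is stated in the paper as a \emph{conjecture}, explicitly described there as a reformulation of the Four Color Theorem, and the paper offers no proof of it --- it only records (below \cref{4-color}) that the equivalence with the Four Color Theorem follows from \cref{main-2} together with Tait's classical result. Your argument is precisely that equivalence, worked out in the easy direction (dual graph, loop/bridge duality, building the $K_4$-valued edge labels from a proper face $4$-coloring, checking admissibility at trivalent vertices), followed by an appeal to the Appel--Haken theorem. Those details are correct: bridgelessness does guarantee that the three faces at each vertex are distinct, the labels $f(R_i)f(R_j)$ are non-identity and pairwise distinct by the involution property of $K_4$, and \cref{main-2} converts $\#\mathrm{Tait}(\Gamma)\geq 1$ into $\chi^{cw}(\mathbb{S}^2,\Gamma)(1)\neq 0$. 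So as a conditional statement (``true, given the Four Color Theorem'') your write-up is sound, and you are candid that this is all it is.

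The gap, relative to what the conjecture is actually asking, is that importing the Four Color Theorem wholesale defeats the purpose: the paper isolates \cref{4-color} (and its cross-section reformulation \cref{conj:alternate_4-color}) exactly because a proof \emph{within} the defect-TFT framework --- e.g.\ a direct lower bound on the intermediate-state sums of \cref{sum_over_IS}, or an obstruction-theoretic argument showing that bridges are the only obstruction to gluing the local sections of \cref{coloring_process} --- would amount to a new proof of the Four Color Theorem. Your proposal supplies no such mechanism, and your closing paragraph correctly identifies that this is where the genuine difficulty lies. One smaller point: your reduction to the connected case via ``monoidality'' is not literally an instance of the monoidal structure of $\textit{Bord}_2^{\textit{def,cw}}(\mathcal{D}_{+}^{\mathbf{3}})$, since both components are embedded in the \emph{same} sphere rather than in a disjoint union of spheres; the product formula is still true, but the clean justification is simply that $\#\mathrm{Tait}(\Gamma_1\sqcup\Gamma_2)=\#\mathrm{Tait}(\Gamma_1)\cdot\#\mathrm{Tait}(\Gamma_2)$ combined with \cref{main-2}, not functorial monoidality of $\chi^{cw}$.
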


It is immediate from \cref{cor:big-1} that if $\Gamma$ has a bridge, then $\chi^{cw}(\mathbb{S}^2, \Gamma)(1)$ equals zero. \cref{4-color} is the converse of it. Equivalence with the $4$-color theorem is easily established from the statement of \cref{main-2} and a result due to Tait [cf ~\cite{tait1880note} and ~\cite{baldridge2018cohomology}] that states:

\enquote{  Every planar graph is 4-colorable if and only if every planar bridgeless cubic graph is 3-edge-colorable.  }

This statement can be proved using \cref{coro:Tait} by taking $Y = K_4$ as a $K_4$-set, and a process that is dual to the blowing-up of a graph at vertices. (See also ~\cite{baldridge2023topological}.)

For a one-component graph, a bridge on a planar graph is equivalent to the existence of a generic cross-section with a single defect. Therefore, \cref{4-color} can be reformulated as:

\begin{conj} \label{conj:alternate_4-color}
    If the linear map $\chi^{cw}(\mathbb{S}^2, \Gamma): \mathbb{C} \to \mathbb{C}$ can be written as the composition 
    $$ \mathbb{C} \xrightarrow{\chi^{cw}(\emptyset, \mathbb{S}_t)} X \xrightarrow{\chi^{cw}(\mathbb{S}_t, \emptyset)} \mathbb{C}
    $$
    then it is the zero map.
\end{conj}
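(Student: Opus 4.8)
The plan is to reduce \cref{conj:alternate_4-color} to \cref{4-color} by making precise the passage ``a bridge exists $\iff$ the linear map factors through $X$''. The forward direction of that equivalence is essentially content already developed: a bridge $e$ in a one-component planar trivalent graph $\Gamma$ produces, after isotopy, a generic cross-section $\mathbb{S}_t$ meeting $\Gamma$ transversally in the single point $e$, so that $(\mathbb{S}^2,\Gamma)$ decomposes as $\emptyset \xrightarrow{(D_t,\Gamma_t)} \mathbb{S}_t \xrightarrow{(U_t,\Gamma'_t)} \emptyset$ with $\mathbb{S}_t$ a circle carrying exactly one $0$-defect; functoriality of $\chi^{cw}$ then gives $\chi^{cw}(\mathbb{S}^2,\Gamma) = \chi^{cw}(\mathbb{S}_t,\emptyset)\circ\chi^{cw}(\emptyset,\mathbb{S}_t)$ with the intermediate object $\chi^{cw}(\mathbb{S}_t)=X$. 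So the first step I would carry out is to record this decomposition carefully, citing \cref{generic_cross-section} and the usual Morse-theoretic fact that a bridge can be isotoped to sit alone on a level set.

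The second step is the converse: if $\chi^{cw}(\mathbb{S}^2,\Gamma)$ factors as $\mathbb{C}\xrightarrow{\chi^{cw}(\emptyset,\mathbb{S}_t)} X \xrightarrow{\chi^{cw}(\mathbb{S}_t,\emptyset)}\mathbb{C}$, I want to conclude $\Gamma$ has a bridge. Here I would \emph{not} try to argue that an arbitrary algebraic factorization through $X$ forces a geometric bridge in general; instead I would read the hypothesis of \cref{conj:alternate_4-color} as asserting precisely that such a geometric factorization of the \emph{bordism} $(\mathbb{S}^2,\Gamma)$ exists, i.e.\ that there is a generic cross-section $\mathbb{S}_t$ with $\dim \chi^{cw}(\mathbb{S}_t) = 3$, equivalently with exactly one $0$-defect on $\mathbb{S}_t$. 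A circle with a single $0$-defect that is a generic cross-section of $(\mathbb{S}^2,\Gamma)$ intersects $\Gamma$ in a single edge-point, and since $\mathbb{S}_t$ separates $\mathbb{S}^2$ into two disks (Jordan curve theorem), removing that edge disconnects $\Gamma$ — that is the definition of a bridge. With the equivalence ``bridge $\iff$ map factors through $X$'' in hand, \cref{conj:alternate_4-color} becomes a verbatim restatement of \cref{4-color}: $\Gamma$ bridgeless $\iff$ no such factorization $\iff$ (by \cref{4-color}) $\chi^{cw}(\mathbb{S}^2,\Gamma)(1)\neq 0$.

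I would also tie in the ``only if'' half explicitly: \cref{cor:big-1} together with \cref{main-2} already shows that a bridge forces $\chi^{cw}(\mathbb{S}^2,\Gamma)(1)=0$, so the factoring map, being the zero map, is consistent. Thus the whole of \cref{conj:alternate_4-color} is logically equivalent to \cref{4-color}, hence (by the Tait equivalence quoted after \cref{4-color}) to the four-color theorem; the statement is a conjecture precisely because \cref{4-color} is left as one.

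The main obstacle, and the step deserving the most care, is pinning down the meaning of ``can be written as the composition'' in the statement of \cref{conj:alternate_4-color}: one must insist it refers to a decomposition of the \emph{morphism in} $\textit{Bord}_2^{\textit{def,cw}}(\mathcal{D}_{+}^{\mathbf{3}})$ along a genuine generic cross-section $\mathbb{S}_t$ with $\chi^{cw}(\mathbb{S}_t)=X$ (one $0$-defect), and not merely a factorization of the underlying linear map $\mathbb{C}\to\mathbb{C}$ through the abstract vector space $X$ (which is vacuous). Once that reading is fixed, the argument is the Jordan-curve / generic-cross-section bookkeeping above, and the topological content reduces entirely to \cref{4-color}.
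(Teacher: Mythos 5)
Your proposal matches the paper's own treatment: the statement is left as a conjecture precisely because it is intended as a reformulation of \cref{4-color}, justified (as the paper does in one sentence before stating it) by the equivalence ``bridge $\iff$ existence of a generic cross-section with a single $0$-defect,'' which is exactly the Jordan-curve/cross-section bookkeeping you spell out. Your added care about reading the factorization geometrically (through an actual cross-section $\mathbb{S}_t$ with $\chi^{cw}(\mathbb{S}_t)=X$, not as a vacuous linear-algebra factorization) is a correct and worthwhile clarification of the intended meaning, but it does not change the approach.
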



\section{Additional topics}

This section is meant to look back and explore some of the key ideas developed in this paper that led to the main result.

\subsection{Bundles at the level of  $n$-category}  \label{sec:additional}

Here, we take another look at \cref{coloring_process}, of the coloring process, and underscore the underlying abstract structure. These should be compared with the ideas developed in ~\cite{baez2005higher} and surveyed in ~\cite{baez2011invitation}. We begin by noticing that the conditions (2) in \cref{coloring_process}, can be given in terms of the commutative pasting diagrams as in \cref{fig:section_horiz-composition} :

\begin{figure}[ht]
    \centering

\tikzset{every picture/.style={line width=0.75pt}} 

\begin{tikzpicture}[x=0.75pt,y=0.75pt,yscale=-1,xscale=1]

\draw    (165.17,372.64) -- (165.17,417.73) ;
\draw [shift={(165.17,419.73)}, rotate = 270] [color={rgb, 255:red, 0; green, 0; blue, 0 }  ][line width=0.75]    (10.93,-3.29) .. controls (6.95,-1.4) and (3.31,-0.3) .. (0,0) .. controls (3.31,0.3) and (6.95,1.4) .. (10.93,3.29)   ;
\draw    (259.79,330.58) -- (369.94,330.58) ;
\draw [shift={(371.94,330.58)}, rotate = 180] [color={rgb, 255:red, 0; green, 0; blue, 0 }  ][line width=0.75]    (10.93,-3.29) .. controls (6.95,-1.4) and (3.31,-0.3) .. (0,0) .. controls (3.31,0.3) and (6.95,1.4) .. (10.93,3.29)   ;
\draw    (262.75,505.87) -- (372.9,505.87) ;
\draw [shift={(374.9,505.87)}, rotate = 180] [color={rgb, 255:red, 0; green, 0; blue, 0 }  ][line width=0.75]    (10.93,-3.29) .. controls (6.95,-1.4) and (3.31,-0.3) .. (0,0) .. controls (3.31,0.3) and (6.95,1.4) .. (10.93,3.29)   ;
\draw    (508.44,372.64) -- (508.44,417.73) ;
\draw [shift={(508.44,419.73)}, rotate = 270] [color={rgb, 255:red, 0; green, 0; blue, 0 }  ][line width=0.75]    (10.93,-3.29) .. controls (6.95,-1.4) and (3.31,-0.3) .. (0,0) .. controls (3.31,0.3) and (6.95,1.4) .. (10.93,3.29)   ;

\draw (99.45,324.27) node [anchor=north west][inner sep=0.75pt]   [align=left] {$\displaystyle \ast $};
\draw (158.42,324.27) node [anchor=north west][inner sep=0.75pt]   [align=left] {$\displaystyle \ast $};
\draw (124.01,323.38) node [anchor=north west][inner sep=0.75pt]   [align=left] {$\displaystyle \Downarrow $};
\draw (184.28,324.86) node [anchor=north west][inner sep=0.75pt]   [align=left] {$\displaystyle \Downarrow $};
\draw (96,498.14) node [anchor=north west][inner sep=0.75pt]   [align=left] {$\displaystyle \ast $};
\draw (219.69,499.14) node [anchor=north west][inner sep=0.75pt]   [align=left] {$\displaystyle \ast $};
\draw (134.57,495.07) node [anchor=north west][inner sep=0.75pt]   [align=left] {$\displaystyle \Downarrow $};
\draw (135.95,322.56) node [anchor=north west][inner sep=0.75pt]   [align=left] {$\displaystyle P_{j}$};
\draw (196.78,323.37) node [anchor=north west][inner sep=0.75pt]   [align=left] {$\displaystyle P_{i}$};
\draw (310.36,311.64) node [anchor=north west][inner sep=0.75pt]   [align=left] {$\displaystyle s$};
\draw (311.74,485.41) node [anchor=north west][inner sep=0.75pt]   [align=left] {$\displaystyle s$};
\draw (217.79,325.72) node [anchor=north west][inner sep=0.75pt]   [align=left] {$\displaystyle \ast $};
\draw (145.41,385.34) node [anchor=north west][inner sep=0.75pt]   [align=left] {$\displaystyle \otimes $};
\draw (149.86,495.3) node [anchor=north west][inner sep=0.75pt]   [align=left] {$\displaystyle P_{i} \otimes P_{j}$};
\draw (399.59,317.04) node [anchor=north west][inner sep=0.75pt]   [align=left] {$\displaystyle s( \ast )$};
\draw (490.65,316.04) node [anchor=north west][inner sep=0.75pt]   [align=left] {$\displaystyle s( \ast )$};
\draw (437.74,318.15) node [anchor=north west][inner sep=0.75pt]   [align=left] {$\displaystyle \Downarrow $};
\draw (529.74,318.62) node [anchor=north west][inner sep=0.75pt]   [align=left] {$\displaystyle \Downarrow $};
\draw (451.01,316.32) node [anchor=north west][inner sep=0.75pt]   [align=left] {$\displaystyle s( P_{j})$};
\draw (543.29,316.14) node [anchor=north west][inner sep=0.75pt]   [align=left] {$\displaystyle s( P_{i})$};
\draw (583.75,318.49) node [anchor=north west][inner sep=0.75pt]   [align=left] {$\displaystyle s( \ast )$};
\draw (486.41,385.34) node [anchor=north west][inner sep=0.75pt]   [align=left] {$\displaystyle \otimes $};
\draw (406.83,497.42) node [anchor=north west][inner sep=0.75pt]   [align=left] {$\displaystyle s( *)$};
\draw (585.59,494.25) node [anchor=north west][inner sep=0.75pt]   [align=left] {$\displaystyle s( *)$};
\draw (466.13,499.24) node [anchor=north west][inner sep=0.75pt]   [align=left] {$\displaystyle \Downarrow $};
\draw (480.04,511.47) node [anchor=north west][inner sep=0.75pt]   [align=left] {$\displaystyle s( P_{i}) \otimes s( P_{j})$};
\draw (517.79,498) node [anchor=north west][inner sep=0.75pt]   [align=left] {=};
\draw (486.44,478) node [anchor=north west][inner sep=0.75pt]   [align=left] {$\displaystyle s( P_{i} \otimes P_{j})$};
\draw    (113.4,320.27) .. controls (129.32,297.7) and (144.54,297.25) .. (159.08,318.92) ;
\draw [shift={(159.97,320.27)}, rotate = 237.26] [color={rgb, 255:red, 0; green, 0; blue, 0 }  ][line width=0.75]    (10.93,-3.29) .. controls (6.95,-1.4) and (3.31,-0.3) .. (0,0) .. controls (3.31,0.3) and (6.95,1.4) .. (10.93,3.29)   ;
\draw    (114.42,346.27) .. controls (129.07,364.91) and (143.46,365.38) .. (157.6,347.67) ;
\draw [shift={(158.68,346.27)}, rotate = 127.15] [color={rgb, 255:red, 0; green, 0; blue, 0 }  ][line width=0.75]    (10.93,-3.29) .. controls (6.95,-1.4) and (3.31,-0.3) .. (0,0) .. controls (3.31,0.3) and (6.95,1.4) .. (10.93,3.29)   ;
\draw    (108.65,494.14) .. controls (145.42,433.67) and (183,433.7) .. (221.39,494.23) ;
\draw [shift={(221.97,495.14)}, rotate = 237.87] [color={rgb, 255:red, 0; green, 0; blue, 0 }  ][line width=0.75]    (10.93,-3.29) .. controls (6.95,-1.4) and (3.31,-0.3) .. (0,0) .. controls (3.31,0.3) and (6.95,1.4) .. (10.93,3.29)   ;
\draw    (108.76,520.14) .. controls (145.04,577.74) and (182.41,578.37) .. (220.89,522) ;
\draw [shift={(221.47,521.14)}, rotate = 124.05] [color={rgb, 255:red, 0; green, 0; blue, 0 }  ][line width=0.75]    (10.93,-3.29) .. controls (6.95,-1.4) and (3.31,-0.3) .. (0,0) .. controls (3.31,0.3) and (6.95,1.4) .. (10.93,3.29)   ;
\draw    (172.62,320.27) .. controls (188.41,298.19) and (203.73,298.21) .. (218.59,320.34) ;
\draw [shift={(219.5,321.72)}, rotate = 237.2] [color={rgb, 255:red, 0; green, 0; blue, 0 }  ][line width=0.75]    (10.93,-3.29) .. controls (6.95,-1.4) and (3.31,-0.3) .. (0,0) .. controls (3.31,0.3) and (6.95,1.4) .. (10.93,3.29)   ;
\draw    (172.72,346.27) .. controls (187.65,366.39) and (202.36,367.35) .. (216.87,349.16) ;
\draw [shift={(217.98,347.72)}, rotate = 127.1] [color={rgb, 255:red, 0; green, 0; blue, 0 }  ][line width=0.75]    (10.93,-3.29) .. controls (6.95,-1.4) and (3.31,-0.3) .. (0,0) .. controls (3.31,0.3) and (6.95,1.4) .. (10.93,3.29)   ;
\draw    (423.88,313.04) .. controls (449.53,284.12) and (474.37,283.37) .. (498.4,310.77) ;
\draw [shift={(499.5,312.04)}, rotate = 229.66] [color={rgb, 255:red, 0; green, 0; blue, 0 }  ][line width=0.75]    (10.93,-3.29) .. controls (6.95,-1.4) and (3.31,-0.3) .. (0,0) .. controls (3.31,0.3) and (6.95,1.4) .. (10.93,3.29)   ;
\draw    (425.25,339.04) .. controls (452.04,365.28) and (476.54,365.35) .. (498.74,339.25) ;
\draw [shift={(499.76,338.04)}, rotate = 129.44] [color={rgb, 255:red, 0; green, 0; blue, 0 }  ][line width=0.75]    (10.93,-3.29) .. controls (6.95,-1.4) and (3.31,-0.3) .. (0,0) .. controls (3.31,0.3) and (6.95,1.4) .. (10.93,3.29)   ;
\draw    (514.54,312.04) .. controls (539.22,281.81) and (564.98,282.14) .. (591.79,313.05) ;
\draw [shift={(593.01,314.49)}, rotate = 229.87] [color={rgb, 255:red, 0; green, 0; blue, 0 }  ][line width=0.75]    (10.93,-3.29) .. controls (6.95,-1.4) and (3.31,-0.3) .. (0,0) .. controls (3.31,0.3) and (6.95,1.4) .. (10.93,3.29)   ;
\draw    (514.87,338.04) .. controls (540.48,366.91) and (565.77,368.13) .. (590.75,341.71) ;
\draw [shift={(591.89,340.49)}, rotate = 132.5] [color={rgb, 255:red, 0; green, 0; blue, 0 }  ][line width=0.75]    (10.93,-3.29) .. controls (6.95,-1.4) and (3.31,-0.3) .. (0,0) .. controls (3.31,0.3) and (6.95,1.4) .. (10.93,3.29)   ;
\draw    (427.53,493.42) .. controls (478.66,434.46) and (532.73,433.11) .. (589.72,489.4) ;
\draw [shift={(590.58,490.25)}, rotate = 224.92] [color={rgb, 255:red, 0; green, 0; blue, 0 }  ][line width=0.75]    (10.93,-3.29) .. controls (6.95,-1.4) and (3.31,-0.3) .. (0,0) .. controls (3.31,0.3) and (6.95,1.4) .. (10.93,3.29)   ;
\draw    (428.13,519.42) .. controls (482.36,578.51) and (536.62,577.76) .. (590.92,517.17) ;
\draw [shift={(591.74,516.25)}, rotate = 131.58] [color={rgb, 255:red, 0; green, 0; blue, 0 }  ][line width=0.75]    (10.93,-3.29) .. controls (6.95,-1.4) and (3.31,-0.3) .. (0,0) .. controls (3.31,0.3) and (6.95,1.4) .. (10.93,3.29)   ;

\end{tikzpicture}

    \caption{The commutativity of this diagram means that $s$ respects the horizontal composition: $s(P_i) \otimes s(P_j) = s(P_i) \otimes s(P_j)$.}
\label{fig:section_horiz-composition}

\end{figure}
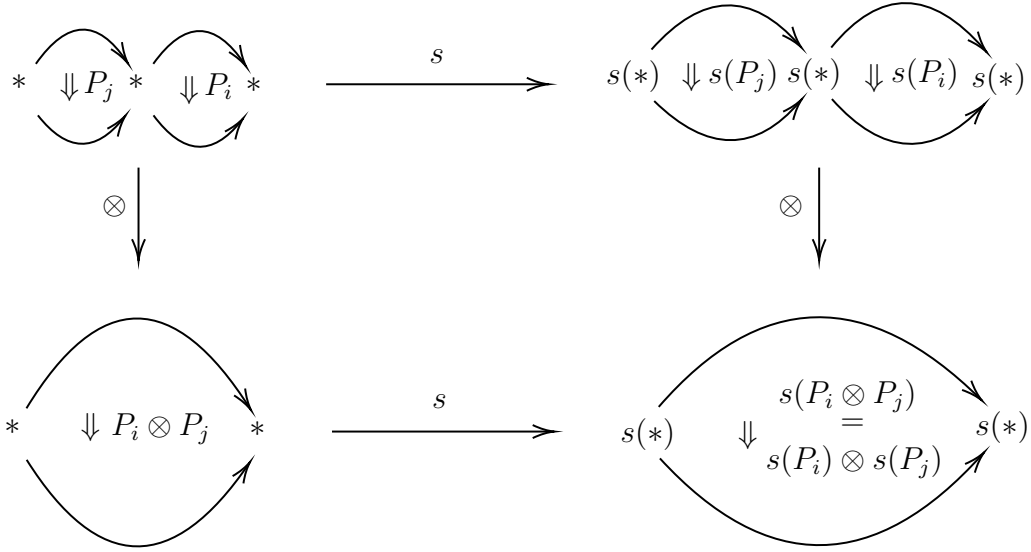

Similarly, the conditions (3) in \cref{coloring_process}, can be given in terms of the commutative pasting diagrams as shown in \cref{fig:section_vert-composition}.

\begin{figure}[ht]

\tikzset{every picture/.style={line width=0.75pt}} 

\begin{tikzpicture}[x=0.75pt,y=0.75pt,yscale=-1,xscale=1]

\draw    (115.43,321.81) -- (216.08,321.81) ;
\draw [shift={(218.08,321.81)}, rotate = 180] [color={rgb, 255:red, 0; green, 0; blue, 0 }  ][line width=0.75]    (10.93,-3.29) .. controls (6.95,-1.4) and (3.31,-0.3) .. (0,0) .. controls (3.31,0.3) and (6.95,1.4) .. (10.93,3.29)   ;
\draw    (439.66,324.34) -- (544.34,322.71) ;
\draw [shift={(546.34,322.68)}, rotate = 179.11] [color={rgb, 255:red, 0; green, 0; blue, 0 }  ][line width=0.75]    (10.93,-3.29) .. controls (6.95,-1.4) and (3.31,-0.3) .. (0,0) .. controls (3.31,0.3) and (6.95,1.4) .. (10.93,3.29)   ;
\draw    (165.93,392.19) -- (165.93,444.39) ;
\draw [shift={(165.93,446.39)}, rotate = 270] [color={rgb, 255:red, 0; green, 0; blue, 0 }  ][line width=0.75]    (10.93,-3.29) .. controls (6.95,-1.4) and (3.31,-0.3) .. (0,0) .. controls (3.31,0.3) and (6.95,1.4) .. (10.93,3.29)   ;
\draw    (256.36,322.85) -- (377.88,322.85) ;
\draw [shift={(379.88,322.85)}, rotate = 180] [color={rgb, 255:red, 0; green, 0; blue, 0 }  ][line width=0.75]    (10.93,-3.29) .. controls (6.95,-1.4) and (3.31,-0.3) .. (0,0) .. controls (3.31,0.3) and (6.95,1.4) .. (10.93,3.29)   ;
\draw    (256.31,515.16) -- (377.83,515.16) ;
\draw [shift={(379.83,515.16)}, rotate = 180] [color={rgb, 255:red, 0; green, 0; blue, 0 }  ][line width=0.75]    (10.93,-3.29) .. controls (6.95,-1.4) and (3.31,-0.3) .. (0,0) .. controls (3.31,0.3) and (6.95,1.4) .. (10.93,3.29)   ;
\draw    (493,392.27) -- (492.91,408.92) -- (492.75,436.89) ;
\draw [shift={(492.74,438.89)}, rotate = 270.32] [color={rgb, 255:red, 0; green, 0; blue, 0 }  ][line width=0.75]    (10.93,-3.29) .. controls (6.95,-1.4) and (3.31,-0.3) .. (0,0) .. controls (3.31,0.3) and (6.95,1.4) .. (10.93,3.29)   ;

\draw (89.66,313.85) node [anchor=north west][inner sep=0.75pt]   [align=left] {$\displaystyle \ast $};
\draw (226,313.85) node [anchor=north west][inner sep=0.75pt]   [align=left] {$\displaystyle \ast $};
\draw (158.51,283.52) node [anchor=north west][inner sep=0.75pt]   [align=left] {$\displaystyle \Downarrow $};
\draw (157.62,339.06) node [anchor=north west][inner sep=0.75pt]   [align=left] {$\displaystyle \Downarrow $};
\draw (91.36,507.16) node [anchor=north west][inner sep=0.75pt]   [align=left] {$\displaystyle \ast $};
\draw (227.69,506.16) node [anchor=north west][inner sep=0.75pt]   [align=left] {$\displaystyle \ast $};
\draw (135.7,505.95) node [anchor=north west][inner sep=0.75pt]   [align=left] {$\displaystyle \Downarrow $};
\draw (402.98,313.85) node [anchor=north west][inner sep=0.75pt]   [align=left] {$\displaystyle s( \ast )$};
\draw (550.03,313.85) node [anchor=north west][inner sep=0.75pt]   [align=left] {$\displaystyle s( \ast )$};
\draw (473.18,282.02) node [anchor=north west][inner sep=0.75pt]   [align=left] {$\displaystyle \Downarrow $};
\draw (474.59,343.59) node [anchor=north west][inner sep=0.75pt]   [align=left] {$\displaystyle \Downarrow $};
\draw (403.76,505.2) node [anchor=north west][inner sep=0.75pt]   [align=left] {$\displaystyle s( \ast )$};
\draw (553.45,505.2) node [anchor=north west][inner sep=0.75pt]   [align=left] {$\displaystyle s( \ast )$};
\draw (447.16,503.65) node [anchor=north west][inner sep=0.75pt]   [align=left] {$\displaystyle \Downarrow $};
\draw (170.8,282.48) node [anchor=north west][inner sep=0.75pt]   [align=left] {$\displaystyle P_{\nu }$};
\draw (169.95,338.18) node [anchor=north west][inner sep=0.75pt]   [align=left] {$\displaystyle P_{\mu }$};
\draw (150.4,506.04) node [anchor=north west][inner sep=0.75pt]   [align=left] {$\displaystyle P_{\mu } \circ P\nu $};
\draw (150.37,406.41) node [anchor=north west][inner sep=0.75pt]   [align=left] {$\displaystyle \circ $};
\draw (477.5,400.57) node [anchor=north west][inner sep=0.75pt]   [align=left] {$\displaystyle \circ $};
\draw (312.62,302.41) node [anchor=north west][inner sep=0.75pt]   [align=left] {$\displaystyle s$};
\draw (310.83,495.69) node [anchor=north west][inner sep=0.75pt]   [align=left] {$\displaystyle s$};
\draw (485.47,279.94) node [anchor=north west][inner sep=0.75pt]   [align=left] {$\displaystyle s( P_{\nu })$};
\draw (486.59,341.51) node [anchor=north west][inner sep=0.75pt]   [align=left] {$\displaystyle s( P_{\mu })$};
\draw (469.91,482.22) node [anchor=north west][inner sep=0.75pt]   [align=left] {$\displaystyle s( P_{\mu } \circ P_{\nu })$};
\draw (493.43,501.81) node [anchor=north west][inner sep=0.75pt]   [align=left] {$\displaystyle =$};
\draw (460,517.5) node [anchor=north west][inner sep=0.75pt]   [align=left] {$\displaystyle s( P_{\mu }) \circ s( P_{\nu })$};
\draw    (103.7,309.85) .. controls (146.95,249.9) and (188.33,249.6) .. (227.86,308.95) ;
\draw [shift={(228.46,309.85)}, rotate = 236.61] [color={rgb, 255:red, 0; green, 0; blue, 0 }  ][line width=0.75]    (10.93,-3.29) .. controls (6.95,-1.4) and (3.31,-0.3) .. (0,0) .. controls (3.31,0.3) and (6.95,1.4) .. (10.93,3.29)   ;
\draw    (104.08,335.85) .. controls (148.25,393.5) and (189.5,393.79) .. (227.82,336.71) ;
\draw [shift={(228.4,335.85)}, rotate = 123.59] [color={rgb, 255:red, 0; green, 0; blue, 0 }  ][line width=0.75]    (10.93,-3.29) .. controls (6.95,-1.4) and (3.31,-0.3) .. (0,0) .. controls (3.31,0.3) and (6.95,1.4) .. (10.93,3.29)   ;
\draw    (104.83,503.16) .. controls (146.37,440.03) and (187.92,439.38) .. (229.46,501.22) ;
\draw [shift={(230.09,502.16)}, rotate = 236.38] [color={rgb, 255:red, 0; green, 0; blue, 0 }  ][line width=0.75]    (10.93,-3.29) .. controls (6.95,-1.4) and (3.31,-0.3) .. (0,0) .. controls (3.31,0.3) and (6.95,1.4) .. (10.93,3.29)   ;
\draw    (105.43,529.16) .. controls (146.25,586.1) and (187.46,586.05) .. (229.04,529.03) ;
\draw [shift={(229.66,528.16)}, rotate = 125.83] [color={rgb, 255:red, 0; green, 0; blue, 0 }  ][line width=0.75]    (10.93,-3.29) .. controls (6.95,-1.4) and (3.31,-0.3) .. (0,0) .. controls (3.31,0.3) and (6.95,1.4) .. (10.93,3.29)   ;
\draw    (425.68,309.85) .. controls (470.68,237.89) and (515.98,237.53) .. (561.58,308.77) ;
\draw [shift={(562.26,309.85)}, rotate = 237.64] [color={rgb, 255:red, 0; green, 0; blue, 0 }  ][line width=0.75]    (10.93,-3.29) .. controls (6.95,-1.4) and (3.31,-0.3) .. (0,0) .. controls (3.31,0.3) and (6.95,1.4) .. (10.93,3.29)   ;
\draw    (426.54,335.85) .. controls (473.49,400.45) and (518.42,400.77) .. (561.33,336.81) ;
\draw [shift={(561.97,335.85)}, rotate = 123.58] [color={rgb, 255:red, 0; green, 0; blue, 0 }  ][line width=0.75]    (10.93,-3.29) .. controls (6.95,-1.4) and (3.31,-0.3) .. (0,0) .. controls (3.31,0.3) and (6.95,1.4) .. (10.93,3.29)   ;
\draw    (426.44,501.2) .. controls (471.74,428.44) and (517.9,428.07) .. (564.88,500.11) ;
\draw [shift={(565.59,501.2)}, rotate = 237.15] [color={rgb, 255:red, 0; green, 0; blue, 0 }  ][line width=0.75]    (10.93,-3.29) .. controls (6.95,-1.4) and (3.31,-0.3) .. (0,0) .. controls (3.31,0.3) and (6.95,1.4) .. (10.93,3.29)   ;
\draw    (426.67,527.2) .. controls (472.49,597.72) and (518.54,598.07) .. (564.8,528.26) ;
\draw [shift={(565.49,527.2)}, rotate = 123.27] [color={rgb, 255:red, 0; green, 0; blue, 0 }  ][line width=0.75]    (10.93,-3.29) .. controls (6.95,-1.4) and (3.31,-0.3) .. (0,0) .. controls (3.31,0.3) and (6.95,1.4) .. (10.93,3.29)   ;

\end{tikzpicture}

    \caption{The commutativity of this diagram means that $s: \mathfrak{E} \to \mathfrak{B}$ respects the vertical composition: $s(P_{\mu} \circ P_{\nu}) = s(P_{\mu}) \circ s(P_{\nu})$.}
    \label{fig:section_vert-composition} 
\end{figure}
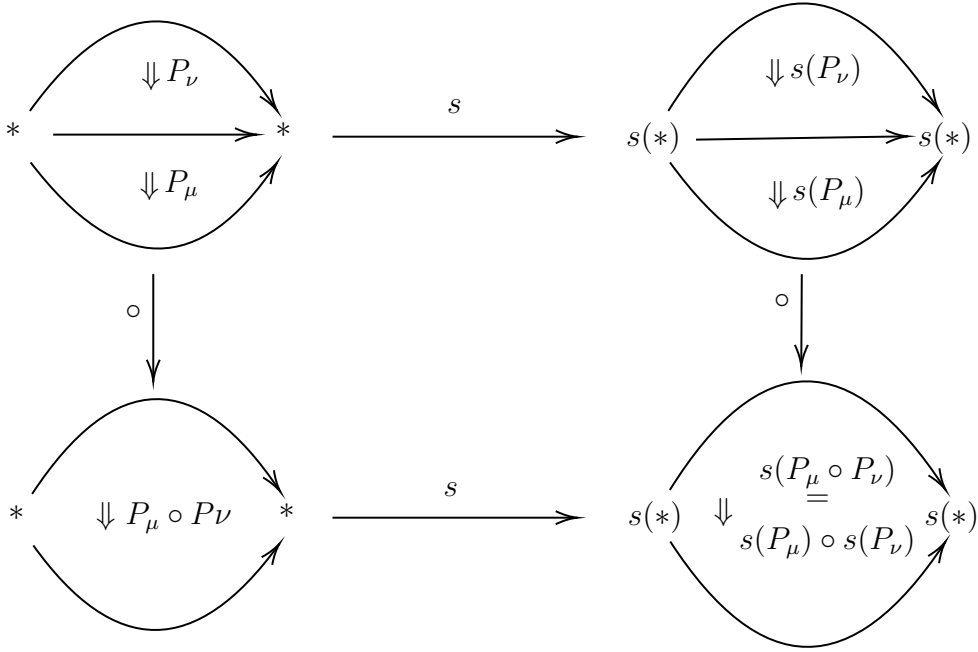

The same holds for $\mathfrak{p}$ as well. This suggests that it may be possible to understand both $\mathfrak{p}$ and $s$ as $2$-functors. It is not difficult to see that from the perspective of diagrammatical calculus of (higher) categories, we have added an extra \textit{normal} direction and a composition along it, giving a three-dimensional diagram. This is an important observation since the totality of $2$-category is a $3$-category (cf ~\cite{leinster2001survey}). Definition of $\mathfrak{p}$ as in \cref{rema:section} owes to the fact that while defining objects in the $1$-category $\textit{Bord}_2^{def}(\mathcal{D})$ as a disjoint union of several circles with defects we also introduced many copies of $\{\ast\}$ at level-$0$ (\cref{fig:level-0}). This suggests the following generalizations.

We begin with the convention that the term bundle also takes account of the term covering. Next, to define a (fiber) bundle, one can first define what a trivial bundle is, and then, with a concept of locality available, one can impose the triviality condition only locally. By analogy, we can think of a bundle as consisting of two data: triviality and locality, and proceed to define what a trivial bundle is in the category \textbf{Set} instead of \textbf{Top}. 

\begin{defi} \label{defi:set_bundle}

In the category $\textbf{Set}$, a bundle is a surjective function, \begin{tikzcd}
E \arrow[r, "\pi"] & B \end{tikzcd}, such that for every $b \in B$, there exists a set $F_b$ called the \textit{fiber} at $b$ such that the set $\pi^{-1}(b)$ is in bijection with the set $\{b\} \times F_b$.

A section of \begin{tikzcd}
E \arrow[r, "\pi"] & B \end{tikzcd} is a function  \begin{tikzcd}
B \arrow[r, "s"] & E \end{tikzcd} satisfying $\pi \circ s = 1$.

\end{defi}

Next, we recall the definition of $n$-category given inductively in terms of enrichment. 

\begin{defi}[~\cite{cheng2004higher}, Definition 2-n] \label{defi:strict_n-cat}
A strict $n$-category consists of:
\begin{enumerate}
    \item A set $\mathcal{C}_0$ whose elements are called objects.
    \item For any two objects $\alpha, \beta \in \mathcal{C}_0$, a $(n-1)$ category $\mathcal{C}(\alpha, \beta)$.
    \item For three objects $\alpha, \beta, \gamma \in C_0$, an $(n-1)$-functor $$ \mathcal{C}(\alpha, \beta) \otimes \mathcal{C}(\beta, \gamma) \to \mathcal{C}(\alpha, \gamma) $$
    \item For every object $\alpha \in \mathcal{C}_0$, an $(n-1)$-functor $$ 1 \to \mathcal{C}(\alpha, \alpha) $$ where $1$ is the terminal $(n-1)$-category,
\end{enumerate}
    satisfying associativity, unit, and interchange axioms.
\end{defi}

Now, we can define a bundle inductively at the level of $n$-category.

\begin{defi}
    A \textit{bundle at the level of} $n$-\textit{category} is an $n$-functor\begin{tikzcd}
\mathcal{E}_n \arrow[r, "\mathfrak{P}_n"] & \mathcal{B}_n  , \end{tikzcd} built inductively:
\begin{itemize}
    \item (\textbf{Base case:}) \begin{tikzcd}
\mathcal{E}_0 \arrow[r, "\mathfrak{P}_0"] & \mathcal{B}_0 \end{tikzcd} on objects is given as $\pi$ in \cref{defi:set_bundle}.
     \item (\textbf{Induction step:}) Assuming we know how to build a bundle at $(n-1)$ category, for $p, q \in \mathcal{E}_0$, it is given by a family of bundles at the level of $(n-1)$ category \begin{tikzcd}
\mathcal{E}(\alpha, \beta) \arrow[r, "\mathfrak{P}_{n-1}^{\alpha\beta}"] & \mathcal{B}(\mathfrak{p}_0(\alpha), \mathfrak{p}_0(\beta) ) \end{tikzcd} such that for three objects $\alpha, \beta, \gamma \in \mathcal{E}_0$, there is a natural isomorphism $\mathfrak{P}_{n-1}^{\alpha\beta} \otimes \mathfrak{P}_{n-1}^{\beta\gamma} \to \mathfrak{P}_{n-1}^{\alpha\gamma}$.
\item For an object $\alpha \in \mathcal{E}_0$, $\mathfrak{P}_{n-1}^{\alpha\alpha}$ maps $1_{\alpha}$, the image of $1$ in $\mathcal{C}(\alpha, \alpha)$ under the $(n-1)$ functor of \cref{defi:strict_n-cat}, (4), to $1_{\mathfrak{p}_0(\alpha)}.$

\end{itemize}

\end{defi}

By definition of an $n$-functor in enriched category theory, $\mathfrak{p}_n$ respects associativity, units, and the interchange law. Next, recursively construct bundles at the level of $n$-category for $n=1$ and $n=2$.

\begin{cons} \label{cons: bundle_1-cat}
Here, we build a bundle at the level of $1$-category. So let, \begin{tikzcd}
\mathcal{E}_1 \arrow[r, "\mathfrak{P}_1"] & \mathcal{B}_1 \end{tikzcd} be such a bundle. For two objects $p,q \in \mathcal{E}_0 $, $ \mathcal{E}_1(p,q)$ is a set. Therefore, the family \begin{tikzcd}
\mathcal{E}_1(p, q) \arrow[r, "\mathfrak{P}_{1}^{pq}"] & \mathcal{B}(\mathfrak{p}_0(p), \mathfrak{p}_0(q) ) \end{tikzcd}
is a collection of functions, each given by \cref{defi:set_bundle}.
\end{cons}

\begin{exam}
\cref{fig:bundle_1-cat} shows an example of a bundle at the level of $1$-category. The function $\mathfrak{p}_0$ maps $p_1, p_2, p_3 \in \mathcal{E}_0$ to $a \in \mathcal{B}_0$, and $q \in \mathcal{E}_0$ to $b \in \mathcal{B}_0$. The function $\mathfrak{p}_0^{p_1q}$ maps $l_1, l_2 \in \mathcal{E}_1(p_1,q)$ to $x \in \mathcal{B}_1(a, b)$, the function $\mathfrak{p}_0^{p_2q}$ maps $n \in \mathcal{E}_1(p_2,q)$ to $x \in \mathcal{B}_1(a, b)$, and the function $\mathfrak{p}_0^{p_3q}$ maps $m \in \mathcal{E}_1(p_3,q)$ to $x \in \mathcal{B}_1(a, b)$. Note that the fiber of $\mathfrak{p}_0$ over $a$ differs from that over $b$.

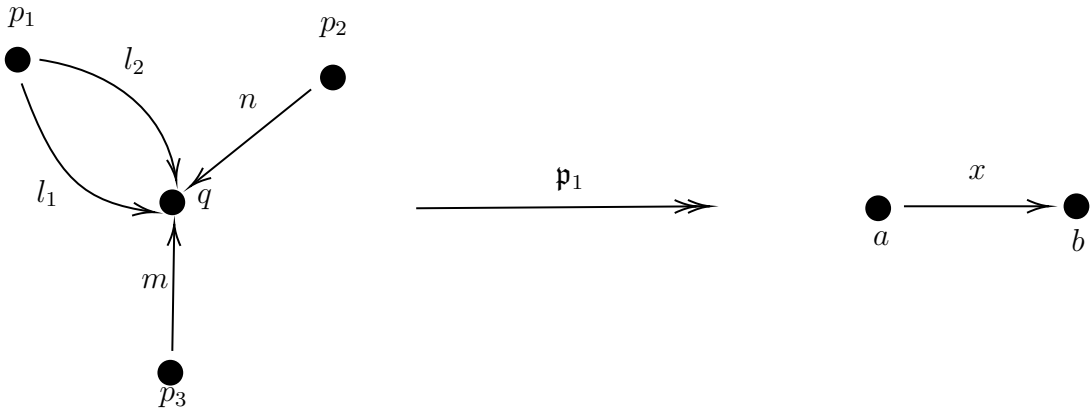
\begin{figure}[ht]
    \centering

\tikzset{every picture/.style={line width=0.75pt}} 

\begin{tikzpicture}[x=0.75pt,y=0.75pt,yscale=-1,xscale=1]

\draw    (53,317) .. controls (86.49,321.93) and (116.1,340.43) .. (121.76,376.35) ;
\draw [shift={(122,378)}, rotate = 262.3] [color={rgb, 255:red, 0; green, 0; blue, 0 }  ][line width=0.75]    (10.93,-3.29) .. controls (6.95,-1.4) and (3.31,-0.3) .. (0,0) .. controls (3.31,0.3) and (6.95,1.4) .. (10.93,3.29)   ;
\draw    (44,329) .. controls (61.73,378.25) and (75.58,388.69) .. (109.44,393.77) ;
\draw [shift={(111,394)}, rotate = 188.13] [color={rgb, 255:red, 0; green, 0; blue, 0 }  ][line width=0.75]    (10.93,-3.29) .. controls (6.95,-1.4) and (3.31,-0.3) .. (0,0) .. controls (3.31,0.3) and (6.95,1.4) .. (10.93,3.29)   ;
\draw    (190,332) -- (130.56,379.75) ;
\draw [shift={(129,381)}, rotate = 321.23] [color={rgb, 255:red, 0; green, 0; blue, 0 }  ][line width=0.75]    (10.93,-3.29) .. controls (6.95,-1.4) and (3.31,-0.3) .. (0,0) .. controls (3.31,0.3) and (6.95,1.4) .. (10.93,3.29)   ;
\draw    (120,464) -- (120.97,402) ;
\draw [shift={(121,400)}, rotate = 90.9] [color={rgb, 255:red, 0; green, 0; blue, 0 }  ][line width=0.75]    (10.93,-3.29) .. controls (6.95,-1.4) and (3.31,-0.3) .. (0,0) .. controls (3.31,0.3) and (6.95,1.4) .. (10.93,3.29)   ;
\draw    (489,391) -- (560,391) ;
\draw [shift={(562,391)}, rotate = 180] [color={rgb, 255:red, 0; green, 0; blue, 0 }  ][line width=0.75]    (10.93,-3.29) .. controls (6.95,-1.4) and (3.31,-0.3) .. (0,0) .. controls (3.31,0.3) and (6.95,1.4) .. (10.93,3.29)   ;
\draw    (243,392) -- (391,391) ;
\draw [shift={(391,391)}, rotate = 179.61] [color={rgb, 255:red, 0; green, 0; blue, 0 }  ][line width=0.75]    (17.64,-3.29) .. controls (13.66,-1.4) and (10.02,-0.3) .. (6.71,0) .. controls (10.02,0.3) and (13.66,1.4) .. (17.64,3.29)(10.93,-3.29) .. controls (6.95,-1.4) and (3.31,-0.3) .. (0,0) .. controls (3.31,0.3) and (6.95,1.4) .. (10.93,3.29)   ;
\draw  [fill={rgb, 255:red, 0; green, 0; blue, 0 }  ,fill opacity=1 ] (36,317) .. controls (36,313.69) and (38.69,311) .. (42,311) .. controls (45.31,311) and (48,313.69) .. (48,317) .. controls (48,320.31) and (45.31,323) .. (42,323) .. controls (38.69,323) and (36,320.31) .. (36,317) -- cycle ;
\draw  [fill={rgb, 255:red, 0; green, 0; blue, 0 }  ,fill opacity=1 ] (114,389) .. controls (114,385.69) and (116.69,383) .. (120,383) .. controls (123.31,383) and (126,385.69) .. (126,389) .. controls (126,392.31) and (123.31,395) .. (120,395) .. controls (116.69,395) and (114,392.31) .. (114,389) -- cycle ;
\draw  [fill={rgb, 255:red, 0; green, 0; blue, 0 }  ,fill opacity=1 ] (195,326) .. controls (195,322.69) and (197.69,320) .. (201,320) .. controls (204.31,320) and (207,322.69) .. (207,326) .. controls (207,329.31) and (204.31,332) .. (201,332) .. controls (197.69,332) and (195,329.31) .. (195,326) -- cycle ;
\draw  [fill={rgb, 255:red, 0; green, 0; blue, 0 }  ,fill opacity=1 ] (113,475) .. controls (113,471.69) and (115.69,469) .. (119,469) .. controls (122.31,469) and (125,471.69) .. (125,475) .. controls (125,478.31) and (122.31,481) .. (119,481) .. controls (115.69,481) and (113,478.31) .. (113,475) -- cycle ;
\draw  [fill={rgb, 255:red, 0; green, 0; blue, 0 }  ,fill opacity=1 ] (470,392) .. controls (470,388.69) and (472.69,386) .. (476,386) .. controls (479.31,386) and (482,388.69) .. (482,392) .. controls (482,395.31) and (479.31,398) .. (476,398) .. controls (472.69,398) and (470,395.31) .. (470,392) -- cycle ;
\draw  [fill={rgb, 255:red, 0; green, 0; blue, 0 }  ,fill opacity=1 ] (570,391) .. controls (570,387.69) and (572.69,385) .. (576,385) .. controls (579.31,385) and (582,387.69) .. (582,391) .. controls (582,394.31) and (579.31,397) .. (576,397) .. controls (572.69,397) and (570,394.31) .. (570,391) -- cycle ;

\draw (36,289) node [anchor=north west][inner sep=0.75pt]   [align=left] {$\displaystyle p_{1}$};
\draw (193,294) node [anchor=north west][inner sep=0.75pt]   [align=left] {$\displaystyle p_{2}$};
\draw (112,481) node [anchor=north west][inner sep=0.75pt]   [align=left] {$\displaystyle p_{3}$};
\draw (131,381) node [anchor=north west][inner sep=0.75pt]   [align=left] {$\displaystyle q$};
\draw (50,376) node [anchor=north west][inner sep=0.75pt]   [align=left] {$\displaystyle l_{1}$};
\draw (94,309) node [anchor=north west][inner sep=0.75pt]   [align=left] {$\displaystyle l_{2}$};
\draw (103,424) node [anchor=north west][inner sep=0.75pt]   [align=left] {$\displaystyle m$};
\draw (152,333) node [anchor=north west][inner sep=0.75pt]   [align=left] {$\displaystyle n$};
\draw (472,402) node [anchor=north west][inner sep=0.75pt]   [align=left] {$\displaystyle a$};
\draw (572,401) node [anchor=north west][inner sep=0.75pt]   [align=left] {$\displaystyle b$};
\draw (520,369) node [anchor=north west][inner sep=0.75pt]   [align=left] {$\displaystyle x$};
\draw (312,370) node [anchor=north west][inner sep=0.75pt]   [align=left] {$\displaystyle \mathfrak{p}_{1}$};

\end{tikzpicture}

    \caption{Diagram on the left represents a category $\mathcal{E}_1$ with only four objects and only non-trivial morphisms are shown. The diagram on the right is of category $\mathcal{B}_1$ with only two objects.}
    \label{fig:bundle_1-cat}
\end{figure}
    
\end{exam}

\begin{cons} \label{cons:bundle_2-cat}
Now, with bundles at the level of $1$-category in hands, we build a bundle at the level of $2$-category. So let, \begin{tikzcd}
\mathcal{E}_2 \arrow[r, "\mathfrak{P}_2"] & \mathcal{B}_2 \end{tikzcd} be such a bundle. For two objects $\alpha,\beta \in \mathcal{E}_0$, $\mathcal{E}_2(\alpha,\beta)$ is a $1$-category. Therefore, the family \begin{tikzcd}
\mathcal{E}(a, b) \arrow[r, "\mathfrak{P}_{1}^{ab}"] & \mathcal{B}(\mathfrak{p}_0(a), \mathfrak{p}_0(b) ) \end{tikzcd}
is a collection of $1$-functors, each given by \cref{cons: bundle_1-cat}.
\end{cons}

\begin{defi}
    A section of a bundle at the level of $n$-category \begin{tikzcd}
\mathcal{E}_n \arrow[r, "\mathfrak{P}_n"] & \mathcal{B}_n \end{tikzcd} is another $n$-functor \begin{tikzcd}
\mathcal{B}_n \arrow[r, "\mathfrak{s}_n"] & \mathcal{E}_n \end{tikzcd}, satisfying $\mathfrak{p}_n \diamond \mathfrak{s}_n = \mathbf{1}$, where $\diamond$ stands for the composition of $n$-functors, and $\mathbf{1}$ for the identity $n$-functor.
\end{defi}

We conclude this section with the conjecture that there is an extended TFT approach to graph coloring where the coloring processing is given by sections of bundles at the level of $2$-category as defined in \cref{cons:bundle_2-cat}.

\subsection{Word Problem and defect-cobordism} \label{sec:word-problem_theory}

The ideas developed in \cref{sec:groups}, in particular \cref{associated-def}, allow us to interpret the word problem as a co-bordism problem in the category $\textit{Bord}_2^{def}(\mathcal{P}_G)$. Note that to read a word as an element of the group $G$, we must fix an orientation and a base-point on the defect-circle. First, objects in the category $\textit{Bord}_2^{def}(\mathcal{P}_G)$ come with orientation, this is not an issue. Next, one can choose the distinguished point \{-1\} as a base point. Note, that a different choice of base-point changes the word by a conjugate, which is not necessarily bad since functions like trace are known to be independent of conjugation. 

However, for this manuscript, we fix the base-point as the distinguished point $\{-1\}$, and all the circles in the category $\textit{Bord}_2^{def}(\mathcal{P}_G)$ is given the standard orientation of the plane, We employ the following conventions:

\begin{conv} \label{conv:word_reading}
    \begin{enumerate}
        \item A single circle with defects, labeled by the generators of $P_G$ represents a word read from $\{-1\}$ to $\{-1\}$ following the standard orientation of the circle. (See the caption for \cref{fig:word-problem}, Diagram (i).)
        \item Recall, an object in $\textit{Bord}_2^{def}(\mathcal{P}_G)$ is a disjoint union of circles. Each of them represents a word as described in the previous point. To multiply them as elements of the group $G$, we use a pair of pants, as shown in \cref{fig:word-problem}.
        \item A single circle with defects reading the word $w$ will be denoted by $[w]$, i.e., while $w$ is an element of the group $G$, $[w]$ is an object in the category $\textit{Bord}_2^{def}(\mathcal{P}_G)$.
    \end{enumerate}
\end{conv}

\begin{rema} \label{rema:co-pants_factorization}

A consequence of \cref{conv:word_reading}, (2), is that co-pants can be used for the factorization of a word.
    
\end{rema}

\begin{rema}

This is worth noticing that \cref{defn:def-cat}, (2), also allows a permutation of $\hat{\mathbb{S}}^1$ components of a disjoint union as a morphism, which gives us a symmetric structure. In the light of \cref{conv:word_reading}, (2), we see that a transposition like $[w_1] \sqcup [w_2] \to [w_2] \sqcup [w_1]$ amounts to $[w_1w_2] = [w_2w_1]$, which is not true always. Therefore, to discuss the word problem we have to sacrifice the symmetric structure. Therefore, in the context of the word problem, we assume that the category $\textit{Bord}_2^{def}(\mathcal{P}_G)$ is not symmetric unless $G$ is abelian.

\end{rema}

\begin{figure}[ht]
    \centering
    \includegraphics[width=0.98\linewidth]{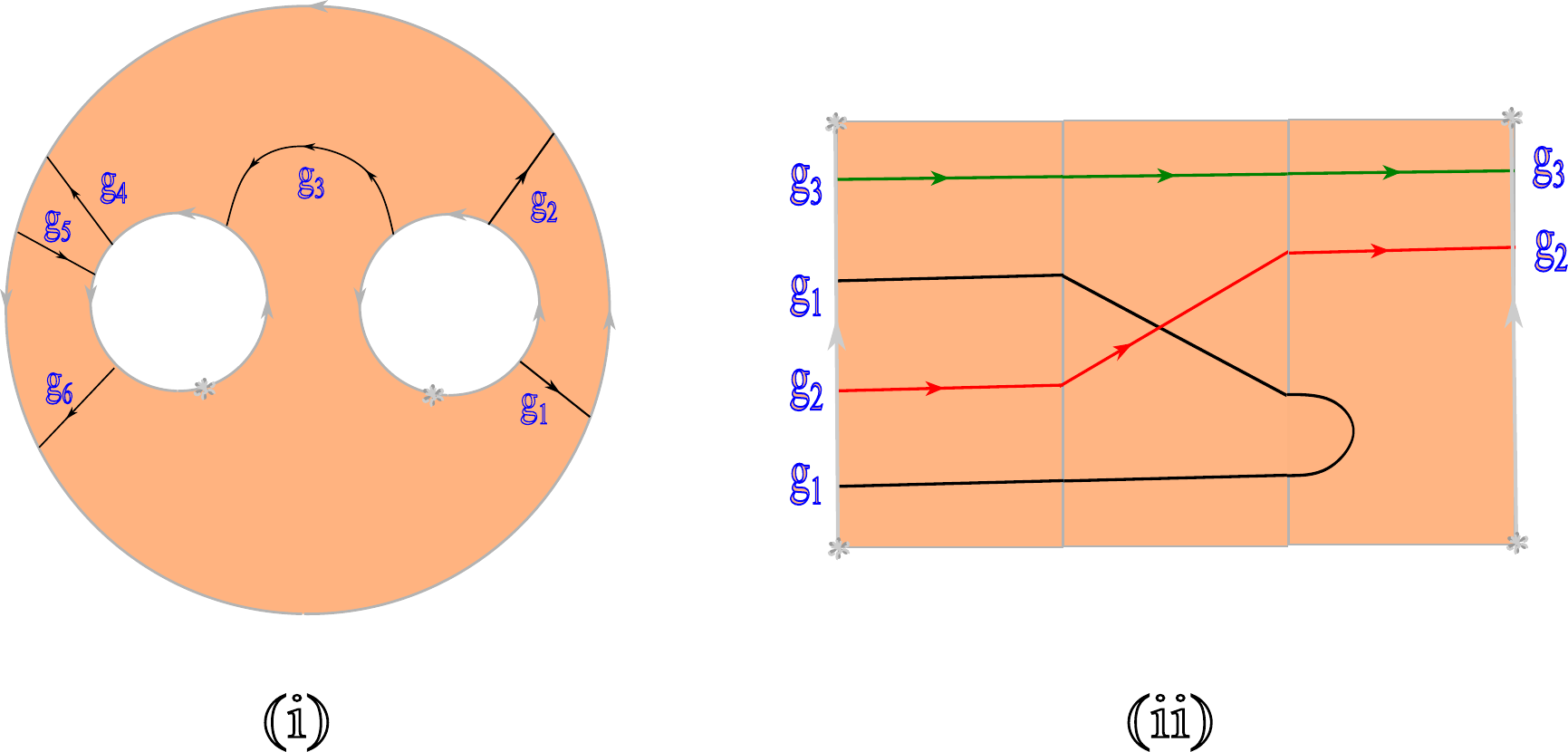}
    \caption{The figure (i) shows an example of multiplying two group elements when interpreting them as an object in the category $\textit{Bord}_2^{def}(\mathcal{P}_G)$. In this case, we see $(g_1g_2g_3)(g_3^{-1}g_4g_5^{-1}g_6) = g_1g_2g_4g_5^{-1}g_6$. Note that the induced orientation on the two inner circles is opposite to the standard orientation, but we always follow the standard orientation - the orientation that we chose on defect-circles as objects in the category $\textit{Bord}_2^{def}(\mathcal{P}_G)$. Diagram (ii) shows the word relation discussed in \cref{exam:word-problem}. Here, we have located the point $\{-1\}$ by placing the only element of $D_2$ directly over it, so its placement is specified by $\ast$. We follow this convention throughout this section. }
    \label{fig:word-problem}
\end{figure}

\begin{exam} \label{exam:word-problem}
    Let $G = \langle g_1, g_2, g_3 \mid g_1^2, g_1g_2g_1^{-1}g_2^{-1} \rangle$. \cref{fig:word-problem} shows the bordism verion of the relation $g_1g_2g_1g_3 =g_2g_3$.
\end{exam}


\cref{fig:word-problem} shows a planar picture for the bordism version of the word-problem mentioned in \cref{exam:word-problem}. However, a field-theoretic description can be given when one interprets a word problem as a cobordism problem. First, we need to explore the point \cref{local-junctions}, (3), in more detail. Defect-disks, as in \cref{fig:defect_orient}, (i) and (ii), can be thought of both as a cup, i.e., a bordism from the $\emptyset$, and also as a cap, i.e., a bordism to the $\emptyset$. This is demonstrated in \cref{fig:cup-cap_detail} where the defect-disk (iii) in \cref{fig:plcwgrp} is considered.  

\begin{figure}
    \centering
    \includegraphics[width=0.9 \linewidth]{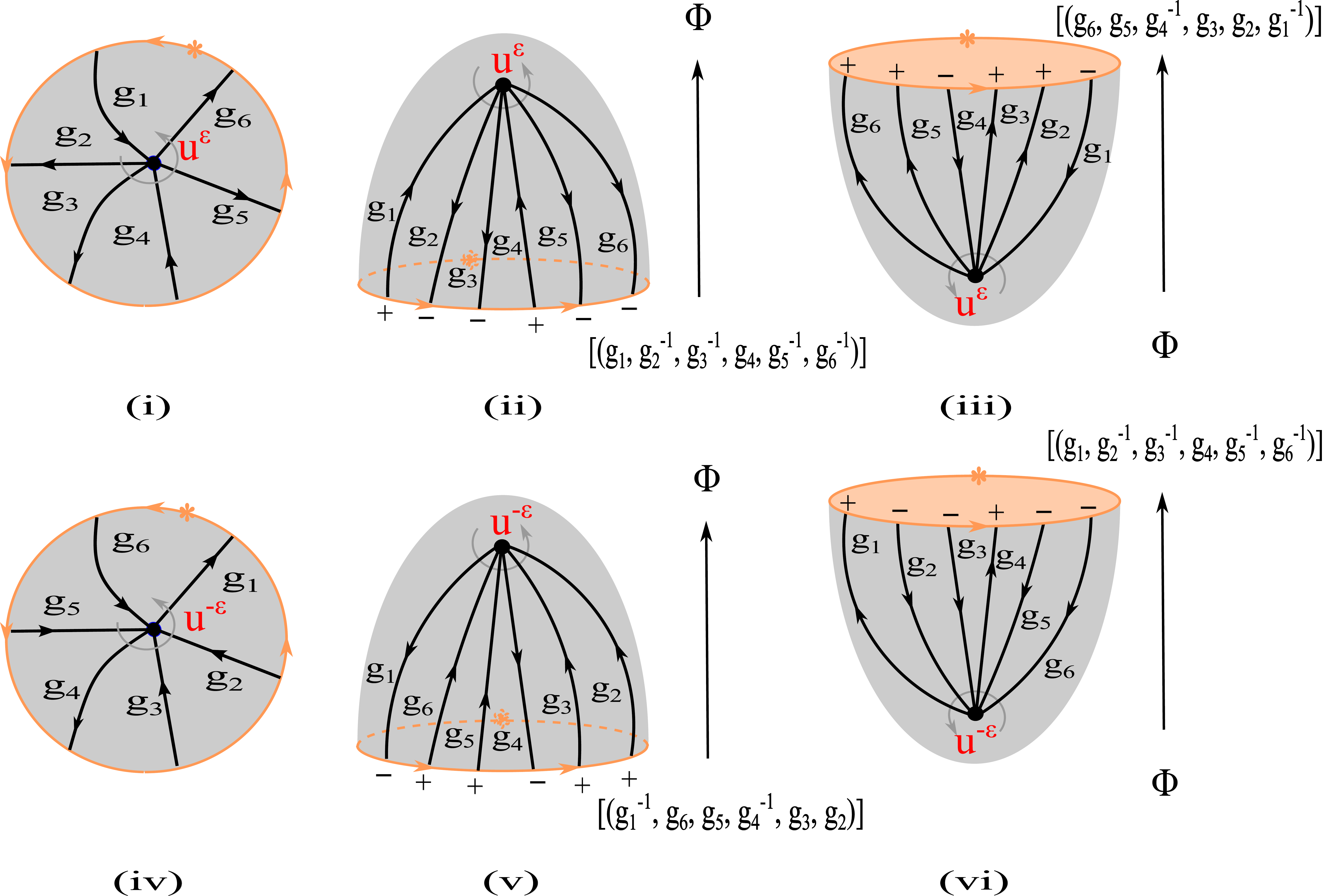}
    \caption{Diagram (i) shows the defect-disk for $\psi_{\{0,1\}}(u^{\epsilon}) = [(g_1, g_2^{-1}, g_3^{-1}, g_4, g_5^{-1}, g_6^{-1})]$. referring to the convention in the caption of \cref{Bord_def}, we see that if this defect-disk is treated as a cap then the boundary circle is an in-boundary and the corresponding word is in the class $[(g_1, g_2^{-1}, g_3^{-1}, g_4, g_5^{-1}, g_6^{-1})]$ as in Diagram (ii)). However, if it is seen as a cup then the boundary circle is the out-boundary and the word determined by the standard orientation is in the class $[(g_6, g_5, g_4^{-1}, g_3, g_2, g_1^{-1})]$ as in Diagram (iii)). Similarly, Diagram (iv), shows a defect-disk for the map $\psi_{\{0,1\}}(u^{-\epsilon}) = [(g_1, g_2^{-1}, g_3^{-1}, g_4, g_5^{-1}, g_6^{-1})]$. The cap version of this map is shown in Diagram (v), the word determined by the boundary circle is in the class $[(g_6, g_5, g_4^{-1}, g_3, g_2, g_1^{-1})]$. While Diagram (vi) shows the cup with the word in the boundary class $[(g_1, g_2^{-1}, g_3^{-1}, g_4, g_5^{-1}, g_6^{-1})]$.}
    \label{fig:cup-cap_detail}
\end{figure}

Note in \cref{fig:cup-cap_detail}, the cap (cup) for $\psi_{\{0,1\}}(u^{\epsilon})$ has identical boundary with the cup (cap) for $\psi_{\{0,1\}}(u^{-\epsilon})$, which facilitate their gluing as mentioned in \cref{local-junctions}, (3). One can also think of involution as reflecting in the horizontal plane. This will change the orientation of the surface, and also the left and right. Consequently, one needs to flip the direction of arrows on the $1$-strata to be consistent with the map $\psi_{1,2}$. This is the relation that we see in \cref{fig:cup-cap_detail} between Diagram (ii) and (vi), and between Diagram (iii) and (v).


Let us look at the following example:

\begin{exam} \label{exam:big_word-problem}
    Consider the group $G$, given by the presentation $P_G = \langle a, b, c, d \mid ab^{-1}ac \rangle $. In this group, the word $aca$ equals $b$, as this can be seen by the following chain of transformations:
    \begin{multline} \label{multline:word_problem}
        aca \longrightarrow (ac)(c^{-1}a^{-1}ba^{-1})a \longrightarrow (a(cc^{-1})a^{-1})b(a^{-1}a) \longrightarrow b
    \end{multline}
    
\end{exam}

We take a moment to decipher \cref{multline:word_problem}. The process of inserting the relation $c^{-1}a^{-1}ba^{-1}$ between $a$ and $c$ can be thought of as the creation of the word $c^{-1}a^{-1}ba^{-1}$, and is indeed given by the creation operator, the cup bordism, when one interprets this as a cobordism problem in $\textit{Bord}_2^{def}(\mathcal{P}_G)$. \cref{fig:word_bordism} shows \cref{multline:word_problem} as a cobordism between $[aca]$ and $[b]$ in $\textit{Bord}_2^{def}(\mathcal{P}_G)$.

\begin{figure}
    \centering
    \includegraphics[width=420pt, height=500pt]{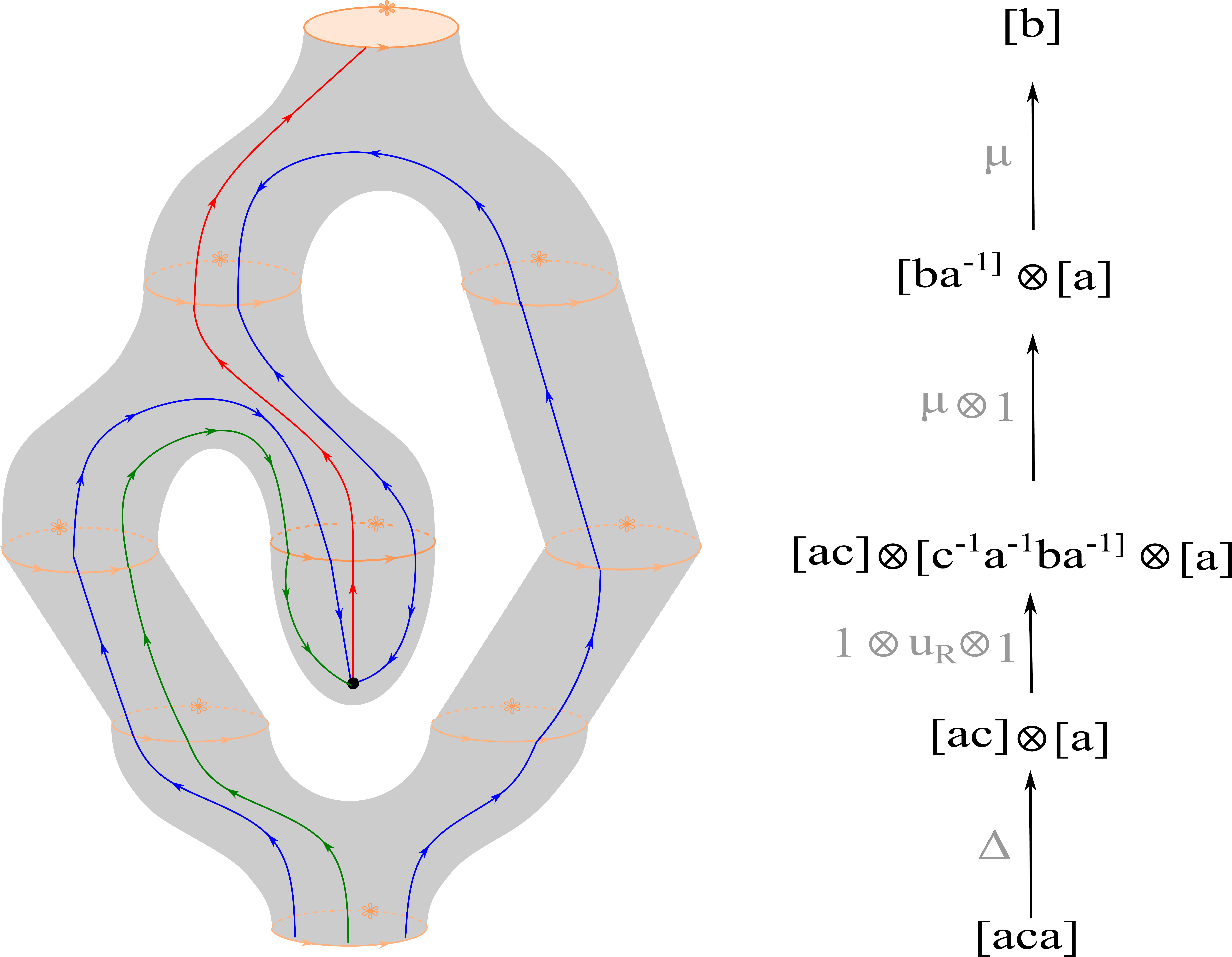}
    \caption{The cobordism version of \cref{multline:word_problem}. Note, in Group Theory, the trivial relation $gg^{-1}$ is always understood as one always wants a reduced word. In the case of surfaces with defects, this is just the continuity relation: if a curve enters a small region, it must leave. See \cref{exam:big_word-problem} for details of this diagram.}
    \label{fig:word_bordism}
\end{figure}

Generally, when solving a word problem, one often replaces a word (say) $w$ with another word $w'$. This can be done provided $w = w'$, but that amounts to $e = w^{-1}w' = w'w^{-1}$. Therefore, this process is the same as inserting the relation $w^{-1}w'$ to the right of $w$ or $w'w^{-1}$ to the left of $w$. Therefore, at each stage, solving a word problem can be thought of as inserting a word given by some relation or removing a word that determines a relation. Therefore, we state the following:

\begin{theo} \label{thm:word_cobordism}

     Given a group $G$ and a presentation $\mathcal{P}_G$, form the category \linebreak
    $\textit{Bord}_2^{def}(\mathcal{P}_G)$. If two words $w_1$ and $w_2$ represent the same element of the group $G$, then any two circles with defects, $[w_1]$ and $[w_2]$, are cobordant in the category $\textit{Bord}_2^{def}(\mathcal{P}_G)$. 
    
\end{theo}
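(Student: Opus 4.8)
\textbf{Proof proposal for \cref{thm:word_cobordism}.}

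The plan is to build the cobordism explicitly by induction on the length of a derivation witnessing $w_1 = w_2$ in $G$, using the co-pants (pair of pants read as a multiplication, cf.\ \cref{conv:word_reading}, (2)) together with the defect-disks (cups and caps) associated to the relations of $\mathcal{P}_G$ as the elementary building blocks. First I would invoke the standard fact from combinatorial group theory that $w_1 = w_2$ in $G = \langle B_G \mid R_G\rangle$ if and only if $w_1 w_2^{-1}$ is, in the free group on $B_G$, a product of conjugates of elements of $R_G \cup R_G^{-1}$; equivalently, $w_1$ can be transformed to $w_2$ by a finite sequence of elementary moves, each of which either (a) inserts or deletes a trivial pair $g g^{-1}$, or (b) inserts or deletes a relator $r \in R_G$ (or a cyclic permutation/inverse of one) as a consecutive subword. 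It suffices to realize each such elementary move as an elementary cobordism in $\textit{Bord}_2^{\textit{def}}(\mathcal{P}_G)$ and then compose; composition of bordisms is again a bordism, so the composite is a cobordism from $[w_1]$ to $[w_2]$.

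The key steps, in order, are: (1) realize move (a) --- insertion of $gg^{-1}$ --- as the cobordism obtained by attaching a small cup labeled with the trivial $2$-relation (the bigon/``continuity'' disk, as noted in the caption of \cref{fig:word_bordism}: a curve entering a small region must leave it) to the circle $[w]$ via a co-pants, producing $[w' g g^{-1} w'']$ where $w = w'w''$; deletion is the same cobordism read backwards, i.e.\ with a cap. (2) Realize move (b) --- insertion of a relator $r = g_{i_1}^{\epsilon_1}\cdots g_{i_m}^{\epsilon_m} \in R_G$ --- using the defect-disk $\psi_{\{0,1\}}(u_r)$ of \cref{def:group_condns}, (5), viewed as a cup as in \cref{local-junctions}, (3) and \cref{fig:cup-cap_detail}: the boundary of this cup reads (a cyclic rotation/inversion of) $r$, and gluing it into $[w]$ by a co-pants at the appropriate location yields $[w' r w'']$; deletion uses the cap version, whose boundary matches by the orientation-consistency conditions \cref{condns}, (2) and the involution relation of \cref{local-junctions}, (3). (3) Check that conjugation is harmless: because we are allowed cyclic permutations of relators (which correspond to isotoping the basepoint/rotating the defect-disk) and because inserting $r$ at any position along the circle is available, the ``product of conjugates'' description is covered by finitely many applications of (2); here \cref{conv:word_reading}, (2)--(3) and \cref{rema:co-pants_factorization} let us factor $[w]$ through a co-pants to expose any consecutive subword boundary. (4) Finally, concatenate: if $w_1 = u_0 \to u_1 \to \cdots \to u_k = w_2$ is the derivation, the composite bordism $\Sigma = \Sigma_k \circ \cdots \circ \Sigma_1 : [w_1] \to [w_2]$ (with each $\Sigma_j$ the elementary cobordism realizing $u_{j-1}\to u_j$, completed to a genuine surface with defects by adding product regions $[u_{j-1}]\times I$ away from the local modification) is the desired cobordism; one verifies it satisfies \cref{cobordism}, in particular that $\partial\Sigma = (-[w_1]) \sqcup [w_2]$ and that the defect data restricts correctly on the two boundary circles, which follows stratum-by-stratum from the construction of each $\Sigma_j$ out of basic-gons.

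I expect the main obstacle to be bookkeeping the orientations and basepoints consistently: the co-pants induces the \emph{opposite} of the standard orientation on its two incoming circles (as stressed in the caption of \cref{fig:word-problem}), so reading the word off a glued circle requires carefully tracking how the cyclic word on a relator defect-disk sits relative to the distinguished point $\{-1\}$, and one must confirm that the cup/cap duality from \cref{local-junctions}, (3) (equivalently the $\ast$-involution that flips the surface and reverses the $1$-strata arrows) exactly implements ``insert $r$'' versus ``delete $r$'' without an unwanted conjugation or inversion slipping in. Once the dictionary ``elementary Tietze-type move $\leftrightarrow$ elementary surgery along a basic-gon'' is pinned down with the correct orientation conventions, the induction and the verification of \cref{cobordism} are routine. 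A secondary, purely expository point is that we should note (as already observed in the remark preceding the theorem) that this cobordism need not respect the symmetric structure, so the statement is about cobordance of objects, not about any monoidal naturality.
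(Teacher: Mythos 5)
Your proposal is correct and follows essentially the same route as the paper: the paper's (informal) argument is exactly the reduction of $w_1=w_2$ to a finite sequence of insertions/deletions of relators and trivial pairs, realized by the relator defect-disks as cups/caps together with pants and co-pants (cf.\ \cref{exam:big_word-problem}, \cref{fig:word_bordism}, and \cref{table:word_problem}), composed into a single bordism. Your version merely makes explicit what the paper leaves implicit (the ``product of conjugates of relators'' fact, the induction on derivation length, and the orientation/basepoint bookkeeping via \cref{local-junctions} and \cref{conv:word_reading}), so no further comparison is needed.
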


We believe that the converse of \cref{thm:word_cobordism} is also true, i.e., if two circles with defects representing words $w_1$ and $w_2$ are cobordant in $\textit{Bord}_2^{def}(\mathcal{P}_G)$ then the two words represent the same element of the group $G$. A version of Morse Theory can be used to prove this, provided some questions related to the category $\textit{Bord}_2^{def}(\mathcal{P}_G)$ are addressed (see \cref{sec:word_problem_future}.)

The following example demonstrates an obstruction in solving a word problem in terms of tools developed in this manuscript.

\begin{exam} \label{exam:planar_word-problem}

    It is clear that in the group $K_4$, the words $c$ and $aa$ are not the same. Let us see this as a cobordism problem. We claim that there is no planar cobordism between them in the category $\textit{Bord}_2^{def}(\mathcal{K}_4)$, i.e., $c$ and $aa$ are not cobordant via a cylinder. Suppose there exists such a cobordism, then glue the defect-disk in $\textit{Bord}_2^{def}(\mathcal{P}_G)$ (\cref{fig:colored}, top left) to the boundary circle $aa$ as in \cref{fig:planar_word-problem}, Diagram (i). Take the involution of this composite surface and glue it along the common boundary (see \cref{fig:planar_word-problem}, Diagram (ii)). This gives us a $3$-edge colored planar trivalent graph with a generic cross-section containing a single defect $c$ (see \cref{fig:planar_word-problem}, Diagram (iii). This is a contradiction to \cref{big-1}.
    
\end{exam}

\begin{figure}
    \centering
    \includegraphics[width=0.8\linewidth]{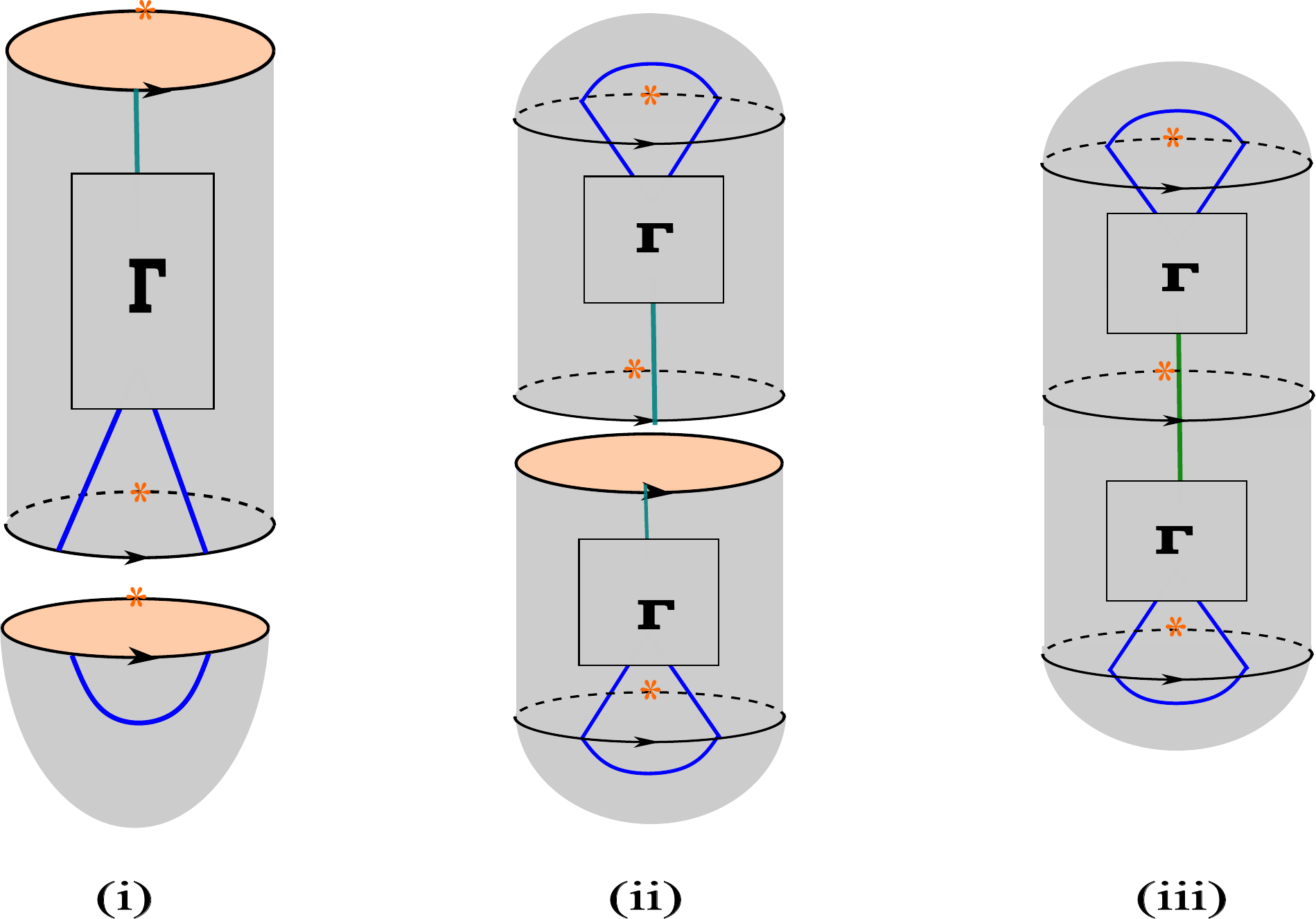}
    \caption{See \cref{exam:planar_word-problem} for details. }
    \label{fig:planar_word-problem}
\end{figure}

The construction used in \cref{exam:planar_word-problem} can be generalized if there exists a functor $T: \textit{Bord}_2^{def}(\mathcal{P}_G) \to \mathcal{C}$ to monoidal category. For demonstration, we can take $\mathcal{C}$ to be $\text{Vect}_{\mathbb{K}}.$ Let, $w$ be a word, that is equal to the identity as an element of $G$, then there exists a cup $\hat{M}$ as in \cref{fig:bordism_future}, Diagram (i), with the out-boundary $[w]$. If $\hat{M}^{\ast}$ denotes the involution of $\hat{M}$ (the cap in \cref{fig:bordism_future}), then it can be glued along the common-boundary $[w]$ to produce a closed surface (with defects) $\hat{M} \cup_{[w]} \hat{M}^{\ast}$ in $\textit{Bord}_2^{def}(\mathcal{P}_G)$. Thus $T$ assigns a scalar to  $\hat{M} \cup_{[w]} \hat{M}^{\ast}$. This scalar denoted $T(\hat{M})$, is a characteristic of the bordism $\hat{M}: \emptyset \to [w]$. Thus, for a fixed word $w$, every bordism $\hat{M}: \emptyset \to [w]$ gives rise to a scalar $T(\hat{M})$. In general, two such scalars are not expected to be equal. \cref{fig:bordism_future} demonstrate this for a specific group given by a presentation.

\begin{figure}
    \centering
    \includegraphics[width=0.98\linewidth]{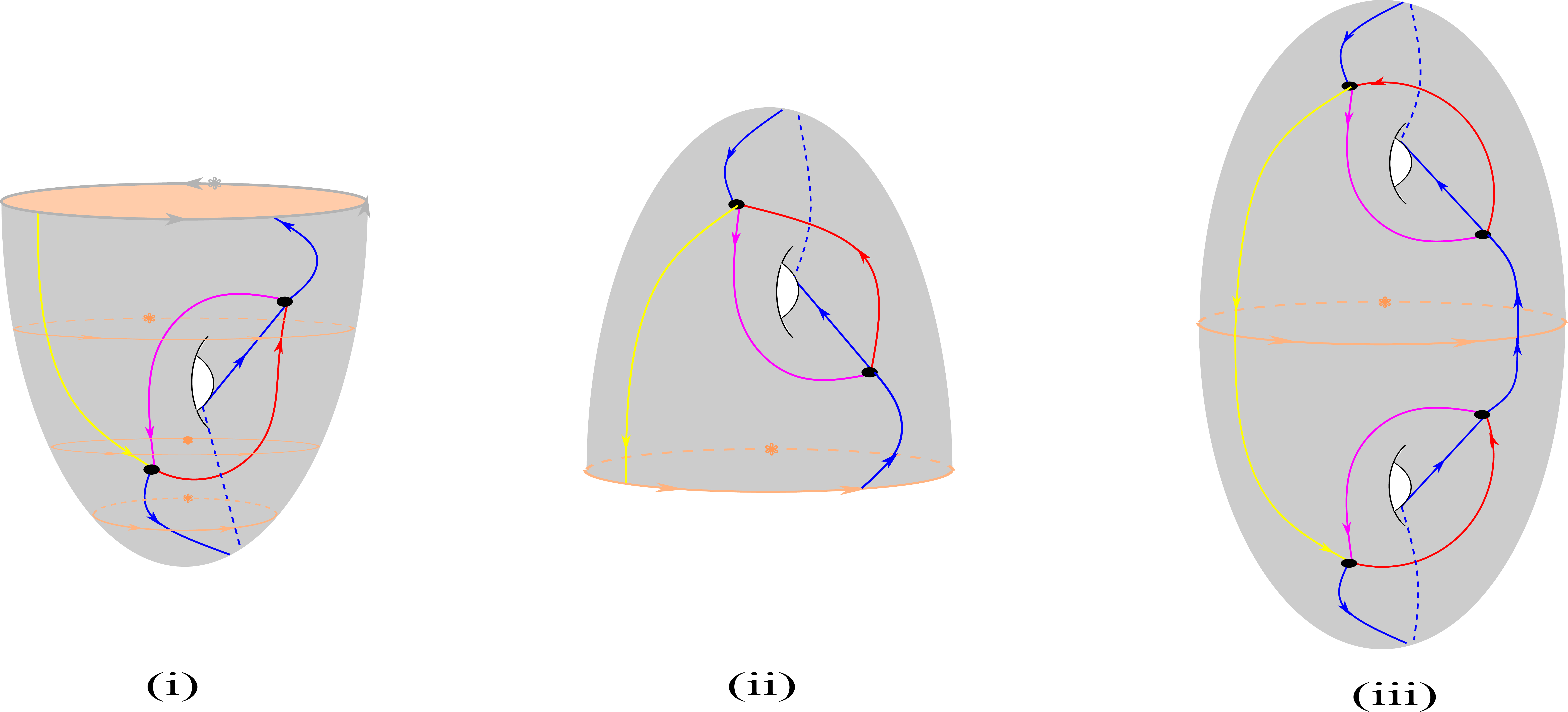}
    \caption{ Let $G$ be a group, given by the presentation $\langle a, b, d, e \mid a^{-1}b^{-1}ed, aba^{-1}b^{-1}, aba^{-1}e^{-1} \rangle$. With $a$ in blue, $b$ in red, $d$ in yellow, and $e$ in pink, Diagram (i) shows a bordism $\hat{H}:\emptyset \to (d^{-1}a)$ as described in \cref{eqn:bordism_future}. Diagram (ii) is the involution, showing a bordism $\hat{H}^{\ast}: (d^{-1}a) \to \emptyset $. Diagram (iii) is the closed surface obtained by gluing $\hat{H}$ and $\hat{H}^{\ast}$ along the common boundary $(d^{-1}a)$.}
    \label{fig:bordism_future}
\end{figure}

\begin{multline}  \label{eqn:bordism_future}
     e \xrightarrow[]{(a^{-1}a)} a^{-1}a \xrightarrow[]{ (d^{-1}e^{-1}ba)(a^{-1}a} d^{-1}e^{-1}ba \\
     \xrightarrow[]{(d^{-1}b^{-1})(aba^{-1}b^{-1})(ab)}  d^{-1}e^{-1}ab \xrightarrow[]{(d^{-1}e^{-1}ab)(b^{-1}a^{-1}ea)}  d^{-1}a
\end{multline}

One can check that target words at each arrow in \cref{eqn:bordism_future} can be read by taking generic cross sections of \cref{fig:bordism_future}, Diagram (i), beginning from the bottom.

We conclude this section with the following dictionary:

\begin{table}
\centering
\begin{tabular}{ | m{200pt}| m{200pt} | } 
  \hline
  \textbf{Word Problem for finitely presented groups} & \textbf{cobordism Problem with defects} \\
  \hline
  Words. & Circle with defects. \\
  \hline
  Multiplication in the group & A pair of pants with defects. \\
  \hline
  Factorization & Co-pants with defects.\\
  \hline
  Inserting a word from a relation & Creation (cup-bordism) \\
  \hline
  Removing a word from a relation & Annihilation (cap-bordism) \\
  \hline
  When two words represent the same elements of the group. & When two circles with defects are $\text{cobordant}^{\ast \ast}$. \\ 
  \hline
\end{tabular}
\caption{The asterisk in the last row highlights the fact that this correspondence is only one way: left to right. The converse, right to left, should also be true but not covered in this project.}
\label{table:word_problem}
\end{table}

Where $\ast \ast$ shows that they are equivalent but their theory is not fully dealt with in this manuscript. See \cref{sec:word_problem_future}.


\section{Future direction and possible connections} \label{sec:future_dirn}

We conclude this manuscript with a list of possible projects that further explore the mathematics developed here. Some of the things that appear here are not rigorous and are mere speculations of the authors. We begin with an invitation for an alternate proof of \cref{main-2}:


\subsection{A generalization of the universal construction.} \label{sec:2-defect_Universal}

We recall the definition of the universal construction of topological theories from ~\cite{khovanov2020universal} [section-1]. Let $\text{Bord}_n$ be the category of oriented $n$-dimensional bordism, i.e., its objects are oriented closed $(n-1)$ dimensional manifolds and a morphism between two objects $N_1$ and $N_2$ is given by $n$-dimensional surface $M$ (up to diffeomorphism relative to the boundary) such that $\partial M = (- N_1) \sqcup N_1$. Suppose that there exists an invariant $\alpha$ of closed $n$-dimensional manifolds that assigns each such manifold $M$, an element $\alpha(M)$ in a commutative ring $R$ with unity and involution $\kappa : R \to R$, such that:

\begin{enumerate}
    \item $\alpha(M_1) = \alpha(M_2)$ whenever $M_1$ and $M_2$ are diffeomorphic.
    \item $\alpha(\emptyset_n) = 1$,
    \item $\alpha(M_1 \sqcup M_2) = \alpha(M_1)\alpha(M_2)$, and
    \item $\alpha(-M) = \kappa(\alpha(M))$
\end{enumerate}

Note that, a partition function of a topological field theory satisfies all these properties. So, it is not unnatural to ask if such an $\alpha$ can be a partition function of a topological field theory. Universal construction aims at producing a topological field theory whose partition function is $\alpha$ by defining the state space of a closed, oriented $(n-1)$-manifold $N$ as follows:

\begin{itemize}
    \item First, define a free $R$-module $\textit{Fr}(N)$ with the basis $[M]$, for all oriented $n$-manifold $M$ such that $\partial M = N$. Here, $[M]$ is just a formal symbol associated to $M$.
    \item Next, use $\alpha$ to obtain a $\kappa$-semilinear pairing $\langle , \rangle : \textit{Fr}(N) \times \textit{Fr}(N) \to R$ by $$ \langle M_1, M_2 \rangle \coloneqq \alpha(-M_1 \cup_N M_2) $$ for $\partial M_1 = \partial M_2 = N$, and extend by (semi)linearity: $$ \langle \sum a_i[M_i], \sum b_j[M'_j] \rangle = \sum \kappa(a_i)b_j \langle M_i, M'_j \rangle.$$
    \item Finally, define the state space $\alpha(N)$ as the quotient space of $\textit{Fr}(N)$ by the kernel of the semilinear form $( , )$, i.e., $\alpha(N) \coloneqq \textit{Fr}(N)/\text{ker} \langle , \rangle$.
    \item A cobordism $M: N_0 \to N_1$ induces a map $\alpha(M): \alpha(N_0) \to \alpha(N_1)$, which takes a generator $[M_0]$ of $\alpha(N_0)$ to $[MM_0]$ in $\alpha(N_1)$.

\end{itemize}

The idea is that if such a TFT exists then it assigns a vector to any $n$-manifold with the out-boundary $N$ (i.e. $M: \emptyset \to N$ as a morphism in $\text{Bord}_n$.) The state space $\alpha(N)$ has a basis as all these vectors. (But, since $\alpha$ takes value in a ring rather than in a field, this has to be modified.)

After this introduction, note that if we choose $\alpha$ to be the number of Tait-coloring of a planar trivalent graph, then it satisfies the property $\alpha(\Gamma_1 \sqcup \Gamma_2) = \alpha(\Gamma_1) \alpha(\Gamma_2)$. So, a natural question to ask is can one generalize this construction for defect TFTs where $\alpha$ assigns a state to a defect-interval (and not to just defect-circles)in such a way that the partition function of the theory for planar graphs is given by $\alpha$? We emphasize that there is already a version for 2-extended TFT given in ~\cite{khovanov2002functor} and next, it may be helpful to consider the correspondence between defects and extended TFT in
[~\cite{kapustin2010topological}, section-2.3]. Another reason this approach is appealing is the following general version of \cref{main-2}:

\begin{conj} \label{conj:gen-surface}
Let $\Gamma$ be a trivalent graph embedded in a surface $\Sigma$. Consider the surface with defect $(\Sigma, {\Gamma})$ in $\text{Mor}(\textit{Bord}_2^{\textit{def,cw}}(\mathcal{D}_{+}^{\mathbf{3}}))(\emptyset, \emptyset)$. The action of the functor $\chi^{cw}$ on $(\Sigma, \Gamma)$ is the assignment,
    \begin{equation}
        \begin{split}
            \chi^{cw}(\Sigma, \Gamma) &: \mathbb{C} \longrightarrow \mathbb{C}\\
            & \lambda \mapsto \#\text{Tait}(\Gamma) \lambda .
        \end{split}
    \end{equation}
    
\end{conj}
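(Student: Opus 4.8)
\textbf{Proof proposal for \cref{conj:gen-surface}.}

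The plan is to mimic the proof of \cref{main-2} but replace the sphere-specific decomposition \cref{prop-deco} with a surface-adapted handle decomposition. First I would fix a height function $f : \Sigma \to [0,1]$ (a Morse function), giving a decomposition of $(\Sigma, \Gamma)$ into elementary cobordisms obtained by passing critical points of index $0$, $1$, and $2$. Away from the (finitely many) trivalent vertices of $\Gamma$, the portion of $\Sigma$ between consecutive critical levels is a cylinder with defects, and by \cref{cylinder} (together with \cref{prop-deco-plcw} and \cref{Prop:fusion}) the functor $\chi^{cw}$ applied to such a cylinder is the ``transfer matrix'' whose $(i,j)$-entry counts $3$-edge colorings of that piece with prescribed boundary colors. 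By isotoping $\Gamma$ I can assume each elementary cobordism contains at most one vertex, and the pieces carrying a vertex are handled exactly as in \cref{basic-gon-chi} (cups, caps, and the $\mu$-pattern). The composite $\chi^{cw}(\Sigma, \Gamma) : \mathbb{C} \to \mathbb{C}$ is then the product of these matrices, and the generalized sum-over-intermediate-states argument of \cref{sum_over_IS} shows that this composite, evaluated at $1$, equals the total number of $3$-edge colorings of $\Gamma$ that extend consistently across every cross-section.

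The key new ingredients beyond the $\mathbb{S}^2$ case are the index-$1$ critical points that change the genus (and the number of boundary circles of a generic cross-section), i.e.\ the ``pair-of-pants'' and ``co-pair-of-pants'' elementary cobordisms. For these I would verify directly, using the lattice-TFT formulas of \cref{chi-cw} and \cref{Prop:fusion}, that $\chi^{cw}$ of such a piece is the linear map whose matrix entries count colorings with fixed boundary data — this is the natural analogue of \cref{cylinder} for non-cylindrical elementary cobordisms, and its proof is the same PLCW-decomposition-plus-$\mathscr{P},\mathscr{E}$ computation. Crucially, \cref{sum_over_IS} as stated already allows $O_t$ to be a disjoint union of circles with defects, so the intermediate-state bookkeeping goes through verbatim: a global coloring of $(\Sigma, \Gamma)$ is the same as a compatible family of colorings of the pieces, matching along every cross-section, and the cardinality count factors as the trace/product of transfer matrices. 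Finally, independence of the answer from the choice of Morse function follows, as in the proof of \cref{main-2}, from the functoriality of $\chi^{cw}$ and the fact that any two Morse functions are connected by a sequence of handle slides and birth/death moves, each of which corresponds to an identity among composites of the elementary-cobordism matrices.

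The main obstacle I anticipate is the one flagged already in \cref{sec:key_tools}: not every generic cross-section of $(\Sigma, \Gamma)$ admits a collar, so not every slice is literally an object of $\textit{Bord}_2^{\textit{def,cw}}(\mathcal{D}_+^{\mathbf 3})$, and one must be careful that the chosen Morse function produces only cross-sections that do (the distinguished-point condition of \cref{distinguished} must be propagated through handles of genus-changing type). A second, more bookkeeping-heavy obstacle is that on a surface of positive genus a generic cross-section can be a disjoint union of several circles, so the ``distinguished point lies in a single component of $\Sigma_2$'' condition interacts non-trivially with the monoidal structure; I would need to check that the PLCW decomposition furnished by \cref{prop-deco-plcw} can be arranged compatibly across a handle. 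Once those are dispatched, the enumeration argument is a routine extension of \cref{cylinder}, \cref{sum_over_IS}, and the proof of \cref{main-2}, and I expect no genuinely new algebraic input — the group structure of $K_4$ (used in \cref{big-1}) is \emph{not} needed here, since \cref{conj:gen-surface} only asserts the equality of a count with a TFT value, not an obstruction.
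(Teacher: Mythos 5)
First, a point of orientation: the paper does not prove this statement at all --- it is \cref{conj:gen-surface}, left explicitly as a conjecture, with only the planar case (\cref{main-2}) proved and a different possible route (extending the universal construction of \cref{sec:2-defect_Universal} and comparing it with $\chi^{cw}$) sketched. So there is no proof of record to measure your argument against; the question is whether your outline closes the gap on its own, and as written it does not.

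The genuine gap is the step you dismiss as ``the natural analogue of \cref{cylinder} for non-cylindrical elementary cobordisms, and its proof is the same PLCW-decomposition-plus-$\mathscr{P},\mathscr{E}$ computation.'' Nothing in the paper's machinery delivers this: \cref{Prop-main} and \cref{Thm:simplest} are proved only for cylinders split into one defect-carrying polygon plus a trivial one, \cref{basic-gon-chi} treats only the four disk patterns, and the proof of \cref{cylinder} is an induction on the number of basic cylinders supplied by \cref{prop-deco}, whose own proof invokes the distinguished point precisely to rule out edges wrapping around $\mathbb{S}^2$ --- a phenomenon that genuinely occurs on a higher-genus $\Sigma$. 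So to execute your plan you must (i) prove a replacement for \cref{prop-deco} valid for arbitrary $(\Sigma,\Gamma)$, whose elementary pieces now include pants and co-pants carrying defect lines that can run between legs or wrap the waist, and show such a decomposition can be chosen compatibly with the collar condition of \cref{generic_cross-section} and with \cref{distinguished} (all distinguished points confined to a single component of $\Sigma_2$ --- a nontrivial constraint once cross-sections are disconnected and the faces of $\Gamma$ may all be disks); and (ii) prove, not assert, that $\chi^{cw}$ of each such elementary piece is the matrix of coloring counts with prescribed boundary colors. Point (ii) is plausible --- the surrounding theory is trivial, so no handle-operator factors intervene --- but it is exactly the new computation the conjecture is waiting for, and your proposal does not supply it. Your remaining points are sound: the set-theoretic gluing argument of \cref{sum_over_IS} and the trace-style self-gluing bookkeeping do extend, and \cref{big-1} indeed plays no role; but what you have is a reasonable strategy for attacking the conjecture, not a proof of it.
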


In other words, the number $\chi^{cw}(\Sigma, \Gamma)(1)$ is the number of Tait-coloring of the trivalent graph $\Gamma$, embedded in an arbitrary surface $\Sigma$. However, \cref{main-2} has been proved only for the planar case. This makes sense as edge-colorability is intrinsic to the graph and does not depend on the surface it is embedded in \cref{sec:motivation}. Extending the universal construction and proving that it is naturally isomorphic to $\chi^{cw}$ will prove \cref{conj:gen-surface}. However, a proof of \cref{conj:gen-surface} does not have to utilize this idea.


\subsection{Foam evaluation and trivial surrounding 3-d TFT.} \label{sec:3-defect_Universal}

It has been shown in \cite{carqueville20203} that the $3$-dimensional case of defect TFTs is related to Gray categories with duals in the same way as the $2$-dimensional defect TFTs are related to pivotal $2$-categories. A gray category, being a $3$-category, possesses three compositions that can be used to glue (fuse) diagrams in three independent directions to form a foam, which we want to see as a $3$-dimensional manifold with defects with $D_3 = \{\ast\}$. In connection with ~\cite{khovanov2021foam}, we see that a (pre)foam also allows its $2$-dimensional starts to accommodate dots, which plays a role in the definition of foam evaluation. In analogy with the approach taken here to define a surface with defects, it is natural to seek a definition of (pre)foam that uses  \textit{defect-balls} as a local model. Then, one can ask:
\enquote{Does there exist a trivial surrounding $3$-dimensional lattice TFT (suitably using $D_1, D_0$ and $\psi_{0,2}$)
which is connected to the foam evaluation?} 
To construct a lattice TFT, it may be useful to look at ideas developed in \cite{kapustin2014topological}.

Other than lattice TFT construction, the universal construction can be another way to produce TFT with defects in dimension three; especially, when a variation of the universal construction for foams already exists, and was used in [\cite{khovanov2004sl}] to categorify Kuperberg invariant of closed $A_2$ webs [~\cite{kuperberg1996spiders}]. This suggests that the universal construction can be extended to the category $\text{Bord}_3^{def}(\mathcal{D})$ (the case of  $\text{Bord}_2^{def}(\mathcal{D})$ has been discussed in \cref{sec:2-defect_Universal}.) Furthermore, producing a defect TFT in dimension three can be employed to produce invariants of $3$-manifolds via orbifold construction of defect TFTs [\cite{carqueville2023orbifolds}, \cite{carqueville2018orbifolds}, \cite{carqueville2019orbifolds}].


\subsection{Word problem, n-deformation, and Andrew-Curtis.} \label{sec:word_problem_future}

We begin from where we left in \cref{sec:word-problem_theory}. The first problem that comes when attempting to prove the converse of \cref{thm:word_cobordism} is the fate of the distinguished point $\{-1\}$ under bordism. Even if we only want to work with conjugacy classes, we would not want $1$-defects to wrap around the surfaces uncontrollably. It is not clear whether \cref{distinguished} is sufficient; one may want to impose more conditions in terms of the topology of the underlying surface. A possible remedy can be given in terms of \cref{fig:plcwgrp}, (iv) while staying in the planar world. Indeed, we see this in \cref{fig:bordism_future} that the commutator $aba^{-1}b^{-1}$ correspond to a handle up to stabilization (cf ~\cite{kauffman2021virtual}). Next, in \cref{fig:word-problem}, (i), we combined $g_3$ from both input words. This corresponds to the simple fact that we want a reduced word at the end. It is not clear whether one should allow the insertion of a relation of the kind $ww^{-1}$ when factoring a word. It is not difficult to see that a co-pants that creates a word like $ww^{-1}$ between two words $w_1$ and $w_2$, can be given as a composition of co-pants without any word like $ww^{-1}$, (so, the diagram is always progressive, i.e., the projection map to the time-coordinate is injective when restricted to each $1$-defects) a cup with the word $ww^{-1}$, and a pair of pants. That is true for a pair of pants too, and one can check that the pair of pants in \cref{fig:word-problem}, (i), can be given in terms of progressive pants and co-pants, and a cap. This is just a bordism version of the fact that one needs to first multiply and then reduce to get to a reduced word. It appears that cups, caps, progressive pants, and progressive co-pants form all the elementary cobordisms in $\textit{Bord}_2^{def}(\mathcal{P}_G)$. Therefore, using Morse Theory to deduce that a morphism in $\textit{Bord}_2^{def}(\mathcal{P}_G)$ can be written in terms of these elementary cobordisms would prove the converse of \cref{thm:word_cobordism}. Note that the converse of \cref{thm:word_cobordism} will allow us to prove the following generalized version of \cref{big-1}:

\enquote{If an object is in $\textit{Bord}_2^{def}(\mathcal{P}_G)$ is such that the word $w$ corresponding to it is not equal to the identity $e \in G$, then $[w]$ is not cobordant to $\emptyset$.}


In addition to those in connection with the word problem, there are other questions related to the category$\textit{Bord}_2^{def}(\mathcal{G})$. For instance, when the group $G$ is finite, one can also define the category $\textit{Bord}_2^{def}(\mathcal{G})$, which has $D_2 = \{\ast\}, D_1 = G$, and the set $D_0$ is in one-to-one correspondence with words whose product is the identity. The defect disks, in the distinguished neighborhood of $0$-strata labeled by elements of $D_0$, are modeled on \cref{fig:plcwgrp}, (iii), with these words. Now, we see issues regarding the category $\textit{Bord}_2^{def}(\mathcal{P}_G)$ that need to be addressed. For example, it is not unreasonable to enquire what happens to the category $\textit{Bord}_2^{def}(\mathcal{P}_G)$ under Tietze transformations (cf ~\cite{geoghegan2008fundamental}, Section 3.1), and how is this category related to the category$\textit{Bord}_2^{def}(\mathcal{G})$ when $G$ is a finite group? Taking inspiration from ~\cite{henry2022tietze}, it is reasonable to introduce some notion of weak equivalence between categories  $\textit{Bord}_2^{def}(\mathcal{P}_G)$ and $\textit{Bord}_2^{def}(\mathcal{Q}_G)$ for two presentations $P_G, Q_G$ of the same group $G$. However, interpreting a vertex as a $2$-morphism, this has to be a weak-equivalence of $2$-categories, which means that we step into the world of $3$-categories (cf ~\cite{leinster2001survey}, Introduction.) It is also interesting to note that when $G$ is a finite group, a construction similar to ours appears as $3$-defect conditions in ~\cite{carqueville20203}, Section 4.3 with $D_3 = \{\ast\}$.) However, it should be emphasized that $D_2$ and $D_0$ do not have to be trivial, and a general such theory will appear elsewhere. 

Finally, we note that the concept of $n$-deformations as introduced in ~\cite{wright1975group}, and the Andrew-Curtis Conjecture are based on the subject of a group presentation. So, it is not strange to seek how they are related to the category $\textit{Bord}_2^{def}(\mathcal{P}_G)$. More precisely, if we put them on the left-hand side of \cref{table:word_problem}, then what will go into the right-hand side in columns against them?


\subsection{N-graphs, N-triangulation, and orbifold construction.} The central ideas of this manuscript were discovered by the author when he was working on the ribbon graph formulation of N-graphs (cf ~\cite{casals2023legendrian}) in connection with Section-3 of ~\cite{treumann2016cubic}. The plan was to connect this work with special Legendrians in $\mathbb{S}^5$ (see ~\cite{wang2002compact}). However, one would need to allow to glue more than just disks to work with Legendrian weaves. There could be ways to do this using ideas developed in ~\cite{barazer2021cutting}, but for now, we follow this manuscript that is relevant to $N$-graphs. In our setup, an $N$-graph with only hexagonal vertices lives in the set $\text{Mor}(\textit{Bord}_2^{def, cw}(\mathcal{P}_{S_n}))$ where

   \begin{equation} \label{eqn:N-graph}
        S_n = \left\langle \tau_1, \dots , \tau_{n-1} \Bigg|
        \begin{array}{lr}
        \tau_i\tau_j = \tau_j\tau_i \hspace{3mm} \mid i - j \mid > 1\\
        \tau_i\tau_{i+1}\tau_i \hspace{2.5mm} = \hspace{2.5mm} \tau_{i+1}\tau_{i}\tau_{i+1} \\
          \hspace{10mm}\tau_i^2 \hspace{1mm} = \hspace{1mm} 1
        \end{array}\right\rangle .
  \end{equation}    

  It is unclear how to incorporate trivalent vertices. An $N$-graph which also has trivalent vertices sits inside a larger category. This category also allows basic-gons of type-III where all the $1$-strata are labeled by the same generator $\tau_j$ in \cref{eqn:N-graph}. However, looking at the connection explored in Appendix-A of ~\cite{casals2023legendrian}, there might be other indirect ways to accommodate trivalent vertices since the subject of Soergel bimodules is strongly linked to the Coxeter system (cf ~\cite{libedinsky2019gentle} and ~\cite{elias2020introduction}.)

  From the perspective of orbifold construction of a defect TFT (~\cite{carqueville2018orbifolds}, ~\cite{carqueville2019orbifolds}, ~\cite{carqueville2023orbifolds}), we note that an $N$-graph is dual to $N$-triangulation (cf ~\cite{fock2006moduli}, Section 1.15 and also ~\cite{casals2023legendrian}, Section 3) in the same way the dual to a triangulation is a single trivalent graph. The orbifold construction of defect TFT utilizes the dual of Pachner moves (~\cite{carqueville2023orbifolds}, Section 4). The analog of Pachner moves for $N$-triangulation is related to cluster mutation (~\cite{schrader2019cluster}.) Therefore it may be worthwhile to explore the subject of defect TFT for such defects, with appropriate $D_2$.


\subsection{Ribbon graph and graph connection.} \label{sec:ribbon_graph}

  The theory of the bordism category  $\textit{Bord}_2^{def}(\mathcal{P}_G)$ for a finitely presented group bears an uncanny resemblance with the subject of graph-connection (see, for instance, ~\cite{bourque2023flat}. Therefore, it is not unnatural to seek relationships between two subjects.

  Another closely related subject is that of ribbon-graphs. A ribbon-graph, like a surface with defects, is given by a cyclic ordering at each vertices, which dictates how disks should be glued to produce a surface (cf. ~\cite{mulase1998ribbon}). This provides a way to study defects in a non-planar surface, which can be very useful in studying the word problem. Here, we view the pair $(\Sigma, \Gamma)$, for a ribbon graph $\Gamma$ as a surface with defects, where $\Sigma$ is the surface determined by $\Gamma$. It is worth mentioning that this approach does not cover the entire theory of non-planar defects since there are surfaces with defects that do not come from a ribbon-graph. Moreover, after blowing up (cf. ~\cite{baldridge2023topological}) one can work exclusively with trivalent ribbon graphs, which allows one to work with twists as in ~\cite{baldridge2023topological} instead of the usual cyclic ordering at vertices; ~\cref{fig:ribbon_torus} gives an example of this. Note that for a trivalent graph $\Gamma$ the pair $(\Sigma, \Gamma)$ is a surface with defects. Considering a ribbon graph allows the ideas developed in this manuscript to extend beyond the plane, but their theory will appear elsewhere.

  \begin{figure}[ht]
      \centering
      \includegraphics[scale=0.2]{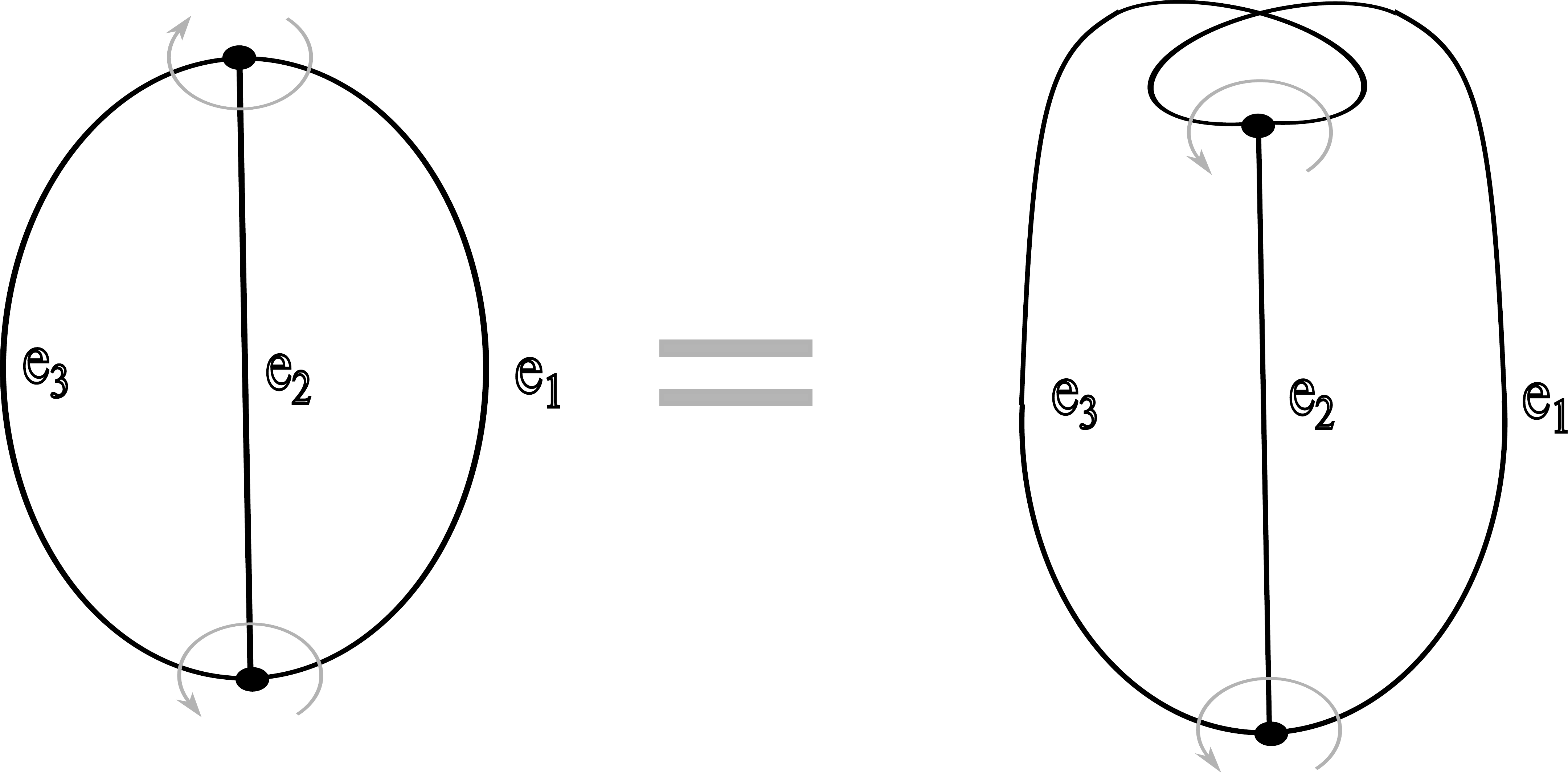}
      \caption{The ribbon graph for Torus in two pictures. Twist allows to keep the orientation fixed on each vertex.}
      \label{fig:ribbon_torus}
  \end{figure}


\subsection{Reinterpretation and obstruction:}

Finally, it will be interesting to reinterpret the ideas developed here in connection with $(\infty, n)$ categories as in [~\cite{lurie2008classification}, Section-4.3] and in connection with constructible sheaf [~\cite{freed2022topological}, section-2.4, 2.5]
and topological symmetry in QFT as in ~\cite{freed2022topological} and ~\cite{freed2024introduction}. It may shed some light on the obstruction of the coloring process. Since, \cref{4-color} can be stated as the obstruction for a planar trivalent graph to admit a global section to $\textit{Bord}_2^{def, cw}(\mathcal{P}_{K_4})$ are solely given by its bridges. Finally, from the higher bundle interpretation as outlined in \cref{sec:additional}, incorporating the idea of $2$-connection developed in ~\cite{baez2005higher}, may also be very useful in studying the obstruction.



\bibliographystyle{alpha}
\bibliography{refs}

\end{document}